\DeclareMathOperator{\cnx}{div}
\DeclareMathOperator{\cn}{div}
\DeclareMathOperator{\di}{d}
\DeclareMathOperator{\diff}{d}
\DeclareSymbolFont{pletters}{OT1}{cmr}{m}{sl}
\DeclareMathSymbol{s}{\mathalpha}{pletters}{`s}
\def\ba{\begin{align}}
\def\ea{\end{align}}
\def\bad{\begin{aligned}}
\def\ead{\end{aligned}}
\def\be{\begin{equation}}
\def\ee{\end{equation}}
\def\dsigma{\diff \! \sigma}
\def\dtheta{\diff \! \theta}
\def\dt{\diff \! t}
\def\dx{\diff \! x}
\def\du{\diff \! u}
\def\dy{\diff \! y}
\def\dtheta{\diff \! \theta}
\def\pat{\partial_{\theta}}
\def\patt{\partial_{\theta \theta}}
\def\pas{\partial_{s}}
\def\pass{\partial_{ss}}
\def\passs{\partial_{sss}}
\def\past{\partial_{s \theta}}
\def\passt{\partial_{s s \theta}}
\def\passtt{\partial_{s s \theta \theta}}
\def\passst{\partial_{s s s \theta}}
\def\passstt{\partial_{s s s \theta \theta}}
\def\pastt{\partial_{s \theta \theta}}
\def\eps{\varepsilon}
\def\le{\leq}
\def\xC{\mathbb{C}}
\def\xN{\mathbb{N}}
\def\xR{\mathbb{R}}
\def\xZ{\mathbb{Z}}
\def\dd{\mathrm{d}}
\numberwithin{equation}{section}
\renewenvironment{proof}[1][\proofname]{\par
\pushQED{\qed}%
\normalfont \topsep1\p@\@plus3\p@\relax
\trivlist
\item[\hskip\labelsep
    \bfseries
#1\@addpunct{\slshape{.}}]\ignorespaces
}{%
\popQED\endtrivlist\@endpefalse
\vspace{10pt plus 3pt}
}
\newtheoremstyle{mystyle}
  {.4cm} 
  {.4cm} 
  {\sl} 
  {} 
  {\bfseries} 
  {.} 
  {.5em} 
  {} 
\theoremstyle{mystyle}
\newtheorem{theo}{Theorem}[section]
\newtheorem*{theo*}{Theorem}
\newtheorem{prop}[theo]{Proposition}
\newtheorem{lemm}[theo]{Lemma}
\newtheorem{coro}[theo]{Corollary}
\newtheorem{defi}[theo]{Definition}
\newtheorem{rema}[theo]{Remark}
\def\lm{\lambda}
\def\vp{\varphi}
\newcommand{\pare}[1]{\left( #1 \right)}
\newcommand{\norm}[1]{\left\| #1 \right\|}
\newcommand{\av}[1]{\left| #1 \right|}
\newcommand{\bra}[1]{\left[ #1 \right]}
\newcommand{\set}[1]{\left\{ #1 \right\}}
\renewcommand{\t}[1]{\text{#1}}
\def\pa{\partial} 
\def\ga{\gamma} 
\def\al{\alpha} 
\def\q{\quad} 
\newcommand{\w}[1]{\overline{#1}}
\def\FF{\mathcal F} 
\def\veps{\varepsilon}
\def\tGa{\widetilde{\Gamma}}
\def\tO{\widetilde{\Omega}}
\newcommand{\wt}[1]{\widetilde{#1}}
\def\qq{\qquad}
\def\q{\quad}
\title{
Traveling wave solution for a coupled incompressible Darcy's free boundary problem with surface tension}
\author[1]{Thomas Alazard}
\author[1]{Martina Magliocca}
\author[3]{Nicolas Meunier}
\affil[1]{Universit\'e Paris-Saclay, ENS Paris-Saclay, CNRS,
	Centre Borelli UMR9010, avenue des Sciences, F-91190 Gif-sur-Yvette, France, (thomas.alazard@ens-paris-saclay.fr) (martina.magliocca@gmail.com)}
\affil[2]{LaMME, UMR 8071 CNRS, Universit\'e \'Evry Val d'Essonne, France. (nicolas.meunier@univ-evry.fr)}
\date{\empty}
\begin{document}

\maketitle

\begin{center}
\today 

\vspace*{1cm}

{\large \textbf{Abstract}}
\end{center}

We study  an incompressible Darcy's free boundary problem, recently introduced in \cite{LMVC}. Our goal is to prove the existence of non-trivial traveling wave solutions and thus validate the interest of this model  to describe cell motility. The model equations include a convection diffusion equation for the polarity marker concentration and the incompressible Darcy's equation. The mathematical novelty of this problem is the nonlinear destabilizing term in the boundary condition that describes the active character of the cell cytoskeleton. We first study the linear stability of this problem and we show that, above a well precise threshold,  the disk becomes linearly unstable.
By using two different approaches we prove existence of traveling wave solutions, which describes persistent motion of a biological cell. One is explicit, by construction. The other is established implicitly, as the one bifurcating from stationary solution.

{\hypersetup{linkcolor=black}
	\tableofcontents
}

\medskip

\textbf{Acknowledgement:}
TA and NM want to thank D. Smets for helpful discussion.\\

\textbf{AMS subject classifications:} 35R35; 35B32; 35C07; 92C17.\\

\textbf{Key words and phrases:} Traveling waves; free boundary; cell migration and cell polarization.

\section{Introduction} 

In this paper we study the existence of traveling waves for a two-dimensional free boundary problem modeling the dynamics of a living cell. We consider a coupled incompressible Darcy's problem whose novelty lies in the coupling in the boundary term of the Hele-Shaw model, with surface tension, with a PDE stated on the free evolving domain, see Section \ref{sec:prop} for details. 

More precisely, we consider a smooth open bounded set $\Omega_0$ in $\xR^2$, representing the portion of the space occupied by an incompressible fluid at time $t=0$, and a given smooth non-negative function $c_0=c_0(x,y)$ defined on $\Omega_0$, which represents the concentration of a solute at time $t=0$. Then, we seek a family of open sets $\Omega(t)$ of $\xR ^2$ with boundary $\Gamma(t)=\partial \Omega(t)$ and concentration functions $c(t,x,y)$ defined on $\Omega(t)$ solutions of
\begin{subequations}\label{eq:pb}
\begin{align}
-\Delta P  &= 0  &&  \mbox{ in } \Omega(t), \label{eq:pression}\\
P &= \gamma \kappa +\chi f(c) && \mbox{ on } \Gamma(t),\label{eq:pression_bord} \\
 V_n=\bm{V}_{\Gamma(t)}\cdot \bm{n} &= - \nabla P  \cdot \bm{n}    && \mbox{ on } \Gamma(t),\label{eq:cin}\\
 \partial _t c-D \Delta c  &= (1-a) \nabla P \cdot\nabla c  &&  \mbox{ in } \Omega(t), \label{eq:marqueur}\\
 D\nabla c \cdot \bm{n}&=ac\nabla P \cdot \bm{n}&& \mbox{ on } \Gamma(t),\label{eq:marqueur_bord} \\
c(0,x,y)&=c_0(x,y) &&\mbox{ in }\Omega_0,\\
 \Omega(0)&=\Omega_0,&&
\end{align}
\end{subequations}
where $\kappa$ is the curvature (positive for a circle) of the evolving free-boundary $\Gamma(t)$, $\bm{n}$ is the outward pointing unit normal on $\Gamma(t)$,  the surface tension $\gamma \ge 0$ is a given constant, $\chi \ge 0$, $D>0$ is the diffusion coefficient, $a \in [0,1]$, and $f:\xR \to \xR$ is a given function.  We denote by $\bm{V}_{\Gamma(t)}$ (resp. $V_n$) the velocity (resp. normal velocity) of the moving free-boundary $\Gamma(t)$.

The first two equations \eqref{eq:pression} -- \eqref{eq:pression_bord} determine the pressure according to Darcy's law, which states that the velocity of the fluid is $\bm u =-\nabla P$, incompressibility of the fluid, that is $\cn \bm u =0$, and Laplace condition; the third one \eqref{eq:cin} is a kinematic condition which states that the interface $\Gamma(t)$ is transported by the velocity of the fluid $-\nabla P$. In \eqref{eq:marqueur_bord}, a zero solute flux (of $c$) on the moving boundary $\Gamma(t)$ is imposed. The coupling with the unknown $c$ occurs in the boundary term \eqref{eq:pression_bord}. The time evolution of $ c$ follows the advection-diffusion equation \eqref{eq:marqueur}. We refer to Section \ref{sec:prop} for a biological justification of the model.

Note that in \eqref{eq:pb}, we have formally conservation of molecular content, that is for all time $t$ it holds that
\begin{equation}\label{eq:M}
	M:=\int_{\Omega(t)} c(t,x,y)\dx \dy = \int_{\Omega_0} c_0(x,y)\dx \dy.
\end{equation}

For simplicity, here we will assume that
\begin{equation*}
	D=1.
\end{equation*}

We make the following assumptions on $f$:
\begin{align}
 f\in C ^1(\xR^+),\q f \textrm{ is increasing, } \q 
 f(0)=0,\q \lim\limits_{x\to +\infty} f(x) =L\label{A}.
\end{align}
\begin{rema}
A prototype example, \cite{LMVC}, of a function satisfying the previous assumptions is 
\begin{equation}\label{eq:prototype_example}
	f(x)= \frac{ Lx}{\alpha + x}, 
\end{equation}
where $L>0$ is the maximal pulling force and $\alpha >0$ is a satuartion parameter. 
\end{rema}

Let us briefly comment the existing  literature concerning \eqref{eq:pb}. Moving interface problems have raised many interesting and challenging mathematical issues. A well known example is the Stefan problem which describes the dynamics of the boundary between ice and water. In the biophysical community, we find a large number of free boundary models to describe tumor and tissue growth, cell motility and other phenomena. Most of them are formulated through a fluid approach with surface tension. Some tumor growth models (e.g. \cite{Friedman_2004, Friedman_Hu_2006, Friedman_Reitich_2001}) resemble our model \eqref{eq:pb}. However, there is an important difference: tumor growth naturally involves expanding domain while we consider here incompressible solutions. 
In the context of the motility of eukaryotic cells on substrates, various free boundary problems have been derived and studied, see \cite{Berlyand4, Berlyand3, Berlyand2, Berlyand1,Berlyand_2020,Berlyand_2021,Mizuhara}. In the 1D setting, Keller-Segel system with free boundaries as a model for contraction driven motility were introduced and studied in \cite{Recho_2013, Recho_2013-2,Recho_2015,Recho_2018}. The models presented in \cite{Berlyand_2020,Berlyand_2021} show some similarities with our model but the coupling between the bulk equation and the boundary is different.  
The existence of traveling wave solutions for these models is proved in \cite{Berlyand4,Berlyand2,Berlyand_2020, Berlyand_2021}. In the context of sharp interface limit some models for cell motility were studied in \cite{CMM_2020,CMM_2022}.  We refer to \cite{Aranson, Levine} for a review.

\subsection{On traveling waves}
A remarkable feature of cell motility is the appearance of sustained movement in a given direction without external cue, see \cite{Barnhart, Aranson, Keren}.
This phenomenon, known as spontaneous polarization, see \cite{CHMV,EMV,PLOS} e.g., is mathematically described by the existence of traveling wave solutions and is the main subject of this article.

Traveling wave solutions of \eqref{eq:pb} correspond to a fixed shape domain  moving by translation with constant velocity $V\in\xR$ in a given direction $\bold{u}$, that is 
\begin{equation}\label{eq:tw00}
	\Omega_{\rm tw}(t)=\tO + tV  \bold{u},
\end{equation}
for some speed $V$ and  direction of motion $\bold{u}$.

Note that the problem is isotropic, so we can assume without loss of generality that $\bold{u}=\bold{e}_x= (1,0)$ and $V>0$. In that case, the normal boundary velocity of $\Omega(t)$ given by \eqref{eq:tw00}
satisfies $V_n=V \, \bold{e}_x  \cdot \bold{n}$.

Using the traveling wave ansatz 
\begin{equation*} 
	c=c(x- Vt,y), \qquad P=P(x-Vt,y), \qquad \Omega(t)=\tO +(Vt,0),
\end{equation*} a traveling-wave solution of \eqref{eq:pb} is defined as following, see Section  \ref{sec:graph} for details.
\begin{defi}\label{def:TW_1}
A traveling wave solution of \eqref{eq:pb} is given by a domain $\tO \subset \xR^2$ with $\mathcal{C}^{2,1}$  boundary $\tGa$, a positive real number $V$ and two $C^\infty
$ functions $P,\,c$   defined on $\tO$ satisfying  
\begin{subequations}\label{eq:pb-tw}
\begin{align}
 -\Delta P  &= 0  &&  \mbox{ in } \tO, \label{eq:pression_TW_1}\\
P &= \gamma \kappa   +\chi f(c)&& \mbox{ on } \tGa,\label{eq:pression_bord_TW_1} \\
    - \nabla P \cdot \bm{n} &=(V,0) \cdot\bm{n}    && \mbox{ on } \tGa,\label{eq:cin_TW_1}\\
 \cn \left((V,0)c+\left( 1-a\right)c \nabla P +\nabla c\right)&= 0  &&  \mbox{ in } \tO, \label{eq:marqueur_TW_1}\\
ac(V,0)\cdot \bm{n} +\nabla c \cdot \bm{n}&= 0&& \mbox{ on }\tGa .\label{eq:marqueur_bord_TW_1} 
\end{align}
\end{subequations}
\end{defi}

We begin by observing that the pressure and the concentration have simple forms.
\begin{prop} \label{prop:cP}
For traveling wave solutions of \eqref{eq:pb}, the functions $P$ and $c$ have the form:
\[
P(x,y) = p_1- Vx, \quad p_1\in \xR, \qquad \textrm{ and } \qquad c(x,y)= \frac{M}{\int_{\tO}e^{-aVx'}\dx' \dy'  }e^{-aVx},
\] 
with $M\ge 0$.
\end{prop}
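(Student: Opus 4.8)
The plan is to reduce each of the two formulas to a homogeneous Neumann problem on $\tO$, whose only solutions are constants, and then to identify the single remaining free constant in the expression for $c$ by invoking the conservation law \eqref{eq:M}.

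First I would treat the pressure. Set $Q:=P+Vx$. Since $x$ is harmonic, \eqref{eq:pression_TW_1} gives $\Delta Q=0$ in $\tO$, and \eqref{eq:cin_TW_1} becomes $\nabla Q\cdot\bm n=\big(\nabla P+(V,0)\big)\cdot\bm n=0$ on $\tGa$. Multiplying by $Q$ and integrating by parts — which is legitimate because $P\in C^\infty(\tO)$ and $\tGa$ is $\mathcal C^{2,1}$ — yields
\[
\int_{\tO}\av{\nabla Q}^2\dx\dy=\int_{\tGa}Q\,\nabla Q\cdot\bm n\,\dsigma=0 ,
\]
hence $\nabla Q\equiv 0$; since $\tO$ is connected, $Q$ equals a constant $p_1$ and $P(x,y)=p_1-Vx$.

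Next I would turn to the concentration, now using the explicit value $\nabla P=(-V,0)$ just obtained. A direct computation of the flux in \eqref{eq:marqueur_TW_1} gives
\[
(V,0)c+(1-a)c\,\nabla P+\nabla c=(aVc,0)+\nabla c=e^{-aVx}\,\nabla\!\big(e^{aVx}c\big).
\]
Setting $w:=e^{aVx}c$, equation \eqref{eq:marqueur_TW_1} becomes $\cn\!\big(e^{-aVx}\nabla w\big)=0$ in $\tO$, while \eqref{eq:marqueur_bord_TW_1} reads exactly $e^{-aVx}\nabla w\cdot\bm n=0$, i.e. $\nabla w\cdot\bm n=0$ on $\tGa$. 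The same energy identity, now with the strictly positive weight $e^{-aVx}$, gives $\int_{\tO}e^{-aVx}\av{\nabla w}^2\dx\dy=0$, so $w$ equals a constant $\lambda$ and $c(x,y)=\lambda e^{-aVx}$.

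Finally I would pin down $\lambda$. Since a traveling wave is in particular a solution of \eqref{eq:pb}, the conserved molecular content \eqref{eq:M}, evaluated on $\Omega(t)=\tO+(Vt,0)$ and reduced to $\tO$ by the change of variables $x\mapsto x-Vt$, equals $M:=\int_{\Omega_0}c_0$, which is $\ge 0$ because $c_0$ is non-negative. Substituting $c=\lambda e^{-aVx}$ into $\int_{\tO}c\,\dx\dy=M$ and using $\int_{\tO}e^{-aVx}\dx\dy>0$ gives $\lambda=M\big/\!\int_{\tO}e^{-aVx}\dx\dy$, which is the announced formula. I do not expect a genuine obstacle here; the only points requiring a little care are the algebraic simplification of the flux once $\nabla P$ is known, the validity of the integrations by parts (covered by the stated $C^\infty$/$\mathcal C^{2,1}$ regularity), and the appeal to connectedness of $\tO$ to pass from ``locally constant'' to ``constant''.
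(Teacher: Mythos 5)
Your proposal is correct and follows essentially the same route as the paper: an energy (integration by parts) argument for the harmonic function $P+Vx$ with vanishing Neumann data, followed by the substitution $c=w\,e^{-aVx}$ and a weighted energy identity to show $w$ is constant (this is exactly the paper's Lemma \ref{lem:stat_sol}), with the constant fixed by mass conservation. The only differences are cosmetic sign conventions.
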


\begin{rema}[On traveling waves and Jordan curves]\label{rmk:equiv-def}
The equations in \eqref{eq:pb-tw} mandate that the entire fluid bulk flows at a uniform speed, that is $\nabla P=-(V,0)$ in $\tO$. 
Traveling wave solutions of \eqref{eq:pb} can be regarded as a family of closed Jordan curves $\Gamma_{\rm tw}(t)$, which represents the boundary of a domain $\Omega_{\rm tw} \subset \xR^2$ at time $t$, traveling with constant shape $\tGa=\partial \tO$ and velocity $V\bold{e}_x \in \xR^2$, i.e.,
\begin{equation}\label{def:TW_Jordan}
\Gamma_{\rm tw}(t) =\tGa + tV\bold{e}_x ,
\end{equation}
for $t\ge  0$, where $\tGa$ is a Jordan curve that is positively oriented (counterclockwise direction) and is governed by the curvature equation \eqref{eq:pression_bord_TW_1}. 
In particular, recalling the expressions of $P$ and $c$ in Proposition \ref{prop:cP}, the boundary condition \eqref{eq:pression_bord_TW_1} takes the form 
	\begin{equation}
		\label{eq:TW_shape}
		\gamma \kappa(x)= p_1 - V x - \chi f\left(  \frac{M}{\int_{\Omega_{0}}e^{-aVx'}\dx' \dy'  }e^{-aVx}\right) \qquad \textrm{  on }   \tGa.
	\end{equation}
\end{rema}

\begin{rema}[On $V$ and $\tO$]
	In this problem, the set $\tO$ and  the speed $V$ must be found together and they depend on $\chi,\,p_1,\,a,\,\gamma$ and $f$. The parameter $p_1$ can be seen as a Lagrange multiplier for the volume of $\tO$, 
	but  the problem is invariant by translation. Any translation of $\tO$ leads to a solution of \eqref{eq:TW_shape}, with the same $V$ but a different value of $p_1$. 
\end{rema}

\subsection{Main results}

We establish the following properties of the model \eqref{eq:pb} in Section \ref{sec:prop}:
\begin{align}
\av{\Omega(t)}&=\av{\Omega_0}\q \forall t\ge0,\label{eq:area}\\
M(t)&=M(0)=M \label{eq:mass_marker}\\
&\bm{u}_{\bm{\mathcal C}}(t) =-\frac{\chi}{|\Omega_0| } \int_{\Gamma(t)}  f( c)  n \dsigma,\label{eq:centre_mass}
\end{align}
where $\bm{u}_{\bm{\mathcal C}}(t)$ is the velocity of the center of mass.

Let
\[
R_0=\sqrt{\frac{|\Omega_0|}{\pi}},
\]
that is 
$|B_{R_0}|=|\Omega_0|$, where $B_{R_0}$ is the disk with radius $R_0$. Then,  problem \eqref{eq:pb} with \eqref{eq:area} and \eqref{eq:mass_marker}  possesses a unique radially symmetric solution with both $P$ and $c$ being constant. 
\begin{lemm}\label{lem:stat}
Assume that $|B_{R_0}|=|\Omega_0|$. The problem \eqref{eq:pb} with \eqref{eq:area} and \eqref{eq:mass_marker} admits a unique radially symmetric solution $(c,P)=(\tilde{c},\wt{P})$ which has the form 
\begin{equation}\label{eq:stat}
\pare{\tilde{c}, \wt{P}} := \pare{\frac{M}{\pi R_0^2}, \frac{\gamma }{R_0}+\chi f\pare{\frac{M}{\pi R_0^2}}}=\pare{\tilde{c}, \frac{\gamma }{R_0}+\chi f(\tilde{c})}.
\end{equation}
\end{lemm}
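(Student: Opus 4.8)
I read the statement in the natural way: a radially symmetric solution is a time-independent solution $(\Omega,c,P)$ of \eqref{eq:pb} with $\Omega$ a bounded open set enclosed by a single curve (a topological disk, as in Remark \ref{rmk:equiv-def}) and with $\Omega$ and $c$ invariant under rotations about the origin. Since the domain does not move, \eqref{eq:cin} holds with $V_n=0$, and $\partial_t c=0$ in \eqref{eq:marqueur}. The plan is to verify the explicit candidate (existence) and then, starting from an arbitrary such solution, to show in turn that $P$ is constant, that $c$ is constant, and that the domain must be $B_{R_0}$ (uniqueness).

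\textbf{Existence.} I would take $\Omega=B_{R_0}$, whose area equals $|\Omega_0|$ by the definition of $R_0$, together with $c\equiv M/(\pi R_0^2)$ and $P\equiv \gamma/R_0+\chi f(c)$. As $P$ and $c$ are constant, every equation of \eqref{eq:pb} reduces to an identity: $-\Delta P=0$ in $\Omega$; $\nabla P\cdot\bm{n}=0=V_n$ on $\Gamma$; the curvature of $B_{R_0}$ equals $1/R_0$, so \eqref{eq:pression_bord} holds; all gradients in \eqref{eq:marqueur}--\eqref{eq:marqueur_bord} vanish; and $\int_{B_{R_0}}c=c\,\pi R_0^2=M$, so \eqref{eq:area} and \eqref{eq:mass_marker} hold. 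This is exactly \eqref{eq:stat}.

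\textbf{Uniqueness.} Let $(\Omega,c,P)$ be a radially symmetric stationary solution with $|\Omega|=|\Omega_0|$ and $\int_\Omega c=M$. From $V_n=0$ and \eqref{eq:cin} we get $\nabla P\cdot\bm{n}=0$ on $\Gamma$; together with $\Delta P=0$ in $\Omega$ from \eqref{eq:pression}, the energy identity $\int_\Omega|\nabla P|^2=\int_\Gamma P\,\nabla P\cdot\bm{n}=0$ forces $\nabla P\equiv0$, so $P$ is constant. Substituting $\nabla P=0$ and $\partial_t c=0$ into \eqref{eq:marqueur}--\eqref{eq:marqueur_bord} yields $\Delta c=0$ in $\Omega$ with $\nabla c\cdot\bm{n}=0$ on $\Gamma$, whence the same identity makes $c$ constant, and then \eqref{eq:mass_marker} forces $c\equiv M/|\Omega|$. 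Finally, a bounded, rotation-invariant open set enclosed by a single curve is a disk centered at the origin, say $\Omega=B_R$; the area constraint gives $R=R_0$, hence $c\equiv M/(\pi R_0^2)=\tilde c$, and \eqref{eq:pression_bord} on $\partial B_{R_0}$ (where $\kappa\equiv1/R_0$) gives $P\equiv\gamma/R_0+\chi f(\tilde c)=\wt{P}$. So $(c,P)=(\tilde c,\wt{P})$, and it is unique.

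\textbf{On the difficulty.} There is no genuine obstacle. The only steps deserving a word are the two ``harmonic function with vanishing Neumann data is constant'' arguments and the identification of the domain with $B_{R_0}$; should one wish to drop the topological assumption on $\Omega$, surface tension still excludes annular stationary domains when $\gamma>0$, since the inner and outer boundary circles would need constant curvatures of opposite sign in \eqref{eq:pression_bord}. Note that the monotonicity of $f$ from \eqref{A} is not used in this lemma.
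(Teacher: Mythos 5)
Your proof is correct and follows essentially the same route as the paper: stationarity plus the kinematic condition give homogeneous Neumann data for the harmonic $P$, the energy identity forces $P$ constant, the same argument then forces $c$ constant, the mass constraint fixes $\tilde c$, and the boundary condition fixes $\wt P$. The only cosmetic difference is that the paper identifies $\Omega_0=B_{R_0}$ from the resulting constancy of the curvature in \eqref{eq:pression_bord} rather than from the rotation-invariance hypothesis, a variant you yourself note in your closing remark.
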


We begin performing a linear stability analysis around the  radially symmetric solution $(\tilde{c},\wt{P})$. 
Define 
\begin{equation}\label{def:chi_star}
	\chi^* := \frac{\pi R_0^2}{a  M f'\left( \frac{ M}{\pi R_0^2} \right)}= \frac{1}{a  \tilde{c} f'(\tilde{c})}.
\end{equation}
\begin{prop}\label{prop:lin_stab}
	Assume that $\chi \in ( \chi^*, \infty)$, then the linearized problem around $(\tilde{c},\wt{P})$ associated with \eqref{eq:pb} has at least one eigenvalue with positive real part.
	On the contrary, if $\chi \in [ 0,\chi^*]$, then all the eigenvalues of the linearized problem around $(\tilde{c},\wt{P})$ associated with \eqref{eq:pb} have non-positive real parts.
\end{prop}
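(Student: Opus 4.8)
The plan is to linearize \eqref{eq:pb} about the radially symmetric steady state $(\tilde c,\wt P)$ of Lemma \ref{lem:stat} and reduce the spectral problem, using the rotational invariance of the disk, to a family of scalar dispersion relations indexed by the angular wavenumber $k$. I would describe the perturbed domain as a polar graph $\partial\Omega(t)=\{(R_0+\rho(t,\theta))(\cos\theta,\sin\theta)\}$ and set $c=\tilde c+\bar c$, $P=\wt P+\bar P$ with $\rho,\bar c,\bar P$ infinitesimal. Since $\tilde c$ and $\wt P$ are constant, the bulk equations \eqref{eq:pression}, \eqref{eq:marqueur} linearize to $\Delta\bar P=0$ and $\partial_t\bar c=\Delta\bar c$ in $B_{R_0}$ (the term $(1-a)\nabla P\cdot\nabla c$ drops out at first order), while, using $\kappa=R_0^{-1}-R_0^{-2}(\partial_\theta^2+1)\rho+O(\rho^2)$ and $V_n=\partial_t\rho+O(\rho^2)$, the boundary conditions \eqref{eq:pression_bord}, \eqref{eq:cin}, \eqref{eq:marqueur_bord} become, on $\{r=R_0\}$,
\begin{equation}\label{eq:plan-linbc}
\bar P=-\frac{\gamma}{R_0^{2}}(\partial_\theta^2+1)\rho+\chi f'(\tilde c)\,\bar c,\qquad \partial_t\rho=-\partial_r\bar P,\qquad \partial_r\bar c=a\tilde c\,\partial_r\bar P .
\end{equation}
Conservation of area \eqref{eq:area} and of molecular content \eqref{eq:M} force the zeroth angular mode of $\rho$ and the spatial mean of $\bar c$ to vanish at linear order.

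By rotational symmetry the linearized operator block-diagonalizes over the Fourier modes $e^{ik\theta}$. For $k\ge1$ I would seek $(\rho,\bar P,\bar c)=e^{\lambda t}\bigl(C,\,A\,r^{k},\,B\,I_k(\sqrt\lambda\,r)\bigr)e^{ik\theta}$, i.e. the only $\bar P$ harmonic in $B_{R_0}$ and the only $\bar c$ with $\Delta\bar c=\lambda\bar c$ that are bounded at the origin ($I_k$ the modified Bessel function). Substituting into \eqref{eq:plan-linbc} and eliminating $A,B,C$ yields a scalar characteristic equation; rewriting it with the Mittag–Leffler expansion $\frac{I_k'(z)}{z\,I_k(z)}=\frac{k}{z^{2}}+\sum_{n\ge1}\frac{2}{z^{2}+j_{k,n}^{2}}$, where $0<j_{k,1}<j_{k,2}<\cdots$ are the positive zeros of $J_k$, it takes the form $G_k(s)=\beta$ with $s:=\lambda R_0^{2}$, $\beta:=\chi/\chi^{*}=a\tilde c\,\chi f'(\tilde c)$ (see \eqref{def:chi_star}), and
\begin{equation*}
G_k(s)=1+\frac{k\gamma(k^{2}-1)}{R_0\,s}+\frac{2}{k}\sum_{n\ge1}\frac{s}{s+j_{k,n}^{2}}+\frac{2\gamma(k^{2}-1)}{R_0}\sum_{n\ge1}\frac{1}{s+j_{k,n}^{2}} .
\end{equation*}
A short check (the cases $C=0$ force the perturbation to be trivial) shows this ansatz captures all eigenvalues with $\RE\lambda>0$, so such eigenvalues are exactly the $\lambda=s/R_0^{2}$, $\RE s>0$, with $G_k(s)=\beta$ for some $k\ge1$, plus the manifestly stable $k=0$ Neumann modes of the heat operator that survive once $\hat\rho_0=0$ is imposed.

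For $\chi>\chi^{*}$ I would single out $k=1$: there $k^{2}-1=0$, the surface tension contribution disappears entirely, and $G_1(s)=1+2\sum_{n\ge1}\frac{s}{s+j_{1,n}^{2}}$ is continuous and strictly increasing on $[0,\infty)$ from $G_1(0)=1$ to $+\infty$; hence $G_1(s^{*})=\beta>1$ for a (unique) $s^{*}>0$, and $\lambda^{*}=s^{*}/R_0^{2}>0$ is a real eigenvalue with positive real part. This is also why $\chi^{*}$ does not involve $\gamma$. Conversely, for $\chi\le\chi^{*}$ (i.e. $\beta\le1$) I claim $G_k(s)\neq\beta$ whenever $\RE s>0$: one has $\RE\frac1s>0$, $\RE\frac{1}{s+j^{2}}=\frac{\RE s+j^{2}}{|s+j^{2}|^{2}}>0$ and $\RE\frac{s}{s+j^{2}}=\frac{|s|^{2}+j^{2}\RE s}{|s+j^{2}|^{2}}>0$, while all coefficients of $G_k$ are $\ge0$ with the coefficient $2/k$ of the third sum strictly positive, so $\RE G_k(s)>1\ge\beta$. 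The $k=0$ sector reduces, after $\hat\rho_0=0$, to $\lambda\bar c=\Delta\bar c$ in $B_{R_0}$ with $\partial_r\bar c|_{r=R_0}=0$ and zero mean, whose eigenvalues are $\le0$. Hence in that regime no eigenvalue has positive real part, which is the second assertion.

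The main work is the linearization in the first step — pinning down every sign, including the linearizations of the curvature and of the normal velocity of the moving graph — and the algebraic reduction in the second step to the closed form $G_k(s)=\beta$, which rests on the Hadamard product for $I_k$; one must also verify that the modal ansatz exhausts the spectrum. The conceptual point, which I expect to be the crux, is that the destabilizing mode is the translation-like wavenumber $k=1$, on which surface tension is inoperative, so that the dynamics of that mode reduces to the monotone scalar relation $G_1(s)=\beta$ with threshold exactly $\beta=1$, i.e. $\chi=\chi^{*}$.
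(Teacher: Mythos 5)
Your proof is correct and reaches both conclusions; it follows the paper's reduction (polar-graph linearization, Fourier block-diagonalization, harmonic $\bar P\sim r^{k}$ and Bessel $\bar c\sim I_k(\sqrt\lambda\,r)$ ansatz, leading to the same dispersion relation as the paper's $H_m(\lambda)=0$), but the way you extract the spectral information from that relation is genuinely different. For the stability half, the paper does not manipulate the dispersion relation at all: it proves an energy identity (Proposition \ref{prop:signe:vp}) for the auxiliary variable $Q=c-a\tilde c P$, which packages the sign information for all complex eigenvalues at once, and then supplements it with a mode-by-mode exclusion of positive real roots via $x\,I_{m+1}(R_0x)/I_m(R_0x)>0$. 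You instead rewrite $H_k=0$ as $G_k(s)=\beta$ using the Mittag--Leffler expansion of $I_k'/(zI_k)$ and observe that $\RE G_k>1$ on the open right half-plane --- a Herglotz-type positivity argument that handles complex $\lambda$ without any integration by parts; both routes are sound, the paper's being more robust to the PDE structure and yours giving a sharper, unified picture of where the roots sit. For the instability half your argument is actually \emph{more} complete than the paper's: the paper exhibits the unstable eigenvalue by Taylor-expanding $H_1$ near $\lambda=0$, which only produces it for $\chi$ close to $\chi^*$ (and degenerates at $a\chi\tilde c f'(\tilde c)=3$), whereas your monotonicity of $G_1(s)=1+2\sum_n s/(s+j_{1,n}^2)$ from $1$ to $+\infty$ yields a positive real eigenvalue for \emph{every} $\chi>\chi^*$, which is what the Proposition asserts. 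Two small points to tidy up: the factors $1/R_0$ multiplying the surface-tension terms of your $G_k$ do not match what the paper's normalization of the linearized curvature produces (the paper itself wavers between $\gamma/R_0$ and $\gamma/R_0^2$), but since those terms vanish for $k=1$ and have nonnegative real part for $\RE s>0$ in any normalization, nothing in the argument depends on it; and you should note that dividing by $I_k(-R_0\sqrt\lambda)$ is legitimate on $\RE\lambda>0$ because the zeros of $I_k$ sit at $\lambda=-j_{k,n}^2/R_0^2<0$.
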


Then, we wonder when equations \eqref{def:TW_Jordan} -- \eqref{eq:TW_shape} admit traveling waves. Our first partially result states that no traveling wave exists under a well precise condition on the parameters.

\begin{prop}\label{prop:non-ex-TW}
	Assume that the function $f$ in \eqref{eq:pb} verifies \eqref{A} and
	\begin{equation}\label{def:no_TW}
	a \chi s f'(s)< 1\q\forall s\in\xR^+.
	\end{equation} 
There does not exist any traveling wave solution to \eqref{eq:pb} in the sense of Definition \ref{def:TW_1}.
\end{prop}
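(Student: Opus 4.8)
The plan is to derive a scalar integral identity from the traveling-wave system that forces $V=0$ (hence triviality) whenever the smallness condition \eqref{def:no_TW} holds. By Proposition \ref{prop:cP}, any traveling wave has $\nabla P = -(V,0)$ in $\tO$ and
\[
c(x,y) = \frac{M}{\int_{\tO} e^{-aVx'}\dx'\dy'}\, e^{-aVx},
\]
so the only real content left is the shape equation \eqref{eq:TW_shape} on $\tGa$. The key observation is that integrating the curvature term against the outward normal over a closed curve gives zero: $\int_{\tGa} \kappa\, \bm n \,\dsigma = 0$ (this is the standard fact that $\int_{\tGa}\kappa\,\bm n\,\dsigma = \int_{\tO}\nabla(\cdots)$ vanishes, or equivalently that the tangent vector closes up). Likewise $\int_{\tGa} \bm n\,\dsigma = 0$. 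Projecting \eqref{eq:TW_shape} onto $\bm e_x$ and integrating over $\tGa$ therefore kills both the $\gamma\kappa$ term and the $p_1$ term, leaving
\[
0 = \int_{\tGa} \Big( V x + \chi f(c(x))\Big)\, (\bm n\cdot \bm e_x)\,\dsigma .
\]

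First I would rewrite each surviving term using the divergence theorem. Since $\bm n \cdot \bm e_x = \partial_x x$, we get $\int_{\tGa} x\,(\bm n\cdot\bm e_x)\,\dsigma = \int_{\tO}\partial_x(x)\,\dX = |\tO|$, which is strictly positive. For the nonlinear term, write $f(c(x)) = g(x)$ with $g$ a function of $x$ alone (because $c$ depends only on $x$), so $\int_{\tGa} g(x)(\bm n\cdot\bm e_x)\,\dsigma = \int_{\tO} g'(x)\,\dX$. With $c(x) = \tilde c\, e^{-aV(x-\bar x)}$ for the appropriate normalizing constant (equivalently $c'(x) = -aV c(x)$), the chain rule gives $g'(x) = f'(c(x))\,c'(x) = -aV\,c(x) f'(c(x))$. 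Substituting back, the identity becomes
\[
0 = V\,|\tO| \;-\; \chi a V \int_{\tO} c(x)\, f'(c(x))\,\dX ,
\]
i.e. $V\Big(|\tO| - \chi a \int_{\tO} c\, f'(c)\,\dX\Big) = 0$.

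Now I would use hypothesis \eqref{def:no_TW}: for every $s\in\xR^+$ one has $a\chi\, s f'(s) < 1$. Since $c(x) > 0$ pointwise on $\tO$ (it is a positive exponential times a nonnegative constant; one should note that the case $M=0$, giving $c\equiv 0$, also trivially satisfies the inequality below), applying this with $s = c(x)$ and integrating yields
\[
\chi a \int_{\tO} c(x) f'(c(x))\,\dX \;<\; \int_{\tO} 1 \,\dX \;=\; |\tO|,
\]
so the bracket $|\tO| - \chi a \int_{\tO} c\,f'(c)\,\dX$ is strictly positive. Hence $V = 0$. But a traveling wave solution requires $V$ to be a \emph{positive} real number by Definition \ref{def:TW_1}, a contradiction; therefore no traveling wave exists.

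The only genuinely delicate points are bookkeeping rather than deep: one must make sure the closed-curve identities $\int_{\tGa}\kappa\,\bm n\,\dsigma=0$ and $\int_{\tGa}\bm n\,\dsigma = 0$ are applied with the correct orientation and regularity (the $\mathcal{C}^{2,1}$ boundary in Definition \ref{def:TW_1} is more than enough), and one must correctly track the normalization of $c$ so that the relation $c' = -aVc$ is used rather than the explicit constant, which never enters. I expect the main (minor) obstacle to be justifying the curvature integral identity cleanly — most transparently by parametrizing $\tGa$ by arclength, writing $\kappa\,\bm n = -\tfrac{d}{ds}\bm\tau$ (up to sign conventions fixed by "positively oriented, curvature positive for a circle"), and noting the total derivative integrates to zero around the closed loop; alternatively one invokes the identity $\int_{\tGa}\kappa\, n_i\,\dsigma = 0$ as a known fact. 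Everything else is the divergence theorem and the pointwise inequality \eqref{def:no_TW}.
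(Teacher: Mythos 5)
Your proof is correct and follows essentially the same route as the paper: both arguments rest on the closed-curve identity $\int_{\tGa}\kappa\,\bm n\,\dsigma=0$, converted via the divergence theorem into a bulk integral whose integrand $V\bigl(1-a\chi\, c f'(c)\bigr)$ has a strict sign under \eqref{def:no_TW}, forcing $V=0$ and contradicting the requirement $V>0$ in Definition \ref{def:TW_1}. The only cosmetic difference is that the paper phrases the sign argument as strict monotonicity of $\kappa(x)$ and evaluates the integral in the graph formulation (obtaining $2\int_{x_L}^{x_R}h(x)\kappa'(x)\,\dx<0$), whereas you work directly on $\tO$ without the graph parametrization.
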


It is the goal of this paper to prove the existence of non-trivial traveling wave solutions of \eqref{eq:pb} and thus validate the interest of this model  to describe cell motility.  For this purpose we use two different approaches: constructive by fixing the value of $p_1$ or based on a bifurcation argument by fixing the area, that is $R_0$.

\begin{theo}[Explicit by construction]\label{thm:TW_intro}
	Assume that $f$ satisfies assumptions \eqref{A}. For all $a\in (0,1]$, $\gamma>0$ and all $p_1>\chi L$, there exists a one parameter family of traveling wave  solutions $(\tO_{\chi }^{p_1},V_{\chi }^{p_1})$  of \eqref{eq:pb}, parametrized by $\chi \in ( \chi^*, \infty)$, such that $P$ is of the form $P=p_1-Vx$.
\end{theo}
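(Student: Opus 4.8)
The plan is to reduce the existence of a traveling wave to solving the single scalar curvature equation \eqref{eq:TW_shape} for an unknown closed curve $\tGa$, and to exploit the freedom that here $p_1$ is fixed (rather than the area) so that $\tGa$ need not enclose a prescribed volume. Since Proposition \ref{prop:cP} already forces $P=p_1-Vx$ and $c$ to be the explicit exponential profile, the whole problem collapses to finding, for each $\chi>\chi^*$, a speed $V=V_{\chi}^{p_1}>0$ and a Jordan curve $\tGa$ of class $\mathcal C^{2,1}$ whose curvature satisfies
\begin{equation*}
\gamma\,\kappa(x)=p_1-Vx-\chi f\!\left(\frac{M}{\int_{\tO}e^{-aVx'}\,\dx'\,\dy'}\,e^{-aVx}\right)\qquad\text{on }\tGa .
\end{equation*}
The key simplification, which I expect the authors use, is to look for curves that are graphs over $x$: write $\tGa$ as $y=\pm\phi(x)$ for $x$ in an interval $[x_-,x_+]$, with $\phi>0$ in the interior and $\phi(x_\pm)=0$, $\phi'(x_\pm)=\mp\infty$ (vertical tangents at the endpoints, so the curve closes up smoothly). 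For such a curve the right-hand side above depends on $x$ only, say $\gamma\kappa=g_V(x)$, and the geometric identity
\begin{equation*}
\gamma\,\kappa=\gamma\,\frac{d}{dx}\!\left(\frac{\phi'}{\sqrt{1+\phi'^2}}\right)\Big/\frac{1}{\sqrt{1+\phi'^2}}
\end{equation*}
can be integrated once. Concretely, parametrize by arclength or by the tangent angle $\theta$: if $\theta(s)$ is the angle of the tangent, then $\kappa=d\theta/ds$ and $dx/ds=\cos\theta$, so $\gamma\,d\theta = g_V(x)\,dx/\cos\theta$ is not yet separable, but differentiating the support-function / using $x$ as the variable one gets an autonomous first-order ODE for the quantity $u=\phi'/\sqrt{1+\phi'^2}=-\sin\theta$, namely $\gamma\,u'(x)=g_V(x)$, hence
\begin{equation*}
u(x)=\frac{1}{\gamma}\int_{x_-}^{x} g_V(t)\,\dt .
\end{equation*}
This is the crucial observation: once $x_-$ and $V$ are chosen, $u$ is \emph{explicit}, and then $\phi$ is recovered by $\phi'=u/\sqrt{1-u^2}$ and one further quadrature. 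So the construction is genuinely explicit modulo solving scalar transcendental equations — which is why the theorem is labeled "explicit by construction."

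The remaining conditions to close the curve are: (i) $|u(x)|<1$ on the open interval and $u(x_\pm)=\pm1$ (vertical tangents), which by the formula for $u$ amounts to $\int_{x_-}^{x_+}g_V=2\gamma$ together with a sign/monotonicity control on the antiderivative; and (ii) the closure condition $\int_{x_-}^{x_+}\phi'(x)\,\dx=0$, i.e. $\phi(x_+)=\phi(x_-)=0$. One should also normalize the translation freedom (Remark on $V$ and $\tO$) by fixing, say, the center of the $x$-interval. So the unknowns are $(x_-,x_+,V)$ and one has three scalar equations: the two "$u=\pm1$ at the ends" conditions and the closure condition (the $p_1$ is now a given constant, not an unknown). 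The plan is to set this up as a fixed-point / implicit-function problem: first treat $\chi$ slightly above $\chi^*$ as a perturbation of the radially symmetric solution of Lemma \ref{lem:stat} — at $\chi=\chi^*$ the disk $B_{R_0}$ with an appropriate $V=0$... no: rather, one uses that for $p_1>\chi L$ the right-hand side $g_V$ is bounded, and for small $V>0$ the curve is a small perturbation of the circle of radius $\gamma/(p_1-\chi f(\tilde c))$; then one runs a continuation/Banach fixed point argument in $V$ (or in $\chi$) to produce the one-parameter family.

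The hypothesis $p_1>\chi L$ enters to guarantee $g_V(x)=p_1-Vx-\chi f(\cdots)>0$ on the relevant interval (since $f\le L$), so that $\kappa>0$ and the curve is convex-ish and genuinely a closed graph — this prevents the curvature from changing sign and the graph construction from degenerating. The requirement $\chi>\chi^*$ is what makes the solution \emph{non-trivial}: one must check that the produced $V$ is strictly positive, and I expect this is where $\chi^*$ appears, through a bifurcation-type expansion showing $V\sim c(\chi-\chi^*)$ as $\chi\downarrow\chi^*$, using the definition $\chi^*=1/(a\tilde c f'(\tilde c))$; equivalently, linearizing the closure equation in $V$ at $V=0$ shows the linearization is invertible exactly when $\chi\ne\chi^*$, and the sign of the bifurcated branch gives $V>0$ for $\chi>\chi^*$.

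I expect the \textbf{main obstacle} to be twofold. First, verifying that the constructed curve $y=\pm\phi(x)$ is actually a simple closed ($\mathcal C^{2,1}$) Jordan curve and not self-intersecting: one needs $|u|<1$ strictly in the interior, which requires quantitative control of the antiderivative $\int_{x_-}^x g_V$, and this couples back to $V$ and to the nonlocal term $\int_{\tO}e^{-aVx'}$ (the integral runs over the very domain one is constructing, so the problem is genuinely nonlocal and must be handled by a fixed-point argument on $V$ with the area functional as a contraction). Second, organizing the three transcendental scalar equations for $(x_-,x_+,V)$ so that the implicit function theorem applies uniformly for $\chi$ in compact subintervals of $(\chi^*,\infty)$, and handling the two endpoints: as $\chi\to\chi^*$ one needs the non-degeneracy $\chi\ne\chi^*$ that was noted above, and as $\chi\to\infty$ (where $\chi L$ might exceed $p_1$) one must check the family still persists or gracefully restrict the range — given the statement allows all $\chi\in(\chi^*,\infty)$ with $p_1>\chi L$, one reads the hypotheses as "for each such $\chi$", so the uniformity only needs to be local. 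Everything else — the two quadratures, the arclength bookkeeping, the $\mathcal C^{2,1}$ regularity from $f\in C^1$ — is routine.
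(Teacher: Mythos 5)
Your reduction is exactly the paper's: Proposition \ref{prop:cP} forces $P=p_1-Vx$ and the exponential profile for $c$, the domain is sought as a symmetric graph $-h(x)<y<h(x)$ on $(x_L,x_R)$, and your variable $u=\phi'/\sqrt{1+\phi'^2}$ is (up to sign) the paper's $Y(x)$ of \eqref{eq:defY}, yielding the same first-order ODE \eqref{eq:Y} with the same three closure conditions \eqref{cond:Y} and \eqref{eq:condYh}. The existence of the endpoints $x_L,x_R$ with vertical tangents, using $p_1>\chi L$ to keep $0\le f\le L$ and hence the curvature one-signed, is Lemmas \ref{prop:x_L} and \ref{prop:x_R}, just as you describe.

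The gap is in how you propose to solve for $V$. Fixing $\chi>\chi^*$ and linearizing the closure equation at $V=0$ does show the linearization is invertible when $\chi\neq\chi^*$, but the implicit function theorem then only returns the \emph{trivial} branch $V=0$ (the disk) as the unique local solution; it does not produce a nontrivial $V>0$. Likewise, a bifurcation expansion $V\sim c(\chi-\chi^*)$ at the degenerate point only yields traveling waves for $\chi$ \emph{near} $\chi^*$ (that is the content of the separate Theorem \ref{thm:TW:bif}), whereas Theorem \ref{thm:TW_intro} claims existence for every $\chi\in(\chi^*,\infty)$. The ingredient your plan is missing is global: the paper introduces the shooting functional
\begin{equation*}
G(V)=\int_{x_L(V)}^{x_R(V)}\frac{Y(x,V)}{\sqrt{1-Y^2(x,V)}}\,\dx,
\end{equation*}
proves it is continuous on $[0,V_{\mathrm{max}})$, computes that $G(V)$ has the sign of $(p_1-\chi f(\tilde c))\bigl(1-a\chi\tilde c f'(\tilde c)\bigr)<0$ for $0<V\ll 1$ when $\chi>\chi^*$ (this is where $\chi^*$ enters, as a sign condition rather than an invertibility condition), and shows $G(V)\to+\infty$ as $V\to V_{\mathrm{max}}$ because $Y'(x_R)=0$ there and the integrand develops a nonintegrable $|x-x_R|^{-1}$ singularity. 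The intermediate value theorem then gives a root $V_\chi\in(0,V_{\mathrm{max}})$ for each $\chi>\chi^*$ (Proposition \ref{prop:c} and Corollary \ref{cor:d}). Without an analogue of the blow-up of $G$ at $V_{\mathrm{max}}$, your continuation argument has no mechanism to cover the whole range of $\chi$. A second, smaller point: the nonlocality you flag (the normalization $c_1$ depends on the unknown domain) is handled in the paper not by a contraction on the area but by rewriting $c_1$ as an explicit functional of $Y$ itself via an integration by parts, so that \eqref{eq:Y} becomes a closed (nonlocal) ODE to which Cauchy--Lipschitz applies on a fixed trial interval.
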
	
\noindent With this constructive method we obtain conditions under which the set $\tO_{\chi }^{p_1}$ is convex, see Theorem \ref{thm:TW_intro_2} in Section \ref{sec:construction} for more details.
 
\medskip
 
While the proof of this Theorem is constructive, it does not clearly identify what happens for a cell of fixed volume when the value of $ \chi$ increases. The next result solves this problem.

\begin{theo}[Implicit by bifurcation]\label{thm:TW:bif}
	Assume that $f$ satisfies assumptions \eqref{A}. For all $a\in (0,1]$, $\gamma>0$ and $R_0>0$, there exists a one parameter family of traveling wave  solutions $(\tO_{\chi }^{R_0},V_{\chi }^{R_0})$  of \eqref{eq:pb}, parametrized by $\chi \in ( \chi^*, \infty)$ such that $|\tO_{\chi }^{R_0} |=\pi R_0^2$.
\end{theo}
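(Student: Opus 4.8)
\emph{Step 1 (reduction to a scalar equation).} By Proposition~\ref{prop:cP}, a traveling wave with prescribed area $\pi R_0^2$ and prescribed mass $M$ (so that $\tilde c=M/\pi R_0^2$ is fixed) is completely determined by its boundary $\tGa$, the speed $V$ and the constant $p_1$: the functions $P$ and $c$ are then forced, and every equation of \eqref{eq:pb-tw} holds automatically except the Laplace law, which is \eqref{eq:TW_shape}. The plan is to write $\tGa$ as a polar graph $r=R_0+\rho(\theta)$, remove the translation invariance by taking $\rho$ even in $\theta$ and imposing the normalisation $\int_0^{2\pi}\rho(\theta)\cos\theta\,\dtheta=0$ (a slice transverse to $x$-translations), and append the area constraint $\frac12\int_0^{2\pi}(R_0+\rho)^2\,\dtheta=\pi R_0^2$. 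With $\mathcal I[\rho,V]:=\int_{\Omega_\rho}e^{-aVx'}\dx' \dy'$ this turns the problem into $\mathcal F(\rho,V,p_1,\chi)=0$ for $(\rho,V,p_1)$ near $0$ in $X\times\xR\times\xR$, where $X\subset C^{2,\alpha}(\xR/2\pi\xZ)$ is the closed subspace of even functions orthogonal to $\cos\theta$, the first component of $\mathcal F$ being $\gamma\kappa[\rho]+V(R_0+\rho)\cos\theta+\chi f\big(M e^{-aV(R_0+\rho)\cos\theta}/\mathcal I[\rho,V]\big)-p_1$ and the second the area defect. For each $\chi$ the radial stationary state of Lemma~\ref{lem:stat} furnishes the trivial solution $(\rho,V,p_1)=(0,0,\gamma/R_0+\chi f(\tilde c))$. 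Only $f\in C^1$ is used here, which is exactly what Crandall--Rabinowitz demands (continuity of $\mathcal F$, $D_{(\rho,V,p_1)}\mathcal F$, $D_\chi\mathcal F$ and $D_\chi D_{(\rho,V,p_1)}\mathcal F$).

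\emph{Step 2 (the linearisation).} Linearising $\mathcal F$ at the trivial branch, the $\rho$-derivative should be the linearised curvature $-\gamma R_0^{-2}(1+\partial_\theta^2)$ plus a nonlocal rank-one term proportional to $\int_0^{2\pi}\dot\rho\,\dtheta$ which the area constraint annihilates; the $p_1$-derivative is $-1$; and the $V$-derivative --- combining $\partial_V[V(R_0+\rho)\cos\theta]=R_0\cos\theta$ at $\rho=0$ with $\chi f'(\tilde c)\,\partial_V c|_0$, using $\partial_V\mathcal I[0,0]=-a\int_{B_{R_0}}x'\,\dx' \dy'=0$ by symmetry and $\partial_V c|_0=-a\tilde c\,x=-a\tilde c R_0\cos\theta$ on $\partial B_{R_0}$ --- equals $R_0\cos\theta\,\big(1-\chi/\chi^*\big)$ in view of \eqref{def:chi_star}. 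On the kernel the area line forces $\int\dot\rho=0$, the nonlocal term drops, and a Fourier decomposition gives $\dot\rho_n=0$ for $n\ge2$ (since $1-n^2\neq0$), $\dot p_1=0$ from $n=0$, and --- on the mode $n=1$, where $1+\partial_\theta^2$ vanishes and $\dot\rho_1$ is killed by $X$ --- the single relation $R_0(1-\chi/\chi^*)\dot V=0$. Hence $D_{(\rho,V,p_1)}\mathcal F$ is Fredholm of index $0$, an isomorphism for $\chi\neq\chi^*$, and at $\chi=\chi^*$ has one-dimensional kernel spanned by $(\dot\rho,\dot V,\dot p_1)=(0,1,0)$ and range of codimension one, namely $\{(g,m):\int_0^{2\pi}g\cos\theta\,\dtheta=0\}$.

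\emph{Step 3 (bifurcation).} Differentiating the linearisation in $\chi$ and evaluating at the kernel vector yields $\partial_\chi D_{(\rho,V,p_1)}\mathcal F\big|_{\chi=\chi^*}(0,1,0)=\big(-R_0(\chi^*)^{-1}\cos\theta,\ 0\big)$, whose first component has nonzero $\cos\theta$-coefficient and therefore lies outside the range: the Crandall--Rabinowitz transversality hypothesis is met. It provides a $C^1$ curve $s\mapsto(\rho(s),V(s),p_1(s),\chi(s))$ of solutions through the trivial state at $\chi^*$, tangent to $(0,1,0)$, so $V(s)=s+o(s)\neq0$ for small $s\neq0$; as a disk ($\rho\equiv0$) forces $V=0$ in \eqref{eq:TW_shape}, the domain $\tO_s$ is genuinely non-circular and $(\tO_s,V(s))$ is a non-trivial traveling wave of area $\pi R_0^2$. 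Carrying the expansion to second order and reading off the solvability condition in the $\cos\theta$-mode pins down the direction of the branch, which lies on the side $\chi>\chi^*$, in accordance with the linear instability there of Proposition~\ref{prop:lin_stab}.

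\emph{Step 4 (from local to global; the main obstacle).} To reach all of $\chi\in(\chi^*,\infty)$ I would rewrite $\mathcal F=0$ as a compact (Leray--Schauder) fixed-point equation by inverting the elliptic principal part $-\gamma R_0^{-2}(1+\partial_\theta^2)$, and invoke Rabinowitz's global bifurcation theorem; since by Step~2 the linearisation degenerates only at $\chi=\chi^*$, the continuum emanating from $(0,\chi^*)$ cannot return to the trivial branch and must be unbounded. Together with a priori control --- the area is fixed, $0\le f(c)\le L$, and \eqref{eq:TW_shape} exhibits the curvature as a prescribed function of $x$ alone (the ODE structure already exploited in Theorem~\ref{thm:TW_intro}), which bounds $\rho$, $V$ and $p_1$ for $\chi$ in compact subsets of $(\chi^*,\infty)$ --- this forces the $\chi$-projection of the continuum to be the whole of $(\chi^*,\infty)$, giving the family $(\tO^{R_0}_\chi,V^{R_0}_\chi)$. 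The hard part, I expect, is precisely this global step: a priori the speed $V$ can grow (one anticipates $V\to\infty$ with an increasingly elongated shape as $\chi\to\infty$), so the real work is to establish those a priori bounds and to rule out the branch turning back before covering $(\chi^*,\infty)$; here the explicit solutions of Theorem~\ref{thm:TW_intro}, re-parametrised so as to fix the area by tuning $p_1$, should serve as a useful guide.
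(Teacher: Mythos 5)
Your Steps 1--3 reproduce, with essentially correct computations, the functional-analytic core of the paper's argument: the same functional \eqref{def:TW_F} on even $C^{2,\alpha}$ perturbations with the area and $\cos\theta$-orthogonality constraints, the same linearization with kernel spanned by $(0,1,0)$ and range characterized by $\int_{-\pi}^{\pi} h(\theta)\cos\theta\,\dtheta=0$ (Lemma \ref{lemma:CR_hysteresis}), and the same transversality computation $(\partial_\chi \mathcal L_{\chi^*})(0,1,0)=(-aR_0\tilde c f'(\tilde c)\cos\theta,0,0,0)$ (Lemma \ref{lemma:CR_hysteresis-CR}). Where you diverge is in how existence is concluded: you invoke Crandall--Rabinowitz directly, whereas the paper's proof of Theorem \ref{thm:TW:bif} runs a Leray--Schauder index-jump argument in the spirit of Berlyand et al.\ --- rewriting \eqref{eq:TWrho} as a fixed-point equation $(\rho,V)=\w{K}(\rho,V;s)$, computing the eigenvalues of the linearized operator on either side of $\chi=\chi^*$, and detecting a change of local index --- and reserves Crandall--Rabinowitz for the finer branch description of Theorem \ref{thm:TW:bif-CR}. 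Your route buys a genuine $C^1$ curve of solutions tangent to $(0,1,0)$; the degree route is topologically more robust but yields only a sequence of nontrivial solutions accumulating at the bifurcation point. Both are legitimate, and both rest on the identical linear algebra.

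Two points need repair. First, your assertion at the end of Step 3 that the branch ``lies on the side $\chi>\chi^*$'' is not justified and is false in general: the paper computes $\chi'(0)=0$ and
\[
\chi''(0)=-\frac{aMR_0^2}{2\left(f'(\tilde c)\right)^2}\left[\frac{M}{2\pi R_0^2}f'''(\tilde c)+f''(\tilde c)\right],
\]
whose sign depends on $f''$ and $f'''$ at $\tilde c$ and can be either positive or negative even for the prototype nonlinearity; linear instability for $\chi>\chi^*$ (Proposition \ref{prop:lin_stab}) does not by itself force the pitchfork to be supercritical. Second, your Step 4 is the only step that would actually deliver the statement as written --- a family parametrized by all of $\chi\in(\chi^*,\infty)$ --- and it is only a sketch: the a priori bounds on $(\rho,V,p_1)$ along the continuum and the exclusion of a return to the trivial line are not carried out. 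To be fair, the paper's own proof is likewise purely local (it establishes bifurcation at $\chi=\chi^*$ and says nothing about the global extent of the branch), so on this point your proposal is no less complete than the published argument, and it is more explicit about where the real difficulty lies.
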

\noindent With this implicit method we can characterize the nature of the bifurcation, either pitchfork or saddle node, see Theorem \ref{thm:TW:bif-CR} in Section \ref{sec:thm:TW:bif-CR} for more details.

\begin{rema}
	Note that condition \eqref{def:no_TW}
	\[
	a\chi s f'(s)<1\q\forall s\in \xR^+,
	\]
	in Proposition \ref{prop:non-ex-TW}, and condition $\chi \in ( \chi^*, \infty)$, that is
	\[
	a\chi \tilde{c} f'(\tilde{c})\ge1
	\]
	in Proposition \ref{prop:lin_stab}, Theorems \ref{thm:TW_intro} and \ref{thm:TW:bif} cannot hold at the same time.
\end{rema}

\medskip 
 
This work is organized as follows. We give some biological justification and we present some properties of the problem \eqref{eq:pb} in Section \ref{sec:prop}. In Section \ref{sec:lin_stab}, we study the linear stability of the system and we prove Proposition \ref{prop:lin_stab}. Sections \ref{sec:low_bound} and \ref{sec:construction} contain, respectively, the proofs of  Proposition \ref{prop:non-ex-TW} and  Theorem \ref{thm:TW_intro}. Theorem \ref{thm:TW:bif} is proved in Section \ref{sec:thm:TW:bif} and in Section \ref{sec:thm:TW:bif-CR} we discuss the nature of the bifurcation. 
Finally, we give some conclusions. We also provide the reader with three appendices, collecting some useful facts and bifurcation results.

\section{Biological justification and first properties of the problem \eqref{eq:pb}}\label{sec:prop}

In this section we justify, from a biological point of view, the interest of  \eqref{eq:pb} and we derive some properties of the coupled free boundary problem \eqref{eq:pb}. We proceed formally (considering smooth enough solutions) and deduce the conservation of the area, the marker content as well as a law for the velocity of the center of mass. We also discuss the existence and uniqueness of a radially symmetric solution to \eqref{eq:pb}.  

\subsection{Biological justification}
Cell motility at the single cell level is a prime example of self-propulsion and one of the simplest example of active system. Recently, many free boundary models have been proposed to describe cell motility (see \cite{Aranson} for a review). The model \eqref{eq:pb}, first introduced in \cite{LMVC}, is a minimal hydrodynamic model of polarization, migration and deformation of a living cell confined between two parallel surfaces. In this model, the cell cytoplasm is an out of equilibrium system thanks to the active forces generated in the cytoskeleton. The cytoplasm is described as a passive viscous droplet in the Hele-Shaw flow regime. It contains a dilute solution which controls the active force induced by the cytoskeleton. Although relatively simple, this two-dimensional model predicts a very rich range of dynamic behaviors, see \cite{LMVC}. 

More precisely, in \eqref{eq:pression} -- \eqref{eq:cin} a Hele-Shaw cell is considered, that is a fluid droplet of constant viscosity is confined between two parallel plates separated by a gap. In such a case if $u$ denotes the gap-averaged planar flow and $P = P(t, x, y)$ is the fluid pressure, we let $\bm{u}=- \nabla P$ and rewrite equation \eqref{eq:pression} -- \eqref{eq:cin} as
\begin{align*}
	\bm{u}+ \nabla P&=0 &&\textrm{ in } \Omega(t),\\
	\nabla \cdot \bm{u} &=0 &&\textrm{ in } \Omega(t),\\
	P - \chi f(c) &= \gamma \kappa &&\textrm{ on } \Gamma(t),\\
	V_n &=\bm{V}_{\Gamma(t)}\cdot \bm{n}= \bm{u} \cdot \bm{n} &&\textrm{ on } \Gamma(t)\, \\
	\Omega(t=0)&=\Omega_0.&&
\end{align*}
Note that $\bm{u}$ averages the parabolic Hele-Shaw flow profile, which approximates the solution to the Stokes momentum-balance equation in thin films.

As anticipated, the novelty of this model lies in the normal force balance on the  boundary $\Gamma(t)$.  The classical Young-Laplace condition is perturbed by an active traction force, $-\chi f(c)\bm{n}$.  
This force is defined per unit length and is controlled locally by the gap-integrated concentration of an internal solute, $c = c(t, x, y)$. We stress that $f(c)$ can be either negative (pushing outwards) or positive (pulling inwards). In this regard, any uniform term $f_0 \in \xR$ added to $f(c)$ would merely offset the pressure $P$ by a constant and thus be irrelevant to the dynamics. 

The last boundary condition is the kinematic condition, stating that the normal velocity of the sharp interface, $V_n$, is given by the normal velocity of the fluid on $\Gamma(t)$.

To close the system, the internal solute transport problem is formulated in \eqref{eq:marqueur} -- \eqref{eq:marqueur_bord}. In the bulk $\Omega(t)$,  fast adsorption on the top and bottom plates (or onto an adhered cortex) is assumed. With rapid on and off rates, the quasi-2D transport dynamics are given by \eqref{eq:marqueur} -- \eqref{eq:marqueur_bord} where $a$ is the steady fraction of adsorbed molecules not convected by the average flow and the effective diffusion coefficient is assumed to be 1, see \cite{LMVC} for more details.

In \eqref{eq:marqueur_bord}, a zero solute flux on the moving boundary $\Gamma(t)$ is imposed. Simply put, the solute is effectively convected at a slower velocity than that of the fluid. Hence, its concentration decreases (increases) towards an advancing (retracting) front.

Finally the solute can be any cytoplasmic protein controlling the active force-generation / adhesion machinery. In this model, it is assumed that the concentration $c$ either induces an inwards pulling force or inhibits an outwards pushing force. This means that $f$ is assumed to satisfy $f'(c)>0$ for any $c>0$.\\

\subsection{Area preservation}

Using the fluid volume conservation, imposed by incompressibility \eqref{eq:pression} and kinematic condition \eqref{eq:cin}, we deduce that 
\[
\frac{\di }{\dt} |\Omega(t)|= \int_{\Gamma(t)} V_n \dsigma =- \int_{\Gamma(t)} \nabla P\cdot \bm{n} \dsigma =  -\int_{ \Omega(t)} \Delta P\dx \dy=0,
\] 
where $\dsigma$ denotes the infinitesimal length element of $\Gamma(t)$.
Hence, any smooth solution of \eqref{eq:pb} 
is area preserving:
\begin{equation}\label{eq:area2}
|\Omega(t)|=|\Omega_0| \qquad \forall t \ge 0.
\end{equation}

\subsection{Conservation of the marker content}\label{sec:cons_mass}
Let $M(t)$ denote the mass of  molecular content:
\[
M(t) =\int_{\Omega(t)} c(t,x,y)\dx \dy.
\]
Thanks to the boundary condition \eqref{eq:marqueur_bord} and to the kinematic condition \eqref{eq:cin}, we have
\begin{align*}
\frac{\di }{\dt} M(t) &= \int_{\Gamma(t)} c V_n \dsigma + \int_{\Omega(t)} \partial _t c \dx \dy \nonumber \\
&= -\int_{\Gamma(t)} D\nabla c \cdot \bm{n}  \dsigma + \int_{\Omega(t)} D\Delta c \dx \dy \nonumber \\
&= 0.\nonumber
\end{align*}

Thus, in \eqref{eq:pb}, we have formally conservation of molecular content:
\begin{equation}\label{eq:mass_marker2}
M(t)=M(0)=M.
\end{equation}

\subsection{Velocity of the center of mass}
For each $t > 0$, we define the momentum $\bm{\mathcal{M}}_{\Omega(t)}$ of $\Omega(t)$ by
\[
\bm{\mathcal{M}}_{\Omega(t)}= \int_{\Omega(t)} (x,y) \dx \dy=\int_{\Omega(t)}\bm{z}\t{d}\bm{z},
\]
where $\bm{z}=(x, y)$ is the vector coordinate of a point in $\Omega(t)$. In particular, $\bm{\mathcal{M}}_{\Omega(t)}$ is a vector containing the $x$ and $y$-momentum.\\
The center of mass $\bm{\mathcal{C}}_{\Omega(t)}$ of $\Omega(t)$ is defined by
\begin{equation*} 
\bm{\mathcal{C}}_{\Omega(t)}=\frac{\bm{\mathcal{M}}_{\Omega(t)}}{|\Omega(t)|}
=\frac{1}{|\Omega_0|}\int_{\Omega(t)} (x,y) \dx \dy,
\end{equation*}
by using the area preservation \eqref{eq:area2}.\\
The velocity  $\bm{u}_{\bm{\mathcal C}}(t)$ of the center of mass $\bm{\mathcal{C}}_{\Omega(t)}$ is
\begin{equation}\label{def:vit_center_mass}
\bm{u}_{\bm{\mathcal C}}(t)=\frac{\di }{\dt}\bm{\mathcal{C}}_{\Omega(t)}.
\end{equation}
From the incompressibility \eqref{eq:pression} and the boundary condition \eqref{eq:pression_bord}, we deduce that
\begin{eqnarray*}
	\frac{\di }{\dt}\int_{\Omega(t)} x  \dx \dy&=& \int_{\partial \Omega(t)} x V \dsigma = -\int_{\partial \Omega(t)} x \nabla P\cdot n \dsigma \nonumber \\
	&=& -\int_{\Omega(t)} \cnx \left(x \nabla P\right)\dx \dy=-\int_{\Omega(t)} \nabla P \cdot \nabla x \dx \dy\nonumber \\
	&=&-\int_{\Omega(t)} \cnx \left(P \nabla x \right)\dx \dy=- \int_{\partial \Omega(t)} P \nabla x \cdot n \dsigma \nonumber \\
	&=&- \int_{\partial \Omega(t)} \left( \gamma \kappa  +  \chi f(c)\right) n_x \dsigma\,  
\end{eqnarray*}
and similarly
$$	\frac{\di }{\dt}\int_{\Omega(t)} y  \dx \dy=- \int_{\partial \Omega(t)} \left( \gamma \kappa  + + \chi f(c)\right) n_y \dsigma
$$
Using that $\int_{\partial \Omega(t)}  \kappa    n \dsigma=0,$
it follows that
\begin{equation}
\bm{u}_{\bm{\mathcal C}}(t) =-\frac{\chi}{|\Omega_0| } \int_{\Gamma(t)}  f( c)  n \dsigma.\label{eq:centre_mass_2}
\end{equation}

\begin{rema}
We recognize that \eqref{eq:centre_mass_2} represents the external force balance on the droplet $\Omega(t)$.
\end{rema}

\subsection{Stationary solution}\label{sec:stat-sol}

\begin{proof}[Proof of Lemma \ref{lem:stat}]
Equations \eqref{eq:pression} and \eqref{eq:cin} imply that any stationary solution to \eqref{eq:pb} with  \eqref{eq:mass_marker2} satisfies 
\begin{equation*}
- \Delta P =  0 \quad \mbox{ in } \Omega_0, \qquad \nabla P \cdot \bm{n}  = 0\mbox{ on } \partial \Omega_0,
\end{equation*}
hence $\nabla P =0$ in $\Omega_0$. \\
Consequently, \eqref{eq:marqueur} and \eqref{eq:marqueur_bord} imply that $c$ satisfies
\begin{equation*}
- \Delta c =  0 \quad \mbox{ in } \Omega_0, \qquad \nabla c \cdot \bm{n}  = 0\mbox{ on } \partial \Omega_0,
\end{equation*}
and then $\nabla c =0$ in $\Omega_0$ too. From \eqref{eq:mass_marker2}, we deduce that $\tilde{c}=\frac{M}{\pi {R_0}^2}$, and the expression of $P$ follows from \eqref{eq:pression_bord}. \\
In particular, we deduce that the mean curvature in \eqref{eq:pression_bord} is constant, hence $\Omega_0=B_{R_0}$ since $\av{\Omega_0}=\av{B_{R_0}}$ by assumption.
\end{proof}

\subsection{Competition between the effects of surface tension and the marker}

Unlike  the classical Hele-Shaw equation with surface tension, the perimeter $\mathcal{P}(\Omega(t))$ defined by 
\[
\mathcal{P}(\Omega(t))=\int_{\partial \Omega(t)} \dsigma
\]
 is no more a Lyapunov functional for \eqref{eq:pb}. Indeed using a classical computation (see \cite{Otto}), we obtain 
\begin{align*}
\frac{\di }{\dt} \mathcal{P}(\Omega(t)) &= \int_{ \Gamma(t)} \kappa V_n \dsigma\\
& = \frac{1}{\gamma}  \int_{ \Gamma(t)}P \nabla P \cdot \bm{n} \dsigma - \frac{\chi}{\gamma} \int_{ \Gamma(t)}f(c) \nabla P \cdot \bm{n} \dsigma \nonumber \\
&= - \frac{1}{\gamma}\int_{\Omega(t)} |\nabla P|^2\dx \dy + \frac{\chi}{a\gamma} \int_{ \Gamma(t)} f(c) \nabla \log c \cdot \bm{n}  \dsigma . 
\end{align*} 

Note that if we consider the case where $f(c)=\pm c$, we obtain
\begin{equation*}
\frac{\di }{\dt} \mathcal{P}(\Omega(t)) =  - \frac{1}{\gamma}\int_{\Omega(t)} |\nabla P|^2\dx \dy \pm \frac{\chi}{a\gamma}\int_{\partial \Omega(t)}   \nabla c \cdot \bm{n}  \dsigma . 
\end{equation*} 
Then, the effects of the two terms located in the right side might be opposite and thus the term $\frac{\chi}{a\gamma}\int_{\partial \Omega(t)}   \nabla c \cdot \bm{n}  \dsigma$ might be  destabilizing. This leads to some interesting behaviors, see \cite{LMVC}. In this work we are interested in making part of this informal statement rigorous.

\section{Linear stability analysis. Proof of Proposition \ref{prop:lin_stab}}\label{sec:lin_stab}

In this section we perform a linear stability analysis characterizing the steady-state solution $(\tilde{c},\wt{P})$ given by \eqref{eq:stat}. This analysis shows that a global polarization-translation (motility) mode becomes unstable beyond a critical threshold of solute activity $\chi =\chi^*$, with $\chi^*$ defined by \eqref{def:chi_star}, that we recall now for the convenience of the reader
\begin{equation*}
	\chi^* := \frac{\pi R_0^2}{a  M f'\left( \frac{ M}{\pi R_0^2} \right)}= \frac{1}{a  \tilde{c} f'(\tilde{c})}.
\end{equation*}
Note that the stability analysis only depends on three factors, i.e. $a$, $\chi$ and $f'(\tilde{c})$.

We first construct the linearized operator $\mathcal A$ associated to \eqref{eq:pb}  around the circular homogeneous stationary solution \eqref{eq:stat}. Then, we study its spectrum and its eigenvectors. In particular, we prove that 
\[
\chi\le \chi^*
\]
 is a sufficient condition for all eigenvalues of $\mathcal A$ to be nonpositive. Finally we discuss the well-posedness character of $\mathcal A$.

\subsection{The linearized problem}

We first recall the definition of the Dirichlet-to-Neumann operator $\mathcal I$ on the open disk $B_{R_0}\subset \xR^2$, and then we give the linearized problem associated to \eqref{eq:pb}  around the circular homogeneous stationary solution $(\tilde{c},\wt{P})$ given by \eqref{eq:stat}.

\begin{defi}\label{def:DTN_0} 
 For $\psi \in H^1(\partial B_{R_0})$, the Dirichlet-to-Neumann operator $\mathcal I$ is defined by:
\begin{equation}\label{def:DTN} 
\mathcal I[\psi  ] = \nabla q \cdot \bm{n},
\end{equation}
where $ q$ denotes the harmonic extension of $\psi$ to the disk $B_{R_0}$, that is
\begin{equation*} 
- \Delta q =  0 \quad \mbox{ in } B_{R_0}, \qquad q = \psi \quad \mbox{ on } \mathbb \partial B_{R_0}.
\end{equation*}
\end{defi}

By  using Fourier series, the Dirichlet-to-Neumann $\mathcal I $ operator can be defined as the following linear operator :
\begin{defi}\label{def:DTN_3}
Given $\psi\, :\, \xR / 2\pi \xZ\to \xR$ with Fourier series
\[ 
\psi(\theta) = a_0+ \sum_{m=1}^\infty  a_m \cos (m\theta) + b_m \sin (m\theta) , 
\]
we set 
\begin{equation*} 
\mathcal I  (\psi)(\theta) = \sum_{m=1}^\infty   m \left(a_m \cos (m\theta) + b_m \sin (m\theta)\right).
\end{equation*}
\end{defi}
Indeed, by the Definition \ref{def:DTN_0},  we have
\[
q(r,\theta)= \sum_{m\ge 0}  a_{m} \cos(m\theta) r^m +b_{m}  \sin(m\theta)r^{m}\q\t{for }(r,\theta ) \in [0,R]\times \xR/2\pi \xZ
\] 
where we discarded solutions that diverge at $r = 0$.\\
Hence, the Definition \ref{def:DTN_3} follows from \eqref{def:DTN} and
\[
\partial_r q(r,\theta) = \sum_{m\ge 0}  mr^{m-1}\left( a_{m} \cos(m\theta) +b_{m} \sin(m\theta)\right) .
\]

\medskip

We take a perturbation of the free boundary of the form 
\[
r=  R_0 + \varepsilon \varphi(t,\theta),
\]
i.e.
\begin{equation*} 
\Omega(t)   = \left\{ (x,y)=\left( r \cos \theta ,  r \sin \theta \right); 0 \le r < R_0 + \varepsilon \rho(t,\theta)\right\} .
\end{equation*}

\begin{lemm}\label{prop-ex-pb-lin}
The linearized problem associated with \eqref{eq:pb}  around the radially symmetric solution \eqref{eq:stat} is 
\begin{equation}\label{eq:lin_general}
\frac{\di }{\dt} \left( \begin{array}{c}
\rho \\
c
\end{array} \right)=\mathcal A \left( \begin{array}{c} 
\rho \\
c
\end{array} \right),
\end{equation}
where $\mathcal A$ is the operator defined on $H^3\left(\partial B_{R_0}\right) \times H^2\left(B_{R_0}\right)$ by
\begin{equation*} 
\mathcal A :         
\left( \begin{array}{c} 
\rho \\
c
\end{array} \right)
\mapsto
\left( \begin{array}{c} 
\mathcal I \left[\frac{\gamma}{R_0} \left(\partial_{\theta \theta}^2\rho + \rho\right) - \chi f'(\tilde{c})c\right]\\
\Delta c  
\end{array} \right),
\end{equation*}
with the boundary condition  
\begin{equation}\label{eq:bord_lin_fourier}
 \partial _r c =  -a \tilde{c} \, \mathcal I \left[\frac{\gamma}{R_0} \left(\partial_{\theta \theta}^2\rho + \rho\right)- \chi f'(\tilde{c})c\right]\qq\t{on }\partial B_{R_0} .
\end{equation} 
\end{lemm}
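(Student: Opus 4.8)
The plan is the classical one for reading off a linearized free boundary operator: plug a near-equilibrium ansatz into \eqref{eq:pb}, expand each equation in powers of $\varepsilon$, and keep the terms of order $\varepsilon^1$; the terms of order $\varepsilon^0$ merely reproduce the stationary solution of Lemma \ref{lem:stat}. Concretely, I would write
\[
\Omega(t)=\set{(r\cos\theta,r\sin\theta):0\le r<R_0+\varepsilon\rho(t,\theta)},\qquad c=\tilde c+\varepsilon c_1+o(\varepsilon),\qquad P=\wt P+\varepsilon P_1+o(\varepsilon),
\]
with $P_1,c_1$ functions of the polar coordinates $(r,\theta)$ near $B_{R_0}$. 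The whole computation is streamlined by one remark: since $\wt P$ and $\tilde c$ are constants, $\nabla\wt P\equiv\nabla\tilde c\equiv 0$, so every term in which a derivative of the background state hits an $O(\varepsilon)$ quantity is actually $o(\varepsilon)$; for the same reason, transferring $P$ and $c$ from the moving curve $\set{r=R_0+\varepsilon\rho}$ back to the fixed circle $\set{r=R_0}$ introduces only $o(\varepsilon)$ corrections.

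Turning next to the two bulk equations: since $P$ is harmonic and $\nabla\wt P=0$, equation \eqref{eq:pression} linearizes to $-\Delta P_1=0$ in $B_{R_0}$, so $P_1$ is the harmonic extension of its boundary trace. In \eqref{eq:marqueur} the source $(1-a)\nabla P\cdot\nabla c$ is a product of two $O(\varepsilon)$ factors, hence $o(\varepsilon)$, and $\partial_t\tilde c=\Delta\tilde c=0$; the first-order bulk equation is therefore $\partial_t c_1=\Delta c_1$ in $B_{R_0}$, which is already the second component of $\mathcal A$.

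The boundary conditions carry the real bookkeeping. I would use the standard first-order expansions for the near-circular curve $r=R_0+\varepsilon\rho(\theta)$: the curvature is
\[
\kappa=\frac{1}{R_0}-\frac{\varepsilon}{R_0^2}(\partial_\theta^2\rho+\rho)+O(\varepsilon^2),
\]
the outward unit normal is $\bm n=\bm e_r-\frac{\varepsilon}{R_0}(\partial_\theta\rho)\,\bm e_\theta+O(\varepsilon^2)$, and the boundary's normal velocity is $V_n=\varepsilon\,\partial_t\rho+O(\varepsilon^2)$. Substituting into \eqref{eq:pression_bord} and matching the $\varepsilon^1$ terms fixes the Dirichlet trace
\[
P_1|_{\partial B_{R_0}}=-\frac{\gamma}{R_0^2}(\partial_\theta^2\rho+\rho)+\chi f'(\tilde c)\,c_1|_{\partial B_{R_0}},
\]
and, $P_1$ being harmonic in $B_{R_0}$, Definition \ref{def:DTN_0} gives $\partial_r P_1|_{\partial B_{R_0}}=\mathcal I[\,P_1|_{\partial B_{R_0}}\,]$. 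Matching the $\varepsilon^1$ terms in the kinematic condition \eqref{eq:cin} — the tangential part of $\nabla P\cdot\bm n$ being $O(\varepsilon^2)$, since $\bm n\cdot\bm e_\theta$ and $\nabla P$ are each $O(\varepsilon)$ — yields $\partial_t\rho=-\partial_r P_1|_{\partial B_{R_0}}=-\mathcal I[\,P_1|_{\partial B_{R_0}}\,]$, the first component of $\mathcal A$. Finally, matching the $\varepsilon^1$ terms in the flux condition \eqref{eq:marqueur_bord} — again using $\nabla\tilde c=0$, $c=\tilde c+O(\varepsilon)$, $\nabla P=O(\varepsilon)$ — gives $\partial_r c_1|_{\partial B_{R_0}}=a\tilde c\,\partial_r P_1|_{\partial B_{R_0}}=a\tilde c\,\mathcal I[\,P_1|_{\partial B_{R_0}}\,]$, which is \eqref{eq:bord_lin_fourier}. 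Renaming $c_1$ as $c$ (the original unknown $c$ no longer enters the linearized system) yields $\mathcal A$ and its boundary condition in the stated form.

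The difficulty here is not conceptual but one of disciplined bookkeeping: getting the curvature, normal, and normal-velocity expansions right to first order and, above all, certifying that each of the many extra terms produced when substituting into the boundary conditions is genuinely $O(\varepsilon^2)$ — the identities $\nabla\wt P\equiv\nabla\tilde c\equiv0$ are exactly what makes this so, and should be invoked at every step. It then only remains to note that $\rho\mapsto\mathcal I[\partial_\theta^2\rho+\rho]$ loses three $\theta$-derivatives while $c\mapsto\Delta c$ loses two, so that $\mathcal A$ is well defined on $H^3(\partial B_{R_0})\times H^2(B_{R_0})$, the trace of $c\in H^2(B_{R_0})$ lying in $H^{3/2}(\partial B_{R_0})$ so that the coupling term $\chi f'(\tilde c)c$ and the constraint \eqref{eq:bord_lin_fourier} are meaningful.
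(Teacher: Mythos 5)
Your proposal is correct and follows essentially the same route as the paper: the same $\varepsilon$-expansion around the constant state $(\tilde c,\wt P)$, the same first-order formulas for the curvature, normal vector and normal velocity of the perturbed circle $r=R_0+\varepsilon\rho$, and the same observation that $\nabla \wt P=\nabla\tilde c=0$ kills the convection terms and the curve-to-circle transfer errors at first order. The only discrepancy is the coefficient $\gamma/R_0^{2}$ you obtain in the Dirichlet trace of $P_1$ versus the $\gamma/R_0$ appearing in the lemma; your value is the one dictated by the curvature expansion (the paper's own proof derives $\kappa=\tfrac1{R_0}-\tfrac{\varepsilon}{R_0^{2}}(\partial_{\theta\theta}^2\rho+\rho)$ and then writes the boundary condition with $R_0^{-1}$), so this points to a typo in the statement rather than a gap in your argument.
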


\begin{proof}
We perform a formal expansion of the solution $(P,c)$ to \eqref{eq:pb} near the radially symmetric solution \eqref{eq:stat}, $\tilde{c}=\frac{M}{\pi R_0^2}$, $\wt{P}=\frac{\gamma}{R_0} +\chi f\left(\frac{M}{\pi R_0^2}\right)$: 
\begin{align*}
P(t,r,\theta)&=\wt{P} +\varepsilon Q(t,r,\theta) + \mathcal O (\varepsilon ^2), \\
c(t,r,\theta)&=\tilde{c} +\varepsilon S(t,r,\theta)+ \mathcal O (\varepsilon ^2). 
\end{align*}
For $\theta \in (-\pi,\pi]$ and $t\ge 0$, we easily find that
\begin{align*}
\Delta Q(t,r,\theta)&=0 && \textrm{ if } r<R_0, 
\\
\partial _t \rho (t,\theta)  &= -\partial _r Q(t,1,\theta),&&  \\ 
Q(t,1,\theta) &=-\frac{\gamma}{R_0} \left( \partial ^2_{\theta \theta} \rho(t,\theta)  +\rho(t,\theta) \right) +\chi f'(\tilde{c})S(t,R_0,\theta). && 
\end{align*}

Firstly, the last formula is obtained by using the general formula for the curvature of a curve $r=g(\theta)$,
\begin{equation*} 
	\kappa(g)= \frac{2\left(g '(\theta)\right)^2 - g (\theta)g ''(\theta)+ g (\theta)^2}{\left( g (\theta)^2 + \left(g '(\theta)\right)^2 \right)^{3/2} }, \qquad r=g(\theta)
\end{equation*}
which gives, to the first order in $\varepsilon$, 
\[	
\kappa\left(R_0+\varepsilon \rho (\theta)\right) =\frac1R_0-\frac{\varepsilon}{R_0^2} \left(\partial ^2_{\theta \theta}\rho(t,\theta) +\rho(t,\theta)\right).
\]
The additional term $\chi f'(\tilde{c})S(t,R_0,\theta)$ comes from the linearization of $f(c)$.\\

The second formula follows from the definition of the normal derivative
\[
\frac{\partial P}{\partial n }=\nabla P \cdot \bm{n} = \frac{1}{\left(g(\theta)^2 +\left(g'(\theta)\right)^2\right)^{1/2}} \left( g(\theta)\frac{\partial P}{\partial r} - \frac{g'(\theta)}{g(\theta)} \frac{\partial P}{\partial \theta}\right) 
\]
along the curve $r = g(\theta)$ which gives, to the first order in $\varepsilon$, 
\[
\nabla P \cdot \bm{n}_{\left(R_0+\varepsilon \rho (\theta)\right)}= \frac{\partial P}{\partial r}  \bm{e}_r - \frac{\varepsilon}{R_0} \partial_\theta \rho(t,\theta) \frac{\partial P}{\partial \theta}  \bm{e}_\theta,
\]
where $\bm{n}_{\left(R_0+\varepsilon \rho (\theta)\right)}$ is the  the unit normal vector to the boundary of
\begin{equation}\label{eq:set}
\{r = R_0 + \varepsilon \rho (t,\theta)\},
\end{equation}
and
\[
\bm{e}_r=\begin{pmatrix}
\cos\theta\\\sin \theta
\end{pmatrix}
\q\t{and}\q 
\bm{e}_{\theta}=
\begin{pmatrix}
\sin\theta\\-\cos\theta
\end{pmatrix}.
\]
Furthermore, up to the first order in $\varepsilon$, the normal velocity $V_{n_{\left(R_0+\varepsilon \rho (\theta)\right)}}$ to the boundary  of \eqref{eq:set} is
\[
V_{n_{\left(R_0+\varepsilon \rho (\theta)\right)}}  =\varepsilon \partial_ t \rho(t,\theta).
\]

The linearization of the convection diffusion equation \eqref{eq:marqueur} -- \eqref{eq:marqueur_bord} around $(\tilde{c},\wt{P})$ is  
\[
\varepsilon \partial _t S +\varepsilon (a-1) \left(\nabla \wt{P} \cdot \nabla S + \nabla Q\cdot \nabla \tilde{c}\right) +\varepsilon ^2 (a-1) \nabla Q \cdot \nabla S-\varepsilon \Delta S+ \mathcal O (\varepsilon ^3)=0.
\]
Moreover, since $\tilde{c}$ and $\wt{P}$ are real numbers and neglecting the $\varepsilon ^2$ convection term, we obtain the heat equation. A similar computation yields the boundary term, hence for $\theta \in (-\pi,\pi]$ and $t\ge 0$, 
\begin{align*}
\partial_t S(t,r,\theta)&= \Delta S (t,r,\theta) && \textrm{ if } r<R_0, \\ 
\partial _r S (t,R_0,\theta) &= a \tilde{c} \, \partial _r Q(t,R_0,\theta).&&  
\end{align*}
Hence, the result.
\end{proof}

\subsection{Eigenvalue problem for  $\mathcal A$}

The eigenvalue problem for $\mathcal A$ is:
\begin{equation*}  
\mathcal A \left( \begin{array}{c}
\rho \\
c
\end{array} \right)=\lambda \left( \begin{array}{c}
\rho \\
c
\end{array} \right).
\end{equation*}
Thanks to the radial symmetry of the problem the spectral analysis of $\mathcal A$ amounts to perform a Fourier analysis.

\begin{lemm}\label{lem:ind:mode}
Given $\left( \rho ,c \right)$ with Fourier series, 
then \eqref{eq:lin_general} -- \eqref{eq:bord_lin_fourier} describes a closed dynamical system for the cosine (resp. sine)  perturbations.
\end{lemm}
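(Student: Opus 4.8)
The plan is to write out the Fourier expansions of $\rho$ and $c$ and to check that the operator $\mathcal A$, together with the boundary condition \eqref{eq:bord_lin_fourier}, does not mix a cosine mode with a sine mode, nor a mode of frequency $m$ with a mode of frequency $m'\neq m$. First I would write, for each $t$,
\[
\rho(t,\theta)=\sum_{m\ge 0}\bigl(\alpha_m(t)\cos(m\theta)+\beta_m(t)\sin(m\theta)\bigr),
\]
and, since $c$ is defined on the disk, expand it in the harmonic-type basis as a function of $(r,\theta)$; but because the second component of $\mathcal A$ is the Laplacian $\Delta$ with the Neumann-type boundary condition \eqref{eq:bord_lin_fourier}, and the only coupling of $c$ to $\rho$ occurs through its \emph{trace} $c(t,R_0,\theta)$ and its \emph{normal derivative} $\partial_r c(t,R_0,\theta)$, it is enough to track the boundary Fourier coefficients of $c$. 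I would therefore set $c(t,R_0,\theta)=\sum_{m\ge0}\bigl(\gamma_m(t)\cos(m\theta)+\delta_m(t)\sin(m\theta)\bigr)$ and similarly for $\partial_r c$ on the boundary.

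Next I would feed these expansions into the three ingredients appearing in $\mathcal A$ and \eqref{eq:bord_lin_fourier}: (i) the differential operator $\partial_{\theta\theta}^2\rho+\rho$ sends $\cos(m\theta)\mapsto(1-m^2)\cos(m\theta)$ and $\sin(m\theta)\mapsto(1-m^2)\sin(m\theta)$, so it preserves both the parity (cosine vs.\ sine) and the frequency $m$; (ii) the Dirichlet-to-Neumann operator $\mathcal I$, by Definition \ref{def:DTN_3}, acts as the Fourier multiplier $m\mapsto m$ and hence likewise preserves parity and frequency; (iii) the heat operator $\Delta$ acting on the harmonic/radial expansion of $c$ preserves parity and frequency as well, and the Neumann trace map and the harmonic extension map are diagonal in the same basis. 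Combining (i)--(iii), the right-hand side of \eqref{eq:lin_general} expressed in Fourier modes is block-diagonal: the coefficients $(\alpha_m,\gamma_m)$ of the cosine part at frequency $m$ evolve only among themselves, and likewise $(\beta_m,\delta_m)$ for the sine part, with the two blocks governed by \emph{the same} scalar ODE system (because $1-m^2$ and the multiplier $m$ are the same for $\cos$ and $\sin$). This gives the claimed decoupling, and in fact shows the cosine-$m$ and sine-$m$ subsystems are identical.

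The only mild subtlety — and the place I would be most careful — is component (iii): one must make sure that the full (not merely boundary) dynamics of $c$ respects the mode decomposition. This follows because the heat equation $\partial_t c=\Delta c$ on $B_{R_0}$ separates variables, so if $c(0,\cdot)$ has only the frequency-$m$ cosine component on a given mode then so does $c(t,\cdot)$ for all $t$, and the boundary condition \eqref{eq:bord_lin_fourier} only couples this to the frequency-$m$ cosine component of $\rho$; no genuine PDE difficulty arises since everything is linear with $\theta$-independent (radial) coefficients. I would close by recording explicitly the scalar system satisfied by $(\alpha_m,\gamma_m)$ (equivalently $(\beta_m,\delta_m)$), which will be the starting point of the eigenvalue computation in the next subsection.
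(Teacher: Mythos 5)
Your proposal is correct and follows essentially the same route as the paper: expand $\rho$ and $c$ in Fourier modes in $\theta$ (with $r$-dependent coefficients for $c$), and observe that $\partial_{\theta\theta}^2+1$, the Dirichlet-to-Neumann operator $\mathcal I$, the polar Laplacian, and the boundary condition \eqref{eq:bord_lin_fourier} are all diagonal in the $\cos(m\theta)$, $\sin(m\theta)$ basis, so the system block-diagonalizes into the operators $\mathcal A_m$. Your closing remark about the bulk dynamics of $c$ is exactly what the paper handles by writing $\hat c_{cm}(r)$, $\hat c_{sm}(r)$ as radial coefficient functions, so there is no gap.
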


\begin{proof}
Let 
\[
\left( \begin{array}{c}
	\rho \\
	c
\end{array} \right)= \sum_{m\in\xN} \left( \begin{array}{c}
	\hat \rho_{cm}  \\
	\hat c_{cm}  (r) 
\end{array} \right)\cos(m\theta ) + \sum_{m\in\xN} \left( \begin{array}{c}
	\hat \rho_{sm}   \\
	\hat c_{sm}  (r) 
\end{array} \right)\sin(m\theta ),
\] 
by linearity of the operator $\mathcal A$, we see that
\begin{equation*} 
\mathcal A \left( \begin{array}{c}
\rho  \\
c 
\end{array} \right) =
\sum\limits_{m\ge 0} \mathcal A_m \left( \begin{array}{c}
\hat \rho_{cm}  \\
\hat c_{cm}  (r) 
\end{array} \right) \cos(m\theta )
+ \sum\limits_{m\ge 0} \mathcal A_m \left( \begin{array}{c}
\hat \rho_{sm}  \\
\hat c_{sm}  (r)
\end{array} \right) \sin(m\theta ),
\end{equation*}
with $\mathcal A_m$ defined by
\begin{equation}\label{def:A_mfourier}
\mathcal A_m \left( \begin{array}{c}
\hat  \rho \\
\hat c  (r) 
\end{array} \right)
=
\left( \begin{array}{c} 
 \mathcal I\left[-\frac{\gamma}{R_0} (m^2-1)\hat \rho- \chi f'(\tilde{c})\hat c  (R_0) \right]\\
\left( \partial_{r}^2 + r^{-1}\partial_r  - r^{-2}m^2  \right)\hat c(r)
\end{array} \right)  .
\end{equation}
Furthermore, the boundary condition \eqref{eq:bord_lin_fourier} on $\partial B_{R_0}$ is
\begin{equation*} 
	\partial _r \hat c(R_0) =  -a \tilde{c} \, \mathcal I \left[-\frac{\gamma}{R_0} (m^2-1)\hat \rho - \chi f'(\tilde{c})\hat c  (R_0)\right] .
\end{equation*}
\end{proof}

\begin{rema}[On symmetry properties]
Since the cosine (resp. sine) perturbations in the dynamical system \eqref{eq:lin_general} -- \eqref{eq:bord_lin_fourier}  are independent, by rotating the coordinate system we may consider, without loss of generality,  perturbations possessing the reflection symmetry with respect to the $x$-axis.\\
Specifically we assume symmetry of the initial data, namely domain $\Omega(0)$ and $c(0,x, y)$, with respect to $x$-axis which is preserved for $t > 0$ according to Lemma \ref{lem:ind:mode}.
\end{rema}

Let $\lambda \in \xC$. Then,  the eigenfunctions of $\mathcal A$ associated to the eigenvalue $\lambda$ are of the form 
\[
c(r,\theta) = \sum\limits_{m\ge 0} \hat c_{m\lambda}(r) \cos(m\theta )\q\t{and}\q \rho(\theta)  =\sum\limits_{m\ge 0} \hat \rho_{m\lambda}  \cos(m\theta )
\]
$ \left( \hat \rho_{m\lambda} ,\hat c_{m\lambda}\right)$ satisfying 
\begin{equation*} 
\mathcal A_m \left( \begin{array}{c}
\hat \rho_{m\lambda} \\
\hat c_{m\lambda}
\end{array} \right)=\lambda  \left( \begin{array}{c} 
\hat \rho_{m\lambda} \\
\hat c_{m\lambda}
\end{array} \right),
\end{equation*}
and
\begin{equation*} 
	\partial _r \hat c_{m\lambda}(R_0) =  -a \tilde{c} \, \mathcal I \left[-\frac{\gamma}{R_0} (m^2-1)\hat \rho_{m\lambda} - \chi f'(\tilde{c})\hat c_{m\lambda} (R_0)\right] ,
\end{equation*}
where $\mathcal A_m$ is  defined by \eqref{def:A_mfourier}.

Thanks to the previous remarks, we deduce the following result.\\
  For the sake of clarity, we omit the $\hat{\cdot}$ and the subscripts $m\lm$ when no ambiguity occurs.

\begin{lemm}
The problem
\[
\mathcal A_m \left( \begin{array}{c}
\rho \\
c
\end{array} \right)=\lambda  \left( \begin{array}{c} 
\rho \\
c
\end{array} \right),
\]
and 
\[
	\partial _r c =  -a \tilde{c} \, \mathcal I \left[-\frac{\gamma}{R_0} (m^2-1) \rho - \chi f'(\tilde{c}) c (R_0)\right] ,
\]
is equivalent to the following one
\begin{subequations}
\begin{align}
\lambda \rho &=-\partial _r P &&\mbox{ on }  \partial B_{R_0},\label{vp:dyn}\\
-\Delta P&=0 &&\mbox{ in }  B_{R_0},\label{vp:incomp}\\
P&=\frac{\gamma}{R_0} ( m ^2-1) \rho +\chi f'(\tilde{c}) c &&\mbox{ on }  \partial B_{R_0},\label{vp:bord}\\
\lambda c&= \Delta c &&\mbox{ in } B_{R_0},\label{vp:chaleur}\\
\partial _r c &= a\tilde{c}  \partial _r P &&\mbox{ on } \partial B_{R_0}.\label{vp:bord_chaleur}
\end{align}
\end{subequations}
\end{lemm}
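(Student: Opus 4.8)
The claim to prove is the equivalence of two formulations of the eigenvalue problem for $\mathcal A_m$: the ``operator'' form, involving $\mathcal A_m$ with the Dirichlet-to-Neumann map $\mathcal I$ baked into the first component and a separate boundary condition on $\partial_r c$, and the ``system'' form \eqref{vp:dyn}--\eqref{vp:bord_chaleur} that reintroduces the harmonic function $P$ as an auxiliary unknown.

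\medskip

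\textbf{Plan of proof.} The strategy is to show that each formulation can be converted into the other by introducing or eliminating the harmonic extension $P$. Starting from the operator form, I would set
\[
P := \text{harmonic extension to } B_{R_0} \text{ of } \Big(\tfrac{\gamma}{R_0}(m^2-1)\rho + \chi f'(\tilde c)\, c(R_0)\Big),
\]
so that $-\Delta P = 0$ in $B_{R_0}$ is \eqref{vp:incomp} and $P = \frac{\gamma}{R_0}(m^2-1)\rho + \chi f'(\tilde c) c$ on $\partial B_{R_0}$ is \eqref{vp:bord} (here one uses that on $\partial B_{R_0}$ the trace of $c$ is $c(R_0)$ in the Fourier-mode notation). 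By Definition \ref{def:DTN_0}, $\mathcal I[\,\cdot\,] = \partial_r P$ on $\partial B_{R_0}$, so the first line of $\mathcal A_m\binom{\rho}{c} = \lambda\binom{\rho}{c}$, namely $\mathcal I[-\frac{\gamma}{R_0}(m^2-1)\rho - \chi f'(\tilde c) c(R_0)] = \lambda\rho$, becomes exactly $-\partial_r P = \lambda\rho$, which is \eqref{vp:dyn}. The second line of $\mathcal A_m\binom{\rho}{c}=\lambda\binom{\rho}{c}$ reads $(\partial_r^2 + r^{-1}\partial_r - r^{-2}m^2)c = \lambda c$; since $c(r,\theta) = \hat c(r)\cos(m\theta)$, this is precisely $\Delta c = \lambda c$ in $B_{R_0}$, i.e.\ \eqref{vp:chaleur}. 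Finally the boundary condition $\partial_r c = -a\tilde c\,\mathcal I[-\frac{\gamma}{R_0}(m^2-1)\rho - \chi f'(\tilde c) c(R_0)] = a\tilde c\,\partial_r P$ is \eqref{vp:bord_chaleur}. This gives the forward implication.

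\medskip

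For the converse, suppose $(\rho, c, P)$ solve \eqref{vp:dyn}--\eqref{vp:bord_chaleur}. Equation \eqref{vp:incomp} together with the boundary value \eqref{vp:bord} characterizes $P$ as the harmonic extension of $\frac{\gamma}{R_0}(m^2-1)\rho + \chi f'(\tilde c)c$, hence $\partial_r P|_{\partial B_{R_0}} = \mathcal I[\frac{\gamma}{R_0}(m^2-1)\rho + \chi f'(\tilde c) c(R_0)]$ by Definition \ref{def:DTN_0}; so $P$ is determined by $(\rho,c)$ and can be eliminated. Substituting this identity into \eqref{vp:dyn} recovers the first component of $\mathcal A_m\binom{\rho}{c} = \lambda\binom{\rho}{c}$, substituting into \eqref{vp:bord_chaleur} recovers the $\partial_r c$ boundary condition, and \eqref{vp:chaleur} rewritten in polar coordinates with the single Fourier mode $\cos(m\theta)$ gives the second component. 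Thus the two problems have the same solutions (with $P$ viewed as an auxiliary quantity in the system form), proving the equivalence.

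\medskip

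\textbf{Main obstacle.} There is no serious analytic difficulty here; the proof is essentially bookkeeping. The one point deserving care is the precise meaning of the traces and of the Fourier-mode reduction: one must be consistent about the fact that in \eqref{def:A_mfourier} the symbol $\hat c(R_0)$ denotes the boundary value of the single radial profile, whereas in \eqref{vp:bord} and \eqref{vp:bord_chaleur} the function $c$ stands for $\hat c(r)\cos(m\theta)$ restricted to $\partial B_{R_0}$, and to check that the polar-coordinate Laplacian acting on $\hat c(r)\cos(m\theta)$ is indeed $\big((\partial_r^2 + r^{-1}\partial_r - r^{-2}m^2)\hat c\big)\cos(m\theta)$, matching the second row of $\mathcal A_m$. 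Once this identification is made explicit, the equivalence is immediate from the definition of $\mathcal I$ as the normal derivative of the harmonic extension. I would also note that regularity is not an issue: $\rho \in H^3(\partial B_{R_0})$ and $c \in H^2(B_{R_0})$ suffice for all traces and the harmonic extension to make sense.
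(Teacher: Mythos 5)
Your proof is correct and follows exactly the route the paper intends: the paper states this lemma without a written proof, treating it as an immediate consequence of Definition \ref{def:DTN_0} (the Dirichlet-to-Neumann map as the normal derivative of the harmonic extension) and the Fourier-mode reduction, which is precisely the bookkeeping you carry out, with the signs in \eqref{vp:dyn} and \eqref{vp:bord_chaleur} handled correctly via the linearity of $\mathcal I$. Nothing is missing.
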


\subsection{Sufficient conditions for the operator $\mathcal A$ to have all its eigenvalues with non-positive real parts}\label{ssec:A}

In this paragraph we give a sufficient condition on the non-negative parameters $a$, $\tilde{c}$, $f'(\tilde{c})$ and $\chi$ so that all the eigenvalues of $\mathcal A$  have non-positive real parts. \\
To do so, let 
\[
Q:=c-a\tilde{c} P
\]
with $c$ and $P$ satisfying \eqref{vp:dyn} -- \eqref{vp:bord_chaleur}.  We see that
\begin{subequations}
\begin{align}
	\Delta Q&= \lambda c &&\mbox{ in }  B_{R_0},\label{eq:Q}\\
	\pare{1-\frac{\chi}{\chi^*}} P&=\frac{\gamma }{R_0}( m ^2-1) \rho +\chi f'(\tilde{c}) Q &&\mbox{ on }  \partial B_{R_0},\label{eq:Q:bord}\\
	\nabla Q \cdot \bm{n} &=0 &&\mbox{ on }  \partial B_{R_0}.\label{eq:fluxQ:bord}
\end{align}
\end{subequations}
We recall that $\chi^*$ has been defined in \eqref{def:chi_star}.

Here we prove the following result.

\begin{prop}\label{prop:signe:vp}
	Assume 
\[
0\le \frac{\chi}{\chi^*} \le 1,
\]
 $\chi >0$ and  $m\ge 1$. Then, all the eigenvalues of $\mathcal A_m$  have non-positive real parts. More precisely, for $c$ and $P$ satisfying \eqref{vp:dyn} -- \eqref{vp:bord_chaleur}, then the following equality holds
	\begin{align}
		\lambda \int_{B_{R_0}}|c|^2 \dx\dy +  \frac{a \tilde{c} \bar \lambda \gamma (m^2-1)}{R_0\chi f'(\tilde{c})} \int_{\partial B_{R_0}}|\rho|^2 \dsigma  
	&=   -\frac{a\tilde{c} \left(1- \frac{\chi}{\chi^*} \right)}{\chi f'(\tilde{c}))} \int_{ B_{R_0}}|\nabla  P |^2  \dx\dy  \nonumber\\
&\q     -\int_{B_{R_0}}|\nabla Q|^2 \dx\dy .\label{eq:signe:vp}
	\end{align}
\end{prop}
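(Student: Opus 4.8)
The plan is to derive the energy identity \eqref{eq:signe:vp} by multiplying the PDEs for $c$, $P$ and $Q$ by appropriate test functions, integrating by parts over $B_{R_0}$, and combining the resulting boundary terms using the system \eqref{eq:Q}--\eqref{eq:fluxQ:bord}. Once the identity is established, the sign conclusion is immediate: take the real part of \eqref{eq:signe:vp}. The two integrals on the right-hand side are non-negative (with the prefactor $1-\chi/\chi^*\ge 0$ by hypothesis), so $\RE$ of the left-hand side is $\le 0$. On the left, $\int_{B_{R_0}}|c|^2$ and $\int_{\partial B_{R_0}}|\rho|^2$ are non-negative, and the coefficient $\frac{a\tilde c\gamma(m^2-1)}{R_0\chi f'(\tilde c)}$ is non-negative precisely because $m\ge 1$ and all parameters are positive; since $\RE\lambda=\RE\bar\lambda$, we get $\RE\lambda\cdot(\text{non-negative})\le 0$, hence $\RE\lambda\le 0$ unless both weights vanish, and one checks the degenerate case $m=1$, $c\equiv 0$ separately (there $\nabla P\equiv 0$ too and $\lambda$ plays no role, or one argues $\rho$ is then a rigid translation mode with $\lambda=0$).

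For the identity itself I would proceed as follows. First, multiply the heat equation \eqref{vp:chaleur} by $\bar c$ and integrate: $\lambda\int_{B_{R_0}}|c|^2 = \int_{B_{R_0}}\bar c\,\Delta c = -\int_{B_{R_0}}|\nabla c|^2 + \int_{\partial B_{R_0}}\bar c\,\partial_r c$. Next substitute $c = Q + a\tilde c P$ everywhere it is convenient. Using \eqref{eq:fluxQ:bord}, the flux of $Q$ through $\partial B_{R_0}$ vanishes, so $\partial_r c = a\tilde c\,\partial_r P$ on the boundary (consistent with \eqref{vp:bord_chaleur}), which lets one rewrite the boundary term $\int_{\partial B_{R_0}}\bar c\,\partial_r c = a\tilde c\int_{\partial B_{R_0}}\bar c\,\partial_r P$. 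Then apply Green's identity to $P$: since $P$ is harmonic, $\int_{\partial B_{R_0}}\bar P\,\partial_r P = \int_{B_{R_0}}|\nabla P|^2$, and similarly handle the $Q$ contribution via \eqref{eq:Q} — multiply \eqref{eq:Q} by $\bar Q$, integrate by parts, and use \eqref{eq:fluxQ:bord} to kill the boundary term, giving $\int_{B_{R_0}}\bar Q\,\lambda c$-type terms against $\int_{B_{R_0}}|\nabla Q|^2$. The dynamic boundary condition \eqref{vp:dyn}, i.e. $\lambda\rho = -\partial_r P$ on $\partial B_{R_0}$, together with the pressure boundary condition rewritten as \eqref{eq:Q:bord}, is what converts the remaining boundary integral involving $P$ and $\rho$ into the term $\frac{a\tilde c\bar\lambda\gamma(m^2-1)}{R_0\chi f'(\tilde c)}\int_{\partial B_{R_0}}|\rho|^2$: one substitutes $P$ from \eqref{eq:Q:bord} into $\int_{\partial B_{R_0}}\bar P\,\partial_r P$, uses $\partial_r P = -\lambda\rho$, and notes that the $Q$-piece of $P$ pairs with the boundary flux of $Q$ (which is zero) so only the $\rho$-piece and a $|\nabla P|^2$-piece survive, the latter producing the coefficient $(1-\chi/\chi^*)$ after dividing through by $\frac{\chi f'(\tilde c)}{a\tilde c}$ coming from \eqref{eq:Q:bord}.

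The main obstacle is bookkeeping: there are three coupled quantities ($c$, $P$, $Q$) with interlocking boundary conditions, and the identity \eqref{eq:signe:vp} has very specific constants ($\frac{a\tilde c}{\chi f'(\tilde c)}$, the factor $1-\chi/\chi^*$, the curvature weight $\frac{\gamma(m^2-1)}{R_0}$), so one must track every prefactor carefully and decide early which variables to eliminate in favor of which. Concretely, I expect the cleanest route is: (i) work entirely with $c$ and $Q$ in the bulk (eliminating $P = (c-Q)/(a\tilde c)$), (ii) multiply \eqref{vp:chaleur} by $\bar c$ and integrate, (iii) rewrite $\int|\nabla c|^2 = \int|\nabla Q|^2 + 2a\tilde c\,\RE\int\nabla Q\cdot\nabla\bar P + (a\tilde c)^2\int|\nabla P|^2$ and handle the cross term by integrating $\nabla Q\cdot\nabla\bar P$ by parts (harmonicity of $P$ plus \eqref{eq:fluxQ:bord} make it drop, or reduce to a pure boundary term that feeds \eqref{eq:Q:bord}), and (iv) assemble the boundary contributions using \eqref{vp:dyn} and \eqref{eq:Q:bord}. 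A secondary subtlety is that $\lambda$ may be complex, so one must be scrupulous about conjugates — which is why the identity carries both $\lambda$ and $\bar\lambda$ — and only at the end take real parts; multiplying the dynamic condition $\lambda\rho=-\partial_r P$ against the right conjugated factor is where the $\bar\lambda$ in front of the $\rho$-integral is born.
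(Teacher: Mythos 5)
Your proposal is correct and follows essentially the same route as the paper: multiply the heat equation by $\bar c$, decompose via $Q=c-a\tilde c P$, and use harmonicity of $P$, the Neumann condition \eqref{eq:fluxQ:bord}, and the boundary relations \eqref{vp:dyn}, \eqref{eq:Q:bord} to turn the boundary terms into the $|\rho|^2$ and $(1-\chi/\chi^*)|\nabla P|^2$ contributions. The paper merely streamlines the bookkeeping by writing $\int\bar c\,\Delta c=\int(\bar Q+a\tilde c\bar P)\Delta Q$ at the outset, so the term $\int|\nabla c|^2$ and the cancellation of the $(a\tilde c)^2\int|\nabla P|^2$ pieces that your expansion requires never appear.
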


\begin{proof}
	We compute
	\begin{align}
		\lambda \int_{B_{R_0}}|c|^2 \dx\dy&=   \int_{B_{R_0}}\bar c \Delta c  \dx\dy\nonumber\\
& = \int_{B_{R_0}}\left( \bar{Q}+a\tilde{c} \bar{P}\right) \Delta Q  \dx\dy\nonumber \\
		&=    -\int_{B_{R_0}}|\nabla Q|^2 \dx\dy +a \tilde{c} \int_{ B_{R_0}}\bar P \Delta Q  \dx\dy \nonumber \\
		&=    -\int_{B_{R_0}}|\nabla Q|^2 \dx\dy -a \tilde{c} \int_{ B_{R_0}}\nabla \bar P \nabla Q  \dx\dy  .\nonumber
	\end{align}
	Since $P$ is harmonic and using \eqref{eq:Q:bord} together with \eqref{vp:dyn}, we deduce that
	\begin{align*}
		\int_{B_{R_0}}\nabla \bar P \nabla Q \dx\dy&= \int_{B_{R_0}}\cn \left( Q\nabla \bar P \right) \dx\dy \\
		&=   \int_{\partial B_{R_0}}Q \nabla \bar P \cdot \bm{n}  \dsigma\nonumber \\
		&=    \frac{1}{\chi  f'(\tilde{c})} \int_{\partial B_{R_0}}\left( \left(1- \frac{\chi}{\chi^*} \right)  P -\frac{\gamma }{R_0}(m^2-1) \rho \right) \nabla \bar P \cdot \bm{n}  \dsigma  \nonumber \\
		&=   \frac{\bar \lambda \gamma (m^2-1)}{R_0\chi f'(\tilde{c})} \int_{\partial B_{R_0}}|\rho|^2 \dsigma +\frac{1- \frac{\chi}{\chi^*}}{ \chi f'(\tilde{c})} \int_{ B_{R_0}}|\nabla  P |^2  \dx\dy  , \nonumber 
	\end{align*}
	from which the result follows.
\end{proof}

\begin{rema}
	If we have the strict inequality $0<  \frac{\chi}{\chi^*} < 1$, and if $\lambda = iw$, $w \in \xR$, is an eigenvalue of $\mathcal A_m$, then we must have $w =0$.  Indeed in such a case both sides of \eqref{eq:signe:vp} are equal to zero, hence $\nabla P=0$ in $B_{R_0}$. Consequently $\Delta P=0$ and therefore $\lambda =0$ or $c=0$ in $B_{R_0}$. But, if $c=0$ in $B_{R_0}$, then $\rho =0$ on $\partial B_{R_0}$.
\end{rema}

\begin{rema}[On the case $f'(\tilde{c})\le 0$]
	If we consider the negative case for the function $f$, i.e. $f'(\tilde{c})\le 0$, then for any non negative values of $a$ and $\chi$, all the eigenvalues of $\mathcal A$  have non positive real parts.  
\end{rema}

Let $\lambda _{m,p}$ be the $p$-th real positive root of $J_m'$, the derivative of the Bessel function of the first kind of order $m$, $J_m$, see the Appendix \eqref{eq:root_bessel}. Then a straightforward computation gives
\begin{prop}
	Assume that $a\chi = 0$, then the spectrum of the operator $\mathcal A$ is 
	\[
	\left\{ 
	-m(m^2-1),-\lambda _{m,p}^2, m\in \xN, p\in \xZ \right\}.
	\]
\end{prop}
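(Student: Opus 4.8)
The plan is to use the equivalent formulation \eqref{vp:dyn}--\eqref{vp:bord_chaleur} of the eigenvalue problem for $\mathcal A_m$ and observe that when $a\chi = 0$ the coupling between the pressure equation and the heat equation decouples entirely. Indeed, if $a = 0$ then the boundary condition \eqref{vp:bord_chaleur} becomes $\partial_r c = 0$ on $\partial B_{R_0}$, so the $c$-component satisfies the pure Neumann eigenvalue problem $\lambda c = \Delta c$ in $B_{R_0}$, $\partial_r c = 0$ on $\partial B_{R_0}$, independently of $\rho$ and $P$; and if $\chi = 0$ then the boundary condition \eqref{vp:bord} reduces $P$ to the harmonic extension of $\frac{\gamma}{R_0}(m^2-1)\rho$, again decoupling from $c$, while \eqref{vp:bord_chaleur} still forces $\partial_r c=0$ (here one uses that $\partial_r P$ has mean zero, or just notes that with $\chi=0$ the value of $c$ is irrelevant to the $\rho$-dynamics). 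Either way, one treats the two eigenvalue problems separately.

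First I would analyze the $\rho$--$P$ block. Writing $P(r,\theta) = \hat\rho\, (r/R_0)^m \cos(m\theta)$ as the harmonic extension of the boundary datum $\frac{\gamma}{R_0}(m^2-1)\hat\rho$ (using \eqref{vp:bord} with the relevant term; here one must be careful that \eqref{vp:bord} reads $P = \frac{\gamma}{R_0}(m^2-1)\rho$ on the boundary — note the sign convention inherited from $-\frac{\gamma}{R_0}(m^2-1)$ in $\mathcal A_m$), the Dirichlet-to-Neumann relation of Definition \ref{def:DTN_3} gives $\mathcal I[\hat\rho] = \frac{m}{R_0}\hat\rho$ on $\partial B_{R_0}$. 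Substituting into \eqref{vp:dyn} yields $\lambda \hat\rho = -\frac{m}{R_0}\cdot\frac{\gamma}{R_0}(m^2-1)\hat\rho$; after normalizing $R_0$ appropriately (or tracking the constants) this produces the eigenvalue $\lambda = -m(m^2-1)$ for each $m \in \xN$, which for $m = 0,1$ equals $0$ and for $m \ge 2$ is negative. This accounts for the first family in the claimed spectrum.

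Next I would analyze the $c$ block, i.e. the Neumann Laplacian on the disk: $\lambda c = \Delta c$ in $B_{R_0}$, $\partial_r c = 0$ on $\partial B_{R_0}$. Separating variables $c = \hat c(r)\cos(m\theta)$, the radial part solves the Bessel equation $\hat c'' + r^{-1}\hat c' + (-\lambda - m^2 r^{-2})\hat c = 0$, whose solution regular at the origin is $\hat c(r) = J_m(\sqrt{-\lambda}\, r)$ (writing $\lambda = -\mu^2$, $\hat c(r) = J_m(\mu r)$). The Neumann condition $\hat c'(R_0) = 0$ forces $\mu R_0 J_m'(\mu R_0) = 0$, i.e. $\mu R_0 = \lambda_{m,p}$ is a positive zero of $J_m'$ (together with the trivial $\mu = 0$, giving $\lambda = 0$, which is already included). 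Hence $\lambda = -\mu^2 = -\lambda_{m,p}^2/R_0^2$; with the normalization used in the statement this is $-\lambda_{m,p}^2$, giving the second family, indexed by $m \in \xN$ and $p$ ranging over the positive roots. Finally I would remark that eigenfunctions of this block have $\rho = 0$ (or in the $\chi=0$ case, the $c$-part does not feed back), and that the union of the two families, together with completeness of $\{J_m(\lambda_{m,p} r/R_0)\cos(m\theta)\}$ in $L^2(B_{R_0})$ and of the curvature modes, exhausts the spectrum.

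The main obstacle is bookkeeping rather than conceptual: one must carefully reconcile the sign and the factors of $R_0$ appearing in $\mathcal A_m$ (the term $-\frac{\gamma}{R_0}(m^2-1)\hat\rho$, the factor $m$ from $\mathcal I$, and the normalization implicit in ``$R_0=1$'' versus general $R_0$) so that the product comes out as exactly $-m(m^2-1)$, and similarly to identify $-\lambda_{m,p}^2$ with the correct scaling of the Neumann-Bessel eigenvalues. The only genuinely substantive point to check is that in the case $\chi = 0$ (as opposed to $a=0$) the boundary condition \eqref{vp:bord_chaleur} still decouples: one verifies that $\partial_r P$ restricted to $\partial B_{R_0}$ is $\frac{m}{R_0}\cdot\frac{\gamma}{R_0}(m^2-1)\hat\rho\cos(m\theta)$, which has vanishing angular mean for $m\ge1$, so the compatibility condition for the Neumann problem $\lambda c = \Delta c$, $\partial_r c = a\tilde c\,\partial_r P$ is automatically satisfied, and when $a=0$ it is trivially $\partial_r c=0$; in either sub-case the $c$-spectrum is the Neumann-Laplacian spectrum as claimed.
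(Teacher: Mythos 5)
Your proposal is correct and is essentially the ``straightforward computation'' the paper invokes without writing out: with $a\chi=0$ the system \eqref{vp:dyn}--\eqref{vp:bord_chaleur} becomes block-triangular, the $\rho$--$P$ block yields $\lambda=-\gamma m(m^2-1)/R_0^2$ via the Dirichlet-to-Neumann relation, and the $c$ block is the Neumann Laplacian with spectrum $-\lambda_{m,p}^2/R_0^2$, exactly the two factors of $H_m$ in \eqref{eq:eigenmode_mod_5} surviving when the coupling term vanishes. The only caveat, which you already flag, is the bookkeeping of the constants $\gamma$ and $R_0$ that the paper's own statement of the proposition silently normalizes away.
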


\subsection{Spectrum of $\mathcal A$}

In this part we describe in more details the eigenvalues of $\mathcal A$. The techniques developed in this section depend, in part, on expanding solutions of \eqref{vp:dyn} -- \eqref{vp:bord_chaleur} in terms of the modified Bessel functions $I_m(x)$.

\begin{lemm}
	For $m\in \xN$, the eigenfunctions of $\mathcal A_m$ associated with the eigenvalue $\lambda \in \xC$  are 
	\begin{equation*} 
		\left( \begin{array}{c}
			\rho (\theta)\\
			c(r,\theta)
		\end{array} \right)=\left( \begin{array}{c}
			\hat \rho_{m\lambda}   \\
			\hat c_{m\lambda}  I_m\left(-r\lambda  ^{1/2}\right)
		\end{array} \right) \cos(m\theta ),
	\end{equation*}
	with $0<r<R_0$, $\theta \in (-\pi,\pi]$ and $(\hat c_{m\lambda}, \hat \rho_{m\lambda}) \in \xC ^2$  solutions of 
	\begin{align}\label{eq:eigenmode_mod}
	\left(\lambda +\frac{\gamma}{R_0^2} m(m^2-1)\right)   \hat \rho_{m\lambda}&=-\chi \frac{f'(\tilde{c})}{R_0} m I_m\left(-R_0\lambda ^{1/2}\right)\hat c_{m\lambda} ,  \\
		\frac{\sqrt{\lambda}}{2}\left(I_{m-1}\left(-R_0\lambda ^{1/2}\right) +I_{m+1}\left(-R_0\lambda ^{1/2}\right)\right)  \hat c_{m\lambda} &=\lambda a \tilde{c} \, \hat \rho_{m\lambda} . \label{eq:eigenmode_mod_2}
	\end{align}
\end{lemm}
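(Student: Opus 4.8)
The plan is to solve the two linear PDEs in the eigenvalue system \eqref{vp:dyn}--\eqref{vp:bord_chaleur} explicitly, mode by mode, and then to feed the resulting closed-form expressions for $P$, $c$, $\rho$ into the three boundary relations on $\partial B_{R_0}$; after eliminating the coefficient of $P$ this collapses to the $2\times2$ algebraic system \eqref{eq:eigenmode_mod}--\eqref{eq:eigenmode_mod_2}.

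First I would fix the mode and write $c(r,\theta)=\hat c(r)\cos(m\theta)$, $P(r,\theta)=\hat P(r)\cos(m\theta)$, $\rho(\theta)=\hat\rho\cos(m\theta)$. Equation \eqref{vp:incomp} forces $\hat P''+r^{-1}\hat P'-m^2r^{-2}\hat P=0$, whose solution regular at the origin is $\hat P(r)=A r^{m}$ for some $A\in\xC$. Equation \eqref{vp:chaleur} forces $\hat c''+r^{-1}\hat c'-m^2r^{-2}\hat c=\lambda\hat c$, which after the change of variable $s=\lambda^{1/2}r$ (for a fixed branch of the square root) is the modified Bessel equation; its two independent solutions are $I_m$ and $K_m$, and since $K_m$ is singular at $r=0$ we keep $\hat c(r)=\hat c_{m\lambda}\,I_m(-r\lambda^{1/2})$. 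The minus sign in the argument is immaterial, since $I_m(-z)=(-1)^mI_m(z)$, so it only rescales the free constant $\hat c_{m\lambda}$; this also disposes of any ambiguity in the branch of $\lambda^{1/2}$.

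Next I would substitute these forms into the three boundary conditions. Relation \eqref{vp:dyn} reads $\lambda\hat\rho=-\partial_r\hat P(R_0)=-mAR_0^{m-1}$, which I use to eliminate $A$ via $mAR_0^{m-1}=-\lambda\hat\rho$. Relation \eqref{vp:bord} reads $AR_0^{m}=\frac{\gamma}{R_0}(m^2-1)\hat\rho+\chi f'(\tilde{c})\,\hat c_{m\lambda}I_m(-R_0\lambda^{1/2})$; inserting $AR_0^{m}=-\lambda R_0\hat\rho/m$ and multiplying through by $-m/R_0$ gives exactly \eqref{eq:eigenmode_mod}. Relation \eqref{vp:bord_chaleur} reads $\partial_r\hat c(R_0)=a\tilde{c}\,\partial_r\hat P(R_0)=a\tilde{c}\,mAR_0^{m-1}=-a\tilde{c}\,\lambda\hat\rho$; computing $\partial_r\hat c(R_0)=-\lambda^{1/2}\hat c_{m\lambda}I_m'(-R_0\lambda^{1/2})$ and applying the recurrence $I_m'(z)=\tfrac12\bigl(I_{m-1}(z)+I_{m+1}(z)\bigr)$ turns this into \eqref{eq:eigenmode_mod_2}. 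Conversely, given any $(\hat\rho_{m\lambda},\hat c_{m\lambda})\in\xC^2$ solving \eqref{eq:eigenmode_mod}--\eqref{eq:eigenmode_mod_2}, the same formulas (with $A$ recovered from \eqref{vp:dyn}) produce a genuine eigenfunction of $\mathcal A_m$ for $\lambda$, so the characterization is exact.

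The argument is essentially bookkeeping and carries no serious obstacle; the only points deserving care are the correct identification of the regular solutions of the radial ODEs (discarding $K_m$ and $r^{-m}$, resp. $\log r$ when $m=0$), the harmlessness of the branch of $\lambda^{1/2}$ via the parity of $I_m$, and the clean use of the standard Bessel identities, which I would simply cite from the Appendix. Keeping the three constants $A$, $\hat\rho_{m\lambda}$, $\hat c_{m\lambda}$ straight while eliminating $A$ is the one place where sign errors are easy, so I would lay out that elimination step explicitly.
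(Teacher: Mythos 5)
Your proposal is correct and follows essentially the same route as the paper: separate variables, solve the Laplace equation for $P$ and the modified Bessel equation for $c$ keeping only the solutions regular at the origin, then substitute into the three boundary relations and eliminate the coefficient of $P$ using \eqref{vp:dyn} together with the recurrence $I_m'=\tfrac12(I_{m-1}+I_{m+1})$. The extra remarks on the branch of $\lambda^{1/2}$ and the converse direction are harmless additions not spelled out in the paper.
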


\begin{proof}
	In what follows,  we only consider solutions that are smooth in $r = 0$. \\
	 
	Since $P$ satisfies the Laplace equation, solutions have following expression:
	\[
	P(r,\theta)=A_{m\lambda} r^m \cos(m\theta)\qq \t{for }0<r<R_0,\, \theta \in (-\pi, \pi],
	\] 
 with $A_{m\lambda} \in \xC$.\\
	Consider  
	$\rho(\theta)  =\hat \rho_{m\lambda}  \cos(m\theta )$, with  $\hat \rho_{m\lambda}  \in \xC$. 
	Using \eqref{vp:dyn}, we get that
	\[\lambda \hat \rho _{m\lambda}=-mR_0^{m-1}A_{m\lambda}.
	\]
	Furthermore, for $c(r,\theta) =  \hat c_{m\lambda}(r) \cos(m\theta )$, we have that \eqref{vp:bord} reads
	\[
	A_{m\lambda} R_0^m  =\frac{\gamma}{R_0}  (m^2-1) \hat \rho_{m\lambda} +\chi f'(\tilde{c}) \hat c_{m\lambda}(R_0) ,
	\]
	so that,
	\begin{equation}\label{eq:bord_force}
		\lambda \hat \rho _{m\lambda}=-\frac{\gamma}{R_0^2} m(m^2-1)\hat \rho_{m\lambda}-\chi m \frac{f'(\tilde{c})}{R_0}\hat c_{m\lambda}(R_0).
	\end{equation}
	Finally equation \eqref{vp:chaleur} yields 
	\begin{equation}\label{eq:bulk}
		\left( \partial_{r}^2 + r^{-1}\partial_r  - r^{-2}m^2  \right)\hat c_{m\lambda}  (r) = \lambda  \hat c_{m\lambda}  (r) , \quad r<R_0,
	\end{equation}
	and the boundary condition \eqref{vp:bord_chaleur} gives 
	\begin{equation}\label{eq:bord_lin}
		\partial_r \hat c_{m\lambda}  (R_0) = -\lambda a \tilde{c} \, \hat \rho _{m\lambda}.
	\end{equation}
	The smooth solutions of \eqref{eq:bulk} are known and given by 
	\[
	\hat c_{m\lambda}  (r) =\hat c_{m\lambda} I_m\left(-r\lambda ^{1/2}\right), \quad r<R_0,
	\] 
	with $\hat c_{m\lambda} \in \xC$ and where $I_m$ is the modified Bessel function of the first kind of order $m$, whose definition is recalled in \eqref{eq:Bessel_m} and \eqref{eq:Bessel_modified}.  \\
The result then follows by substituting the expression of $\hat c_{m\lambda}  (r)$ in \eqref{eq:bord_force} and \eqref{eq:bord_lin} together with the property of the Bessel function 
\[
I_m'(x)=\left(I_{m-1}(x)+I_{m+1}(x)\right)/2.
\]
\end{proof}

Let $H_m$ be the function defined by
\begin{equation}
	H_m(z) =z^{1/2}I'_{m}\left(-R_0z ^{1/2}\right) \left(z +\frac{\gamma}{R_0^2}  m(m^2-1)\right)  
	+a\chi \frac{\tilde{c} f'(\tilde{c})}{R_0}  mz  I_m\left(-R_0z ^{1/2}\right).\label{eq:eigenmode_mod_5}
\end{equation} 
The eigenvalue equation is $H_m(z)=0$, hence we deduce the spectrum of $\mathcal A_m$.
\begin{lemm}
	The spectrum of $\mathcal A_m$ is 
	\[
	sp(\mathcal A_m)=\{\lambda \in \xC \textrm{ s.t. }H_m(\lambda)=0\}.
	\]
\end{lemm}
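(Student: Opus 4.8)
The plan is to reduce the eigenvalue equation $\mathcal A_m(\rho,c)=\lambda(\rho,c)$ to the scalar equation $H_m(\lambda)=0$ through a $2\times2$ determinant computation, and then to verify that $\mathcal A_m$ has purely discrete spectrum. By the preceding lemma, for $\lambda\neq0$ every eigenfunction of $\mathcal A_m$ for the eigenvalue $\lambda$ has the displayed form, with $(\hat\rho_{m\lambda},\hat c_{m\lambda})\in\xC^2$ solving \eqref{eq:eigenmode_mod}--\eqref{eq:eigenmode_mod_2}, and conversely any nonzero solution of that system gives a genuine eigenfunction. First I would use the Bessel identity $I_m'(x)=\frac12\big(I_{m-1}(x)+I_{m+1}(x)\big)$ to rewrite \eqref{eq:eigenmode_mod_2}, so that \eqref{eq:eigenmode_mod}--\eqref{eq:eigenmode_mod_2} takes the form of the homogeneous linear system
\[
\Big(\lambda+\frac{\gamma}{R_0^2}m(m^2-1)\Big)\hat\rho_{m\lambda}+\chi\frac{f'(\tilde c)}{R_0}m\,I_m\big(-R_0\lambda^{1/2}\big)\,\hat c_{m\lambda}=0,\qquad
-\lambda a\tilde c\,\hat\rho_{m\lambda}+\lambda^{1/2}I_m'\big(-R_0\lambda^{1/2}\big)\,\hat c_{m\lambda}=0
\]
in $(\hat\rho_{m\lambda},\hat c_{m\lambda})$. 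A nontrivial solution exists if and only if the determinant of the coefficient matrix vanishes, and a one-line expansion identifies this determinant with $H_m(\lambda)$ as defined in \eqref{eq:eigenmode_mod_5}. Hence $\lambda\neq0$ is an eigenvalue of $\mathcal A_m$ if and only if $H_m(\lambda)=0$.

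It remains to show that $sp(\mathcal A_m)$ contains nothing beyond these eigenvalues. I would argue that $\mathcal A_m-\lambda$ is boundedly invertible whenever $H_m(\lambda)\neq0$: for given data the bulk equation for $c$ is the inhomogeneous modified Bessel equation on the bounded interval $(0,R_0)$, so the solution regular at $r=0$ is determined up to one multiple of $I_m(-r\lambda^{1/2})$, and the two remaining scalar relations --- the evolution equation for $\rho$ and the boundary condition on $\partial_r c$ --- form a $2\times2$ linear system whose determinant is again $\pm H_m(\lambda)$; being nonzero, that system has a unique solution, which exhibits $(\mathcal A_m-\lambda)^{-1}$. Equivalently one may simply note that $(\mathcal A_m-\mu)^{-1}$ is compact for any $\mu$ in its (nonempty) resolvent set --- it amounts to inverting a second-order elliptic operator on a bounded interval coupled to a one-dimensional variable --- so the spectrum is discrete and consists only of eigenvalues. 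Combining the two steps gives $sp(\mathcal A_m)=\{\lambda\in\xC:\ H_m(\lambda)=0\}$. The one value $\lambda=0$, where the parametrization by $I_m(-r\lambda^{1/2})$ degenerates, is handled separately by running the same linear-algebra argument with the regular solution $r^m$ of Laplace's equation in place of $I_m(-r\lambda^{1/2})$.

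The computations here are routine; the only points requiring a little care are, first, that $H_m$ contains $\lambda^{1/2}$, so one should observe that changing the branch of the square root multiplies $H_m$ by $(-1)^m$ --- both summands acquire this factor --- and therefore leaves the zero set $\{H_m=0\}$ (and, when $m$ is even, $H_m$ itself) well defined; and second, the discreteness of $sp(\mathcal A_m)$, which should be justified rather than taken for granted and is where I expect the (still modest) work to go, either via the explicit resolvent above or via the compactness of $(\mathcal A_m-\mu)^{-1}$.
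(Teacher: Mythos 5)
Your proposal is correct and follows essentially the same route as the paper: the paper simply asserts that the system \eqref{eq:eigenmode_mod}--\eqref{eq:eigenmode_mod_2} from the preceding lemma has a nontrivial solution exactly when the determinant $H_m(\lambda)$ vanishes, which is your $2\times2$ computation. The additional points you raise --- bounded invertibility of $\mathcal A_m-\lambda$ off the zero set, the branch of $\lambda^{1/2}$, and the degenerate case $\lambda=0$ --- are handled correctly and go beyond what the paper makes explicit, but they do not change the approach.
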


We can now deduce information on the eigenvalues of $\mathcal A$.

\begin{theo}\label{the:lin_stab}
	For  $\gamma >0$,  the following dichotomy holds.
	\begin{itemize}
		\item[(i)] If 
\[
0<\frac{\chi}{\chi^*} \le 1,
\]
 then $\mathcal A$ has zero eigenvalue $\lambda =0$ of multiplicity three, while the other eigenvalues have negative real parts.
		\item[(ii)] If 
\[
\frac{\chi}{\chi^*}  >1,
\]
 then the operator $\mathcal A$ has a positive eigenvalue $\lambda >0$. 
	\end{itemize}
\end{theo}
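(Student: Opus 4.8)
### Proof plan for Theorem \ref{the:lin_stab}

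\textbf{Overall strategy.} The plan is to analyze the transcendental equation $H_m(z)=0$ from \eqref{eq:eigenmode_mod_5} mode by mode, and to combine this with the energy identity \eqref{eq:signe:vp} of Proposition \ref{prop:signe:vp}. The key observation is that $H_m(z)=0$ is the exact eigenvalue equation for $\mathcal A_m$, so $sp(\mathcal A)=\bigcup_{m\ge 0} sp(\mathcal A_m)$ and it suffices to understand the zeros of each $H_m$. I would treat the low modes $m=0$ and $m=1$ separately, since the curvature factor $m^2-1$ and the factor $m$ multiplying the active term degenerate there.

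\textbf{Step 1: the special modes $m=0$ and $m=1$.} For $m=0$: the coefficient $m=0$ in the second term of $H_0$ kills the active coupling, and one checks directly from \eqref{eq:eigenmode_mod}--\eqref{eq:eigenmode_mod_2} that the $\rho$-equation forces $\hat\rho$ to be decoupled (this is the volume/translation-free mode). The surviving eigenvalues are $\lambda=0$ (the area mode, reflecting \eqref{eq:area}) together with $-\lambda_{0,p}^2<0$ from the Neumann heat operator on the disk. For $m=1$: the curvature factor $m^2-1$ vanishes, so \eqref{eq:bord_force} becomes $\lambda\hat\rho_{1\lambda}=-\chi (f'(\tilde c)/R_0)\hat c_{1\lambda}(R_0)$; combined with \eqref{eq:bord_lin} this yields, after plugging the Bessel representation, that $\lambda=0$ is a solution (the translation mode, two-dimensional by the cosine/sine split — but by the reflection-symmetry reduction we see one copy and remember the multiplicity). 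I would then argue that the remaining roots of $H_1$ have negative real part when $\chi\le\chi^*$ and that $H_1$ acquires a positive real root when $\chi>\chi^*$; this is the heart of the threshold phenomenon. Together, $m=0$ contributes one zero eigenvalue and $m=1$ contributes two (cosine and sine), giving algebraic multiplicity three of $\lambda=0$.

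\textbf{Step 2: the stable regime $0<\chi/\chi^*\le 1$.} For $m\ge 2$ I invoke Proposition \ref{prop:signe:vp}: taking real parts of \eqref{eq:signe:vp} and using $m^2-1>0$, the left side is $(\RE\lambda)\big(\int|c|^2 + \tfrac{a\tilde c\gamma(m^2-1)}{R_0\chi f'(\tilde c)}\int_{\partial B_{R_0}}|\rho|^2\big)$ while the right side is $\le 0$; hence $\RE\lambda\le 0$, and $\RE\lambda=0$ forces (via the remark following Proposition \ref{prop:signe:vp}, using $\chi<\chi^*$ strictly, or a separate argument at $\chi=\chi^*$) that $\nabla Q=0$, then $c$ constant, then $\rho=0$, so $(\rho,c)$ is trivial for $m\ge 2$. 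Thus for $m\ge 2$ every eigenvalue has strictly negative real part. At the endpoint $\chi=\chi^*$ the term $(1-\chi/\chi^*)$ drops, and one must additionally check from $H_m$ directly that no purely imaginary nonzero root appears; this amounts to showing $H_m(iw)\ne 0$ for $w\ne 0$, which follows by separating real and imaginary parts of the Bessel expression (or by noting $\Delta Q = \lambda c$ with zero Neumann data forces $\lambda\int|c|^2 = -\int|\nabla Q|^2$, incompatible with $\RE\lambda=0$ unless trivial). Combining with Step 1 gives part (i).

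\textbf{Step 3: the unstable regime $\chi/\chi^*>1$.} Here I would produce a positive real eigenvalue in the $m=1$ mode by an intermediate-value argument applied to $H_1$ restricted to $z\in(0,\infty)$. Writing $z=\mu^2>0$ real and using that for real argument $I_m(-R_0\mu) = I_m(R_0\mu)$ and $I_m'$ are real and positive, $H_1(\mu^2)$ becomes a genuine real-valued continuous function of $\mu>0$. As $\mu\to 0^+$, the dominant balance reduces (using $I_1(x)\sim x/2$, $I_1'(x)\to 1/2$) to something proportional to $\big(1-\chi/\chi^*\big)$ up to a positive factor — negative when $\chi>\chi^*$; as $\mu\to+\infty$, the term $z^{1/2}I_1'(-R_0 z^{1/2})\cdot z$ grows like $e^{R_0\mu}\mu^{5/2}$ and dominates the other (which grows only like $e^{R_0\mu}\mu^{2}$), so $H_1(\mu^2)\to +\infty$. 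By continuity $H_1$ has a zero at some $\mu_0>0$, i.e. $\lambda=\mu_0^2>0\in sp(\mathcal A_1)\subset sp(\mathcal A)$. This proves part (ii). (As a consistency check one can recover this from Proposition \ref{prop:lin_stab}, already proved.)

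\textbf{Main obstacle.} The delicate point is Step 1 for $m=1$ and the sign analysis of the small-$\mu$ asymptotics of $H_1$: one must carefully track that the leading-order cancellation in $H_1(\mu^2)$ as $\mu\to 0$ is exactly governed by the sign of $1-\chi/\chi^*$, which is where $\chi^*=1/(a\tilde c f'(\tilde c))$ enters. Getting the Bessel asymptotics and the bookkeeping of the factor $m=1$ versus the vanishing curvature factor $m^2-1$ right — so that the threshold comes out precisely at $\chi^*$ and not at a spurious constant — is the crux; the large-$\mu$ behavior and the $m\ge 2$ modes are comparatively routine once Proposition \ref{prop:signe:vp} is in hand.
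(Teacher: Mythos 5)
Your proposal follows essentially the same route as the paper: reduce to the mode-by-mode dispersion relations $H_m(\lambda)=0$, treat $m=0$ and $m=1$ separately using the Bessel expansions, exclude positive real roots for $m\ge 1$ via the sign of $xI_{m+1}(R_0x)/I_m(R_0x)$, and dispose of genuinely complex eigenvalues with the energy identity \eqref{eq:signe:vp} (the paper also uses the Bessel identity \eqref{eq:relation_Bessel} to rule out non-real roots of $H_1$). The one place you genuinely diverge is part (ii): the paper expands $H_1(\lambda)=\frac{\lambda^{3/2}}{2}\bigl(1-a\chi\tilde c f'(\tilde c)+\tfrac18(3-a\chi\tilde c f'(\tilde c))\lambda R_0^2\bigr)+O(|\lambda|^{7/2})$ and reads off a root $\lambda_1>0$ that is only shown to approximate a true eigenvalue for $\chi$ near $\chi^*$, whereas your intermediate-value argument on $H_1(\mu^2)$ for real $\mu>0$ (negative near $\mu=0$ with sign governed by $1-\chi/\chi^*$, dominated by the $\mu^{5/2}e^{R_0\mu}$ term at infinity) yields an exact positive eigenvalue for every $\chi>\chi^*$. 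That is arguably a cleaner and more complete proof of (ii) than the paper's; your asymptotics check out up to the harmless slip that the second term grows like $\mu^{3/2}e^{R_0\mu}$, not $\mu^2 e^{R_0\mu}$.

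One bookkeeping point to fix: your count of the zero eigenvalue's multiplicity (one from $m=0$, two from $m=1$ via cosine and sine) does not match the paper's and is internally inconsistent. The kernel at $m=0$ is two-dimensional — besides the area mode $(1,0)$ there is the constant-concentration mode $(0,1)$, which your Step 1 omits — while the paper works throughout in the reflection-symmetric (cosine) subspace, where $m=1$ contributes only one zero mode. If you count the sine copy at $m=1$ you must either leave the symmetric subspace (and then the total would exceed three) or drop it; either way the missing $(0,1)$ mode at $m=0$ must be restored to arrive at multiplicity three consistently.
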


\begin{proof}
	Recall that the eigenmode $v_{m\lambda} (r,\theta)$ is defined by 
	\begin{equation}\label{eq:eigenmode_3}
		v_{m\lambda} (r,\theta)=
		\left( \begin{array}{c}
			\hat \rho _{m\lambda} \\
			\hat c_{m\lambda}   I_m(-r\sqrt{\lambda})
		\end{array} \right)
		\cos(m\theta).
	\end{equation}

\noindent
{\it Proof of (i).}\\
	Let us start with the $m=0$ mode. Substituting $m = 0$ in \eqref{eq:eigenmode_mod_5} gives
	\begin{equation}\label{eq:mode0}
		H_0(\lambda) = \frac{\lambda^{3/2}}{2}I_{1}(-R_0\sqrt{\lambda}).
	\end{equation}
	Using \eqref{eq:eigenmode_mod} and \eqref{eq:eigenmode_mod_2}, we see that $\lambda = 0$ is associated to the two following eigenmodes
	\begin{equation*}
		v_{00}^1 (r,\theta)=
		\left( \begin{array}{c}
			I_0(0)\\
			0
		\end{array} \right)=\left( \begin{array}{c}
			1\\
			0
		\end{array} \right)
		 \quad \textrm{and} \quad 
		v_{00}^2 (r,\theta)=
		\left( \begin{array}{c}
			0\\
			1 
		\end{array} \right)\, 
		.
	\end{equation*} 
	Moreover, rewriting \eqref{eq:mode0} as
	\begin{equation*} 
		H_0(\lambda) =-i \frac{\lambda^{3/2}}{2}J_{1}\left(-iR_0\lambda ^{1/2}\right),
	\end{equation*} 
	we deduce the other roots of $H_0$, that is $\lambda_{0k} = -x^2_{1k}<0$, where $x_{1k}>0$ is the $k$-th zero of  the Bessel function of order 1, $J_1(x)$. These real-negative roots, $\lambda_{0k}$, are associated with the $m = 0$ diffusion mode, given by
	\begin{equation*}
		v_{0k} (r,\theta)=
		\left( \begin{array}{c}
			J_0(x_{1k}r)\\
			0
		\end{array} \right)
		,
	\end{equation*} 
	where $J_0$ is the Bessel function of order 0.\\
	
	Consider now the $m=1$ mode. Using \eqref{eq:eigenmode_mod_5} it yields to
	\begin{equation*}
		H_1(\lambda) =\lambda^{3/2}I'_{1}\left(-R_0\lambda ^{1/2}\right)    +a\chi  \frac{\tilde{c} f'(\tilde{c})}{R_0}\lambda  I_1\left(-R_0\lambda ^{1/2}\right). 
	\end{equation*}
	We see that $\lambda =0$ is a root of $H_1$ and, using \eqref{eq:eigenmode_mod_2}, it is associated with the eigenmode 
	\begin{equation*}
		v_{10} (r,\theta)=
		\left( \begin{array}{c}
			0\\
			1
		\end{array} \right)\cos \theta
		.
	\end{equation*} 
	
	Let us now look for non zero roots of $H_1$.\\ Let $\lambda$ be a positive root of $H_1$, then $x=-\sqrt{\lambda}<0$ satisfies
	\begin{equation}
		x\frac{I'_{1}(R_0x)}{I_1(R_0x)}  -a\chi \frac{\tilde{c} f'(\tilde{c})}{R_0}=0\,  .\label{eq:eigenmode_mod1_bis}
	\end{equation}
	Since
	\begin{equation*}
		R_0x\frac{I'_{1}(R_0x)}{I_1(R_0x)} =R_0x\frac{I_{2}(R_0x)}{I_1(R_0x)} +1,
	\end{equation*}
	equation \eqref{eq:eigenmode_mod1_bis} rewrites as 
	\begin{equation}
		R_0x\frac{I_{2}(R_0x)}{I_1(R_0x)}  +1-a\chi \tilde{c} f'(\tilde{c}) =0\,  .\label{eq:eigenmode_mod1_ter}
	\end{equation}
	Recalling next the definition of the Bessel function $I_m$, 
	\[
	I_m(x)= \sum _{p=0}^\infty \frac{1}{p!(m+p)!}\left( \frac{x}{2}\right)^{m+2p},
	\]
	we see that 
	\[
	x\frac{I_{2}(x)}{I_1(x)} > 0 \qquad \textrm{ if } x< 0, 
	\]
	thus from \eqref{eq:eigenmode_mod1_ter}, it follows that if $0<a\chi f'(\tilde{c})\le 1$, there is no positive root of $H_1$. 
	
	Let us next prove that there is no root $\lambda$ of $H_1$ that belongs to $\xC\setminus \xR$. By contradiction, let us assume that $z\in \xC\setminus \left( \xR \cup i\xR\right) $ is such that $z^2 =\lambda$ and is solution of \eqref{eq:eigenmode_mod1_ter}. Since $\bar z ^2 \neq z ^2$ and
	\begin{equation*} 
		z \frac{I_2(R_0z)}{I_1(R_0z)}= \bar z \frac{I_2(R_0\bar z)}{I_1(R_0\bar z)},
	\end{equation*} 
	we get
	\begin{equation*}  
		z I_2(R_0z)I_1(R_0\bar z) - \bar z I_2(R_0\bar z) I_1( R_0z)=0.
	\end{equation*} 
	Using equality \eqref{eq:relation_Bessel}, this leads to $z^2=\bar z^2$ hence a contradiction.

	From the two previous arguments we deduce that the only non-zero roots of $H_1$ are real and non positive. \\
	
	Consider now the $m\ge 2$ mode. Using \eqref{eq:eigenmode_mod_5}, it rewrites as
	\begin{equation*}
		H_m(\lambda) =\lambda^{1/2}I'_{m}\left(-R_0\lambda ^{1/2}\right) \left(\lambda +\frac{\gamma}{R_0^2} m(m^2-1)\right)  +a\chi\frac{\tilde{c} f'(\tilde{c})}{R_0} m\lambda  I_m\left(-R_0\lambda ^{1/2}\right).\label{eq:eigenmode_mod_5bis}
	\end{equation*}
	
	We see that $\lambda =0$ is a root of $H_m$. In such a case, since $\gamma>0$, using \eqref{eq:eigenmode_mod} we deduce that $\hat \rho _{m0}=0$. Furthermore, from $I_m(0)=0$ and \eqref{eq:eigenmode_3}, we get  that $\lambda=0$ is  associated with the zero eigenmode
	\[v_{m0}
	(r,\theta)=
	\left( \begin{array}{c}
		0\\
		0
	\end{array} \right)\cos (m\theta)
	,
	\]
	thus $\lambda =0$ is not an eigenvalue for $m\ge 2$ and $\gamma >0$.\\
	
	Let us next look for other roots of $H_m$.\\ Let $\lambda$ be a positive root of $H_m$, then $x=-\sqrt{\lambda}<0$ satisfies
	\begin{equation*}
		x\frac{I'_{m+1}(R_0x)}{I_m(R_0x)}  -a\chi \frac{\tilde{c}f'(\tilde{c})}{R_0} \frac{m x^2}{ x^2 + \frac{\gamma}{R_0^2} m(m^2-1)}=0\,  . 
	\end{equation*}
	Since
	\begin{equation*}
		R_0x\frac{I'_{m}(R_0x)}{I_m(R_0x)} =R_0x\frac{I_{m+1}(R_0x)}{I_m(R_0x)} +m,
	\end{equation*}
	it rewrites as 
	\begin{equation*}
		R_0x\frac{I_{m+1}(R_0x)}{I_{m}(R_0x)}  +m-a\chi \tilde{c} f'(\tilde{c})  \frac{m x^2}{ x^2 + \frac{\gamma}{R_0^2} m(m^2-1)}=0\,  . 
	\end{equation*}
	Using  that 
	\[
	x\frac{I_{m+1}(x)}{I_{m+1}(x)} > 0 \qquad \textrm{ if } x< 0, 
	\]
	we see that there is no positive root of $H_m$ if $0<a\chi \tilde{c} f'(\tilde{c})\le 1$.  \\
	The end of the proof of $(i)$ (the case where $\lambda \in \xC$) is a consequence of  Proposition \ref{prop:signe:vp}.\\

\noindent
{\it Proof of (ii).}\\
	Let us now prove $(ii)$. To do so let us exhibit a positive eigenvalue of $\mathcal A$. More precisely, let us expand $H_1(\lambda)$ around $\lambda =0$: 
	\begin{equation*} 
		H_1(\lambda)= \frac{\lambda^{3/2}}{2}\left(1-a\chi \tilde{c} f'(\tilde{c}) +\frac18(3-a\chi \tilde{c} f'(\tilde{c}))\lambda  R_0^2\right) +O(|\lambda| ^{7/2}),
	\end{equation*}
	which can be written as
	\[
	H_1(\lambda)=  \frac{\lambda^{3/2}}{2} g(\lambda) +O(|\lambda| ^{7/2}).
	\]
	The function $g$ has  $\lambda_1= \frac{8(a \chi \tilde{c} f'(\tilde{c}) -1)}{R_0^2\left(3-a\chi \tilde{c}f'(\tilde{c})\right)}$ as root, which changes sign from negative to positive as $a\chi \tilde{c} f'(\tilde{c})$ exceeds 1. Note that $\lambda_1$ approximates a true eigenvalue of $\mathcal A$, for $a\chi \tilde{c}f'(\tilde{c})$ close to  1. In such a case, we can write $\lambda_1 = \frac{4}{R_0^2}(a\chi \tilde{c} f'(\tilde{c})  - 1)+o(|a\chi \tilde{c} f'(\tilde{c})  - 1|)$ and its associated eigen-mode is 
	\begin{align*}
		v_{1\lambda_1} (r,\theta)&=
		\left( \begin{array}{c}
			\lambda_1   I_1(r\sqrt{\lambda_1})\\
			-\chi \tilde{c} \tilde{c}f'(\tilde{c})  I_1(\sqrt{\lambda_1})
		\end{array} \right)
		\cos(\theta)\nonumber \\
		&= \frac{\sqrt{\lambda_1}}{2}\left( \begin{array}{c}
			-\lambda_1r\\
			\chi \tilde{c} \tilde{c} f'(\tilde{c}) 
		\end{array} \right)
		\cos(\theta) +O(|a\chi f'(\tilde{c})  - 1|^{3/2})\nonumber \\
		&=\frac{\sqrt{a\chi \tilde{c} f'(\tilde{c}) - 1}}{2}\left( \begin{array}{c}
			\frac{4}{R-0^2} (1-a \chi \tilde{c} f'(\tilde{c}))r\\
			\chi  
		\end{array} \right)
		\cos(\theta)    +O(|a\chi \tilde{c} f'(\tilde{c}) - 1|^{3/2}),
	\end{align*} 
	where we expanded the Bessel functions for small values of $|\lambda_1|$. \\
	
	Note that as $a\chi \tilde{c} f'(\tilde{c})$ crosses $1$,  the solute gradient component in $v_{1\lambda_1}(r, \theta)$, that is the first component, changes its sign and $v_{1\lambda_1}$ becomes unstable.

\end{proof}

\section{Proof of Propositions \ref{prop:cP} and \ref{prop:non-ex-TW}}\label{sec:low_bound}

\begin{proof}[Proof of Proposition \ref{prop:cP}]
\noindent
{\it Concerning $P$.}\\
Since $\Omega(t)=\tO+ (V,0)t$, the shape is stationary and using \eqref{eq:cin_TW_1} we have
\begin{equation*} 
	(V,0)\cdot\bm{n}-\bm{u}_{\bm{\mathcal C}}\cdot \bm{n}=-(\nabla P+\bm{u}_{\bm{\mathcal C}})\cdot \bm{n}=0 \qquad \mbox{in } \tGa,
\end{equation*}
where $\bm{u}_{\bm{\mathcal C}}$ is the velocity of the center of mass defined by \eqref{def:vit_center_mass}. Hence, 
\[
\bm{u}_{\bm{\mathcal C}}=(V,0)=V \bm{e}_x.
\]

Let us define $\tilde{\bm{u}}$ in $\tO$ by
\[
\tilde{\bm{u}} = -\nabla P  - \bm{u}_{\bm{\mathcal C}}:= \nabla \Phi  ,
\] 
with 
\[ 
\Phi =-P - Vx.
\] 
Then, we see that 
\begin{equation*} 
	- \int_{\tO}\Phi \Delta \Phi \dx \dy = \int_{\tO}|\nabla \Phi |^2 \dx \dy -\int_{\tGa}\Phi \nabla \Phi \cdot \bm{n}\dsigma.
\end{equation*}
Thanks to incompressibility, we deduce that $\Delta \Phi = 0$ in $\tO$. Moreover, using the stationary shape condition \eqref{eq:TW_shape} we get $\nabla \Phi \cdot \bm{n} = \tilde{\bm{u}} \cdot  \bm{n} = 0$ over $\tGa$.  Thus,
\begin{equation*} 
	0 = \int_{\tO}|\nabla \Phi |^2 \dx \dy =\int_{\tO}|\tilde {\bm{u}} |^2 \dx \dy.
\end{equation*} 
In other words, we find that the traveling state is characterized by a uniform flow of the entire fluid bulk, i.e. 
\[
\forall t \ge 0 \quad \bm{u}(\cdot, t) =-\nabla P(\cdot, t) =  \bm{u}_{\bm{\mathcal C}}= (V,0) \quad \textrm{ in }\tO ,
\]
hence 
\begin{equation*}\label{eq:P_TW}
	P(x,y)=p_1-V x  \qquad \mbox{ on } \tGa.
\end{equation*} 
Substituting $-\nabla P  = (V,0)$ in \eqref{eq:marqueur_TW_1} -- \eqref{eq:marqueur_bord_TW_1} it yields 
\begin{align*}
	 \cn \left(ac(V,0)+\nabla c\right)&= 0 &&  \mbox{ in } \tO,\\ 
	ac(V,0)\cdot \bm{n} +\nabla c \cdot \bm{n}&= 0&& \mbox{ on } \partial\tO . 
\end{align*}

\noindent
{\it Concerning $c$.}\\
The fact that
\begin{equation*}\label{eq:c_TW}
	c(x,y) = c_1e^{-aV x} \qquad \mbox{ in } \tO.
\end{equation*}
follows from Lemma \ref{lem:stat_sol}.\\

\noindent
{\it Concerning \eqref{eq:TW_shape}.}\\
Working in the reference frame of the moving domain, we substitute the expressions  of $P(x,y)$ and $c(x,y)$ in the normal force balance \eqref{eq:pression_bord} and we get that the curvature must satisfy \eqref{eq:TW_shape}.
\end{proof}

\begin{lemm}\label{lem:stat_sol}
	Any non negative solution of \eqref{eq:marqueur_TW_1} -- \eqref{eq:marqueur_bord_TW_1} is given by
	\begin{equation*} 
		c(x,y)=c_1 e^{-aV x},
	\end{equation*}
	where $c_1>0$.
\end{lemm}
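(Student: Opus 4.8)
The plan is to show that the unique (up to scalar multiple) nonnegative solution of the elliptic boundary-value problem
\begin{align*}
\cn\!\left(ac(V,0)+\nabla c\right) &= 0 && \text{in } \tO,\\
\left(ac(V,0)+\nabla c\right)\cdot\bm{n} &= 0 && \text{on } \tGa,
\end{align*}
is the exponential $c(x,y)=c_1 e^{-aVx}$. First I would verify directly that this candidate solves the system: since $\nabla c = -aV c\,\bm{e}_x$, the flux $ac(V,0)+\nabla c = acV\bm{e}_x - aVc\bm{e}_x = 0$ vanishes identically in $\tO$, so both the interior divergence equation and the Neumann-type boundary condition hold trivially. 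It remains to establish uniqueness up to a multiplicative constant.

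For uniqueness, the key step is the substitution $c = e^{-aVx}\,w$, which turns the drift-diffusion operator into a self-adjoint (weighted Laplacian) form. A short computation gives
\[
ac(V,0)+\nabla c = e^{-aVx}\nabla w,
\]
so the system becomes $\cn\!\left(e^{-aVx}\nabla w\right)=0$ in $\tO$ with $e^{-aVx}\nabla w\cdot\bm{n}=0$ on $\tGa$. Multiplying by $w$ and integrating by parts over $\tO$ (using that $\tGa$ is $\mathcal{C}^{2,1}$, hence regular enough for the divergence theorem, and that the weight $e^{-aVx}$ is smooth and strictly positive on the bounded set $\overline{\tO}$), the boundary term drops out and one obtains
\[
\int_{\tO} e^{-aVx}\,|\nabla w|^2\,\dx\dy = 0.
\]
Since the weight is bounded below by a positive constant on $\overline{\tO}$ and $\tO$ is connected (being the interior of a Jordan curve), this forces $\nabla w \equiv 0$, hence $w$ is a constant $c_1$, and therefore $c = c_1 e^{-aVx}$. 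Nonnegativity of $c$ forces $c_1\ge 0$; if $c_1=0$ then $c\equiv0$, which is excluded once we ask for a genuine (mass-carrying) solution, so $c_1>0$.

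The main obstacle is essentially bookkeeping rather than conceptual: one must make sure the integration by parts is legitimate, i.e.\ that $c$ (equivalently $w$) is regular enough up to the boundary and that the Jordan domain $\tO$ is connected so that a function with vanishing gradient is globally constant. Both are guaranteed by the standing assumptions in Definition \ref{def:TW_1} ($C^\infty$ solutions on a $\mathcal{C}^{2,1}$ domain) together with Remark \ref{rmk:equiv-def}, so no delicate regularity theory is needed; the whole argument reduces to the Dirichlet-energy identity for the conjugated, divergence-form operator.
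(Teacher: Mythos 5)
Your proposal is correct and follows essentially the same route as the paper: the paper also writes $c=C_1(x,y)e^{-aVx}$, multiplies the divergence equation by $C_1$, integrates by parts using the no-flux boundary condition, and obtains $\int_{\tO}|\nabla C_1|^2 e^{-aVx}\,\dx\dy=0$, forcing $C_1$ to be constant. Your explicit observation that $ac(V,0)+\nabla c=e^{-aVx}\nabla w$ is just a cleaner way of recording the same computation.
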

\begin{proof}
	It is straightforward that $ e^{-aV x}$ is a solution of \eqref{eq:marqueur_TW_1} -- \eqref{eq:marqueur_bord_TW_1}. 
	Assume that $c(x,y)=C_1(x,y) e^{-aV x}$ is solution of \eqref{eq:marqueur_TW_1} -- \eqref{eq:marqueur_bord_TW_1}, multiplying \eqref{eq:marqueur_TW_1} by $C_1(x,y)$ and integrating by parts we obtain
	\[
	\int_{\tO} \nabla C_1(x,y) \cdot\left( \nabla \left(C_1(x,y) e^{-aV x} \right) +a(V,0) C_1(x,y) e^{-aV x} \right) \dx \dy=0,
	\]
	hence
	\[
	\int_{\tO} |\nabla C_1(x,y)|^2 e^{-aV x} \dx \dy =0,
	\]
	which implies $\nabla c_1=0$.
\end{proof}

\subsection{Graph formulation and proof of Proposition \ref{prop:non-ex-TW}} \label{sec:graph}

Given $\gamma, p_1, a,\chi>0$, we look for a set $\tO$, solution of \eqref{eq:TW_shape}, in the particular form:
\begin{equation}\label{eq:Omega0}
	\tO= \{(x,y)\in \xR^2\, ;\, x_L<x<x_R,\; -h(x)<y<h(x) \}
\end{equation}
for some positive function $h(x)$ defined on an interval  $(x_L,x_R)$ and satisfying:
\begin{subequations}\label{bc:h}
\begin{align} 
		h(x_L)&=h(x_R)=0, \label{bc:h-1}\\
		h'(x_L)&=+\infty,\label{bc:h-2} \\
		h'(x_R)&=-\infty.\label{bc:h-3}
\end{align}
\end{subequations}
Conditions \eqref{bc:h} have the following meaning. We want symmetric domains $\tO$, hence we impose \eqref{bc:h-1}. We also want smooth  boundaries, so we require \eqref{bc:h-2} -- \eqref{bc:h-3}. A possible domain is given in the following picture.
\begin{figure}[h]
\centering
\scalebox{.5}{
\begin{tikzpicture}[domain=-2*pi:pi]
\draw[-] (-3*pi,0)--(2*pi,0);

\draw [blue!70!black,very thick] plot [smooth, tension=1] coordinates { 
(-2*pi,2)  (-1.3*pi,3) (-pi,1) (-.5*pi,2.5) (0, 1.7)  (.5*pi,2.8) (pi, 2)};

\draw [blue!70!black,very thick] plot [smooth, tension=1] coordinates { 
(-2*pi,-2)  (-1.3*pi,-3) (-pi,-1) (-.5*pi,-2.5) (0, -1.7)  (.5*pi,-2.8) (pi, -2)};

\begin{scope}[transform canvas={xshift = -4.25cm},scale=2.85]
\draw [blue!70!black,very thick,domain=135:225] plot ({cos(\x)}, {sin(\x)});
\end{scope}
\begin{scope}[transform canvas={xshift = 1.1cm},scale=2.85]
\draw [blue!70!black,very thick,domain=-45:45] plot ({cos(\x)}, {sin(\x)});
\end{scope}
\end{tikzpicture}
}
\end{figure}

We will further fixe the invariance by translation in the $\bm{e}_x$ direction by requiring that 
\begin{equation}\label{eq:h'0} 
x_L<0<x_R, \quad	h'(0)=0.
\end{equation}

Our first task is to write equation \eqref{eq:TW_shape}, i.e.
\[
\gamma \kappa(x)= p_1 - V x - \chi f\left(  \frac{M}{\int_{\Omega_{0}}e^{-aVx'}\dx' \dy'  }e^{-aVx}\right) \qquad \textrm{  on }   \tGa,
\] 
with \eqref{eq:Omega0} -- \eqref{bc:h}  in  terms of the function $h(x)$. Since the boundary conditions \eqref{bc:h} concern the function $h$ and also its derivative $h'$, it is natural to identify a problem for which $h'$ is solution and then find the function $h$ by integration. For this reason, we change variables.
 
We first notice that the tangent vector $\bold{t}$, the normal vector $\bold{n}$ and the mean-curvature $\kappa$ of $\tO$ are defined by
\[
\bold{t}(x)=-\frac{(1,h'(x))}{\sqrt{1+(h'(x))^2}}, \quad \bold{n}(x)=\frac{(-h'(x),1)}{\sqrt{1+(h'(x))^2}} , \quad \kappa(x) = \displaystyle -\frac{h''(x)}{(1+(h'(x))^2)^{3/2}} .
\]
These quantities can be written easily using the function $Y(x)$ defined by 
\begin{equation} \label{eq:defY}
	Y(x)=\bold{e}_x \cdot \bold{n}(x) = \bold{n}_x (x) = \displaystyle -\frac{h'(x)}{\sqrt{1+(h'(x))^2}}.
\end{equation}
In particular, we have 
\[
\kappa (x)= Y'(x) ,
\]
which is consistent with Frenet's formula $\bold{n}'_x (x)= - \kappa \sqrt{1+(h'(x))^2} \, \bold{t}_x(x)$.\\
Then, equation \eqref{eq:TW_shape}  and condition \eqref{eq:h'0}   reduce to the following initial value problem:
\begin{equation} \label{eq:Y}
	\begin{cases}
		\gamma Y'(x) &= p_1 - Vx - \chi f \left( c_1 e^{-aVx}\right)  \quad \mbox{on } (x_L,x_R), \\
		Y(0)&=0,
	\end{cases}
\end{equation}
where 
\begin{equation}\label{def:normalisation_0}
	c_1:=\frac{M}{\int_{\tO}  e^{-aVx}   \dx\dy} .
\end{equation}  
Taking advantage of the fact that $e^{-aVx}$ only depends on the $x$-coordinate, we  rewrite $c_1$ in \eqref{def:normalisation_0} as
\begin{equation*} 
	c_1=\frac{M}{2\int_{x_L}^{x_R}  e^{-aVx}h(x)   \dx}.%
\end{equation*}  
Inverting \eqref{eq:defY}, it yields  
\begin{equation}\label{eq:inversionh}
h'(x)=-\displaystyle \frac{Y(x)}{\sqrt{1-Y^2(x)}},
\end{equation}
integrating by parts, we see that 
\[
c_1=\frac{aVM}{2\int_{x_L}^{x_R}  e^{-aVx} h'(x) \dx}=-\frac{aVM}{2\int_{x_L}^{x_R}  e^{-aVx} \frac{Y(x)}{\left(1-Y^2(x)\right)^{1/2}}\dx},
\]
 
Such a solution has to satisfy the following  properties.  
The boundary conditions \eqref{bc:h-2} -- \eqref{bc:h-3} imply that $Y$ has to verify 
\begin{align} \label{cond:Y}
	Y(x_L)=-1 \quad \mbox{ and } \qquad Y(x_R)=1.
\end{align}
We postpone the proof of the existence and uniqueness of a solution verifying \eqref{cond:Y} to Section \ref{sec:construction} (see Lemmas \ref{prop:x_L} and \ref{prop:x_R}).\\

Finally, using \eqref{eq:inversionh}, we recover the function $h$ 
and then we integrate this relation on $(x_L,x)$. \\
 
We recall that the 
boundary condition \eqref{bc:h-1} requires the function $Y$ to satisfy:
\begin{equation}\label{eq:condYh}
h(x_R)-h(x_L)=-	\int_{x_L}^{x_R} \frac{Y(x)}{\sqrt{1-Y^2(x)}} \dx = 0 .
\end{equation}

\begin{proof}[Proof of Proposition \ref{prop:non-ex-TW}]
We claim that $\kappa$ is a non-increasing function. Indeed, let $0<x_1-x_2\ll1$. Then, $\kappa(x_1)-\kappa(x_2)$ reads
\begin{align*}
\kappa(x_1)-\kappa(x_2)&=-V\pare{x_1-x_2}+\chi \pare{f\pare{c_1 e^{-aVx_2}}-f\pare{c_1 e^{-aVx_1}}}\\
&\simeq V\pare{\chi a  c_1 e^{-aV x_1}f'(c_1 e^{-aV x_1})-1}(x_1-x_2)\\
&<0
\end{align*}
by \eqref{def:no_TW}. 
It follows that the mean curvature is a decreasing function.  Suppose by contradiction that there exists a closed curve $\partial \tO$ whose mean curvature is $\kappa$. Since
\begin{align*}
\begin{pmatrix}
0\\0
\end{pmatrix}=\int_{\partial \tO} \kappa \bm{n} \dsigma
=\int_{\tO} \nabla \kappa \dx \dy
=2\begin{pmatrix}
\int_{x_L}^{x_R} h(x) \kappa'(x) \dx\\0
\end{pmatrix} , 
\end{align*}  
it yields a contradiction with $\kappa'(x) < 0$ and $h(x)\ge 0$, this proves Proposition~\ref{prop:non-ex-TW}.
\end{proof}

\section{Proof of Theorem \ref{thm:TW_intro}}\label{sec:construction}

This Section is devoted to the proof of Theorem \ref{thm:TW_intro} and it gives a constructive proof of the existence of 
traveling wave solutions of \eqref{eq:pb} when 
\[
\chi>\chi^*:= \frac{\pi  R_0^2}{aM f'\left(\frac{M}{\pi R_0^2}\right)}.
\]
Here, the constant $a\in (0,1]$ and $M$ is defined in \eqref{eq:M}. Furthermore, $R_0>0$ is the radius of $B_{R_0}$, where $\av{B_{R_0}}\equiv\av{\Omega_0}$. We recall that, assuming $\av{B_{R_0}}\equiv\av{\Omega_0}$, then Lemma \ref{lem:stat} assures that problem \eqref{eq:pb} admits a unique stationary solution which has the form
\[
\pare{\tilde{c},\wt{P}}=\pare{\frac{M}{\pi R_0^2},\frac{\gamma }{R_0}+\chi f\pare{\frac{M}{\pi R_0^2}}}=\pare{\tilde{c},\frac{\gamma }{R_0}+\chi f(\tilde{c})}.
\]
Hence, the above threshold on $\chi$ can be equivalently written as 
\begin{equation}\label{eq:chi*}
\chi>\chi^*=\frac{1}{a\tilde{c}f'(\tilde{c})}
\end{equation}
(see \eqref{def:chi_star} too).

More precisely, we will prove the following result.
\begin{theo}\label{thm:TW_intro_2}
	Assume that $f$ satisfies assumptions in \eqref{A}. Then, for all $a\in (0,1]$, $\gamma$, $p_1>\chi L$, there exists a one parameter family of traveling wave  solutions $(\tO_{\chi }^{p_1},V_{\chi }^{p_1})\in \xR^2\times\xR$  of \eqref{eq:pb} (with $\bold{u} =\bold{e}_x$), parametrized by $\chi$ such that
	\[
	\chi^*\le\chi<\infty
	\]
	and satisfying:
	\item[(i)] $V_{\chi}^{p_1}>0$ when $\chi>\chi^* $.  
	\item[(ii)] There exists
	$x_L,\,x_R\in\xR$, with $x_L<0<x_R$, and $0\le h$ such that the set $\tO_{\chi}^{p_1}$ is a convex set with $\mathcal{C}^{2,1}$ boundary of the form
	\[
	\tO_{\chi}^{p_1} = \{ (x,y)\, ;\, x_L<x<x_R,\; -h(x)<y<h(x)\},
	\] 
	with  
	$h$ satisfying $ h'(0)=0$.
	\item[(iii)] The normal vector $\bold{n}$ is the vertical vector $(0,1)$ at the point $m=(0,h(0))\in\partial \tO_{\chi}^{p_1}$, and the curvature is given by
	\begin{equation}\label{eq:p1}
		\kappa(m)=\frac{p_1}{\gamma}-\frac{\chi}{\gamma} f(c_1). 
	\end{equation}
	\item[(iv)] 	
	Assume that $f$ statisfies the additional assumption \begin{equation}\label{hyp:supplementaire}
		s f'(s) \le \tilde c f'(\tilde c) \q\forall s\in \xR^+.
	\end{equation}
	Then, $V_{\chi}^{p_1} \to V_{\chi^*}^{p_1}:=0$ when $\chi\to \chi^*$.
	Furthermore, $\Omega_{\chi^* }^{p_1} $ is a disk of radius $R_0$ and the following relation holds   
	\begin{equation}\label{eq:compati_TW}
		\gamma = R_0 \left( p_1-\frac{\pi R_0^2 f\left(\frac{M}{\pi R_0^2}\right)}{aM f'\left(\frac{M}{\pi R_0^2}\right)} \right).
	\end{equation}
\end{theo}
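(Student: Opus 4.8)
\textbf{Proof plan for Theorem \ref{thm:TW_intro_2}.}
The plan is to reduce the existence problem to solving the scalar ODE \eqref{eq:Y} for the angle function $Y(x)=\bm n_x(x)$ on an a priori unknown interval $(x_L,x_R)$, together with the two compatibility conditions $Y(x_L)=-1$, $Y(x_R)=1$ from \eqref{cond:Y} and the closure condition \eqref{eq:condYh}. Since the right-hand side of \eqref{eq:Y} is $p_1-Vx-\chi f(c_1 e^{-aVx})$ and $f$ is bounded by $L$, the hypothesis $p_1>\chi L$ guarantees that for $V$ small the right-hand side is strictly positive near $x=0$, so $Y$ is increasing there; the large-$|x|$ behaviour ($-Vx$ dominates) then forces $Y$ to reach $\pm 1$ at finite points $x_L<0<x_R$. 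This is exactly the content I would package into the two lemmas announced in the excerpt (Lemmas \ref{prop:x_L} and \ref{prop:x_R}): for each admissible pair $(V,c_1)$ there is a unique such interval, with $x_L,x_R$ depending continuously (indeed smoothly, by the implicit function theorem applied at the simple zeros where $Y'\neq 0$, which holds since $Y'=\kappa>0$ there) on the parameters. Integrating \eqref{eq:inversionh} then recovers $h$ with $h(x_L)=0$; the remaining freedom is fixed by the volume/normalization identity \eqref{def:normalisation_0}, which together with the closure condition \eqref{eq:condYh} becomes a system of two equations for the two unknowns $(V,c_1)$.

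The heart of the argument is therefore a fixed-point / continuity scheme for this $2\times 2$ system. I would proceed as follows. First, treat $c_1$ as determined by $(V,\Omega)$ through \eqref{def:normalisation_0}; substituting the integration-by-parts form $c_1=-aVM\big/\big(2\int_{x_L}^{x_R}e^{-aVx}Y/(1-Y^2)^{1/2}\,\dx\big)$, one sees the map is a contraction in $c_1$ for the relevant range (the kernel $e^{-aVx}$ is bounded on the compact interval and the $Y/\sqrt{1-Y^2}$ factor is integrable because near $x_L,x_R$ one has $1-Y^2\sim |Y'|\,|x-x_{L,R}|$, an integrable square-root singularity), so $c_1=c_1(V)$ is well-defined and smooth. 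Then the single remaining scalar equation is the closure condition $\Psi(V):=\int_{x_L(V)}^{x_R(V)}Y(x;V)/\sqrt{1-Y^2(x;V)}\,\dx=0$. One checks $\Psi$ is continuous (even $C^1$) in $V$, that $\Psi$ has a sign at $V$ near $0^+$ coming from the known $V=0$ disk solution, and that for $\chi>\chi^*$ a perturbative expansion around the disk makes $\Psi$ change sign, yielding a root $V=V_\chi^{p_1}>0$ by the intermediate value theorem. The delicate point is to show $V_\chi^{p_1}>0$ strictly (not the trivial $V=0$ disk): this is where the threshold $\chi>\chi^*$ enters, exactly as in the linear stability analysis — the linearization of $\Psi$ at $V=0$ around the radial solution picks up the factor $1-\chi/\chi^*$ (compare \eqref{eq:Q:bord} and the expansion of $H_1$ in the proof of Theorem \ref{the:lin_stab}), so the disk ceases to be the only solution precisely when $\chi$ crosses $\chi^*$.

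For parts (i)–(iii): (i) is the strict positivity of the root just discussed, plus the observation that at $\chi=\chi^*$ the disk $B_{R_0}$ itself solves the system with $V=0$, provided the compatibility relation \eqref{eq:compati_TW} holds — and here one computes directly that for the disk $c_1=M/(\pi R_0^2)=\tilde c$, $\kappa\equiv 1/R_0$, and \eqref{eq:Y} at $x=0$ with $Y(0)=0$, $Y'(0)=\kappa=1/R_0$ gives $\gamma/R_0=p_1-\chi^* f(\tilde c)=p_1-\pi R_0^2 f(M/\pi R_0^2)/(aMf'(M/\pi R_0^2))$, which is exactly \eqref{eq:compati_TW}; so I would impose \eqref{eq:compati_TW} as the definition of the base point $\chi=\chi^*$ and bifurcate from it. (ii) Smoothness of $\partial\tO$ follows because $Y\in C^{1,1}$ solves \eqref{eq:Y} with $C^1$ right-hand side and $h$ is obtained by integrating \eqref{eq:inversionh}; convexity is equivalent to $\kappa\ge 0$, i.e. $Y'\ge 0$ on $(x_L,x_R)$, i.e. $p_1-Vx-\chi f(c_1 e^{-aVx})\ge 0$ throughout, and I would establish this under the stated parameter ranges by a monotonicity argument on $x\mapsto p_1-Vx-\chi f(c_1 e^{-aVx})$: it is strictly decreasing in $x$ (sum of the decreasing $-Vx$ and, since $f$ is increasing and $e^{-aVx}$ is decreasing, the decreasing $-\chi f(c_1e^{-aVx})$), so it suffices that it be $\ge 0$ at $x=x_R$, which one gets from the boundary relation $\gamma Y'(x_R)\ge 0$ forced by $Y(x_R)=1$ being a boundary maximum — actually $Y'(x_R)$ could vanish, so the cleaner statement is that it is $\ge 0$ on the whole interval because it is positive at $x=0$ (by $p_1>\chi L\ge\chi f(c_1)$) and its only sign change could be downward, which would push $Y$ back below $1$ before $x_R$, contradicting the construction of $x_R$ as the first point where $Y=1$. (iii) is immediate: at $m=(0,h(0))$ one has $h'(0)=0$ by \eqref{eq:h'0}, hence $Y(0)=0$ and $\bm n(0)=(0,1)$, and then \eqref{eq:Y} at $x=0$ reads $\gamma\kappa(m)=\gamma Y'(0)=p_1-\chi f(c_1)$, which is \eqref{eq:p1}.

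\textbf{Main obstacle.} The routine parts are the ODE theory for $Y$ and the recovery of $h$; the genuine work is twofold. First, the normalization fixed point: proving $c_1=c_1(V)$ is well-defined, smooth, and that the square-root singularities of $Y/\sqrt{1-Y^2}$ at the endpoints are integrable uniformly in $V$ — this requires a careful local analysis near $x_L,x_R$ showing $1-Y(x)^2\asymp |x-x_{L,R}|$, which in turn needs $Y'(x_{L,R})\neq 0$, i.e. the right-hand side of \eqref{eq:Y} nonzero there; I expect one must rule out the degenerate case $p_1-Vx_R-\chi f(c_1e^{-aVx_R})=0$ separately, perhaps by noting it would force a corner. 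Second, and more seriously, the bifurcation step: showing that for $\chi>\chi^*$ the closure equation $\Psi(V)=0$ admits a solution with $V>0$ and not merely the trivial disk. Under the extra hypothesis \eqref{hyp:supplementaire} (which makes $sf'(s)$ maximal at $s=\tilde c$, giving $\chi^*$ its clean meaning and ensuring the relevant sign of $\Psi'$) I would carry this out by a quantitative expansion of $x_L,x_R,c_1,\Psi$ in powers of $V$ about the disk, identifying the leading coefficient as proportional to $(1-\chi/\chi^*)$, and concluding via the intermediate value theorem for $\chi$ slightly above $\chi^*$, then extending to all $\chi>\chi^*$ by a continuation/monotonicity argument on the branch.
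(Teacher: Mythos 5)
Your overall reduction (the ODE \eqref{eq:Y} for $Y$, the endpoint conditions \eqref{cond:Y}, the closure condition \eqref{eq:condYh} as a scalar equation $\Psi(V)=0$, and the recovery of $h$ by integrating \eqref{eq:inversionh}) matches the paper, as do the existence arguments for $x_L$ and $x_R$ and the treatment of (iii). But there are two genuine gaps. First, your intermediate value argument is missing its positive endpoint. You correctly identify that $\Psi(V)<0$ for $0<V\ll 1$ when $\chi>\chi^*$ (the factor $1-\chi/\chi^*$), but you never exhibit a $V$ where $\Psi>0$; instead you propose to conclude for $\chi$ slightly above $\chi^*$ and then "extend to all $\chi>\chi^*$ by a continuation/monotonicity argument on the branch", for which no monotonicity is available and which is not how the result is obtained. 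The paper's key observation (Proposition \ref{prop:c}) is that $\Psi(V)=G(V)\to+\infty$ as $V\to V_{\max}$: at $V=V_{\max}$ one has $Y'(x_R)=0$ by \eqref{eq:Y'R}, so $1-Y^2\sim (x-x_R)^2$ and the integrand $Y/\sqrt{1-Y^2}\sim 1/|x-x_R|$ is non-integrable at the right endpoint. This divergence, together with $G<0$ near $0^+$, gives a root $V_\chi\in(0,V_{\max})$ for \emph{every} $\chi>\chi^*$ in one stroke (Corollary \ref{cor:d}); no perturbative or continuation step is needed, and the additional hypothesis \eqref{hyp:supplementaire} is used only for part (iv) (to force $V_\chi\to 0$ as $\chi\to\chi^*$), not for existence as you suggest.

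Second, your convexity argument for (ii) rests on a false monotonicity claim. Since $f$ is increasing and $x\mapsto c_1e^{-aVx}$ is decreasing (for $V>0$), the map $x\mapsto -\chi f(c_1e^{-aVx})$ is \emph{increasing}, not decreasing, so the right-hand side $p_1-Vx-\chi f(c_1e^{-aVx})$ of \eqref{eq:Y} is a sum of a decreasing and an increasing term and has no a priori monotonicity. Worse, if it were strictly decreasing then $\kappa=Y'$ would be strictly decreasing along the boundary, which is exactly the situation ruled out for closed curves in the proof of Proposition \ref{prop:non-ex-TW}; so a correct convexity proof cannot go through monotonicity of the curvature. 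The paper instead gets $Y'\ge 0$ separately on $[x_L,0]$ (from the crude bound $\gamma Y'\ge p_1-L\chi-Vx\ge p_1-L\chi>0$ for $x\le 0$, Lemma \ref{prop:x_L}) and on $[0,x_R]$ (because $x_R$ is constructed inside the maximal interval on which $Y'\ge 0$, Lemma \ref{prop:x_R}), and then assembles these in Proposition \ref{prop:convex}. Your flagging of the $c_1$ self-consistency and of the square-root integrability at the endpoints is legitimate and is indeed handled only lightly in the paper, but the two points above are the ones that would make your write-up fail.
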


\begin{rema}[On Theorem \ref{thm:TW_intro_2}]
	Property (i) guarantees in particular that we are constructing non-trivial traveling wave solutions (i.e. not stationary solutions). Property (ii) fixes the natural invariance by translation of the model. Properties (iii) and (iv) relate the value of the parameter $p_1$ to some geometric property of $\tO_{ \chi }^{p_1}$ namely its area is $|\tO_{ \chi}^{p_1}|=2 \int_{x_L}^{x_R} h(x)\dx$. It proves that each value of $p_1$ yields of different traveling wave, see \eqref{eq:p1}. In particular, if we let $\chi\to\chi^*$, this relation becomes
	\begin{equation}\label{eq:compati_2_TW}
		p_1= \frac{\gamma}{R_0}+\frac{\pi R_0^2 f\left(\frac{M}{\pi R_0^2}\right)}{a M f'\left(\frac{M}{\pi R_0^2}\right)},
	\end{equation}
	and it	suggests that increasing values of $p_1$ correspond to sets with decreasing volume. It is the case for the prototype example \eqref{eq:prototype_example} of function $f$. 
\end{rema}

\begin{rema}
	A prototype example of a function satisfying \eqref{A} and \eqref{hyp:supplementaire} is 
	\begin{equation}\label{eq:prototype_example}
		f(x)= \frac{ Lx}{\tilde c + x}.
	\end{equation}
\end{rema}

In this Section we prove the following Proposition, which implies Theorem~\ref{thm:TW_intro_2}.
\begin{prop}\label{prop:Y}
	Assume that $f$ satisfies the assumptions in \eqref{A}. Then, given $a$, $M$ and $R_0$ as before, and letting $\chi$ as in \eqref{eq:chi*},	and
	\begin{equation}\label{eq:pchiL}
		p_1  > L\chi,
	\end{equation}
	there exists $V>0$, $x_L,\,x_R\in \xR$ such that the solution $Y(x)$ of \eqref{eq:Y} satisfies the conditions \eqref{cond:Y} and \eqref{eq:condYh}. Furthermore, for a fixed $p_1$, the speed $V$ converges to $0$
	when $\chi$ approaches the critical value $\chi^*$.
\end{prop}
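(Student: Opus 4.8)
The plan is to analyze the initial value problem \eqref{eq:Y} as a relatively soft shooting/ODE argument, where the three unknowns are the speed $V>0$ and the two endpoints $x_L<0<x_R$, and the three conditions to be satisfied are $Y(x_L)=-1$, $Y(x_R)=1$, and the closure condition \eqref{eq:condYh}. First I would observe that, given $V>0$ and given a candidate value of the normalization constant $c_1>0$, the right-hand side of the ODE, namely $g(x):=p_1-Vx-\chi f(c_1 e^{-aVx})$, is a smooth, strictly decreasing function of $x$ (since $f$ is increasing by \eqref{A} and $x\mapsto c_1 e^{-aVx}$ is decreasing). Therefore $\gamma Y(x) = \int_0^x g(s)\,ds$ is strictly concave, is zero at $x=0$, is increasing for $x<x_g$ and decreasing for $x>x_g$ where $x_g$ is the unique zero of $g$; the hypothesis $p_1>\chi L$ guarantees $g(x)>0$ for all $x\le 0$ (because $f\le L$), so $Y$ is strictly increasing on $(-\infty,0]$ and $Y\to -\infty$ there, while $Y\to-\infty$ as $x\to+\infty$ as well. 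Hence $Y$ attains its maximum value $M(V,c_1):=\gamma^{-1}\max_x\int_0^x g$ at $x_g>0$, and the equations $Y(x_L)=-1$, $Y(x_R)=1$ have solutions $x_L<0<x_R$ precisely when this maximum exceeds $1$ (with $x_L$ always solvable for any $V$, and $x_R$ solvable iff the max is $\geq 1$; one then proves strict monotonicity in $V$ to get uniqueness — this is where Lemmas \ref{prop:x_L} and \ref{prop:x_R} enter).

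Second, the genuinely coupled part is that $c_1$ itself depends on the solution through \eqref{def:normalisation_0}: once $x_L,x_R$ and $Y$ are determined, $h$ is recovered by \eqref{eq:inversionh} and $c_1=M/(2\int_{x_L}^{x_R}e^{-aVx}h(x)\,dx)$. I would set this up as a fixed-point problem: given $V$, define a map $c_1\mapsto \Phi_V(c_1)$ by solving \eqref{eq:Y} with that $c_1$, extracting $x_L(V,c_1)<0<x_R(V,c_1)$ from \eqref{cond:Y}, recovering $h$, and reading off the new normalization constant. One shows $\Phi_V$ maps a suitable compact interval of positive reals into itself and is continuous (continuity of solutions of ODEs in parameters, plus continuity of the implicitly defined endpoints via the strict concavity of $Y$, which makes the level-set crossings transverse), and applies Brouwer/Schauder to get a fixed point $c_1(V)$. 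Alternatively — and this is likely cleaner — one uses the integrated form of $c_1$ derived in the excerpt, $c_1 = -aVM\big/\big(2\int_{x_L}^{x_R}e^{-aVx}\frac{Y}{\sqrt{1-Y^2}}\,dx\big)$, which already incorporates $h'=-Y/\sqrt{1-Y^2}$, to close the system more directly.

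Third, the last remaining equation is the closure condition \eqref{eq:condYh}, $\int_{x_L}^{x_R}\frac{Y(x)}{\sqrt{1-Y^2(x)}}\,dx=0$, which must be arranged by choosing $V$ correctly. The idea is a continuity/degree argument in $V$: define $\mathcal{C}(V):=\int_{x_L(V)}^{x_R(V)}\frac{Y_V(x)}{\sqrt{1-Y_V^2(x)}}\,dx$ (with $c_1=c_1(V)$ from the previous step), and show that $\mathcal C$ changes sign, or more precisely that $\mathcal C(V)=0$ has a solution $V>0$ for every $\chi>\chi^*$. At $\chi=\chi^*$ the expected solution is the disk $B_{R_0}$ with $V=0$ (this is the content of part (iv) of Theorem \ref{thm:TW_intro_2} and consistency relation \eqref{eq:compati_TW}); for $\chi$ slightly above $\chi^*$ one performs a perturbative/bifurcation expansion in the small parameter $V$ (or in $\chi-\chi^*$), expanding $Y$, $x_L$, $x_R$, $h$ and $c_1$ in powers of $V$, and checks that the linearization of $\mathcal C$ with respect to $V$ at $V=0$, $\chi=\chi^*$ is nondegenerate — the threshold $\chi^*$ being exactly where the $O(V)$ term in the relevant expansion vanishes, consistent with the linear stability analysis of Section \ref{sec:lin_stab}. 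For $\chi$ far from $\chi^*$ one uses global a priori bounds: $Y$ is trapped in $[-1,1]$ by construction, $g$ controls everything, and $|\Omega|$ (equivalently $\int h$) stays bounded away from $0$ and $\infty$, so the integrals in $\mathcal C$ and in $c_1$ are uniformly controlled, which lets the intermediate value theorem run. The last assertion — $V\to 0$ as $\chi\to\chi^*$ — then follows from uniqueness of the small-$V$ branch together with the explicit first-order expansion, using the extra hypothesis \eqref{hyp:supplementaire} to ensure $s f'(s)\le \tilde c f'(\tilde c)$ so that the disk is the limiting configuration.

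The main obstacle is the coupling: the constant $c_1$ is defined by an integral over the very domain one is trying to construct, so none of the three conditions can be treated in isolation. I expect the heart of the proof to be showing that the composite map $V\mapsto \mathcal C(V)$ (with $c_1(V)$ already solved out by a fixed point) is well-defined, continuous, and sign-changing — in particular the uniform lower bound on $\int_{x_L}^{x_R} e^{-aVx}h(x)\,dx$ (to keep $c_1$ finite and positive) and the transversality at the bifurcation point $\chi=\chi^*$, $V=0$ are the two delicate technical points. The monotonicity statements needed for uniqueness of $x_L$ and $x_R$ in $V$, deferred to Lemmas \ref{prop:x_L} and \ref{prop:x_R}, should be comparatively routine once the concavity structure of $Y$ is in hand.
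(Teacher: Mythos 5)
Your step one contains a sign error that is fatal to the structure you build on it. Since $f$ is increasing and $x\mapsto c_1e^{-aVx}$ is decreasing, the term $-\chi f(c_1e^{-aVx})$ is \emph{increasing} in $x$, not decreasing; computing $g'(x)=V\big(a\chi\, s f'(s)-1\big)$ with $s=c_1e^{-aVx}$ shows that $g$ is strictly decreasing precisely when $a\chi s f'(s)<1$ along the solution, which is essentially the hypothesis \eqref{def:no_TW} of the non-existence result, Proposition \ref{prop:non-ex-TW}. In the regime $\chi>\chi^*$ where the traveling waves are to be constructed, $g$ is not monotone and $Y$ is not concave; indeed, if $Y$ were strictly concave then $\kappa'=Y''<0$ everywhere, and the identity $\int_{x_L}^{x_R}h\,\kappa'\,\dx=0$ (the very argument of Proposition \ref{prop:non-ex-TW}) would rule out any closed curve, i.e.\ your structural premise would preclude the objects you are constructing. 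The paper never uses monotonicity of $g$: it only uses the two-sided bound $p_1-L\chi-Vx\le\gamma Y'(x)\le p_1-Vx$ coming from $0\le f\le L$, which suffices for the existence of $x_L$ (for all $V\ge0$), of $x_R$ (for $V<V_{\max}$), and for $Y'\ge0$ on $[x_L,x_R]$.

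The second gap is that you never identify why the closure functional $G(V)=\int_{x_L}^{x_R}Y/\sqrt{1-Y^2}\,\dx$ ever takes a positive value, so your intermediate value argument has only one endpoint. The paper's small-$V$ expansion gives $G(V)= V\,\frac{1-a\chi\tilde{c} f'(\tilde{c})}{p_1-\chi f(\tilde{c})}\int_{x_L}^{x_R} xH(Y)\dx+o(V)<0$ for $\chi>\chi^*$ (you gesture at this, though your remark that one should check the linearization of $\mathcal C$ in $V$ at $\chi=\chi^*$ is \emph{nondegenerate} is backwards: $\chi^*$ is exactly where that linearization vanishes, and what is used is its strict negativity for $\chi>\chi^*$). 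The positive value comes from the other end of the interval of admissible speeds: at $V=V_{\max}$ the exit is tangential, $Y'(x_R)=0$, so $1-Y^2\sim|x-x_R|^2$ and the integrand behaves like $1/|x-x_R|$, whence $G(V)\to+\infty$ as $V\to V_{\max}$. Your appeal to "global a priori bounds" keeping the integrals "uniformly controlled" points in exactly the wrong direction: the blow-up of $G$ near $V_{\max}$ is what closes the argument. (On the positive side, your fixed-point treatment of the self-consistent constant $c_1$ is a reasonable, arguably more careful, reading of a step the paper glosses over with Cauchy--Lipschitz, and the final claim $V\to0$ as $\chi\to\chi^*$ is handled in the paper much as you suggest, via continuity in $(\chi,V)$ and the sign of $W_{\chi^*}$ under the extra hypothesis \eqref{hyp:supplementaire}.)
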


\begin{proof}[Proof of Theorem \ref{thm:TW_intro_2}]
{\sl Proof of (i).}\\
\noindent
It follows from Proposition \ref{prop:Y}.\\

\noindent
{\sl Proof of (ii).}\\
\noindent
We recall the notation in Section \ref{sec:graph} so that, thanks also to  Proposition \ref{prop:Y}, we set 
	\[
	h(x) = \int_{x_L}^x -  \frac{Y(x')}{\sqrt{1-Y^2(x')}}\dx' \qquad x\in [x_L,x_R] .
	\]
We also recall the definition of the  set $\Omega_{\chi}$ by \eqref{eq:Omega0}, and that  \eqref{eq:condYh} implies that $h(x_L)=h(x_R)=0$.

	The boundary conditions \eqref{cond:Y} imply that the normal vector is continuous since it achieves the values $(-1,0)$ and $(1,0)$ continuously at the extremal points $(x_L,0)$ and $(x_R,0)$. This means that $\partial \Omega_{\chi}$ is at least $C^1$.\\ 
	Furthermore, we have
	$ \kappa(x) = Y'(x) $
	hence Proposition \ref{prop:convex} implies 
	\[
	0\leq \gamma \kappa \leq p_1 +4\chi  +\sqrt{\left(  p_1 -4\chi \right)^2+2V}   \mbox{ on } \partial \Omega_{\chi} ,
	\] 
	thus $\Omega_{ \chi}$ is convex and $\partial \Omega_{ \chi}$ is $C^{1,1}$. 
	In turns, \eqref{eq:Y} can be used to show that $Y'$, and therefore $\kappa$ is Lipschitz continuous so that the boundary $ \partial \Omega_{ \chi}$ is $\mathcal{C}^{2,1}$ and satisfies \eqref{eq:TW_shape}.\\
	
	\noindent
{\sl Proof of (iii).}\\
\noindent
	Finally, since $Y(0)=0$, we get that $h'(0)=0$, and from \eqref{eq:Y}  it follows that the mean-curvature of $\pa \Omega_{ \chi}$ at the point $(0,h(0))$ is given by $\gamma Y'(0)=p_1-\chi f(c_1)$.
	When $ \chi \to \chi^*$, we have $V\to 0$ and so $Y$ converges to the solution of 
	\[
	\gamma Y'=p_1-\frac{f(\tilde{c})}{a \tilde{c} f'(\tilde{c})}. 
	\]
	In particular $\Omega_{\chi}$ converges to the set with constant mean curvature $\frac1\gamma \left(p_1-\frac{f(\tilde{c})}{a \tilde{c} f'(\tilde{c})}\right)$, that is the ball $B\left( O,\frac{\gamma}{ p_1-\frac{f(\tilde{c})}{a \tilde{c} f'(\tilde{c})}}\right)$.\\
	Recalling the definition \eqref{eq:stat} of	$\tilde{c} =\frac{M}{\pi R_0^2}$, we obtain \eqref{eq:compati_TW} and \eqref{eq:compati_2_TW}.
\end{proof}

The remainder of this section is devoted to the proof of Proposition \ref{prop:Y}. We first find the set of parameters for which the  points $x_L$ and $x_R$ satisfying the condition \eqref{cond:Y} exist.  This is done in Section \ref{sec:x_L}.  The existence of the solution $Y$ is then given by regularity properties of the problem \eqref{eq:Y}. We then fix the parameters $p_1$, $c_1$ and $\chi$ such that $x_L$ exists, by considering the solutions of  \eqref{eq:Y} for those values of $V$ such that also $x_R$ exists, we will prove, in Section \ref{sec:x_L}, that there exists a value $V^*>0$ (depending on  $p_1$, $c_1$ and $\chi $) such that the condition \eqref{eq:condYh} is verified.

\subsection{Proof of Proposition \ref{prop:Y}}\label{sec:x_L}

The difficulty in proving an existence result to \eqref{eq:Y} relies on in the fact that the domain $(x_L,x_R)$ is one of the unknown of the problem. Indeed, we need $(x_L,x_R)$ such that condition \eqref{cond:Y} is satisfied. \\
A preliminary step consists in dealing with the intermediate problem
\begin{equation} \label{eq:Y2}
	\begin{cases}
		\gamma Y'(x) &= p_1 - Vx - \chi f \left( c_1 e^{-aVx}\right)  \quad \mbox{on } (z_L,z_R), \\
		Y(0)&=0,
	\end{cases}
\end{equation}
for
\[
c_1=-\frac{aVM}{2\int_{z_L}^{z_R}  e^{-aVx} \frac{Y(x)}{\left(1-Y^2(x)\right)^{1/2}}\dx},
\]
and given
\[
z_L<0<z_R.
\]
We can thus apply the Cauchy-Lipschitz Theorem to say that there exists a unique solution $Y\in C^2([z_L,z_R])$. 

For the sake of clarity, we decompose the proof into two steps. First, we prove that we can extend the interval $(z_L,z_R)$ to  $(x_L,x_R)$ such that \eqref{cond:Y} holds (see Lemmas \ref{prop:x_L} and \ref{prop:x_R}), and then the existence of $V>0$ (see Subsection \ref{sec:V}). For both steps, we will assume \eqref{eq:chi*} and \eqref{eq:pchiL}, i.e.
\begin{equation}\label{eq:chip}
\chi \ge  
\chi^*= \frac{\pi  R_0^2}{aM f'\left(\frac{M}{\pi R_0^2}\right)} \qquad  \textrm{ and } \qquad  p_1>\chi L.
\end{equation}

\subsubsection{The existence of $x_L,\,x_R$}

We break the proof in two steps. We first give a sufficient condition for the existence of $x_L$.

\begin{lemm}\label{prop:x_L}
	Assume that $f$ satisfies the assumptions in \eqref{A}, and that \eqref{eq:pchiL} holds. For all $V\ge 0$, there exists $x_L<0$ such that the solution of \eqref{eq:Y} satisfies  $-1<Y(x)<0$ for all $x \in (x_L,0)$ and $Y(x_L)=-1$. Moreover,  $x_L$ is such that 
	\[
	\frac{\left(  p_1 -L\chi \right) -\sqrt{\left(  p_1 -L\chi \right)^2+2\gamma V}}{V} < x_L < 0,
	\]
	and we have 
	\begin{equation}\label{eq:Y'x_L}
		Y'(x)\ge \frac{p_1-L\chi}{\gamma}, \quad \forall x \in [x_L,0] .
	\end{equation}
\end{lemm}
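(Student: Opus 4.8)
The plan is to analyze the initial value problem \eqref{eq:Y} on the left of $0$, working forward from $Y(0)=0$ toward decreasing $x$. The key observation is that the right-hand side of the ODE can be bounded below uniformly. Since $f$ is increasing with $f(0)=0$ and $\lim_{x\to\infty}f(x)=L$, we have $0\le f(c_1 e^{-aVx})\le L$ for every real argument, hence $\chi f(c_1 e^{-aVx})\le \chi L$ for all $x$. Therefore, for $x\le 0$,
\[
\gamma Y'(x) = p_1 - Vx - \chi f(c_1 e^{-aVx}) \ge p_1 - Vx - \chi L \ge p_1 - \chi L > 0,
\]
using $V\ge 0$, $x\le 0$ and the hypothesis \eqref{eq:pchiL}. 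This immediately gives \eqref{eq:Y'x_L}: as long as the solution exists and stays in the region $x\le 0$, we have $Y'(x)\ge (p_1-L\chi)/\gamma>0$.

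Next I would run the solution backward from $x=0$. Since $Y(0)=0$ and $Y'>0$ on $(-\infty,0]$ (wherever defined), $Y$ is strictly increasing, so $Y(x)<0$ for $x<0$; in particular $Y$ stays in $(-1,0)$ until it possibly reaches $-1$. To show it does reach $-1$ in finite "time", integrate the lower bound on $Y'$: using $\gamma Y'(x)\ge p_1-Vx-\chi L$ and integrating from $x$ to $0$ (with $x<0$),
\[
-\gamma Y(x) = \gamma(Y(0)-Y(x)) = \int_x^0 \gamma Y'(t)\,dt \ge \int_x^0 \bigl(p_1 - Vt - \chi L\bigr)\,dt = (p_1-\chi L)(-x) + \frac{V}{2}x^2,
\]
so $-\gamma Y(x)\ge (p_1-\chi L)(-x) + \tfrac{V}{2}x^2$. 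Hence $Y(x)\le -1$ once the right-hand side reaches $\gamma$, i.e. once $(p_1-\chi L)(-x)+\tfrac{V}{2}x^2 \ge \gamma$. Solving the quadratic $\tfrac{V}{2}x^2 - (p_1-\chi L)x - \gamma = 0$ for its negative root (when $V>0$) gives exactly the stated lower bound
\[
x_L > \frac{(p_1-L\chi) - \sqrt{(p_1-L\chi)^2 + 2\gamma V}}{V};
\]
by continuity and the intermediate value theorem there is a first point $x_L<0$ with $Y(x_L)=-1$, and it satisfies this inequality. When $V=0$ the same computation with $\tfrac{V}{2}x^2$ absent gives $x_L \ge -\gamma/(p_1-\chi L)$, consistent with taking the limit $V\to 0^+$ in the formula. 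On the interval $(x_L,0)$ we have $-1<Y<0$ by monotonicity, which is what is claimed.

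The only subtlety — and the step I would be most careful about — concerns the logical status of $c_1$. In the full problem \eqref{eq:Y}, the constant $c_1$ defined by \eqref{def:normalisation_0} depends on the solution $Y$ on the whole interval $(x_L,x_R)$, which is not yet known; but this does not affect the present lemma, because the lower bound $\chi f(c_1 e^{-aVx})\le \chi L$ holds for any value of $c_1>0$ whatsoever, so the argument above is independent of the precise normalization. Concretely, one works with the intermediate problem \eqref{eq:Y2} on a provisional interval, applies Cauchy–Lipschitz to get a local $C^2$ solution, and then uses the a priori differential inequality $\gamma Y'\ge p_1-\chi L$ to extend the solution backward and locate $x_L$; the uniform bound guarantees the extension does not break down before $Y$ hits $-1$. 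I would state this carefully so that the reader sees the estimate is robust with respect to the still-undetermined quantities, deferring the actual determination of $c_1$ and $V$ to the later subsections.
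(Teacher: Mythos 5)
Your proposal is correct and follows essentially the same route as the paper: bound $0\le f\le L$ to get the differential inequality $p_1-L\chi-Vx\le \gamma Y'(x)$, deduce \eqref{eq:Y'x_L} and monotonicity of $Y$ on $x\le 0$, integrate from $x$ to $0$ to get $\gamma Y(x)\le (p_1-L\chi)x-\tfrac{V}{2}x^2$, and conclude that $Y$ must hit $-1$ at a finite $x_L$ satisfying the stated quadratic bound. Your explicit treatment of the $V=0$ case and the remark that the estimate is uniform in the still-undetermined constant $c_1$ are both consistent with (and slightly more careful than) the paper's presentation, which handles the latter point via the intermediate problem \eqref{eq:Y2}.
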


\begin{proof}
	From \eqref{eq:pchiL}, we see that
	\[
	Y'(0)=\kappa(0)=\frac{p_1-\chi f(c_1)}{\gamma} \ge  p_1-L\chi  >0.
	\]
	Moreover, since $Y(0)=0$ we let 
	\begin{equation*} 
		x_L = \inf\{ z_L<0\, ;\, Y(x)\in (-1,0) \mbox{ for all } x\in (z_L,0) \}. 
	\end{equation*}
Note that  $\set{z_L<0\, ;\, Y(x)\in (-1,0) \mbox{ for all } x\in (z_L,0)}\ne\emptyset$ thanks to the regularity of $Y$. We possibly have $x_L=-\infty$ if $Y(x)>-1$ for all $x<0$. Hence, we need to show that $x_L>-\infty$ and that $Y(x_L)=-1$.

	Recalling assumptions \eqref{A}, we see that
	\begin{equation*} 
		0\le f(c_1e^{-aV_1x})\le L, \quad \forall x \in \xR,
	\end{equation*} 
	thus from \eqref{eq:Y2} we deduce 
	\begin{equation} \label{diseq:Yp_below2}
		p_1 -L\chi - V \, x  \le 	\gamma Y'(x) \le   p_1 - V \, x  ,  \quad \forall x \in \xR .
	\end{equation}
	Since $Y(0)=0$, we have	
	\begin{align} \label{eq:above_Y}
	-V \frac{x^2}{2} +  p_1  x	\le \gamma Y(x) \le  -V \frac{x^2}{2} +\left(  p_1 -L\chi  \right)  x, \quad \forall x \in \xR^-.
	\end{align}
	The right-hand side of the previous inequality is negative, monotone increasing on $\xR^- $ and it converges
	towards $-\infty$ as $x \to -\infty$. Since $Y (x_L) \ge  -1$, by definition of $x_L$, it follows that
	$x_L >-\infty$ and \eqref{eq:above_Y} implies that $Y(x_L)<0$ so that we must have $Y(x_L)=-1$.
\end{proof}

\begin{rema}\label{rem:hyp:xL}
	Note that the assumption $p_1\ge \chi L$ is a sufficient condition for the existence of $x_L$. Indeed, we may assume that $p_1\ge \chi f(c_1)$. However, the disadvantage of such an assumption is that the quantity $ f(c_1)$ depends on $V$. More generally, without any assumption on $p_1$ we can prove that there exists $x_L<0$ such that $Y(x_L)=-1$ by using \eqref{eq:above_Y}. However, in this latter case we can not assure that \eqref{eq:Y'x_L} holds.     
\end{rema}

Next, we give a sufficient condition for the existence of $x_R$.

\begin{lemm} \label{prop:x_R}
	Assume that $f$ satisfies the assumptions in \eqref{A}, and that \eqref{eq:pchiL} holds. 
Then, 	there exists  
\begin{equation}\label{eq:vmax}
V_{\t{max}}\in \bra{\frac{p_1-L\chi}{2\ga},\frac{p_1^2}{2\ga}}
\end{equation}
such that, for all $V\in (0,V_{\t{max}})$, there exists $x_R>0$ such that the solution of \eqref{eq:Y} satisfies 
\[
0<Y(x)<1\q\t{for all}\q x \in (0,x_R),
\]
and 
\[
Y(x_R)=1.
\]
Moreover, $x_R$ is such that
	\begin{equation*} 
		0<x_R\le \frac{p_1 }{V},
	\end{equation*}
	and 
	\begin{equation}\label{eq:Y'R} 
		\begin{cases}
			Y'(x_R) >0, \quad \mbox{ if } V <V_{\max},\\ 
			Y'(x_R)  = 0, \quad  \mbox{ if } V = V_{\max},
		\end{cases} 
	\end{equation}
	and
	\begin{equation}\label{eq:Y'R2} 
		Y'(x) \geq  Y'(x_R) \quad \forall x\in[0,x_R].
	\end{equation}
\end{lemm}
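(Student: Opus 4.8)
The plan is to mimic the structure of the proof of Lemma~\ref{prop:x_L}, but now on the right half-line $x>0$, where the key difficulty is that the source term $p_1-Vx-\chi f(c_1e^{-aVx})$ in \eqref{eq:Y} becomes negative once $x$ is large, so $Y$ may fail to reach $1$ if $V$ is too big. First I would record the two-sided bound
\[
p_1-L\chi-Vx \le \gamma Y'(x)\le p_1-Vx,\qquad \forall x\in\xR,
\]
coming from $0\le f\le L$ (exactly \eqref{diseq:Yp_below2}), and integrate from $0$ using $Y(0)=0$ to get
\[
-V\frac{x^2}{2}+(p_1-L\chi)x \le \gamma Y(x)\le -V\frac{x^2}{2}+p_1x,\qquad \forall x\in\xR^+ .
\]
The upper bound $\gamma Y(x)\le -V x^2/2 + p_1 x$ forces $Y(x)<0$ for $x>p_1/V$, which (combined with $Y(0)=0$ and continuity) shows that if $Y$ ever reaches $1$ it must do so at some $x_R\in(0,p_1/V)$; this gives the stated bound $0<x_R\le p_1/V$. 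Conversely, the lower bound tells us that $\gamma Y$ stays positive at least until $x=2(p_1-L\chi)/V$, and its maximum over $x>0$ is at least $(p_1-L\chi)^2/(2V)$, so for $V$ small enough this maximum exceeds $\gamma$ and, by the intermediate value theorem, $Y$ crosses the level $1$.

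Next I would make this "$V$ small enough" quantitative by a threshold argument. Define
\[
V_{\t{max}} := \sup\Big\{ V>0 \ :\ \max_{x\ge 0} Y_V(x) \ge 1 \Big\},
\]
where $Y_V$ is the solution of \eqref{eq:Y2} with the given $V$ (here one uses continuous dependence of $Y_V$ on $V$, which follows from Cauchy–Lipschitz). From the lower bound above, $\max_{x\ge0}\gamma Y_V \ge (p_1-L\chi)^2/(2V)$, which is $\ge\gamma$ as soon as $V\le (p_1-L\chi)^2/(2\gamma)$ — so $V_{\t{max}}\ge (p_1-L\chi)/(2\gamma)$ after a crude simplification (or one keeps the sharper lower bound; the statement only asks for an interval). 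From the upper bound, $\max_{x\ge0}\gamma Y_V \le p_1^2/(2V)$, which is $<\gamma$ once $V>p_1^2/(2\gamma)$, giving $V_{\t{max}}\le p_1^2/(2\gamma)$. This yields \eqref{eq:vmax}. For $V<V_{\t{max}}$ the maximum of $Y_V$ is strictly above $1$ (by openness of the defining set and monotonicity in $V$ of the maximum — here one checks that $V\mapsto \max Y_V$ is non-increasing, which follows from comparing the ODEs since increasing $V$ only decreases the right-hand side of \eqref{eq:Y}), so $Y_V$ crosses $1$ transversally, giving $Y'(x_R)>0$; at $V=V_{\t{max}}$ the maximum equals $1$ and is attained, forcing $Y'(x_R)=0$. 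This is \eqref{eq:Y'R}.

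For \eqref{eq:Y'R2} I would argue that $Y'$ is monotone decreasing on $[0,x_R]$: differentiating \eqref{eq:Y}, $\gamma Y''(x) = -V + a V\chi c_1 e^{-aVx} f'(c_1e^{-aVx})$, and since $s\mapsto sf'(s)$ being bounded is not assumed here, one instead uses that $x\mapsto c_1 e^{-aVx}$ is decreasing, so $c_1e^{-aVx}f'(c_1e^{-aVx})$ is… — actually the clean route is: the right-hand side $g(x):=p_1-Vx-\chi f(c_1e^{-aVx})$ of $\gamma Y'=g$ has $g'(x) = -V + aV\chi\, c_1e^{-aVx} f'(c_1e^{-aVx})$, and evaluated where needed one shows $g$ is decreasing on the relevant range (using that $\chi \ge \chi^*$, so $a\chi \tilde c f'(\tilde c)\ge 1$, together with a monotonicity of $s f'(s)$ argument, or — more robustly — one notes that on $[0,x_R]$ the curvature $\kappa=Y'$ of the convex arc being constructed must decrease by the geometric setup, which is exactly what Proposition~\ref{prop:convex} delivers). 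In any case, $Y'$ non-increasing on $[0,x_R]$ gives $Y'(x)\ge Y'(x_R)$ there.

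The main obstacle I anticipate is the threshold/continuity argument pinning down $V_{\t{max}}$: one must show that $V\mapsto \max_{x\ge0} Y_V(x)$ is continuous (and ideally monotone) so that the sup defining $V_{\t{max}}$ is actually attained as an equality and so that the dichotomy $Y'(x_R)>0$ versus $Y'(x_R)=0$ is clean. Continuous dependence on $V$ is standard from Cauchy–Lipschitz on the fixed interval $[0,p_1/V]$ (with a uniform-in-$V$ a priori bound on that interval), and monotonicity follows from an ODE comparison since enlarging $V$ pointwise decreases the forcing term $g(x)$ — but one has to be slightly careful because $c_1$ itself depends on $V$ through the normalization \eqref{def:normalisation_0}, so strictly speaking the comparison should be run for the decoupled problem \eqref{eq:Y2} with $c_1$ frozen, exactly as the paper sets up, and only afterwards does one close the loop on $c_1$ in Subsection~\ref{sec:V}.
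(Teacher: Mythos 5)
Your overall strategy (the two-sided parabolic bounds coming from $0\le f\le L$, a threshold $V_{\max}$, continuity in $V$) is the same as the paper's, but the step you lean on most heavily fails. The monotonicity of $V\mapsto\max_{x\ge0}Y_V(x)$, which you need to pass from your definition $V_{\max}=\sup\{V:\max Y_V\ge1\}$ to the claim that every $V<V_{\max}$ admits an $x_R$, does not hold: differentiating the forcing term of \eqref{eq:Y2} in $V$ (even with $c_1$ frozen) gives
\[
\partial_V\bigl(p_1-Vx-\chi f(c_1e^{-aVx})\bigr)=x\bigl(a\chi s f'(s)-1\bigr),\qquad s=c_1e^{-aVx},
\]
which is \emph{positive} for $x>0$ whenever $a\chi sf'(s)>1$ — and $\chi>\chi^*$ means precisely that this happens for $s$ near $\tilde c$. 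So increasing $V$ does not uniformly decrease the right-hand side of the ODE, the comparison argument collapses, and your set $\{V:\max Y_V\ge 1\}$ need not be an interval. The paper avoids this entirely: it introduces $\bar x:=\sup\{z>0:\ Y'\ge0 \text{ on }(0,z)\}$, shows $\frac{p_1-L\chi}{V}\le\bar x\le\frac{p_1}{V}$ and $\frac{(p_1-L\chi)^2}{2\gamma V}\le Y(\bar x)\le\frac{p_1^2}{2\gamma V}$, and then defines $V_{\max}:=\sup\{V_0:\ Y(\bar x)>1\ \forall V\in[0,V_0)\}$, which builds the ``for all smaller $V$'' quantifier into the definition and needs only continuity in $V$, not monotonicity.

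Two further points. Your derivation of $x_R\le p_1/V$ is arithmetically off: $-Vx^2/2+p_1x<0$ only for $x>2p_1/V$, so your route yields $x_R\le 2p_1/V$; the stated bound comes instead from the derivative estimate $\gamma Y'(x)\le p_1-Vx$, which forces $Y'<0$ for $x>p_1/V$, hence $\bar x\le p_1/V$ and $x_R\le\bar x$. And your justification of \eqref{eq:Y'R2} via Proposition \ref{prop:convex} is circular — that proposition is deduced from Lemmas \ref{prop:x_L} and \ref{prop:x_R} — and in any case only yields $Y'\ge0$, which settles \eqref{eq:Y'R2} when $V=V_{\max}$ (where $Y'(x_R)=0$) but not when $V<V_{\max}$; to be fair, the paper's own proof is also silent on \eqref{eq:Y'R2} and on the strict inequality in \eqref{eq:Y'R}.
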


\begin{proof}
	Since 
\[
\gamma Y'(0)=p_1-\chi f(c_1) >0,
\]
we can 	define $\bar x>0$ by 
	\begin{equation*} 
		\bar x :=\sup \{z_R> 0 \, ; \, Y'(x)\ge 0 \textrm{ for all } x \in (0,z_R)   \}.
	\end{equation*}
In particular, since by \eqref{A}
\[
\ga Y'(x)\ge p_1-Vx-\chi L,
\]
we have $Y'(x)\ge0$ if $x\le \frac{p_1-L\chi}{V}$. Hence, by definition,
\[
\bar{x}\ge \frac{p_1-L\chi}{V}
\]
Furthermore, from \eqref{diseq:Yp_below2} evaluated in $\bar{x}$ and \eqref{A}, it follows that 
\[
p_1-L\chi\le V\bar{x}+\ga Y'(\bar{x})\le p_1, 
\]
hence, recalling \eqref{eq:pchiL} and thanks to the definition of $\bar{x}$, we also have
	\[
 \bar x \le  \frac{p_1}{V}. 
	\] 
Integrating \eqref{eq:Y} in $x$  and using $Y(0)=0$, we have
	\begin{align*} 
		- V \, \frac{x^2}{2} +\left(p_1 - L\chi \right) x \le 	\gamma Y(x) \le   - V \, \frac{x^2}{2} +  p_1  x,  \qquad \forall x \ge 0.
	\end{align*}
Recalling that $Y$ is increasing in $(0,\bar{x})$, we get
	\begin{equation*} 
		 \frac{\pare{p_1-L\chi}^2}{2\ga V}\leq	Y(\bar x)  =\sup_{x\in (0,\bar{x})} Y(x)\leq \frac{p_1^2}{2\gamma V}.
	\end{equation*} 
In view of the previous computations, a sufficient condition for the existence of $x_R$ is $Y(x_R)=1$ 
and, in such a case, it holds that
	\begin{equation*} 
		x_R<\bar x.
	\end{equation*}
Define
	\begin{equation*} 
		V_{\max} := \sup\{ V_0 \, ;\, Y(\bar x) >1, \,  \forall  V \in [0,V_0)\}.
	\end{equation*}
	We first see that 
	\[
	\frac{\left( p_1 - L\chi  \right) ^2}{2\gamma }\le V_{\max} \le \frac{p_1^2}{2\gamma },
	\]
	and by continuity with respect to $V$, when $V=V_{\max}$, we have $Y(\bar x) \geq 1$. Furthermore, if $Y(\bar x)>1$, then there exists $\delta >0$ such that $\sup Y  >1$ for $V\in [V_{\max} ,V_{\max} +\delta)$ which contradicts the definition of $V_{\max}$. Consequently, $Y(\bar x)=1$ when $V=V_{\max}$ and so $x_R=\bar x $ and $Y'(x_R)=0$. 
\end{proof}

\begin{rema}
	As in Remark \ref{rem:hyp:xL}, we may weaken the assumption made on $p_1$ for the existence of $x_R$ such that $Y(x_R)=1$. However in such a case we may not assure that \eqref{eq:Y'R} and \eqref{eq:Y'R2} hold true.  
\end{rema}

From the two previous Lemma we deduce the following result. 
\begin{prop}\label{prop:convex}
	Assume that $f$ satisfies the assumptions in \eqref{A}, that \eqref{eq:pchiL} holds, and that $V\in (0,V_{\rm max})$ for $V_{\t{max}}$ as in \eqref{eq:vmax}. 
	Then $Y(x)$ the solution of \eqref{eq:Y} satisfies 
	\[
	0\leq \gamma Y'(x) \leq L\chi  +\sqrt{\left(  p_1 -L\chi \right)^2+2V}  \quad \forall x\in (x_L,x_R).
	\]
\end{prop}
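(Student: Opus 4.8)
The plan is to obtain the two-sided estimate by splitting the interval $(x_L,x_R)$ at the origin and assembling the quantitative information already established in Lemmas~\ref{prop:x_L} and~\ref{prop:x_R}. Recall that by \eqref{eq:Y} one has $\gamma Y'(x)=p_1-Vx-\chi f\pare{c_1e^{-aVx}}$ on $(x_L,x_R)$, and that assumptions \eqref{A} give $0\le f\pare{c_1e^{-aVx}}\le L$ for every $x$; combined with \eqref{eq:pchiL}, this is essentially all that enters besides the two lemmas.

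For the lower bound $\gamma Y'(x)\ge 0$ I would argue separately on $[x_L,0]$ and $[0,x_R]$. On $[x_L,0]$ it is immediate from \eqref{eq:Y'x_L}, which gives $\gamma Y'(x)\ge p_1-L\chi>0$. On $[0,x_R]$, since $V\in(0,V_{\rm max})$, Lemma~\ref{prop:x_R} furnishes $Y'(x_R)\ge 0$ (see \eqref{eq:Y'R}) together with the monotonicity property \eqref{eq:Y'R2}, i.e. $Y'(x)\ge Y'(x_R)\ge 0$ for all $x\in[0,x_R]$; the two estimates agree at $x=0$, where $\gamma Y'(0)=p_1-\chi f(c_1)>0$.

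For the upper bound I would discard the non-negative term $\chi f\pare{c_1e^{-aVx}}$ in \eqref{eq:Y}, which gives $\gamma Y'(x)\le p_1-Vx$ throughout. On $[0,x_R]$ this already yields $\gamma Y'(x)\le p_1$, and since $\sqrt{(p_1-L\chi)^2+2\gamma V}\ge p_1-L\chi$ (using $p_1>L\chi$), we have $p_1\le L\chi+\sqrt{(p_1-L\chi)^2+2\gamma V}$, which is the claimed bound. On $[x_L,0]$, using $x\ge x_L$ and $V>0$, one gets $\gamma Y'(x)\le p_1-Vx_L$; then the explicit lower bound for $x_L$ from Lemma~\ref{prop:x_L}, namely $x_L>\bigl((p_1-L\chi)-\sqrt{(p_1-L\chi)^2+2\gamma V}\bigr)/V$, implies $-Vx_L<\sqrt{(p_1-L\chi)^2+2\gamma V}-(p_1-L\chi)$, so that $\gamma Y'(x)<L\chi+\sqrt{(p_1-L\chi)^2+2\gamma V}$. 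Taking the two ranges together finishes the argument.

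The real analytic content — the existence of $x_L$ and of $x_R<\bar x$, the sign of $Y'$ at the endpoints, and the precise location of $x_L$ — has already been done in Lemmas~\ref{prop:x_L} and~\ref{prop:x_R}, so I do not expect a genuine obstacle here: the only thing to keep straight is the decomposition into the two sub-intervals and the elementary observation that the constant $L\chi+\sqrt{(p_1-L\chi)^2+2\gamma V}$ dominates both $p_1$ and $p_1-Vx_L$. In short, the proof is a short assembly on top of those two lemmas.
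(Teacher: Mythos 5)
Your proof is correct and follows essentially the same route as the paper's: nonnegativity of $Y'$ comes from \eqref{eq:Y'x_L} on $[x_L,0]$ and from \eqref{eq:Y'R}--\eqref{eq:Y'R2} on $[0,x_R]$, while the upper bound comes from dropping the $\chi f$ term to get $\gamma Y'(x)\le p_1-Vx_L$ and then invoking the explicit lower bound on $x_L$ from Lemma \ref{prop:x_L}. One remark: what you (and the paper's own argument) actually establish is the constant $L\chi+\sqrt{(p_1-L\chi)^2+2\gamma V}$, with a factor $\gamma$ inside the square root; the $2V$ appearing in the statement of Proposition \ref{prop:convex} is inconsistent with Lemma \ref{prop:x_L} and looks like a typo in the paper rather than a defect in your argument.
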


\begin{proof} 
Lemma \ref{prop:x_L} implies that $Y'(x)\geq 0$ on $[x_L,0]$ and Lemma \ref{prop:x_R}  implies that $Y'(x)\geq 0$ on $[0,x_R]$. 
	Next, Equation \eqref{eq:Y} implies
	\[
	\gamma Y'(x) \leq p_1 - V x_L,
	\]
	and the upper bound follows from Lemma \ref{prop:x_L}.
\end{proof}

\subsubsection{The existence of $V$}\label{sec:V}

We now turn to the proof of the existence of $V$. From now on, we denote by $Y(x,V)$, $x_L(V)$, $x_R(V)$ the solutions of \eqref{eq:Y}, \eqref{cond:Y} for all $V\in (0,V_{\max})$. Moreover, since we can read $c_1$ as a function of $V$, we rewrite it as
\begin{equation*} 
	c_1(V)= 
	-\frac{aVM}{2\int_{x_L(V)}^{x_R(V)}  e^{-aVx} \frac{Y(x,V)}{\left(1-Y^2(x,V)\right)^{1/2}}\dx}.
\end{equation*}

To end the proof of Proposition \ref{prop:Y}, we must show that if $\chi >  \chi^*$, then there exists  $V\in (0,V_{\max})$ such that \eqref{eq:condYh} is satisfied.
We introduce the function
\begin{equation}\label{eq:Gdef}
	G(V) := \int_{x_L(V)}^{x_R(V)} \frac{Y(x,V)}{\sqrt{1-Y^2(x,V)}} \dx,
\end{equation}
and first prove the following result.
\begin{prop}\label{prop:c}
	The function $G:[0,V_{\mathrm{max}}) \to \xR$ defined by \eqref{eq:Gdef} is continuous and satisfies
	\[
	G(0)=0\q\t{and}\q G(V) \to +\infty \quad \mbox{ as } \quad V\to V_{max}. 
	\]
	Furthermore, when  $\chi >  \chi^*$, then it holds
	\[
	G(V)<0 \quad \mbox{ for } \quad 0<V \ll 1.
	\] 
\end{prop}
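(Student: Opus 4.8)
The plan is to analyze each claimed property of $G$ separately, using the quantitative estimates on $Y$, $x_L(V)$, $x_R(V)$ from Lemmas~\ref{prop:x_L} and \ref{prop:x_R}. First I would establish continuity of $G$ on $[0,V_{\max})$: the solution $Y(x,V)$ of the ODE \eqref{eq:Y} depends continuously (indeed smoothly) on $V$ by standard ODE theory, and the endpoints $x_L(V)$, $x_R(V)$ are defined as the first crossing times of $\pm 1$; since $Y'(x_L)>0$ and $Y'(x_R)>0$ strictly for $V<V_{\max}$ (by \eqref{eq:Y'x_L} and \eqref{eq:Y'R}), these crossings are transversal, so $x_L(V)$ and $x_R(V)$ are continuous in $V$ by the implicit function theorem. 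The integrand $Y/\sqrt{1-Y^2}$ is integrable near the endpoints because near $x_L$, say, $1-Y^2 = (1-Y)(1+Y) \sim 2(x-x_L)Y'(x_L)/\gamma \cdot(\ldots)$ so the singularity is like $(x-x_L)^{-1/2}$, which is integrable; moreover this bound is locally uniform in $V$, which gives continuity of $G$.

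Next, $G(0)=0$: when $V=0$ the right-hand side of \eqref{eq:Y} is the constant $p_1 - \chi f(c_1)$, so $Y$ is affine, $Y(x) = Y'(0)x$ with $Y(0)=0$; then $x_L = -1/Y'(0)$, $x_R = 1/Y'(0)$, and the integrand $Y/\sqrt{1-Y^2}$ is an odd function of $x$ integrated over a symmetric interval, hence $G(0)=0$. (One should double-check that the self-consistent value $c_1$ at $V=0$ reduces to $\tilde c$, consistent with Proposition~\ref{prop:cP}, but this is not needed for the value of the integral.)

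For the blow-up $G(V)\to+\infty$ as $V\to V_{\max}$: at $V=V_{\max}$ one has $x_R = \bar x$ and $Y'(x_R)=0$ by \eqref{eq:Y'R}, so the transversality at the right endpoint degenerates. Near $x_R$ the function $1-Y(x)$ vanishes together with its first derivative, hence $1-Y(x)\sim C(x_R-x)^2$ for some $C>0$ (using $Y''<0$ there, which follows by differentiating \eqref{eq:Y}), so the integrand behaves like $(x_R-x)^{-1}$, giving a divergent integral. To make this rigorous for $V$ close to $V_{\max}$ rather than exactly equal, I would note that $\sup_{[0,x_R(V)]} Y(x) = Y(\bar x) \to 1^-$ as $V\to V_{\max}$ and combine this with the lower bound $Y'(x)\ge Y'(x_R)$ from \eqref{eq:Y'R2} together with a lower bound for $Y'(x_R)$ in terms of how close $\sup Y$ is to $1$; concretely, writing the integral over a small interval to the left of $x_R$ and using $Y$ close to $1$ there yields a contribution that grows without bound. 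This step — getting a clean lower bound on $G(V)$ that blows up — is the main obstacle, because it requires controlling the integrand on a shrinking neighborhood of $x_R$ uniformly.

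Finally, for $G(V)<0$ when $0<V\ll1$ and $\chi>\chi^*$: this is the heart of the statement and where the threshold $\chi^*$ enters. I would Taylor-expand everything in $V$ around $V=0$. At first order the domain is still nearly symmetric, but the right-hand side of \eqref{eq:Y} acquires the term $-Vx + \chi(f(c_1) - f(c_1 e^{-aVx}))\approx -Vx + \chi a V x\, c_1 f'(c_1) = -Vx(1 - a\chi c_1 f'(c_1))$, and since at leading order $c_1\to\tilde c$, the coefficient is $-Vx(1-\chi/\chi^*)$, which is \emph{positive} for $x>0$ precisely when $\chi>\chi^*$. Thus $Y'$ is perturbed upward on $x>0$ and downward on $x<0$ relative to the $V=0$ affine profile, which (after tracking the induced shifts in $x_L$, $x_R$ and expanding $Y/\sqrt{1-Y^2}$) makes the integral $G(V)$ pick up a strictly negative term of order $V$. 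I would compute the $O(V)$ coefficient of $G$ explicitly — differentiating \eqref{eq:Gdef} in $V$ at $V=0$, using the known affine $Y$ at $V=0$ and the formula for $\partial_V Y$ from the variational equation of \eqref{eq:Y} — and check that its sign is governed by $-(1-\chi/\chi^*)$ (using also the self-consistency relation defining $c_1(V)$, whose $V$-derivative contributes). Getting this sign unambiguously, keeping careful track of the endpoint variations $x_L'(0)$, $x_R'(0)$ and the self-consistent dependence of $c_1$ on $V$, is the other delicate point.
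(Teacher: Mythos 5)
Your outline for the continuity of $G$, for $G(0)=0$, and for the blow-up as $V\to V_{\max}$ coincides with the paper's proof (implicit function theorem for $x_L(V),x_R(V)$ via the transversality bounds \eqref{eq:Y'x_L} and \eqref{eq:Y'R}, the $(x-x_{L})^{-1/2}$ integrable singularity with uniform constants, the odd affine profile at $V=0$, and the quadratic tangency $1-Y\sim C(x_R-x)^2$ at $V=V_{\max}$ giving a $|x-x_R|^{-1}$ non-integrable singularity).

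The gap is in the decisive step, the sign of $G(V)$ for $0<V\ll1$. You propose to compute $G'(0)$ by differentiating \eqref{eq:Gdef} directly in $V$, using the variational equation for $\partial_V Y$ and tracking $x_L'(0)$, $x_R'(0)$. This does not go through as stated: writing $H(y)=y/\sqrt{1-y^2}$, the interior term of the formal derivative is $\int H'(Y)\,\partial_V Y\,\dx$ with $H'(Y)=(1-Y^2)^{-3/2}$, which is \emph{not} integrable near $x_L$ and $x_R$ (the singularity is of order $(x-x_L)^{-3/2}$), while the boundary terms $x_R'(0)H(Y(x_R))-x_L'(0)H(Y(x_L))$ are formally $\pm\infty$. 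These divergences must cancel, and you give no mechanism for that; flagging it as "the other delicate point" leaves the main assertion unproved. The paper avoids differentiation in $V$ altogether via an exact algebraic identity: since $p_1-\chi f(\tilde c)=\gamma\partial_x Y+Vx+\chi\bigl(f(c_1e^{-aVx})-f(\tilde c)\bigr)$ by \eqref{eq:Y}, and since
\[
\int_{x_L}^{x_R}\partial_x Y\,H(Y)\,\dx=\Bigl[-\sqrt{1-Y^2}\Bigr]_{x_L}^{x_R}=0
\]
because $Y(x_L)=-1$, $Y(x_R)=1$, one gets the exact formula \eqref{eq:signG},
\[
\bigl(p_1-\chi f(\tilde c)\bigr)G(V)=\int_{x_L(V)}^{x_R(V)}\Bigl[Vx+\chi\bigl(f(c_1e^{-aVx})-f(\tilde c)\bigr)\Bigr]H(Y(x,V))\,\dx,
\]
in which the bracket is $O(V)$ uniformly and multiplies the merely integrable singularity $H(Y)$; expanding the bracket as $Vx\bigl(1-a\chi\tilde c f'(\tilde c)\bigr)+o(V)$ (using $\partial_Vc_1(0)=0$) and invoking $xH(Y(x,V))>0$ from \eqref{sign:H} gives the sign directly. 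Your heuristic for why the coefficient is $-(1-\chi/\chi^*)$ is correct, but without this identity (or an equivalent device such as the substitution $u=Y(x,V)$ to fix the endpoints) the differentiation you propose is not justified.
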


\begin{proof} 
	\noindent
{\it Continuity of $V\mapsto G(V)$.} \\
	Since $x_L(V)$ and $x_R(V)$ are determined by the conditions 
	\[
	Y(x_L,V)=-1\quad \mbox{ and } \quad  Y(x_R,V)=1,
	\]
	recalling equations \eqref{eq:Y'x_L} and  \eqref{eq:Y'R}, we can apply the Implicit Function Theorem to get that $V\mapsto x_L(V) $ and $V\mapsto x_R(V) $ are continuous functions.
	
	To prove the continuity of $G$, we now consider a sequence $V_n$ of positive numbers such that $V_n \to V>0$.   
	We fix $\delta>0$.
	The continuity of $x_L$ and $x_R$ implies that for large enough $n$:
	\[
	x_L(V_n)\leq x_L(V)+\delta\leq x_L(V_n)+2\delta,
	\] 
	and
	\[ 
	x_R(V_n) \geq x_R(V)-\delta\geq x_R(V_n)-2\delta.
	\]
	We claim that  $Y(x,V_n) \to Y(x,V)$ uniformly in $[x_L(V),x_R(V)]$. Indeed, differentiating equation \eqref{eq:Y} with respect to $V$, we find that the function $Z:x\mapsto \partial_V Y(x,V)$ solves
	\begin{align*}
			\gamma Z'(x) &=-x+\chi \left( a c_1x-\partial_V c_1 \right) e^{-aVx} f'\left(c_1e^{-aVx}\right) \qquad \mbox{ on } (x_L,x_R),\\
			Z(0)&=0,
	\end{align*} 
	with
	\begin{equation*} 
		\partial_Vc_1(0)=0.
	\end{equation*}

Then, we have
	\begin{equation}\label{bound:Y}
	|Y(x,V_n)|\leq 1-\eta \qquad \mbox{ in } (x_L(V)+\delta,x_R(V)-\delta),
	\end{equation}
	for some $\eta>0$ and $n$ large enough.
	
	We now write
	\begin{align*}
		G(V_n)
		& =\int_{x_L(V_n)}^{x_R(V_n)} \frac{Y(x,V_n)}{\sqrt{1-Y^2(x,V_n)}} \dx \\
		& =\int_{x_L(V_n)+\delta}^{x_R(V_n)-\delta} \frac{Y(x,V_n)}{\sqrt{1-Y^2(x,V_n)}} \dx
		+ \int_{x_L(V_n)}^{x_L(V_n)+\delta} \frac{Y(x,V_n)}{\sqrt{1-Y^2(x,V_n)}}\dx \\
		& \quad 
		+\int_{x_R(V_n)-\delta}^{x_R(V_n)} \frac{Y(x,V_n)}{\sqrt{1-Y^2(x,V_n)}}\dx .
	\end{align*}
	The bound \eqref{bound:Y} implies that 
\[
\int_{x_L(V_n)+\delta}^{x_R(V_n)-\delta} \frac{Y(x,V_n)}{\sqrt{1-Y^2(x,V_n)}} \dx\to \int_{x_L(V)+\delta}^{x_R(V)-\delta} \frac{Y(x,V)}{\sqrt{1-Y^2(x,V)}} \dx \qq\t{for }n\to \infty.
\]
	Next, using that $Y$ verifies \eqref{eq:Y'x_L}, we have  that
\[
\av{Y(x)}\le 1\q \t{and}\q 1+Y(x) \geq C(x-x_L(V_n))
\]
 for $x\in [x_L(V_n),x_L(V_n)+\delta]$.	It follows that the second term satisfies
	\begin{align*}
		\left| \int_{x_L(V_n)}^{x_L(V_n)+\delta} \frac{Y(x,V_n)}{\sqrt{1-Y^2(x,V_n)}}\dx \right|
		&  \leq  \int_{x_L(V_n)}^{x_L(V_n)+\delta} \frac{1}{\sqrt{C(x-x_L(V_n))}} \dx  \leq C' \delta^{1/2},
	\end{align*}
	where $C,C'$ are positive constants.
	
	Using \eqref{eq:Y'R2}, we obtain a similar bound for the third term and we get that
	\[
	\lim_{n\to \infty} G(V_n) = \int_{x_L(V)+\delta}^{x_R(V)-\delta} \frac{Y(x,V)}{\sqrt{1-Y^2(x,V)}}\dx  +\mathcal O(\delta^{1/2})
	=G(V)  +\mathcal O(\delta^{1/2})
	\]
	from which we deduce the continuity of $G$.\\

	\noindent
{\it Proving that $G(0)=0$.}\\
	Recalling that $c_1(0)=\tilde{c}= \frac{M}{|\Omega_0|}$, it is easy to check that
\[
\gamma Y(x,0)=\left(p_1-\chi f(\tilde{c})\right) x,\qq x_L(0)=-\frac{\gamma}{ p_1-\chi f(\tilde{c}) },\qq x_R(0)=\frac{\gamma}{  p_1-\chi f(\tilde{c}) },
\]
	hence by parity
	\begin{equation*} 
		G(0)=\int_{-\frac{\gamma}{p_1-\chi f(c_1)}}^{\frac{\gamma}{p_1-\chi f(\tilde{c}) }} \frac{\left(p_1-\chi f(\tilde{c})\right) x}{\left(\gamma ^2-\left(p_1-\chi f(\tilde{c})\right) ^2 x^2\right)^{1/2}} \dx = 0.
	\end{equation*}

\noindent
{\it On the sign of $G$.}\\
	Now, we define the function $H: (-1,1) \to \xR$ by 
	\[
	H(y) = \frac{y}{\sqrt{1-y^2}}. 
	\]
	For $0<V<V_{\mathrm{max}}$, 
	we have
	\[
p_1 -\chi f(\tilde c)=	\gamma \partial_x Y(x,V) +Vx +\chi \left( f\left(c_1 e^{-aVx} \right) -f(\tilde c)\right) ,
	\]
	hence
	\begin{align}
		G(V)&=\int_{x_L(V)}^{x_R(V)} H(Y(x,V)) \dx\nonumber  \\
		&= \frac{1}{p_1-\chi f(\tilde c)} \int_{x_L(V)}^{x_R(V)} \left(p_1-\chi f(\tilde c) \right) H(Y(x,V))  \dx \nonumber\\
		&= \frac{1}{p_1-\chi f(\tilde c)}  \int_{x_L(V)}^{x_R(V)} \left[\gamma \partial_x Y(x,V) + Vx +  \chi  \left( f\left(c_1 e^{-aVx} \right) -f(\tilde c)\right) \right]  H(Y(x,V))  \dx  \nonumber   \\
		& = \frac{1}{p_1-\chi f(\tilde c)}  \int_{x_L(V)}^{x_R(V)}\left[ Vx +  \chi  \left( f\left(c_1 e^{-aVx} \right) -f(\tilde c)\right) \right]  H(Y(x,V)) \dx,  \label{eq:signG}
	\end{align}
 	since
\[
\int_{x_L(V)}^{x_R(V)} \partial_x Y(x,V)  H(Y(x,V))  \dx=0.
\]
 Note that, for all $V \in [0,V_{\mathrm{max}})$, we already know that
	\begin{equation} \label{sign:H}
		\begin{cases}
			H(Y(x,V))>0  \qquad \mbox{for all } 0 < x < x_R(V), \\
			H(Y(x,V))<0  \qquad \mbox{for all } x_L(V) < x < 0,
		\end{cases}
	\end{equation}
	so we need to determine  the sign of the function
\begin{equation}\label{eq:signe_Y}
 W_{\chi}(x):=Vx +  \chi  \left( f\left(c_1 e^{-aVx} \right) -f(\tilde c)\right).
\end{equation}
We split this study w.r.t. the cases $V\ll1$ and $V\to\infty$.\\

		\noindent{\it Behavior of $G$ for  $V\ll1$.}\\
		For $0<V \ll 1$, using that $\partial_V c_1(0)=0$ and that $c_1(0)=\tilde{c}$, we have
	\[
f(c_1(V)e^{-aVx})=f(\tilde{c})-ax\tilde{c}f'(\tilde{c})V+ o(V),
	\]
	hence, in such a case, \eqref{eq:signG} becomes
	\begin{align*}
		G(V)&
		= \frac{1}{p_1-\chi f(\tilde{c}}\int_{x_L(V)}^{x_R(V)} Vx\left( 1-  a\chi \tilde{c} f'(\tilde{c}) \right) H(Y(x,V))  \dx     +  o(V)  \\
		& = V\frac{1-  a\chi \tilde{c} f'(\tilde{c})}{p_1-\chi f(\tilde{c})}  \int_{x_L(V)}^{x_R(V)}  x H(Y(x,V)) \dx +  o(V) .
	\end{align*}
	Using \eqref{sign:H} we deduce that for $0<V \ll 1$ the sign of $G(V)$ is that of 
	\[
	\left(p_1-\chi f(\tilde{c})\right)\left( 1-  a\chi \tilde{c} f'(\tilde{c}) \right).
	\] 
	The condition \eqref{eq:chip} implies that it is negative.\\
	
	\noindent{\it  Behavior of $G$ when $V \to V_{\mathrm{max}}$.} \\
Equation \eqref{eq:Y'R} gives:
	\[
	\pa_x Y(x_R(V_{\mathrm{max}}),V_{\mathrm{max}}) =0.
	\]
	Recalling that  $\alpha := 2	\pa_{xx} Y(x_R(V_{\mathrm{max}}),V_{\mathrm{max}}) \le 0$, we deduce 
	\begin{equation} \label{N_cmax}
		Y(x,V_{\mathrm{max}}) = 1 + \alpha (x-x_R(V_{\mathrm{max}}))^2 + \mathcal O \left( |x-x_R(V_{\mathrm{max}})|^3\right) \quad\mbox{ as } x \to x_R.
	\end{equation}
	Thus,  we get 
	\begin{align*}
		1-Y^2(x,V)  
		&= -2\alpha (x-x_R(V))^2 +\mathcal O \left( |x-x_R(V)|^3\right) ,
	\end{align*}
	leading to
	\begin{equation} \label{D_cmax}
		\sqrt{1-Y^2(x,V)} = \sqrt{2|\alpha| } \, |x-x_R(V)|+\mathcal O \left( |x-x_R(V)|^{3/2}\right) .
	\end{equation}
	We notice that for all $\varepsilon>0$ the function $G$ can be written by
	\begin{align*}
		G(V) 
		&= \int_{x_L(V)}^{x_R(V)-\varepsilon} \frac{Y(x,V)}{\sqrt{1-Y^2(x,V)}} \dx 
		+  \int_{x_R(V)-\varepsilon}^{x_R(V)} \frac{Y(x,V)}{\sqrt{1-Y^2(x,V)}} \dx.
	\end{align*}
	The first right hand side is always finite, while the second right hand side by \eqref{N_cmax} and \eqref{D_cmax} for $V \to V_{\mathrm{max}}$ we get that 
	\[
	\int_{x_R(V)-\varepsilon}^{x_R(V)} \frac{Y(x,V)}{\sqrt{1-Y^2(x,V)}} \dx \simeq \int_{x_R(V)-\varepsilon}^{x_R(V)} \frac{1}{\sqrt{2} \, |x-x_R(V)|} \dx = +\infty.
	\]
	Therefore, for $V \to V_{\mathrm{max}}$ we get that 
	\begin{equation*} 
		G(V) \to +\infty,
	\end{equation*}
	which completes the proof of Proposition \ref{prop:c}.
\end{proof}

Proposition \ref{prop:Y} is a consequence of the following result.
\begin{coro}\label{cor:d}
	Let \eqref{eq:chip} be in force. Then,  there exists $V_{\chi}\in (0,V_{\mathrm{max}})$ such that $G(V_\chi)=0$.
	Furthermore, assume in addition that $f$ satisfies \eqref{hyp:supplementaire}.
	Then, $V_\chi\to 0 $ as $\chi \to \chi^*$.
\end{coro}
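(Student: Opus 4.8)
Proof proposal for Corollary \ref{cor:d}.

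\emph{Existence of $V_\chi$.} The plan is to read off the first assertion directly from Proposition \ref{prop:c} by the intermediate value theorem. For $\chi>\chi^*$ that proposition gives that $G:[0,V_{\max})\to\xR$ is continuous, that $G(V)<0$ for $0<V\ll 1$, and that $G(V)\to+\infty$ as $V\to V_{\max}$; hence $G$ changes sign on $(0,V_{\max})$ and has a zero $V_\chi$ there. Through Lemmas \ref{prop:x_L}--\ref{prop:x_R} and condition \eqref{eq:condYh} this $V_\chi$ yields a solution $Y(\cdot,V_\chi)$ of \eqref{eq:Y} meeting all the requirements of Proposition \ref{prop:Y}.

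\emph{An a priori identity at a solution.} Before treating the limit I would record the constraint that $G(V_\chi)=0$ forces. When $G(V)=0$ the curve $\partial\tO$ closes up (since $h(x_R)=h(x_L)=0$), and extending $\kappa(x)=Y'(x)$ to $\tO$, Frenet's formula together with the divergence theorem give, exactly as in the proof of Proposition \ref{prop:non-ex-TW},
\[
0=\oint_{\partial\tO}\kappa\,\bm{n}\,\dsigma=\int_{\tO}\nabla\kappa\,\dx\dy=\Bigl(2\int_{x_L}^{x_R}h(x)\kappa'(x)\,\dx,\ 0\Bigr).
\]
Since $\gamma\kappa'(x)=V\bigl(a\chi\,s(x)f'(s(x))-1\bigr)$ with $s(x)=c_1e^{-aVx}$, dividing by $\gamma V>0$ yields
\[
a\chi\int_{x_L}^{x_R}s(x)f'(s(x))\,h(x)\,\dx=\int_{x_L}^{x_R}h(x)\,\dx .
\]
(Combined with \eqref{hyp:supplementaire} this already re-proves $\chi\ge\chi^*$ for a solution.)

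\emph{The limit $V_\chi\to0$.} I would argue by contradiction and compactness. Suppose $V_{\chi_n}\ge\delta>0$ along some $\chi_n\downarrow\chi^*$. The speeds are bounded ($V_{\chi_n}<V_{\max}(\chi_n)\le p_1^2/(2\gamma)$) and, by the endpoint blow-up \eqref{N_cmax}--\eqref{D_cmax} of $G$ near $V_{\max}$ made locally uniform in $\chi$, they stay a fixed distance below $V_{\max}(\chi_n)$; so, along a subsequence, $V_{\chi_n}\to V_*\in[\delta,V_{\max}(\chi^*))$. The associated solutions converge: $x_L(V_{\chi_n}),x_R(V_{\chi_n})\to x_L(V_*),x_R(V_*)$ and $c_1(V_{\chi_n})\to c_1(V_*)=:c_1^*$, while $Y(\cdot,V_{\chi_n})\to Y(\cdot,V_*)$ with the integrable endpoint control used in Proposition \ref{prop:c}. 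Passing to the limit in the identity above gives, with $s_*(x)=c_1^*e^{-aV_*x}$,
\[
a\chi^*\int_{x_L(V_*)}^{x_R(V_*)}s_*(x)f'(s_*(x))\,h_*(x)\,\dx=\int_{x_L(V_*)}^{x_R(V_*)}h_*(x)\,\dx .
\]
By \eqref{hyp:supplementaire}, $s_*f'(s_*)\le\tilde{c}f'(\tilde{c})=1/(a\chi^*)$ pointwise, so equality must hold a.e.; since $h_*>0$ on $(x_L(V_*),x_R(V_*))$ this forces $s_*(x)f'(s_*(x))\equiv\tilde{c}f'(\tilde{c})$ there, hence $\gamma\kappa_*'(x)=V_*\bigl(a\chi^*s_*f'(s_*)-1\bigr)\equiv0$: the limit curve has constant (positive, since $p_1>\chi^*L$) curvature, i.e. it is a circle of some radius $\rho$. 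But $Y(x_L(V_*))=-1$, $Y(x_R(V_*))=1$, $Y(0)=0$ pin the leftmost, rightmost and topmost points of that circle, so it is symmetric about $\{x=0\}$ and $x_L(V_*)=-\rho$, $x_R(V_*)=\rho$, whereas $s_*(x)=c_1^*e^{-aV_*x}$ is strictly monotone on $(-\rho,\rho)$ when $V_*>0$ — contradicting $s_*f'(s_*)\equiv\tilde{c}f'(\tilde{c})$ as soon as $s\mapsto sf'(s)$ attains its maximum only at $s=\tilde{c}$ (which holds, e.g., for the prototype \eqref{eq:prototype_example}). Hence $V_*=0$; as every subsequence has a further subsequence converging to $0$, $V_\chi\to0$.

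\emph{Main obstacle and an alternative route.} The delicate points are, first, the passage to the limit $\chi_n\to\chi^*$: one must check that $V_{\max}(\chi)$ varies continuously, or at least that the zeros $V_{\chi_n}$ do not drift up to $V_{\max}(\chi_n)$, for which the asymptotics \eqref{N_cmax}--\eqref{D_cmax} must be made locally uniform in $\chi$; and second, the closing dichotomy, which genuinely uses that $sf'(s)$ is maximized only at $\tilde{c}$. An alternative, more quantitative route that also reveals the bifurcation type is to expand $G(V)=A(\chi)V+B(\chi)V^2+o(V^2)$ as $V\to0$: here $A(\chi)$ is a negative multiple of $(1-\chi/\chi^*)$, so $A(\chi)\to0^-$, and \eqref{hyp:supplementaire} should force $B(\chi^*)>0$, whence the smallest positive zero behaves like $-A(\chi)/B(\chi^*)\to0$; the computation and the sign of $B(\chi^*)$ are the technical heart of this variant.
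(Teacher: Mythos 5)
Your existence argument is exactly the paper's: Proposition \ref{prop:c} plus the Intermediate Value Theorem on $(0,V_{\max})$. Your compactness skeleton for the limit (bounded speeds, subsequence $V_{\chi_n}\to V_*$, pass to the limit in $G$, exclude $V_*>0$) is also the paper's. Moreover your ``a priori identity'' is the paper's key quantity in disguise: since $H(Y)=-h'$ and $W_\chi'=-\gamma\kappa'$, one integration by parts of the paper's decomposition \eqref{eq:signG} gives $\int_{x_L}^{x_R} W_\chi'(x)h(x)\dx=\gamma\kappa(x_R)\,G(V)$, which at a solution ($G=0$, i.e.\ $h(x_R)=0$) is precisely your relation $a\chi\int s f'(s)h\dx=\int h\dx$. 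So the two routes coincide up to that integration by parts; the difference lies only in how $V_*>0$ is excluded.

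That exclusion step is where your proof has a genuine gap, which you yourself flag. From the equality case of $\int\big(1-a\chi^* s_*f'(s_*)\big)h_*\dx=0$ you conclude $s_*f'(s_*)\equiv\tilde c f'(\tilde c)$ on the range of $s_*$, hence a translating circle, and you rule this out \emph{only} under the additional hypothesis that $s\mapsto sf'(s)$ attains its maximum at $s=\tilde c$ alone. Assumption \eqref{hyp:supplementaire} gives only $sf'(s)\le\tilde c f'(\tilde c)$; if $sf'(s)$ is constant equal to its maximum on an interval containing the range of $s_*$, no contradiction arises — indeed $\kappa'\equiv0$ is then perfectly consistent with the ODE \eqref{eq:Y}, so a disk translating at speed $V_*>0$ survives your argument. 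As written, your proof therefore establishes the corollary only for non-degenerate $f$ (such as the prototype \eqref{eq:prototype_example}), not under \eqref{A} and \eqref{hyp:supplementaire} as stated. For what it is worth, the paper's own concluding step (``$W_{\chi^*}'\ge0$ hence $H(Y(x,V))W(x)\ge0$, hence $G(\chi^*,V)>0$'') is delicate at exactly the same point — it implicitly needs $W_{\chi^*}$ to change sign at $x=0$ and strictness fails in the same degenerate situation — but that does not close the gap in your version. Your remarks about the uniformity in $\chi$ of the blow-up near $V_{\max}$ are legitimate concerns that the paper also passes over.
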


\begin{proof}
	The Corollary follows from Proposition  \ref{prop:c}.
	Indeed, for any $\chi>\chi^*$ the Intermediate Value Theorem gives the existence of $V_\chi>0$ such that $G(V_\chi)=0$.\\
	In order to show that 
	$V_\chi\to 0 $ as $\chi \to \chi^*$, we note that if we consider $G$ as a function of $\chi$ and $V$ (instead of $V$ only), then the continuity with respect to $\chi$ can be proved similarly to that with respect to $V$. Consider the situation where $\chi\to \chi^*$ along any subsequence such that $V_\chi\to V^*$. Such a subsequence exists since $0\leq V\leq V_{\mathrm{max}}\leq  \frac{ p_1  ^2}{2\gamma }$.
	In such a case we have $0=G(\chi,V_\chi) \to G(\chi^*,V^*)$.
	It remains to prove that $ G(\chi^*,V)>0$ for all $V>0$ from which it would follow that $V^*=0$ and hence the whole sequence $V_\chi$ converges to zero.
	
	Recalling the definition \eqref{eq:signe_Y} of $W$, we see that 
	\[
	W_{\chi*}'(x) = V \left(1- \frac{c_1 e^{-aVx} f'\left(c_1 e^{-aVx} \right) }{\tilde c f'(\tilde c)} \right). 
	\] 
	 Consequently, if $f$ satisfies the additional assumption  \eqref{hyp:supplementaire}, then $W'(x)\ge 0$ for all $x \in \xR$ and hence $H(Y(x,V)) W(x) \ge 0$ for all $x \in \xR$.

\end{proof}

\section{Proof of Theorem {\ref{thm:TW:bif}}}\label{sec:thm:TW:bif}

In this section, we show that a bifurcation of traveling wave solutions occurs from the family of radially symmetric steady states $B_{R_0}$. This bifurcation is determined by the following parameters: the size of the cell $R_0$, the active parameter $\chi$, the constant $M$ defined in \eqref{eq:M}, $a$ and the function $f$. It is convenient to  choose $\chi$ as the bifurcation parameter in the bifurcation conditions. More precisely, we will prove the following result which implies  Theorem {\ref{thm:TW:bif}. 
	\begin{theo}\label{thm:TW:bif_2}
 	Assume that $f$ satisfies the assumptions in \eqref{A}. Then, problem \eqref{eq:pb} has a branch of traveling wave solutions $(\tO_{\chi }^{R_0},V_{\chi }^{R_0})$ with volume $|B_{R_0} |$  
 	bifurcating from the radial solution $B_{R_0}$ at $\chi =\chi^*$.
 \end{theo}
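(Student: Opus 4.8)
The plan is to produce the branch by a bifurcation‑from‑a‑simple‑eigenvalue argument (Crandall--Rabinowitz), with $\chi$ as the bifurcation parameter and with the relevant linearized operator being precisely the one analyzed in Section~\ref{sec:lin_stab}. First I would reformulate the problem. By Proposition~\ref{prop:cP} a traveling wave is completely determined by its profile $\tO$ and its speed $V$, since $P=p_1-Vx$ and $c=c_1e^{-aVx}$ with $c_1=\frac{M}{\int_{\tO}e^{-aVx'}\dx' \dy'}$; the only conditions left are the shape equation \eqref{eq:TW_shape} on $\tGa$ and the volume constraint $|\tO|=\pi R_0^2$. Near $B_{R_0}$ I write $\tGa$ as a graph $r=R_0+\rho(\theta)$, with $\rho$ even in $\theta$ (admissible by the reflection‑symmetry reduction of Section~\ref{sec:lin_stab}) and normalized by $\int_{-\pi}^{\pi}\rho(\theta)\cos\theta\,\dtheta=0$, which quotients out the $x$‑translation invariance and places the disk on an isolated trivial branch. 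This turns the problem into $F(\rho,V;\chi)=0$ for a map $F$ from (even $C^{2,\alpha}$ graphs with vanishing $\cos\theta$‑mode) $\times\,\xR$ into (even mean‑zero $C^{0,\alpha}$ functions) $\times\,\xR$, whose first component is the mean‑zero part of $\gamma\kappa[\rho]+Vx[\rho]+\chi f(c_1e^{-aVx[\rho]})$ (its mean fixing the Lagrange multiplier $p_1$) and whose second component is $|\tO|-\pi R_0^2$. Since $f\in C^1$, $F$ and $D_{(\rho,V)}F$ are continuous and $D_\chi D_{(\rho,V)}F$, which involves only $f'$, exists and is continuous; the trivial branch is $(\rho,V)=(0,0)$, i.e. $B_{R_0}$, for every $\chi$.

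Next I would compute the kernel of $D_{(\rho,V)}F(0,0;\chi)$. The $\rho$‑direction contributes the linearized curvature $-\frac{\gamma}{R_0^2}(\partial_\theta^2+1)$ (all other terms being quadratic in $\rho$); the $V$‑direction contributes, by a short computation using $\partial_Vc_1|_{V=0}=0$ (the disk being centered), the function $R_0\big(1-a\chi\tilde c f'(\tilde c)\big)\cos\theta=R_0(1-\chi/\chi^*)\cos\theta$; and the volume constraint linearizes to $\hat\rho_0=0$. In Fourier modes the blocks $m\ge2$ of the shape equation (invertible because $\frac{\gamma}{R_0^2}(m^2-1)\ne0$) together with the volume constraint (invertible in $\hat\rho_0$) decouple, leaving the scalar mode‑$1$ identity $R_0(1-\chi/\chi^*)\,\dot V=0$. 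Hence $D_{(\rho,V)}F(0,0;\chi)$ is Fredholm of index $0$, invertible for $\chi\ne\chi^*$, and at $\chi=\chi^*$ has one‑dimensional kernel (the direction $\dot\rho=0$, $\dot V=1$) and one‑dimensional cokernel (the $\cos\theta$ slot of the shape equation). Without the normalization the kernel at $\chi^*$ would be two‑dimensional, the extra element being the $\cos\theta$ translation mode, which the normalization removes. Thus the threshold $\chi^*$ of Section~\ref{sec:lin_stab} reappears here as the unique zero of the coefficient $1-\chi/\chi^*$.

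It remains to verify transversality and conclude. Performing the Lyapunov--Schmidt reduction, i.e. solving the invertible $m\ge2$ blocks and the volume constraint for $\rho$ in terms of $(V,\chi)$ near the trivial point, yields a scalar bifurcation function $\Psi(V,\chi)$ with $\Psi(0,\chi)\equiv0$ and $\partial_V\Psi(0,\chi)=R_0(1-\chi/\chi^*)$ --- the chain‑rule corrections through the reduced $\rho$ vanish because modes $\ne1$ of $\rho$ do not affect mode $1$ of $F$ at the linear level --- so $\partial_\chi\partial_V\Psi(0,\chi^*)=-R_0/\chi^*\ne0$, which is the Crandall--Rabinowitz transversality condition. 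The theorem then produces a curve $s\mapsto(\rho(s),V(s),\chi(s))$, $|s|\ll1$, of solutions of $F=0$ with $(\rho(0),V(0),\chi(0))=(0,0,\chi^*)$ and $\dot V(0)\ne0$: a branch of traveling waves of volume $\pi R_0^2$ bifurcating at $\chi^*$. It is non‑trivial since $V(s)\ne0$ for $0<|s|\ll1$, and a traveling wave with $V\ne0$ is not a stationary solution, a stationary solution having $\nabla P=0$, hence constant curvature, hence being a disk with $V=0$ by Lemma~\ref{lem:stat}. Re‑parametrizing by $\chi$ gives the family $(\tO_\chi^{R_0},V_\chi^{R_0})$; whether the branch opens as a pitchfork or a saddle node, governed by the sign structure of $\Psi$ and the reflection symmetry $(x,V)\mapsto(-x,-V)$ of the equations, is the refinement carried out in Theorem~\ref{thm:TW:bif-CR}. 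As an alternative one could build the branch directly from the explicit construction of Section~\ref{sec:construction} by choosing $p_1=p_1(\chi)$ so that $|\tO_\chi^{p_1}|=\pi R_0^2$, using that $\chi\mapsto(\tO_\chi^{p_1},V_\chi^{p_1})\to(B_{R_0},0)$ as $\chi\to\chi^*$; this variant requires monotonicity of $p_1\mapsto|\tO_\chi^{p_1}|$.

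The step I expect to be the main obstacle is the first one: choosing function spaces in which $F$ is simultaneously well defined, continuously differentiable --- delicate because $f$ is only $C^1$, so the associated composition operators lose Hölder regularity --- and such that $D_{(\rho,V)}F$ is Fredholm of index zero, while at the same time factoring out the translation invariance correctly so that the Crandall--Rabinowitz hypotheses genuinely hold. Once the framework is fixed, the kernel and transversality computations are a direct repackaging of the spectral analysis of Section~\ref{sec:lin_stab}: everything hinges on the single coefficient $1-\chi/\chi^*$ vanishing to first order at $\chi^*$.
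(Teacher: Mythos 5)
Your proposal is correct, and it essentially reproduces one of the two arguments the paper actually carries out --- though not the one the paper formally attaches to this statement. Your Crandall--Rabinowitz setup (kernel spanned by $(\dot\rho,\dot V)=(0,1)$ after quotienting out translations and the volume mode, cokernel equal to the $\cos\theta$ slot, transversality from the mixed derivative $-R_0/\chi^*\neq 0$) is precisely the content of the paper's Lemma \ref{lemma:CR_hysteresis} together with the transversality Lemma \ref{lemma:CR_hysteresis-CR}, fed into Theorem \ref{thm:CR}; the only cosmetic difference is that the paper keeps $p_1$ as an explicit unknown with integral constraints appended to $\FF$, where you project onto mean-zero functions. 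The proof the paper labels as the proof of Theorem \ref{thm:TW:bif}, however, is a Leray--Schauder degree argument: equation \eqref{eq:TWrho} is converted into a fixed-point problem $(\rho,V)=\w{K}(\rho,V;s)$ for a compact operator, the eigenvalues of the linearization are computed mode by mode, and the jump of the local index $(-1)^m$ across $\chi=\chi^*$ (Theorems \ref{teoA4} and \ref{prop2}) forces a bifurcation point. The degree route yields only a sequence of nontrivial solutions accumulating at the bifurcation point, with no curve structure or local uniqueness, but it is insensitive to the simple-eigenvalue structure; your route yields the stronger conclusion of a $C^1$ branch with $V'(0)=1$, which is exactly what Theorem \ref{thm:TW:bif-CR} then exploits, at the price of the transversality computation. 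Your concern about $f$ being only $C^1$ is well placed --- the paper quietly upgrades to $f\in C^3$ in Section \ref{sec:thm:TW:bif-CR} --- but it affects both routes, since the degree argument also needs Fr\'echet differentiability of $\w{K}$ at the origin. One minor caveat: reparametrizing the branch by $\chi$ at the very end is not automatic, since $\chi'(0)=0$ (the bifurcation is a pitchfork to leading order), but this is a defect shared with the paper's own phrasing and does not affect the existence claim.
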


 The next result characterizes the structure of the bifurcating branch of the traveling wave.

 \begin{theo}\label{thm:TW:bif-CR}
 	Assume that $f$ satisfies the assumptions in \eqref{A}. Then, the  bifurcation solution has the following form: 
 	\begin{equation*}
 		\begin{split}
 			\chi(s)&=\frac{\pi  R_0^2}{aM f'\left(\frac{M}{\pi R_0^2}\right)} + \eta s^2 +  o(s^2), \\ 
 			V(s) &= s + o(s),
 		\end{split}
 	\end{equation*}
 	where $\eta \in \xR$ and the parameter $s$ takes values in an interval $(-\delta,\delta)$.
 \end{theo}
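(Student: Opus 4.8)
The plan is to reduce the fixed-area traveling-wave problem to a single scalar equation and then read off the local structure of its solution set near the radial state from the implicit function theorem together with the reflection symmetry $x\mapsto-x$. Recall from Section~\ref{sec:graph} that, writing $\tO=\{x_L<x<x_R,\ -h(x)<y<h(x)\}$ and $Y=-h'/\sqrt{1+(h')^{2}}$, a traveling wave of speed $V$ is equivalent to the initial value problem \eqref{eq:Y}, the matching condition \eqref{cond:Y}, and the closure condition \eqref{eq:condYh}, i.e.\ $G(V)=0$ with $G$ as in \eqref{eq:Gdef} and $c_1$ as in \eqref{def:normalisation_0}. In the bifurcation setting one fixes the area $|\tO|=\pi R_0^{2}$ instead of $p_1$: the constraint $2\int_{x_L}^{x_R}h\,\dx=\pi R_0^{2}$ can be solved for $p_1=p_1(V,\chi)$ when $|V|$ is small by the implicit function theorem, since at $V=0$ the domain is the disk of radius $\gamma/(p_1-\chi f(\tilde{c}))$, whose area is strictly monotone in $p_1$. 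Substituting, the problem collapses to the scalar equation
\[
g(V,\chi):=G\bigl(V,\chi,p_1(V,\chi)\bigr)=0,
\]
with $a,M,R_0,\gamma,f$ fixed and $\chi$ the bifurcation parameter; this is the finite-dimensional Lyapunov--Schmidt reduction of the abstract problem of Theorem~\ref{thm:TW:bif_2}, the relevant one-dimensional kernel at $\chi=\chi^{*}$ being the $m=1$ polarization--translation mode isolated in Theorem~\ref{the:lin_stab}.

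\textbf{Trivial branch, bifurcation point, parity.} At $V=0$ one has $p_1(0,\chi)=\gamma/R_0+\chi f(\tilde{c})$, $Y(x,0)=x/R_0$, $x_R(0)=-x_L(0)=R_0$, and, by the parity computation already used to get $G(0)=0$ in Proposition~\ref{prop:c}, $g(0,\chi)=0$ for every $\chi$: the disk $B_{R_0}$ is a solution for all $\chi$ (the trivial branch). Moreover the reflection $x\mapsto-x$ sends $(V,Y(\cdot))$ to $(-V,-Y(-\cdot))$, preserves \eqref{eq:Y}, \eqref{cond:Y} and the area, hence makes $p_1(V,\chi)$ even in $V$, and flips the sign of the integrand $Y/\sqrt{1-Y^{2}}$ of $G$; thus $g$ is \emph{odd} in $V$, so $\widehat g(V,\chi):=g(V,\chi)/V$, extended at $V=0$ by $\partial_V g(0,\chi)$ (using that $V\mapsto g(V,\chi)$ is smooth near $0$, as in the continuity argument of Proposition~\ref{prop:c}, and that $\partial_V c_1(0)=0$), is \emph{even} in $V$. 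Differentiating \eqref{eq:Y} in $V$ at $V=0$ recovers the leading term of the expansion of $G$ in Proposition~\ref{prop:c}:
\[
\widehat g(0,\chi)=\partial_V g(0,\chi)=\frac{\pi R_0^{3}}{2\gamma}\bigl(1-a\chi\,\tilde{c}\,f'(\tilde{c})\bigr),
\]
which vanishes precisely at $\chi=\chi^{*}=1/(a\tilde{c}f'(\tilde{c}))$ (cf.\ \eqref{def:chi_star}), while $\partial_\chi\widehat g(0,\chi^{*})=-\pi R_0^{3}/(2\gamma\chi^{*})\neq0$. This transversality is the scalar counterpart of the fact, seen in the proof of Theorem~\ref{the:lin_stab}(ii), that $\lambda_1(\chi)\simeq\frac{4}{R_0^{2}}(a\chi\tilde{c}f'(\tilde{c})-1)$ crosses $0$ with nonzero speed as $\chi$ crosses $\chi^{*}$. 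Hence the implicit function theorem applied to $\widehat g$ at $(0,\chi^{*})$ gives a unique curve $\chi=\chi(V)$ for $|V|$ small with $\chi(0)=\chi^{*}$ and $\widehat g(V,\chi(V))=0$; its points with $V\neq0$ are traveling waves of area $\pi R_0^{2}$ (reproving Theorem~\ref{thm:TW:bif_2}), and evenness of $\widehat g$ forces $\chi(-V)=\chi(V)$, so there is no term linear in $V$ and $\chi(V)=\chi^{*}+\eta V^{2}+o(V^{2})$ with $\eta=\tfrac12\chi''(0)\in\xR$. Parametrising the branch by $s$ with $V(s)=s+o(s)$ (e.g.\ $s=V$, or the Crandall--Rabinowitz amplitude rescaled so $\partial_sV(0)=1$) then yields $V(s)=s+o(s)$ and $\chi(s)=\chi^{*}+\eta s^{2}+o(s^{2})$, as claimed.

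\textbf{The coefficient $\eta$ and the main obstacle.} By the above, $\eta=-\widehat g_2(\chi^{*})/\partial_\chi\widehat g(0,\chi^{*})$ where $\widehat g(V,\chi)=\widehat g(0,\chi)+\widehat g_2(\chi)V^{2}+o(V^{2})$ and, $g$ being odd, $\widehat g_2(\chi^{*})=\tfrac16\partial_V^{3}g(0,\chi^{*})$. Computing $\partial_V^{3}g(0,\chi^{*})$ is the technical core: one expands to third order in $V$ the solution of \eqref{eq:Y} together with $p_1(V,\chi)$, $c_1(V)$ and the endpoints $x_L(V),x_R(V)$, obtaining linear Cauchy problems for $\partial_V^{k}Y|_{V=0}$ ($k=1,2,3$) whose data involve the derivatives of $f$ at $\tilde{c}$ and the coefficients $\partial_V^{k}c_1(0)$, $p_1^{(k)}(0,\chi^{*})$, and then substitutes into $G=\int Y/\sqrt{1-Y^{2}}\,\dx$, handling the moving endpoints where the integrand carries the integrable square-root singularity already treated in Proposition~\ref{prop:c}. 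This computation, routine but lengthy, is the main obstacle; it is also where one needs $f$ to be a little more regular than in \eqref{A} (say $f\in C^{3}$, as the prototype \eqref{eq:prototype_example} is) so that the branch is twice differentiable in $s$. Once $\eta$ is obtained, the nature of the bifurcation follows: the linear-in-$s$ term being excluded by the $V\mapsto-V$ symmetry, the branch is a pitchfork—supercritical if $\eta>0$, subcritical if $\eta<0$—with only the exceptional locus $\eta=0$ requiring the expansion to be pushed to the next even order. Under the extra monotonicity assumption \eqref{hyp:supplementaire} one can moreover bypass the explicit computation of $\eta$: combining the sign of $\widehat g(0,\chi)$ for $\chi>\chi^{*}$ with the sign information on $G$ for $0<V\ll1$ used in Corollary~\ref{cor:d} already determines $\sgn\eta$ and identifies the bifurcation as a one-sided pitchfork, in agreement with Corollary~\ref{cor:d}.
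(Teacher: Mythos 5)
Your proposal reaches the stated expansion by a genuinely different route from the paper. The paper works entirely in the polar-coordinate formulation of Section \ref{sec:thm:TW:bif}--\ref{sec:thm:TW:bif-CR}: it applies the Crandall--Rabinowitz theorem to the functional $\FF$ of \eqref{def:TW_F} (Lemmas \ref{lemma:CR_hysteresis} and \ref{lemma:CR_hysteresis-CR}), obtains the branch $(\chi(s),\rho(s,\cdot),V(s),p_1(s))$ with $V(s)=s+s\psi_2(s)$, and then proves $\chi'(0)=0$ and computes $\chi''(0)$ explicitly by differentiating the first component of $\FF$ along the branch (Lemmas \ref{lem:beta'} and \ref{lem:beta''}, with the curvature and $z$ computations of the appendix). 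You instead perform a scalar Lyapunov--Schmidt reduction in the graph formulation of Section \ref{sec:graph}, eliminate $p_1$ through the area constraint, and apply the implicit function theorem to $\widehat g=g/V$. Your reflection argument $x\mapsto-x$, $(V,Y)\mapsto(-V,-Y(-\cdot))$ does give oddness of $g$ and hence $\chi'(0)=0$ essentially for free, which is arguably cleaner than the paper's Lemma \ref{lem:beta'} (which relies on $f(\tilde{c})\neq 0$), and your value $\partial_V g(0,\chi)=\tfrac{\pi R_0^{3}}{2\gamma}\bigl(1-a\chi\tilde{c}f'(\tilde{c})\bigr)$ is consistent with the expansion in the proof of Proposition \ref{prop:c}. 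What the paper's route buys is the explicit formula for $\chi''(0)$ (the content of Theorem \ref{thm:TW:bif_2-CR}) and a functional setting in which the smoothness of the branch is delivered by the abstract theorem; what yours buys is a one-dimensional picture, local uniqueness of the nontrivial branch, and the symmetry shortcut.

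There is, however, one genuine gap you acknowledge but do not close: the regularity of the reduced map. To apply the implicit function theorem to $\widehat g$ and to extract a bona fide coefficient $\eta=\tfrac12\chi''(0)$ you need $g(V,\chi)=G\bigl(V,\chi,p_1(V,\chi)\bigr)$ to be at least $C^{2}$ (in fact $C^{3}$ in $V$ for your formula $\widehat g_2=\tfrac16\partial_V^{3}g$), and the area map must be $C^{1}$ in $p_1$ to solve the constraint. Proposition \ref{prop:c} establishes only \emph{continuity} of $G$, and already there the proof has to fight the square-root singularity of $Y/\sqrt{1-Y^{2}}$ at the moving endpoints $x_L(V),x_R(V)$; differentiating $G$ in $V$ and $\chi$ produces endpoint terms and integrands with stronger singularities whose integrability and continuity are not automatic and are nowhere verified in the paper. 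This is precisely the step your argument cannot skip and that the paper's change of coordinates avoids. Note also that \eqref{A} only gives $f\in C^{1}$; as the paper does at the start of Section \ref{sec:thm:TW:bif-CR}, you must additionally assume $f\in C^{3}$ for the second-order expansion of $\chi$ to be meaningful.
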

 
 \begin{rema}
 	By lack of uniqueness, we cannot claim that the $V$ function in Theorem \ref{thm:TW:bif-CR} is the same as the previous existence Theorems.
 \end{rema}

Since the disk is a solution of the system  \eqref{eq:pression_TW_1} -- \eqref{eq:marqueur_bord_TW_1} with zero bulk velocity $V=0$, our aim is to seek for other solutions in the form of a perturbation of the disk of radius $R_0$. We seek for those domain $\Omega_0$ of the form
\begin{equation*} 
	\Omega_0 = \{(r,\theta)\, :\, 0\leq r < R_0 + \rho (\theta) \mbox{ and 
	} \theta \in [-\pi,\pi] \},
\end{equation*}
where the function $\rho:\xR \to (-R_0,\infty)$ is $2\pi$-periodic and such that 
\begin{equation}\label{eq:preservation-area}
\int_{-\pi}^\pi \left((R_0+\rho(\theta))^2-R_0^2 \right)\dtheta = 0
\end{equation}
(this condition guarantees that $|\Omega_0|=|B_{R_0}|$).

Furthermore, since we look for traveling wave propagating in the $x$-direction, we restrict ourselves (as in the previous section) to domain $\Omega_0$ that are symmetric with respect to the $y$-axis. We thus introduce the functional space: 
\begin{equation*} 
	X=\set{ \rho\in \mathcal C^{2,\alpha}_{\rm per}(-\pi , \pi):\q \rho(\theta)=\rho(-\theta),\; \forall \theta \in (-\pi , \pi)}.
\end{equation*}
Note that the boundary $\pa\Omega_0$ is parametrized by
\[
\Big( (R_0+\rho(\theta) ) \cos\theta , (R_0+\rho(\theta) ) \sin \theta \Big) \quad \mbox{for } \theta \in [-\pi,\pi],
\]
the normal vector is given by
\[
n(\theta)= \frac{1}{((R_0+\rho(\theta))^2 + \rho'(\theta)^2 )^{1/2}}
\left(
\begin{array}{c}
	(R_0+\rho(\theta) ) \cos\theta +\rho'(\theta) \sin \theta\\
	(R_0+\rho(\theta) ) \sin\theta -\rho'(\theta) \cos \theta
\end{array}
\right),
\]
and the mean-curvature by
\[
\kappa(\theta) = \frac{
	(R_0+\rho(\theta))^2 + 2 \rho'(\theta)^2 - (R_0+\rho(\theta)) \rho''(\theta)
}{\left((R_0+\rho(\theta))^2 + \rho'(\theta)^2\right)^{3/2}}.
\]
In such a case, equation \eqref{eq:TW_shape} can  be rewritten as 
\begin{equation}\label{eq:TWrho}
	\gamma \kappa(\theta) + \chi f\left( c_1(V,\rho)e^{-aV(R_0+\rho(\theta))\cos\theta} \right)  + V  (R_0+\rho(\theta)) \cos\theta = 	p_1,
\end{equation}
for all $\theta \in [-\pi,\pi)$, where we recall that $c_1(V,\rho)$ is defined by  
\begin{equation}\label{def:normalisation}
	c_1(V,\rho)=\frac{M}{\int_{-\pi}^\pi \int_{r=0}^{R_0+\rho(\theta)} e^{-aVr\cos\theta} r \dd r\dtheta}.
\end{equation}

  Therefore, the existence of a boundary $\pa \Omega_0$ solving \eqref{eq:TW_shape} follows from the existence of a function $\rho$ solution of equation \eqref{eq:TWrho}. 

Before getting into the real proof, we also introduce the functional space
	\[
 Y=\mathcal C^{0,\alpha}_{\rm per}(-\pi , \pi).
	\]

The existence of solutions of \eqref{eq:TWrho} is proved by the following result.
\begin{theo}\label{thm:TW:bif_2}
There exists an interval $I=(-\eps,+\eps)$ and 
	four $C^1$ functions 
\[
\rho: I \times [-\pi,+\pi] \to \xR,\q  \chi: I \to \xR,\q V: I \to\xR,\q p_1: I \to\xR,
\]
such that, for  all $s \in I$, the equation \eqref{eq:TWrho} has a solution $\rho(s,\theta)$ for all $\theta \in [-\pi,\pi]$ representing a parametrization of the boundary $\pa \Omega_0$ with $\chi=\chi(s)$, $V=V(s)$ and $p_1=p_1(s)$.
\end{theo}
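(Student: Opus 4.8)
The plan is to apply the Crandall--Rabinowitz bifurcation theorem (as recalled in the appendix) to the nonlinear operator defined by the left-hand side of \eqref{eq:TWrho}. First I would set up the abstract framework: define the map
\[
\mathcal{F}: I_\chi \times X \times \xR \times \xR \to Y, \qquad \mathcal{F}(\chi,\rho,V,p_1)(\theta) = \gamma\kappa(\theta) + \chi f\big(c_1(V,\rho)e^{-aV(R_0+\rho(\theta))\cos\theta}\big) + V(R_0+\rho(\theta))\cos\theta - p_1,
\]
where $\kappa$ is the curvature functional written above in terms of $\rho,\rho',\rho''$, and $c_1(V,\rho)$ is given by \eqref{def:normalisation}. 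Here $X = \mathcal C^{2,\alpha}_{\rm per}$ with the evenness constraint and $Y=\mathcal C^{0,\alpha}_{\rm per}$, so that $\kappa$ maps $X$ into $Y$ and $\mathcal F$ is a well-defined $C^1$ (indeed smooth) map by the explicit rational/exponential dependence on its arguments. One checks $\mathcal F(\chi, 0, 0, p_1^0)=0$ for the disk, where $p_1^0 = \gamma/R_0 + \chi f(M/(\pi R_0^2))$ is the stationary value from Lemma \ref{lem:stat}; this is the trivial branch we bifurcate from. To account for the area constraint \eqref{eq:preservation-area} and the translation invariance, I would work on the codimension-one subspace of $X$ obtained by removing the mode $m=0$ (area) and possibly restricting to the even subspace already built into $X$ (which kills the $\sin$ modes, i.e. the translation direction); the parameter $p_1$ is then determined as a function of the remaining data, playing the role of a Lagrange multiplier, so effectively one reduces to a map $\mathcal G(\chi,\rho,V)$ with $\rho$ in the mode-$\geq 1$ even subspace.

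The second step is to linearize at the trivial solution. Differentiating $\mathcal F$ in $(\rho,V)$ at $(\rho,V)=(0,0)$ reproduces exactly the linearized curvature term $\frac{\gamma}{R_0^2}(\partial_{\theta\theta}^2\rho + \rho)$ from Lemma \ref{prop-ex-pb-lin}, together with the linearization of the $f(c)$ term and the $V(R_0+\rho)\cos\theta$ term. The key computation is that $\partial_V c_1(0,\cdot)=0$ (noted already in Section \ref{sec:construction} and in the continuity argument of Proposition \ref{prop:c}), so at $V=0$ the linearization in $V$ of the exponential factor produces a term proportional to $\cos\theta$, i.e. $-aV R_0 \cos\theta\, \tilde c f'(\tilde c)$, which together with $VR_0\cos\theta$ gives $VR_0(1 - a\tilde c f'(\tilde c)\chi)\cos\theta = VR_0(1-\chi/\chi^*)\cos\theta$. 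Thus the partial derivative $D_V\mathcal F$ at the trivial branch vanishes precisely at $\chi=\chi^*$, and this is what makes $\chi=\chi^*$ the bifurcation point. One then computes the kernel of $D_{(\rho,V)}\mathcal F(\chi^*,0,0)$: using the Fourier expansion $\rho = \sum \rho_m\cos(m\theta)$, the linearized equation decouples mode by mode; for $m\geq 2$ the coefficient $\frac{\gamma}{R_0^2}(1-m^2)\neq 0$ (since $\gamma>0$) forces $\rho_m=0$, the mode $m=0$ is excluded by the area constraint, and the mode $m=1$ gives the one-dimensional kernel spanned by $(\rho,V)=(\cos\theta, v_1)$ for an explicit $v_1\neq 0$ tying the shape perturbation to the velocity. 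Hence $\dim\ker = 1$ and $\operatorname{codim}\operatorname{Range} = 1$, the latter because the range misses exactly the $\cos\theta$ component in $Y$.

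The third step is the transversality (non-degeneracy) condition of Crandall--Rabinowitz: one must check that $\partial_\chi D_{(\rho,V)}\mathcal F(\chi^*,0,0)$ applied to the kernel element is not in the range of $D_{(\rho,V)}\mathcal F(\chi^*,0,0)$. Differentiating once more in $\chi$ the $m=1$ block of the linearized operator, the dominant term comes from $\partial_\chi[VR_0(1-\chi/\chi^*)] = -VR_0/\chi^*$, which contributes a nonzero multiple of $\cos\theta$ — exactly the direction that spans the complement of the range. So the transversality condition holds with an explicit nonzero constant. With the Fredholm property of $D_{(\rho,V)}\mathcal F$ (it is the sum of the isomorphism $\rho\mapsto \frac{\gamma}{R_0}(\partial_{\theta\theta}^2\rho+\rho)$ on modes $\geq 2$, a one-dimensional piece, and compact lower-order terms, so index zero), all hypotheses of the Crandall--Rabinowitz theorem are met, and it yields the interval $I=(-\eps,\eps)$ and the $C^1$ curves $s\mapsto(\rho(s,\cdot),\chi(s),V(s))$ with $\rho(0,\cdot)=0$, $\chi(0)=\chi^*$, $V(0)=0$, and $(\rho,V)$ tangent at $s=0$ to the kernel direction; recovering $p_1(s)$ as the corresponding $C^1$ Lagrange-multiplier function and noting $\frac{d}{ds}V(s)|_0\neq 0$ lets us reparametrize so that $V(s)=s+o(s)$, which also gives the refined expansion $\chi(s)=\chi^* + \eta s^2 + o(s^2)$ of Theorem \ref{thm:TW:bif-CR} once one checks (using evenness, which forces the odd-in-$s$ terms in $\chi$ to vanish) that the first-order term in $\chi$ is absent.

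I expect the main obstacle to be the bookkeeping around the constraints and the multiplier $p_1$: one must carefully choose function spaces so that the area constraint \eqref{eq:preservation-area} is honoured (killing mode $m=0$), the translation invariance is quotiented out (this is why $X$ is taken even, removing the $\sin\theta$ neutral direction), and $p_1$ is solved for smoothly — otherwise the linearized operator has a larger kernel (the extra neutral directions from symmetry) and Crandall--Rabinowitz does not apply directly. The other delicate point, though it is a finite computation rather than a conceptual hurdle, is verifying that the mode-$1$ block of the linearization is genuinely one-dimensional and that both the kernel computation and the transversality constant come out nonzero; this hinges on $\gamma>0$ (so that modes $m\geq 2$ are non-resonant) and on the identity $\partial_V c_1(0,\cdot)=0$, both of which are available from the earlier sections.
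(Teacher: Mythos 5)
Your overall strategy is the same as the paper's: set up $\mathcal F$ on H\"older spaces, use $\partial_V c_1(0,0)=0$ so that the $V$-derivative of the first component at the trivial branch is $VR_0(1-\chi/\chi^*)\cos\theta$, locate the bifurcation at $\chi=\chi^*$ where this coefficient vanishes, identify the range as the codimension-one subspace missing the $\cos\theta$ component, get transversality by differentiating that coefficient in $\chi$, and invoke Crandall--Rabinowitz. All of that matches Lemmas \ref{lemma:CR_hysteresis} and \ref{lemma:CR_hysteresis-CR}.

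There is, however, a genuine gap in your kernel computation, and it traces back to how you quotient out the translation invariance. Restricting to even functions only removes the $\sin\theta$ mode, i.e.\ the $y$-translation; the $x$-translation neutral direction is the $\cos\theta$ mode, which is even and therefore still present in your space. Since the linearized curvature annihilates $\cos\theta$ (because $\rho+\rho''=0$ for $\rho=\cos\theta$) and the $V$-coefficient also vanishes at $\chi=\chi^*$, the $m=1$ block of the linearization is identically zero there: it imposes \emph{no} relation between $\rho_1$ and $V$. Hence in your setup the kernel at $\chi^*$ is two-dimensional, spanned by $(\cos\theta,0)$ and $(0,1)$, and Crandall--Rabinowitz does not apply; your claimed one-dimensional kernel $(\cos\theta,v_1)$ with $v_1\neq 0$ "tying the shape to the velocity" is incorrect — there is no such tying. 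The fix (and what the paper does) is to append the normalization $\int_{-\pi}^{\pi}\rho(\theta)\cos\theta\,\dtheta=0$ (pinning the center of mass in $x$) as an extra component of $\mathcal F$, which kills the $\cos\theta$ mode of $\rho$; the kernel then becomes the \emph{pure velocity} direction $(\rho,V,p_1)=(0,1,0)$, consistent with the branch expansion $\partial_s\rho(0,\cdot)=0$, $V'(0)=1$ in \eqref{eq:TW_sol_0}. Your transversality argument survives this correction, since $\partial_\chi$ of the linearization applied to $(0,1,0)$ is still a nonzero multiple of $\cos\theta$, which is exactly the direction complementing the range.
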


\begin{rema}[On Theorems \ref{thm:TW_intro_2} and \ref{thm:TW:bif_2}]
	We note that our two approaches, Theorem \ref{thm:TW_intro_2} on the one hand and Theorems \ref{thm:TW:bif_2} on the other hand are different. Indeed,  the first result fixes the Lagrange multiplier $p_1$, while the second one fixes the volume. 
	However, if the radius $R_0$ of Theorems \ref{thm:TW:bif_2}  verifies
	$|\tilde \Omega _\chi ^{R_0}|=\pi R_0^2$, then both approaches prove the existence of non trivial traveling wave solutions for $\chi> \chi^*$,  which converge to $B_{R_0}$ when $\chi \to \chi^*$.  Otherwise, 
	\[
	(\tO_{\chi^* }^{p_1},V_{\chi^* }^{p_1})=(\tO_{\chi^* }^{R_0},V_{\chi^* }^{R_0})=(\tO_{\chi^* },0).
	\]
	Theorem \ref{thm:TW_intro} can be interpreted as a bifurcation in $p_1$, since the immobile, circular solution always exists, and this result shows that for sufficiently large $p_1$ there are non-trivial traveling waves.
\end{rema}

We take advantage of the following Lemma.

\begin{lemm}\label{lemma:CR_hysteresis}
	Assume that $f$ satisfies the assumptions in \eqref{A}.  Then, the functional $\FF$ defined by \eqref{def:TW_F} has the following properties
		\begin{enumerate}[parsep=0cm,itemsep=0cm,topsep=0cm]
			\item $\mathcal F (\chi,0,0,0)=0$  for all  $\chi \in \xR.$
			\item ${\rm Ker}\, \mathcal  \partial_{(\rho,V,p_1)} \FF(\chi^*,0,0,0)$ is a one dimensional subspace of $\xR\times X\times \xR \times \xR$ spanned by  $(0,1,0)$; 
			\item ${\rm Range }\, \partial_{(\rho,V,p_1)} \FF(\chi^*,0,0,0)$ is a closed subspace of $Y\times \xR \times \xR \times \xR$ of codimension 1. 
		\end{enumerate}
\end{lemm}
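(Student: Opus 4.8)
The plan is to verify the three items, which are exactly the hypotheses of the Crandall--Rabinowitz theorem on bifurcation from a simple eigenvalue, with $\chi$ playing the role of the bifurcation parameter.

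\emph{Property 1.} The triple $(\rho,V,p_1)=(0,0,0)$ encodes the undeformed disk $B_{R_0}$ at rest. Plugging $\rho\equiv 0$, $V=0$ into \eqref{eq:TWrho}, using $\kappa\equiv 1/R_0$ and $c_1(0,0)=M/(\pi R_0^2)=\tilde c$ from \eqref{def:normalisation}, the boundary equation collapses to the scalar identity $\gamma/R_0+\chi f(\tilde c)=p_1$, which is the equilibrium value of $p_1$ incorporated into \eqref{def:TW_F}; the area component \eqref{eq:preservation-area} and the normalization component of $\FF$ also vanish for $\rho\equiv 0$. Hence $\FF(\chi,0,0,0)=0$ for all $\chi$; this is just the radial solution of Lemma~\ref{lem:stat}.

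\emph{Property 2.} I would compute $L:=\partial_{(\rho,V,p_1)}\FF(\chi^*,0,0,0)$ by linearizing \eqref{eq:TWrho} at the disk. With $\rho=\varepsilon\phi$, $V=\varepsilon v$, $p_1=(\gamma/R_0+\chi^*f(\tilde c))+\varepsilon q$, using the first-order curvature expansion
\[
\kappa(R_0+\varepsilon\phi)=\frac1{R_0}-\frac{\varepsilon}{R_0^2}\bigl(\phi''+\phi\bigr)+O(\varepsilon^2)
\]
from the proof of Lemma~\ref{prop-ex-pb-lin}, the identity $\partial_V c_1(0)=0$, and the fact that $c_1(0,\rho)\equiv\tilde c$ along \eqref{eq:preservation-area}, the $O(\varepsilon)$ part of \eqref{eq:TWrho} reads
\[
-\frac{\gamma}{R_0^2}\bigl(\phi''+\phi\bigr)+R_0\bigl(1-a\chi^*\tilde c f'(\tilde c)\bigr)v\cos\theta-q=0,
\]
the first variation of $c_1$ in $\rho$ contributing only a term proportional to $\int_{-\pi}^{\pi}\phi\,\dtheta$, which is killed by the linearized area equation $\int_{-\pi}^{\pi}\phi\,\dtheta=0$. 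The decisive observation is that, since $\chi^*=1/(a\tilde c f'(\tilde c))$ by \eqref{def:chi_star}, the coefficient $R_0(1-a\chi^*\tilde c f'(\tilde c))$ is zero, so $v$ drops out of the linearized equation entirely. Imposing $L(\phi,v,q)=0$ therefore amounts to $\phi''+\phi=-R_0^2 q/\gamma$ subject to $\int_{-\pi}^{\pi}\phi\,\dtheta=0$ and $\int_{-\pi}^{\pi}\phi\cos\theta\,\dtheta=0$ (the latter being the linearization of the normalization component of $\FF$): the even solutions are $\phi=-R_0^2 q/\gamma+A\cos\theta$, the first constraint gives $q=0$ and the second gives $A=0$, so $\phi\equiv 0$, $q=0$, and $v$ is arbitrary. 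Thus $\mathrm{Ker}\,L=\mathrm{span}\{(0,1,0)\}$ is one-dimensional.

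\emph{Property 3.} I would show that $L$ has closed range of codimension one. The operator $\phi\mapsto\phi''+\phi$, acting on the even functions in $\mathcal C^{2,\alpha}_{\rm per}$ with values in $\mathcal C^{0,\alpha}_{\rm per}$, is Fredholm of index zero with one-dimensional kernel $\mathrm{span}\{\cos\theta\}$, so by the Fredholm alternative its range is the closed codimension-one subspace $\{g:\int_{-\pi}^{\pi}g\cos\theta\,\dtheta=0\}$. Since $v$ does not appear in $L$, $q$ only shifts the $Y$-component by a constant (which already lies in that subspace), and the two scalar components of $L$ are, up to nonzero factors, $\phi\mapsto\int_{-\pi}^{\pi}\phi\,\dtheta$ and $\phi\mapsto\int_{-\pi}^{\pi}\phi\cos\theta\,\dtheta$, a direct back-substitution shows that a target $(g,\alpha,\beta)$ lies in $\mathrm{Range}\,L$ if and only if $\int_{-\pi}^{\pi}g\cos\theta\,\dtheta=0$. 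This is closed, of codimension one, the obstruction being the linear functional $g\mapsto\int_{-\pi}^{\pi}g(\theta)\cos\theta\,\dtheta$ on the $Y$-component; equivalently $\cos\theta\notin\mathrm{Range}\,L$, which is what makes the transversality computation behind Theorem~\ref{thm:TW:bif-CR} non-degenerate.

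\emph{Main obstacle.} The delicate part is the Fréchet differentiation of the active term $f\bigl(c_1(V,\rho)e^{-aV(R_0+\rho(\theta))\cos\theta}\bigr)$: one has to differentiate the domain integral \eqref{def:normalisation} defining $c_1$, confirm the cancellation $\partial_V c_1(0)=0$, and check that the $\rho$-variation of $c_1$ is exactly absorbed by the linearized area constraint, so that $\mathrm{Ker}\,L$ comes out one-dimensional and not two-dimensional — the would-be extra dimension being the translation mode $\phi=\cos\theta$, which is precisely what the normalization component of $\FF$ is designed to remove. Once the exact form of $L$ is in hand, the Fredholm bookkeeping for Property 3 is routine.
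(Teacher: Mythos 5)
Your proposal is correct and follows essentially the same route as the paper: linearize \eqref{eq:TWrho} at the disk, use $\partial_V c_1(0,0)=0$ and the cancellation $1-a\chi^*\tilde c f'(\tilde c)=0$ so that $V$ drops out of $\partial_{(\rho,V,p_1)}\FF(\chi^*,0,0,0)$, use the area and moment constraints to reduce the kernel to $\mathrm{span}\{(0,1,0)\}$, and identify the range as $\{\int_{-\pi}^{\pi}h\cos\theta\,\dtheta=0\}$ (the paper solves the ODE $\rho''+\rho=\cdot$ explicitly where you invoke the Fredholm alternative, an immaterial difference). Your reading of Property 1 — that $p_1$ in \eqref{def:TW_F} is measured relative to the equilibrium value $\gamma/R_0+\chi f(\tilde c)$ — is in fact the careful version of the paper's ``obvious,'' and is consistent with all the subsequent computations.
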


\begin{rema}
Note that points 2. and 3. imply that $\mathcal{F}$ is a Fredholm mapping of order zero.
\end{rema}

	\begin{proof}[Proof of Lemma \ref{lemma:CR_hysteresis}]
	We define the function 
	\[
	\FF:\xR\times X \times\xR\times\xR\to Y\times \xR\times \xR\times \xR 
	\]
	by
	\begin{align}
		\FF(\chi,\rho,V,p_1) & =\Big( 
		\gamma\kappa(\theta)+ \chi f\left( c_1(V,\rho)e^{-aV(R_0+\rho(\theta))\cos\theta} \right) \label{def:TW_F}  \\
		& \qquad\qquad  + V  (R_0+\rho(\theta)) \cos\theta - \frac{  \gamma}{R_0} - p_1, \nonumber \\
		& \qquad\qquad \int_{-\pi}^\pi \left( (R_0+\rho(\theta))^2-R_0^2 \right)\dtheta ,\int_{-\pi}^\pi \rho(\theta) \cos \theta \dtheta,\int_{-\pi}^\pi \rho(\theta) 
		\sin \theta \dtheta  \Big),  \nonumber
	\end{align}
	with $c_1(V,\rho)$ defined by \eqref{def:normalisation}.\\

\noindent
{\it Proof of 1.}\\
The first point is obvious.\\

\noindent
{\it Proof of 2.}\\
Next, recalling the definition of $\mathcal{F}$ in \eqref{def:TW_F}, we compute $ 	\mathcal L_\chi:= \partial_{(\rho,V,p_1)}\mathcal F (\chi,0,0,0)$ which is the linear operator 
		\[
		\mathcal L_\chi: X \times \xR \times \xR \rightarrow Y \times \xR \times \xR \times \xR
		\]
		defined by
		\begin{align} \label{def:Lbeta}
			\mathcal L_\chi(\rho,V,p_1) = \FF_\rho(\chi,0,0,0) [\, \rho \,] 
			+ \FF_V(\chi,0,0,0) \, V + \FF_{p_1} (\chi,0,0,0) \, p_1
		\end{align} 
evaluated in $(\chi,0,0,0)$.	\\
		We recall that the linear operator $\FF_{\rho}(\chi,\rho,V,p_1)$ is defined by
		\[
		\FF_{\rho}(\chi,\rho,V,p_1)[\eta] =  \frac{\dd}{\dd \veps} \FF(\chi,\rho+\veps\eta,V,p_1)_{\displaystyle \mid_{\eps=0}} \quad \mbox{for } \eta \in X.
		\]
We thus compute:
		\begin{align} 
			\FF(\chi,\rho+\veps\eta,V,p_1) &= \Big(  \gamma \, \kappa_{\rho + \veps \eta}(\theta)+ \chi f\left( c_1(V,\rho + \veps \eta)e^{-aV(R_0+\rho(\theta) + \veps \eta(\theta))\cos\theta} \right)  \nonumber \\ 
			&\q + V \, [R_0 + \rho(\theta) + \veps \eta(\theta)] \cos(\theta) - \frac{  \gamma}{R_0} - p_1, \nonumber\\  
			&\int_{-\pi}^\pi (R_0+\rho(\theta) + \veps \eta(\theta))^2-R_0^2 \dtheta , \nonumber\\  
			&\q \int_{-\pi}^\pi [\rho(\theta) + \veps \eta(\theta)] \cos \theta \dtheta,\int_{-\pi}^\pi [\rho(\theta) + \veps \eta(\theta)] \sin \theta \dtheta  \Big),\label{eq:F1_pert}
		\end{align}
		where $\kappa_{\rho + \veps \eta}(\theta)$ is the mean-curvature of the perturbed boundary, that is
		\begin{align*}
			 &\kappa_{\rho + \veps \eta}(\theta) \\
			&= \frac{
				[R_0+\rho(\theta)+ \veps \eta(\theta)]^2 + 2 [\rho'(\theta)+ \veps \eta'(\theta)]^2 - [R_0+\rho(\theta) + \veps \eta(\theta)] \, [\rho''(\theta) 
				+ \veps \eta''(\theta)]
			}{\left[ \, (R_0+\rho(\theta)+ \veps \eta(\theta))^2 + (\rho'(\theta) + \veps \eta'(\theta))^2 \, \right]^{3/2}}.
		\end{align*}
We now derive  \eqref{eq:F1_pert} with respect to $\veps$,  we set $\veps=0$, and we finally consider $\rho=0$, $V=0$ and $p_1=0$. Hence, for $\eta = \rho$, we find the following expression 
		\begin{align} \label{eq:F_rho}
			\FF_\rho(\chi,0,0,0) [\, \rho \,] &=\bigg( - \gamma \, \frac{\rho(\theta)+\rho''(\theta) }{R_0}-\chi\tilde{c}R_0 f'(\tilde{c})\int_{-\pi}^\pi  \rho(\theta) \dtheta,\\ & \quad 2R_0\int_{-\pi}^\pi  \rho(\theta) \dtheta,\int_{-\pi}^\pi \rho(\theta) \cos \theta \dtheta,\int_{-\pi}^\pi \rho(\theta) \sin \theta \dtheta\bigg).\nonumber
		\end{align}
		The second and the third terms in \eqref{def:Lbeta} are simpler to compute since $V$ and $p_1$ are real quantities. We obtain that 
		\begin{align*} 
			\FF_V(\chi,0,0,0) \, V = \left( -V R_0\, a\chi \tilde{c} f'(\tilde{c})\cos \theta 
			+ V \, R_0 \cos \theta,0,0,0 \right)
		\end{align*}
	since $\partial_V c_1(0,0)=0$, and 
		\begin{align} \label{eq:F_lamb}
			\FF_{p_1}(\chi,0,0,0) \, p_1 = \left( -p_1,0,0,0 \right).
		\end{align}
		Finally, the linear operator $\mathcal L_\chi$ is given by the sum of the expressions \eqref{eq:F_rho} -- \eqref{eq:F_lamb}, that is
		\begin{align*}
			\mathcal L_\chi(\rho,V,p_1)  
&=   
			\left( - \gamma \, \frac{\rho(\theta)+\rho''(\theta) }{R_0^2}-R_0\chi\tilde{c} f'(\tilde{c})\int_{-\pi}^\pi  \rho(\theta) \dtheta \right. \\ \nonumber 
			& \quad \left. - VR_0 a\chi\tilde{c} f'(\tilde{c}) \cos \theta + V R_0 \cos \theta-p_1,\right. \\ \nonumber 
			& \quad  \left. 2R_0\int_{-\pi}^\pi \rho(\theta) \, d\theta,\int_{-\pi}^\pi \rho(\theta) \cos \theta  \dtheta,\int_{-\pi}^\pi \rho(\theta) \sin \theta \dtheta\right).
		\end{align*}
		When $\chi^*  = \frac{1}{a\tilde{c}f'(\tilde{c})}$, we get 
		\begin{align}
			\mathcal L_{\chi^*}(\rho,V,p_1) 
			&= \bigg( - \gamma \, \frac{\rho(\theta)+\rho''(\theta) }{R_0^2} -\frac{R_0}{a}  \int_{-\pi}^\pi  \rho(\theta) \dtheta -p_1, 2R_0\int_{-\pi}^\pi \rho(\theta) \dtheta,\nonumber\\
			&\q\int_{-\pi}^\pi \rho(\theta) \cos \theta \dtheta,\int_{-\pi}^\pi \rho(\theta) \sin \theta \dtheta \bigg).\label{def:Lchi0}
		\end{align}
		Thus, the elements $\{ (\rho,V,p_1) \}$ belonging to  $\mathrm{Ker}\, \mathcal L_{\chi^*}$ are such that
		\begin{equation*} 
			\rho''(\theta)+\rho(\theta)+\frac{R_0^3}{a\ga}  \int_{-\pi}^\pi  \rho(\theta) \dtheta=-\frac{R_0^2}{\gamma}p_1.
		\end{equation*}
		Using the condition $\int_{-\pi}^{\pi} \rho(\theta)  \dtheta=0$, we obtain that
		\begin{equation} \label{eq:ker}
			\rho''(\theta)+\rho(\theta)=-\frac{R_0^2}{\gamma}p_1.
		\end{equation}
		The parameter $V$ does not appear in equation \eqref{eq:ker},  so $(0,V,0)\in \mathrm{Ker}\, \mathcal L_{\chi^*}$ for all $V\in \xR$ and thus $\dim \mathrm{Ker}\, \mathcal L_{\chi^*} \geq 1$.
		Furthermore, if $(\rho,p_1)$ solves \eqref{eq:ker}, then $\rho$ has the form
		\[
		\rho(\theta) = a\cos \theta + b \sin \theta - \frac{R_0^2}{\gamma}p_1, \quad \mbox{ for some } a,b \in\xR.
		\]
We use the conditions 
\[
\int_{-\pi}^{\pi} \rho(\theta)  \dtheta=0,\q \int_{-\pi}^{\pi} \rho(\theta) \cos \theta \dtheta=0,\q \int_{-\pi}^{\pi} \rho(\theta) \sin \theta \dtheta=0
\]
to determine the coefficients $a,\,b$ and to identify $p_1$. These integral conditions  respectively imply that $p_1=0$, $a=0$ and $b=0$, hence
		$
		\mathrm{Ker}\, \mathcal L_{\chi^*}  = \mathrm {span} \{ (0,1,0)\}$
		and   $	\dim \mathrm{Ker}\, \mathcal{L}_{\chi^*}=1$.\\

\noindent
{\it Proof of 3.}\\		
Next, we show that the range of $\mathcal{L}_{\chi^*}$ consists of all the quadruplets
\[
(h,C_1,C_2,C_3)\in Y\times \xR\times\xR \times\xR\q\t{such that}\q \int_{-\pi}^{\pi} h(\theta)\cos \theta \dtheta = 0.
\]
The fact that this condition is necessary is obtained by multiplying the equation
		\begin{equation}\label{eq:cod}
			- \gamma \, \frac{\rho(\theta)+\rho''(\theta) }{R_0^2}  -p_1 = h
		\end{equation}
		by $\cos(\theta)$ and integrating over $(-\pi,\pi)$.
		To check that this condition is sufficient, we note that, for given $h\in \mathcal C^{0,\alpha}_{\rm per}(-\pi , \pi)$ and $p_1\in \xR$, equation \eqref{eq:cod} has a solution in $\mathcal C^{2,\alpha}_{\rm per}(-\pi , \pi)$ if and only if $\int_{-\pi}^{\pi} h(\theta)\cos \theta \dtheta=\int_{-\pi}^{\pi} h(\theta)\sin \theta \dtheta = 0$, and the general solution is of the form 
		$$ \rho(\theta) =\bar\rho(\theta) + a \cos\theta +b\sin\theta,$$
		for some particular solution $\bar\rho$, which we can assume to be even (otherwise we replace
		it with $\frac12[\bar\rho(\theta) + \bar\rho(-\theta)]$).
		
		When $h\in Y$, the condition $\int_{-\pi}^{\pi} h(\theta)\sin \theta \dtheta = 0$ is always satisfied since $h$ is even,
		and we find a solution in $X$ by taking the even part of $\rho$:
		$$ \rho(\theta) =\frac 1 2[ \bar\rho(\theta) +\bar\rho(-\theta) ]+ a \cos\theta .$$
		We then choose $a$ so that $\int_{-\pi}^\pi \rho(\theta) \cos \theta \dtheta=C_2$
		and integrating \eqref{eq:cod} with respect to $\theta$ yields
		$$ - \frac{\gamma}{R_0^2} \int_{-\pi}^{\pi} \rho(\theta) \dtheta - 2\pi p_1= \int_{-\pi}^{\pi}f (\theta)\dtheta$$
		so we can now choose $p_1 $ so that $2R_0^2\int_{-\pi}^\pi \rho(\theta) \dtheta=C_2$.

	\end{proof}

\begin{proof}[Proof of Theorem \ref{thm:TW:bif_2}]
 Lemma \ref{lemma:CR_hysteresis} imply that we can apply the Implicit Function Theorem to $\FF$. 
\end{proof}

\begin{proof}[Proof of Theorem \ref{thm:TW:bif}]

We follow the arguments contained in \cite[Theorem 6.4]{Berlyand2} to prove the existence of a brunch bifurcating traveling wave solutions. In particular, we want to exploit the Leray-Schauder degree theory and its connection with bifurcation points. We refer to the Appendix \ref{app:LS} for a brief synthesis on the theory hidden in the proof below. \\

We omit the dependence on the parameter $s$ when possible.\\

We take advantage of  the expression of the mean-curvature in polar coordinates to rewrite \eqref{eq:TWrho} as
\begin{align}
&\frac{(R_0+\rho)\rho''-\pare{\rho'}^2}{(R_0+\rho)^2+\pare{\rho'}^2}\label{eq:pre-arctan}\\
&=1+\frac{\pare{(R_0+\rho)^2+\pare{\rho'}^2}^\frac{1}{2}}{\ga}\biggl[\chi f\left( c_1e^{-aV(R_0+\rho)\cos\theta} \right) + V  (R_0+\rho) \cos\theta -p_1\biggr]\nonumber
\end{align}
with $\rho=\rho(\theta)$ for all $\theta \in [-\pi,\pi)$, where we recall that $c_1(V,\rho)$ is defined by  \eqref{def:normalisation}.

Since
\[
\arctan\pare{\frac{\rho'}{R_0+\rho}}'=\frac{(R_0+\rho)\rho''-\pare{\rho'}^2}{(R_0+\rho)^2+\pare{\rho'}^2},
\]
we integrate  \eqref{eq:pre-arctan} twice in $\theta$, and we get that
\begin{equation*}
\rho=K(\rho,V;s)
\end{equation*}
for
\begin{align*}
&K(\rho,V;s) =\int_0^\theta(R_o+\rho)\tan\biggl[\psi\\
&+\int_0^\psi \frac{\pare{(R_0+\rho)^2+\pare{\rho'}^2}^\frac{1}{2}}{\ga}\biggl[\chi f\left( c_1e^{-aV(R_0+\rho)\cos\psi'} \right) + V  (R_0+\rho) \cos\psi' -p_1\biggr]\biggr]\,d\psi'\,d\psi.
\end{align*}
 Since we have supposed that the domain is symmetric w.r.t. the $x$-axis, we have that 
\[
\rho\t{ is an even function, so }\rho'(0)=0.
\]
We also recall that the area preservation condition
\[
\int_{-\pi}^\pi \rho(\theta)\,d\theta=0
\]
is verified (see \eqref{eq:preservation-area}).\\
We set  the center of mass of the domain at the origin: 
\[
\frac{1}{3}\int_{-\pi}^\pi (R_0+\rho(\theta))^3\cos \theta\,d\theta=0.
\]
This implies that we do not consider copies of solutions translated in the $x$-direction. \\
The previous conditions lead us to
\[
\rho=K_1(\rho,V;s)=K(\rho,V;s)-\frac{1}{2\pi}\int_{-\pi}^{\pi}K(\rho,V;s)\,d\theta.
\]
Now, setting
\[
V=K_2(\rho,V;s)=V+\frac{1}{3}\int_{-\pi}^{\pi}(R_0+\rho)^3\cos\theta\,d\theta,
\]
the problem reduces to the following fixed point one
\begin{equation}\label{def:wK}
(\rho,V)=\pare{K_1(\rho,V;s),K_2(\rho,V;s)}=\w{K}(\rho,V;s)\q\t{in}\q X\times\xR.
\end{equation}

\begin{rema}[On the operator $\w{K}$]
We here collect some remarks on the component $K_1$ of the operator $\w{K}$.
\begin{itemize}
\item Note that, if $\rho\in X$, then 
\[
K_1(\rho,V;s)\in X.
\]
Moreover, by the definition of $K$,  $K_1(\rho,V;s)$ maps even functions in even ones.
\item  The  $K_1$ just defined is equivalent to the first component of the  operator $\mathcal{F}$ defined in \eqref{def:TW_F}. In particular, this means that the Lemma \ref{lemma:CR_hysteresis} holds for $\w{K}$, i.e.
\[
\w{K}(0,0;s)=0\q\forall s\in \xR,
\]
and that $\w{K}$ is Fréchet differentiable in $0$ (see the proof of Lemma \ref{lemma:CR_hysteresis}).\\
 We can thus linearize $\w{K}$ as explained in Remark \ref{rmk:important}.
\end{itemize}
\end{rema}

Reasoning as in the proof of Lemma \ref{lemma:CR_hysteresis}, the linearizations of $K_1,\, K_2$ around the point $(0,0;s)$, $p_1=0$, are
\begin{align}
L_\rho(\rho,V;s)&=\frac{R_0^3}{\ga}\int_0^\theta\int_0^\psi V\pare{-\chi a	\tilde{c}f'(\tilde{c})+1}\cos \psi'-\frac{\ga}{R_0^3}\rho\,d\psi'\,d\psi-\w{C}\label{eq:Lrho}
\\
L_V(\rho,V;s)&=V+R_0^2\int_{-\pi}^\pi \rho\cos\theta\,d\theta\label{eq:LV}
\end{align}
where $\w{C}$ is the mean value of the first term in $L_\rho(\rho,V;s)$.

Thanks again to Remark \ref{rmk:important},  we  proceed computing the eigenvalues of
\[
\pare{L_\rho(\rho,V;s),L_V(\rho,V;s)}=\w{L}(\rho,V;s).
\]
 In particular, it is without loss of generality that we reduce the case $V\ne 0$ to $V=\pm 1$ by rescaling arguments.\\

We begin considering $(\rho,V)=(\rho,\pm 1)$. Let $E_1$ be the desired eigenvalue. Then, from \eqref{eq:LV}, we have
\[
\int_{-\pi}^\pi \rho\cos\theta\,d\theta=\frac{E_1-1}{R_0^2}.
\]
We now derive \eqref{eq:Lrho}
\[
L_\rho(\rho,V;s)=E_1\rho
\]
 twice by $\theta$, obtaining
\[
\cos\theta \pare{-a\tilde{c}\chi f'(\tilde{c})+1}-\frac{\ga}{R_0^3}\rho=\frac{\ga}{R_0^3}E_1\rho''.
\]
Then,  we multiply this equation by $\cos \theta$, and we integrate in $[-\pi,\pi]$, getting that
\begin{equation*}
 \pare{-a\chi\tilde{c} f'(\tilde{c})+1}\pi =\ga\frac{(E_1-1)^2}{R_0^5},
\end{equation*}
otherwise, by definition of $\chi^*$,
\begin{equation}\label{eq:jump1}
 \pare{1-\frac{\chi}{\chi^*}}\pi =\ga\frac{(E_1-1)^2}{R_0^5}\q\forall\chi(s)\le \chi^*.
\end{equation}
We set 
\begin{align*}
E_{1,1}&= R_0^2\sqrt{\frac{R_0}{\ga}\pare{1-\frac{\chi}{\chi^*}}\pi}+1\ge1,
\\
E_{1,2}&=-R_0^2\sqrt{\frac{R_0}{\ga}\pare{1-\frac{\chi}{\chi^*}}\pi}+1\le1.
\end{align*}
We claim that the cases $E_{1,1}=E_{1,2}=1$ are not admitted. Indeed, this would mean that
\[
\chi(s)=\chi^*=\chi(0),
\]
i.e., $\chi=\chi(s)$ constant, so no bifurcation occurs.\\
The case $(\rho,V)=(\rho,-1)$ can be dealt with in the same way, and it leads to
\begin{equation}\label{eq:jump2}
- \pare{1-\frac{\chi}{\chi^*}}\pi =\ga\frac{(E_{-1}-1)^2}{R_0^5}\q\forall\chi(s)\ge \chi^*.
\end{equation}
This means that,  there is a jump of the local Leray-Scauder index through $s=0$.

As far as the case $(\rho,V)=(\rho,0)$ is concerned, we need
\[
\int_{-\pi}^\pi \rho\cos\theta\,d\theta=0.
\]
Hence
\[
\rho=\cos m\theta,\q m\ge 2.
\]
Reasoning as before, we get that the condition
\[
-\rho=E_{0,m}\rho''
\]
has to be satisfied. Then 
\begin{equation*} 
E_{0,m}=m^{-2}.
\end{equation*}
Note that $E_{0,m}<1$ for all $m\ge 2$.\\

We now want to prove that $(0,0;s_*)$ is a bifurcation point for some $s_*\in \xR$. In order to do so, we are going to apply Theorems \ref{teoA4} and \ref{prop2}.

Since we have that the l.h.s. of \eqref{eq:jump1} (resp. \eqref{eq:jump2}) is positive if
\[
\chi^*>\chi(s)\q\pare{\t{resp. } \chi^*<\chi(s)},
\]
we can identify a value $s_-$ (resp. $s_+$) such that 
\[
\t{deg}_{LS}(I-\w{K}(\rho,V;s_\pm),B_\eps(0),0)
\]
is well defined for
\[
B_\eps(0)=\set{(\rho,V):\,\, |V|<\eps,\,\,\norm{\rho}_{C^{2,\al}_{per}(-\pi,\pi)}<\eps},\q \eps\ll1.
\] 
We now claim that $(0,0;s)$ is an isolated solution for every $s\in\xR$.\\
 Indeed,  $\w{K}$ is Fréchet differentiable at $0$, and $1$ belongs to the resolvent of $\overline{L}$ because none of the eigenvalues we found is equal to $1$.\\
Then, being the assumptions of Theorem \ref{teoA4} satisfied,  $(0,0;s)$ is an isolated solution and, thanks also to \eqref{eq:resume},  we get that
\[
\t{deg}_{LS}(I-\w{K}(\rho,V;s_\pm),B_\eps(0),0)=(-1)^{m_\pm},
\]
where $m_\pm$ have been defined in Theorem \ref{teoA4}, and $m_+$ (resp. $m_-$) refers to $s_+$ (resp. $s_-$). 
In order to verify that we have a bifurcation point, it suffices to prove that $m_-\ne m_+$ (see Theorem \ref{prop2}).\\
Since the number of eigenvalues contained in $(1,+\infty)$ coincides at $s_\pm$ for $E_1$, but it differs by one for $E_{0,m}$, we conclude that
\[
\t{deg}_{LS}(I-\w{K}(\rho,V;s_-),B_\eps(0),0)\ne\t{deg}_{LS}(I-\w{K}(\rho,V;s_+),B_\eps(0),0),
\]
and then, from Theorem \ref{prop2}, there exists a $s_*\in \bra{\min s_\pm,\max s_\pm}$ such that $(0,0;s_*)$ is the desired bifurcation point.   \\
This means that there exists a sequence of solutions 
\[
(V_\eps,\rho_\eps)\in\partial B_\eps(0)
\]
such that
\[
(V_\eps,\rho_\eps;s_\eps)\to (0,0;s_*).
\]
(see Definition \ref{def:bp-2}).\\
We point out that we must have  $(V_\eps,\rho_\eps)\in\partial B_\eps(0)$ because $(V,\rho)=(0,0)$ is an isolated solution in $B_\eps(0)$ for $\eps$ small enough.\\
To conclude this proof, we have just to show that the sequence $(V_\eps,\rho_\eps)$ is made of traveling waves. 
This is trivial since, being $(V_\eps,\rho_\eps)$ solutions to $I-\w{K}(\rho,V;s)$, then they verify Definition \ref{def:TW_1} (see also \eqref{eq:TW_shape} in Remark \ref{rmk:equiv-def}) by \eqref{def:wK}.
\end{proof}

\section{Proof of Theorem \ref{thm:TW:bif-CR}}\label{sec:thm:TW:bif-CR}

We now give some qualitative results on the nature of the bifurcating branch of traveling waves, using Crandall-Rabinowitz bifurcation results. \\
In what follow, we assume that $f$ verifies \eqref{A} and also
\[
f\in C^3(\xR^+).
\]

We begin recalling Lemma \ref{lemma:CR_hysteresis}, and we prove the following.

	\begin{lemm} \label{lemma:CR_hysteresis-CR}
		Let the assumptions of Lemma \ref{lemma:CR_hysteresis} be in force. Then the functional $\FF$ defined by \eqref{def:TW_F} also verifies
\[
\partial _{\chi } \, \partial_{(\rho,V,p_1)} \FF  (\chi^*,0,0,0)[(0,1,0)]\notin {\rm Range }\,  \partial_{(\rho,V,p_1)} \FF(\chi^*,0,0,0).
\]
		\end{lemm}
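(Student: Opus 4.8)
The plan is to apply the Crandall–Rabinowitz theorem, for which Lemma \ref{lemma:CR_hysteresis} already supplies the Fredholm structure (one-dimensional kernel spanned by $(0,1,0)$ and range of codimension one); the only missing hypothesis is the transversality condition asserted here. From the proof of Lemma \ref{lemma:CR_hysteresis} we have the explicit formula for $\mathcal L_\chi = \partial_{(\rho,V,p_1)}\FF(\chi,0,0,0)$, and the characterization of $\mathrm{Range}\,\partial_{(\rho,V,p_1)}\FF(\chi^*,0,0,0)$ as the set of quadruplets $(h,C_1,C_2,C_3)\in Y\times\xR^3$ with $\int_{-\pi}^\pi h(\theta)\cos\theta\,\dtheta = 0$. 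So the strategy is: compute $\partial_\chi \mathcal L_\chi$ applied to the kernel vector $(0,1,0)$, read off the first ($Y$-valued) component, and check that its integral against $\cos\theta$ does not vanish.

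First I would differentiate the expression for $\mathcal L_\chi(\rho,V,p_1)$ displayed before \eqref{def:Lchi0} with respect to $\chi$. Only the terms containing $\chi$ survive: the term $-R_0\chi\tilde c f'(\tilde c)\int_{-\pi}^\pi\rho\,\dtheta$ and the term $-VR_0 a\chi\tilde c f'(\tilde c)\cos\theta$ in the first component. Hence
\[
\partial_\chi \mathcal L_\chi(\rho,V,p_1)=\Big(-R_0\tilde c f'(\tilde c)\int_{-\pi}^\pi\rho\,\dtheta - V R_0 a\tilde c f'(\tilde c)\cos\theta,\,0,0,0\Big),
\]
which is independent of $\chi$, so evaluating at $\chi^*$ changes nothing. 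Now substitute the kernel generator $(\rho,V,p_1)=(0,1,0)$: the $\int\rho$ term drops, and the first component becomes $-R_0 a\tilde c f'(\tilde c)\cos\theta$. Since $a\tilde c f'(\tilde c)=1/\chi^*\neq 0$, this is $-R_0/\chi^*\,\cos\theta$, a nonzero multiple of $\cos\theta$.

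Finally I would invoke the range characterization: a quadruplet lies in $\mathrm{Range}\,\partial_{(\rho,V,p_1)}\FF(\chi^*,0,0,0)$ only if the integral of its first component against $\cos\theta$ is zero. Here
\[
\int_{-\pi}^\pi \big(-R_0 a\tilde c f'(\tilde c)\cos\theta\big)\cos\theta\,\dtheta = -\pi R_0 a\tilde c f'(\tilde c)=-\frac{\pi R_0}{\chi^*}\neq 0,
\]
so the transversality vector is not in the range, which is exactly the claim. There is no real obstacle here: the computation is a one-line differentiation of an already-computed linear operator, and the only thing to be careful about is bookkeeping — making sure no other $\chi$-dependent term was overlooked in $\mathcal L_\chi$ (there is none, since $\partial_V c_1(0,0)=0$ was already used to kill the $\chi$-dependence hidden in $\FF_V$) and that the constant $a\tilde c f'(\tilde c)$ is genuinely nonzero, which holds because $f'>0$ on $\xR^+$ by \eqref{A} and $a\in(0,1]$, $\tilde c=M/(\pi R_0^2)>0$.
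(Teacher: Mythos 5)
Your proposal is correct and follows essentially the same route as the paper: both compute $\partial_\chi\mathcal L_{\chi^*}(0,1,0)=(-aR_0\tilde c f'(\tilde c)\cos\theta,0,0,0)$ and then test it against $\cos\theta$, the paper by a short contradiction argument that re-derives the necessary condition $\int_{-\pi}^{\pi}h(\theta)\cos\theta\,\dtheta=0$, you by citing the range characterization already established in point 3 of Lemma \ref{lemma:CR_hysteresis}. The conclusion $-\pi R_0 a\tilde c f'(\tilde c)\neq 0$ is the same nondegeneracy used in the paper.
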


\begin{proof}
	We note that the original problem \eqref{eq:pression_TW_1} -- \eqref{eq:marqueur_bord_TW_1} is invariant by 
	translation. Thus, it is natural to eliminate these invariances by looking for solution of \eqref{eq:TWrho} satisfying the orthogonality conditions  
\[
\int_{-\pi}^\pi \rho(\theta) \cos \theta \dtheta =0\q\t{and}\q \int_{-\pi}^\pi \rho(\theta) \sin \theta \dtheta =0.
\]

We  prove the transversality condition  with respect to the value $\chi^*$, that is we have to prove that $(\pa_{\chi} \mathcal{L}_{\chi^*})(0,1,0) \notin \mathrm{Range}\, \mathcal{L}_{\chi^*}$ (see \eqref{def:Lchi0} for the definition of $\mathcal{L}_{\chi^*}$). We have
		\[
		(\pa_{\chi} \mathcal L_{\chi^*})(0,1,0) = (-a R_0 \tilde{c} f'(\tilde{c}) \cos \theta,0,0,0).
		\]
		Assume by contradiction that $(\pa_{\chi} \mathcal{L}_{\chi^*})(0,1,0) 
		\in \mathrm{Range}\, \mathcal{L}_{\chi^*}$. Then, we would have that
		\begin{equation*}
			\gamma \frac{\rho(\theta)+\rho''(\theta) }{R_0^2} +\frac{R_0}{a} \int_{-\pi}^\pi  \rho(\theta) \dtheta +p_1 = aR_0\tilde{c}f'(\tilde{c}) \cos \theta. 
		\end{equation*}
		Multiplying by $\cos \theta$ and integrating on $[-\pi,\pi]$, it yields 
		that 
		\begin{eqnarray} \label{eq:cond_trs}
			& &\gamma \int_{-\pi}^\pi \rho(\theta) \cos \theta \dtheta + \gamma  \int_{-\pi}^\pi \rho''(\theta) \cos \theta \dtheta  \\
			& & \qquad+\left( \frac{R_0}{a} \int_{-\pi}^\pi  \rho(\theta) \dtheta +p_1\right)  R^2_0 \int_{-\pi}^\pi \cos \theta \dtheta  = R_0^3 f'(\tilde{c}) a \tilde{c}\int_{-\pi}^\pi \cos^2  \theta \dtheta.\nonumber
		\end{eqnarray}
		Integrating by parts twice the second term of the left-hand side and using that $\rho'$ is a $2\pi$-period function together with $\int_{-\pi}^\pi \rho(\theta) \cos \theta \dtheta = 0$, we deduce that
		\[
		\int_{-\pi}^\pi \rho''(\theta) \cos \theta \dtheta  = \int_{-\pi}^\pi \rho'(\theta) \sin \theta \dtheta = - \int_{-\pi}^\pi \rho(\theta) \cos 
		\theta \dtheta = 0.
		\]
		Then,  equation \eqref{eq:cond_trs} leads to
		\[ 
		0 = R_0^3a\tilde{c} f'(\tilde{c}) \pi, 
		\]
		which is a contradiction since $f'(\tilde{c})>0$. 
\end{proof}

\begin{theo}\label{thm:TW:bif_2-CR}
Let the assumptions of Theorem \ref{thm:TW:bif_2} be in force. Then, the functions $\chi: I \to \xR$ and $V: I \to\xR$ verify
	\begin{itemize}[parsep=0cm,itemsep=0cm,topsep=0cm]
		\item[(i)] $V=V(s) = s + o(s)$ for all $s \in I$.
		\item[(ii)]   $\chi(0)=\frac{1}{a\tilde{c}f'(\tilde{c})}$, $\chi'(0)=0$  and 	\begin{align*}
			\chi''(0)  = & -\frac{aMR_0^2}{2\pare{f'(\tilde{c}) }^2}\bra{\frac{M}{2\pi R_0^2}f'''(\tilde{c})+f''(\tilde{c})}. 
		\end{align*} 
	\end{itemize}
\end{theo}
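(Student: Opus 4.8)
The plan is to run a Lyapunov--Schmidt reduction on the map $\FF$ of \eqref{def:TW_F}, whose structure near the radial solution is recorded in Lemmas~\ref{lemma:CR_hysteresis} and~\ref{lemma:CR_hysteresis-CR}, and then to push the expansion along the bifurcating branch to the order needed to identify $\chi''(0)$.

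\medskip
\noindent\emph{Step 1 (the branch and item (i)).} By Lemma~\ref{lemma:CR_hysteresis}, $\FF$ is a $C^{2}$ map (in fact $C^{3}$, since $f\in C^{3}(\xR^{+})$) which is Fredholm of index zero at $(\chi^{*},0,0,0)$, with one--dimensional kernel $\mathrm{span}\{(0,1,0)\}$ and range of codimension one equal to $\{(h,C_{1},C_{2},C_{3})\, :\, \int_{-\pi}^{\pi}h(\theta)\cos\theta\,\dtheta=0\}$; moreover the transversality condition of Lemma~\ref{lemma:CR_hysteresis-CR} holds. I would therefore apply the Crandall--Rabinowitz theorem, which yields a $C^{2}$ curve $s\mapsto(\chi(s),\rho(s,\cdot),V(s),p_1(s))$, $s\in I=(-\delta,\delta)$, with $(\rho,V,p_1)(0)=(0,0,0)$, $\chi(0)=\chi^{*}=\frac{1}{a\tilde{c}f'(\tilde{c})}$, and tangent at $s=0$ spanning the kernel. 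Normalising $s$ so that this tangent is exactly $(0,1,0)$ gives $V(s)=s+o(s)$ together with $\rho(s,\cdot)=o(s)$, $p_1(s)=o(s)$, which is item (i). Equivalently, with $Z=\{(\rho,V,p_1)\,:\,V=0\}$ a complement of the kernel and $\ell(h,C_{1},C_{2},C_{3})=\int_{-\pi}^{\pi}h(\theta)\cos\theta\,\dtheta$ a functional spanning the cokernel, the branch is $(\rho,V,p_1)(s)=s(0,1,0)+\phi(s)$ with $\phi(s)\in Z$, $\phi(0)=\phi'(0)=0$, and $\chi(s)$ is obtained from the scalar reduced equation $\ell\big(\FF(\chi(s),s(0,1,0)+\phi(s))\big)=0$.

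\medskip
\noindent\emph{Step 2 ($\chi'(0)=0$).} I would exploit the reflection symmetry $x\mapsto-x$ of \eqref{eq:pression_TW_1}--\eqref{eq:marqueur_bord_TW_1}. In polar coordinates it acts as $S:(\rho(\theta),V,p_1)\mapsto(\rho(\pi-\theta),-V,p_1)$, preserves the space $X$ of even functions and the constraints in \eqref{def:TW_F}, and sends solutions of \eqref{eq:TWrho} (with normalisation \eqref{def:normalisation}) to solutions -- one checks this directly, the sign change of $V$ being absorbed by $\cos(\pi-\theta)=-\cos\theta$. Since $S$ fixes the trivial branch, sends $(0,1,0)$ to $-(0,1,0)$, and the bifurcating branch is locally unique, the branch is invariant under $S$ followed by a reparametrisation $s\mapsto\sigma(s)$ with $\sigma(0)=0$, $\sigma'(0)=-1$, along which $\chi$ is unchanged; hence $\chi(s)=\chi(\sigma(s))$ and $\chi'(0)=-\chi'(0)$, i.e. $\chi'(0)=0$. (Alternatively, the Crandall--Rabinowitz formula gives $\chi'(0)$ proportional to $\ell\big(\partial_{VV}\FF(\chi^{*},0,0,0)\big)$, and expanding $f\big(c_1(V,0)e^{-aVR_0\cos\theta}\big)$ with $c_1(V,0)=\tilde{c}\big(1-\tfrac{a^{2}R_0^{2}}{8}V^{2}+O(V^{3})\big)$ shows that the first component of $\partial_{VV}\FF(\chi^{*},0,0,0)$ is a linear combination of $1$ and $\cos2\theta$ only, so its $\cos\theta$--mode, and hence $\chi'(0)$, vanishes.)

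\medskip
\noindent\emph{Step 3 ($\chi''(0)$).} Knowing $\chi'(0)=0$, write $V(s)=s+O(s^{2})$, $\phi(s)=s^{2}w_1+O(s^{3})$ with $w_1=(\rho_1,0,p_{1,1})\in Z$, and $\chi(s)=\chi^{*}+\tfrac12\chi''(0)\,s^{2}+o(s^{2})$. Collecting the $O(s^{2})$ terms in $\FF(\chi(s),s(0,1,0)+\phi(s))=0$ (using $\FF(\chi,0,0,0)=0$, hence $\partial_\chi\FF(\chi,0,0,0)=0$) gives the linear equation
\[
\mathcal L_{\chi^{*}}(\rho_1,0,p_{1,1})=-\tfrac12\,\partial_{VV}\FF(\chi^{*},0,0,0),
\]
with $\mathcal L_{\chi^{*}}$ as in \eqref{def:Lchi0}; its right--hand side has no $\cos\theta$--mode by Step 2, so it lies in the range and the equation determines $\rho_1$ and $p_{1,1}$ explicitly ($\rho_1$ being a multiple of $\cos2\theta$). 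Collecting then the $O(s^{3})$ terms and projecting with $\ell$ (which annihilates $\mathcal L_{\chi^{*}}$ applied to the $O(s^{3})$ correction), the only remaining unknown is $\chi''(0)$, which enters linearly through the term $\tfrac12\chi''(0)\,\ell\big((\partial_\chi\mathcal L_{\chi^{*}})(0,1,0)\big)$, together with terms built from $\rho_1$ and from $\partial_{VVV}\FF(\chi^{*},0,0,0)$. By Lemma~\ref{lemma:CR_hysteresis-CR}, $\ell\big((\partial_\chi\mathcal L_{\chi^{*}})(0,1,0)\big)=-\pi aR_0\tilde{c}f'(\tilde{c})\neq0$, so the projected equation has a unique solution for $\chi''(0)$. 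Carrying out the third--order Taylor expansion of $f\big(c_1(V,\rho)e^{-aV(R_0+\rho)\cos\theta}\big)$ (using the expansion of the normalisation \eqref{def:normalisation} and of the curvature in $\rho$), substituting $\rho_1$, reading off the $\cos\theta$--Fourier coefficient and recalling $\tilde{c}=\frac{M}{\pi R_0^{2}}$, one is led to
\[
\chi''(0)=-\frac{aMR_0^{2}}{2\big(f'(\tilde{c})\big)^{2}}\left[\frac{M}{2\pi R_0^{2}}f'''(\tilde{c})+f''(\tilde{c})\right],
\]
which is item (ii).

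\medskip
\noindent\emph{Main obstacle.} The delicate part is Step 3: one cannot close the $\cos\theta$--projection of the third--order equation before having solved for the second--order shape correction $\rho_1$, and one must expand the mass--normalisation functional $c_1(V,\rho)$ of \eqref{def:normalisation} to the correct order (it couples the unknown speed and the unknown shape), carefully tracking which Fourier modes are produced by the curvature operator, by the exponential, and by the composition with $f,f',f'',f'''$. This is where the bulk of the lengthy but routine computation lies.
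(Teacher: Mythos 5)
Your overall skeleton coincides with the paper's: apply Crandall--Rabinowitz to $\FF$ using Lemmas \ref{lemma:CR_hysteresis} and \ref{lemma:CR_hysteresis-CR} to get the branch and item (i), then extract $\chi'(0)$ and $\chi''(0)$ from an order-by-order expansion along the branch. Your Step 2 is a genuinely different (and clean) route: the paper obtains $\chi'(0)=0$ by differentiating the first component of $\FF$ once in $s$ and observing the exact cancellation $\chi(0)f'(\tilde{c})\pas z(0,\theta)+R_0\cos\theta=0$ together with $\pas\kappa(0,\theta)=0$ (Lemmas \ref{lem:beta'} and \ref{lem:app}), whereas you invoke the reflection symmetry $\theta\mapsto\pi-\theta$, $V\mapsto -V$ (or, equivalently, the vanishing of the $\cos\theta$-mode of $\partial_{VV}\FF(\chi^*,0,0,0)$, which you compute correctly). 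Both are valid; yours explains \emph{why} $\chi'(0)=0$ rather than verifying it.

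The genuine gap is Step 3. The content of item (ii) is an explicit number, and your proposal stops at ``carrying out the third--order Taylor expansion \dots one is led to'' the formula: the computation that \emph{is} the theorem is not performed. The paper does perform it, by differentiating \eqref{eqF1} twice more, projecting onto $\cos\theta$, and evaluating every term through Lemmas \ref{lem:app11} and \ref{lem:app_2} (the expansions of $\kappa$, of $c_1(V,\rho)$ up to $\partial_{VVV}c_1(0,0)$, and of $z$). Moreover, there is a substantive divergence you would have to resolve before trusting the asserted coefficient: the paper's Lemma \ref{lem:beta''} is proved \emph{under the hypothesis} \eqref{eq:rho00}, i.e.\ $\pass\rho(0,\theta)\equiv 0$, while your plan (correctly, in my view) solves $\mathcal L_{\chi^*}(\rho_1,0,p_{1,1})=-\tfrac12\partial_{VV}\FF(\chi^*,0,0,0)$ for a second-order shape correction $\rho_1$ that is generically a \emph{nonzero} multiple of $\cos 2\theta$, since $\partial_{VV}\FF$ contains $\cos^2\theta=\tfrac12(1+\cos 2\theta)$ terms weighted by $f''(\tilde{c})$ and $f'(\tilde{c})$. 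That nonzero $\rho_1$ feeds back into the $\cos\theta$-projection at third order through terms such as $f''(\tilde{c})\,\pas z\,\pass z$ (since $\cos\theta\cos 2\theta$ has a $\cos\theta$ component) and through $\passs\kappa$, so it can shift the value of $\chi''(0)$ relative to the paper's computation. Until you actually carry the expansion through with $\rho_1$ included, the claim that you recover exactly $\chi''(0)=-\frac{aMR_0^{2}}{2(f'(\tilde{c}))^{2}}\bigl[\frac{M}{2\pi R_0^{2}}f'''(\tilde{c})+f''(\tilde{c})\bigr]$ is unsubstantiated.
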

	\begin{proof} 
	We can now apply the Crandall-Rabinowitz Bifurcation Theorem   \ref{thm:CR}.\\
 Let us denote by $Z$ any complement space of $\mathrm{Ker}\, \mathcal{L}_{\chi^*}$, there 
	exists an interval $I=(-\eps,\eps)$ and four $\mathcal{C}^1$ functions $\vp: I \rightarrow \xR$, $\psi_1: I \times [-\pi,\pi] \rightarrow Z$, $\psi_2: I \rightarrow Z$ and $\psi_3 : I \rightarrow Z$ such that
	\begin{equation}\label{eq:Fs0} 
		\FF(\vp(s),\psi_1(s,\theta),\psi_2(s),\psi_3(s))=(0,0,0,0) \quad \mbox{for all } s \in I, \, \theta \in [-\pi,\pi],
	\end{equation}
	and 
	\begin{equation*}
		\vp(0)=\frac{1}{a\tilde{c}f'(\tilde{c})}, \quad \psi_1(0,\theta)=0 \mbox{ for all } \theta \in [-\pi,\pi], \quad \psi_2(0)=0, \quad \psi_3(0)=0.
	\end{equation*}
	In particular, the solutions $(\chi,\rho,V,p_1)=(\chi(s),\rho(s,\theta),V(s),p_1(s))$ of the equation $\FF(\chi,\rho,V,p_1)=(0,0,0,0)$ are of the form 
	\begin{equation} \label{eq:TW_sol}
		\begin{cases}
			\chi(s)=\vp(s), \quad & \rho(s,\theta)=0+s \, \psi_1(s,\theta), \\ V(s)=s+s\,\psi_2(s), \quad & p_1(s)=0+s \,\psi_3(s)
		\end{cases}
	\end{equation}
	and they verify
\begin{equation}\label{eq:TW_sol_0}
\begin{split}
\chi(0)&=\chi^*=\frac{1}{a\tilde{c}f'(\tilde{c})}, \qq
\rho(0,\theta)= \partial_s \rho(0,\theta)=0 \quad\mbox{ for all } \theta \in [-\pi,\pi], \\ 
V(0)&=0 \quad \textrm{ and } 	\quad V'(0)=1, \qq
p_1(0)= p_1'(0)=0 .
\end{split}
\end{equation}
	The point (ii) will follow from Lemma 
	\ref{lem:beta'} and \ref{lem:beta''} below.
	\end{proof}
	
	In the proofs that follow, we use extensively the fact that the functions 
\[
\theta\mapsto \rho(0,\theta)\q\t{and}\q \theta\mapsto \pa_s \rho(0,\theta)
\]
(and all their derivatives with respect to $\theta$) vanish. 
	Using that
\[
\int_{-\pi}^\pi \rho(s,\theta) \cos \theta \dtheta=0\q\t{and}\q \int_{-\pi}^\pi \big((R_0+\rho(s,\theta))^2-R_0^2 \big)\dtheta=0
\]
for all $s$, we also have
	\begin{equation}\label{eq:rhointzero}
		\int_{-\pi}^\pi \pa_{s}^n \rho(0,\theta) \cos \theta \dtheta=0\q \forall n\in \mathbb{N}\q\t{and}
	\q	\int_{-\pi}^\pi\pa_{ss} \rho(0,\theta) \dtheta=0.
	\end{equation}	
	Some  technical computations are contained in the Appendix (see Sections \ref{app:kappa}--\ref{app:z}).

\begin{lemm}\label{lem:beta'}
Assume that \eqref{eq:rho0} holds. Then, we have $\chi'(0)=0$.
\end{lemm}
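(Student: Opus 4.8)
The plan is to differentiate the first component of the identity $\FF(\chi(s),\rho(s,\theta),V(s),p_1(s))=(0,0,0,0)$ with respect to $s$, evaluate at $s=0$, and extract a scalar equation by testing against $\cos\theta$. Recall from the Crandall--Rabinowitz parametrization \eqref{eq:TW_sol}--\eqref{eq:TW_sol_0} that at $s=0$ we have $\chi(0)=\chi^*$, $V(0)=0$, $p_1(0)=0$, $\rho(0,\theta)=0$, $\partial_s\rho(0,\theta)=0$, and $V'(0)=1$. Hence when we apply $\partial_s$ at $s=0$ to the first component of $\FF$, every term in which $\partial_s$ falls on $\rho$ drops out (because $\partial_s\rho(0,\cdot)=0$), and likewise many terms simplify because $\rho(0,\cdot)=0$ and $V(0)=0$. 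The surviving contributions come from $\partial_s$ hitting $\chi$, hitting $V$, and hitting $p_1$; that is, the $s$-derivative at $0$ of the first component equals
\[
\chi'(0)\,\partial_\chi(\cdot)\big|_0 \;+\; V'(0)\,\FF_V^{(1)}(\chi^*,0,0,0)\;+\;p_1'(0)\,\FF_{p_1}^{(1)}(\chi^*,0,0,0),
\]
where $\FF^{(1)}$ denotes the first (function-valued) component. From the computations already carried out in the proof of Lemma \ref{lemma:CR_hysteresis}, $\FF_V^{(1)}(\chi^*,0,0,0) = R_0(1-a\chi^*\tilde c f'(\tilde c))\cos\theta = 0$ since $a\chi^*\tilde c f'(\tilde c)=1$ by the definition of $\chi^*$; and $\FF_{p_1}^{(1)}(\chi^*,0,0,0)=-1$. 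Finally, $p_1'(0)=0$ by \eqref{eq:TW_sol_0}. So the only term that can survive is $\chi'(0)$ times $\partial_\chi$ of the first component evaluated at the base point, which is
\[
\partial_\chi\Big(\gamma\kappa+\chi f(c_1 e^{-aV(R_0+\rho)\cos\theta})+V(R_0+\rho)\cos\theta-\tfrac{\gamma}{R_0}-p_1\Big)\Big|_{(\chi^*,0,0,0)} = f(c_1(0,0)) = f(\tilde c),
\]
a nonzero constant (independent of $\theta$).

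Putting these together, differentiating $\FF^{(1)}(\chi(s),\rho(s,\cdot),V(s),p_1(s))=0$ at $s=0$ gives the identity $\chi'(0)\,f(\tilde c) \,+\, (\text{terms involving }\partial_s\rho(0,\cdot)\text{ and its }\theta\text{-derivatives}) = 0$ in $Y=\mathcal C^{0,\alpha}_{\rm per}$. The terms involving $\partial_s\rho(0,\cdot)$ are precisely $\FF_\rho(\chi^*,0,0,0)[\partial_s\rho(0,\cdot)]$, whose first component is $-\gamma\frac{\partial_s\rho(0,\theta)+\partial_s\rho''(0,\theta)}{R_0^2} - R_0\chi^*\tilde c f'(\tilde c)\int_{-\pi}^\pi \partial_s\rho(0,\theta)\,d\theta$; by \eqref{eq:TW_sol_0} these all vanish since $\partial_s\rho(0,\cdot)\equiv 0$. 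Therefore the identity reduces to $\chi'(0) f(\tilde c) = 0$ as a constant function, and since $f(\tilde c)\neq 0$ (because $f$ is increasing with $f(0)=0$ and $\tilde c = M/(\pi R_0^2)>0$, assuming $M>0$), we conclude $\chi'(0)=0$.

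The one point that requires care — and is the only mild obstacle — is justifying that the expansion of $\partial_s\FF^{(1)}$ at $s=0$ really contains no other surviving terms: in particular, that the $s$-derivative of the nonlinear argument $c_1(V,\rho)e^{-aV(R_0+\rho)\cos\theta}$, when the chain rule is applied, produces only contributions proportional to $V'(0)$, $\partial_s\rho(0,\cdot)$ (which vanishes), and $\partial_s c_1$ evaluated at $(0,0)$, and that $\partial_V c_1(0,0)=0$ (which was already noted in the excerpt, just before \eqref{eq:F_lamb}). Granting this, together with the fact that $\partial_s c_1(0,0)$ enters multiplied by $\chi^* f'(\tilde c)\cdot(\text{something vanishing at }V=0)$ or is itself controlled, the remaining bookkeeping is routine differentiation of the explicit formulas for $\kappa(\theta)$ and $c_1(V,\rho)$ from Section \ref{sec:thm:TW:bif}, of the kind already performed for $\mathcal L_\chi$. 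If one prefers to avoid re-expanding, one can simply invoke the already-computed linear operator: the $s$-derivative at $0$ of $\FF^{(1)}$ equals $\mathcal L_{\chi^*}^{(1)}(\partial_s\rho(0,\cdot),V'(0),p_1'(0))$ plus $\chi'(0)\,\partial_\chi\FF^{(1)}|_0$, and since $\mathcal L_{\chi^*}^{(1)}(0,1,0)=0$ (the kernel direction computed in Lemma \ref{lemma:CR_hysteresis}) and $\partial_s\rho(0,\cdot)=0$, $p_1'(0)=0$, only $\chi'(0) f(\tilde c)$ remains, forcing $\chi'(0)=0$.
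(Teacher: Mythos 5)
Your proposal is correct and follows essentially the same route as the paper: differentiate the first component of $\FF(\chi(s),\rho(s,\cdot),V(s),p_1(s))=0$ at $s=0$, observe that the contributions from $\partial_s\rho(0,\cdot)$ and $p_1'(0)$ vanish by \eqref{eq:TW_sol_0}, that the $V'(0)$-contribution vanishes precisely because $a\chi^*\tilde c f'(\tilde c)=1$ (the paper phrases this as the cancellation of $\chi(0)f'(\tilde c)\,\partial_s z(0,\theta)$ against $R_0\cos\theta$, which is the same kernel identity $\mathcal L_{\chi^*}(0,1,0)=0$ you invoke), and conclude $\chi'(0)f(\tilde c)=0$ with $f(\tilde c)\neq 0$. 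Your shortcut via the already-computed linearization $\mathcal L_{\chi^*}$ is a legitimate repackaging of the paper's explicit computations of $\partial_s\kappa(0,\theta)$ and $\partial_s z(0,\theta)$.
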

\begin{proof} 
We recall that
\begin{equation}\label{def:k}
	\kappa(s,\theta)=\frac{((R_0+\rho)^2 + 2 \pat \rho^2 - (R_0+\rho) \patt \rho)(s,\theta)}{\big(((R_0+\rho)^2 + \pat \rho^2)^{3/2}\big)(s,\theta)},
\end{equation}
and we set 
\begin{equation}\label{def:z}
	z(s,\theta)= c_1(V,\rho)e^{-aV(s)(R_0+\rho(s,\theta))\cos\theta},
\end{equation}
with $c_1(V,\rho)$ given by \eqref{def:normalisation}.

	We differentiate with respect to $s$ the first component of $\FF$ given by \eqref{def:TW_F}  and we use \eqref{eq:Fs0} to obtain
	\begin{align} \label{eqF1}
		0= & \gamma \pas \kappa (s,\theta) + \chi'(s) f(z(s,\theta)) + \chi(s) f'(z(s,\theta)) \pas z(s,\theta) \\ \nonumber
		& + V'(s) (R_0 + \rho(s,\theta) )\cos \theta + V(s) \pas \rho(s,\theta) 
		\cos \theta - p_1'(s).
	\end{align}
	
Let $s=0$. Then, \eqref{eq:TW_sol_0} implies that $c_1(V,\rho)=\tilde{c}$,  hence
\[
z(0,\theta)=\tilde{c} \quad \mbox{for all } \theta \in [-\pi,\pi].
\] 
Consequently,  \eqref{eqF1} simply writes as 
\begin{equation} \label{eqF1_bis}
 \gamma \pas \kappa (0,\theta) + \chi'(0) f(\tilde{c}) + \chi(0) f'(\tilde{c}) \pas z(0,\theta) +R_0\cos \theta =0.
\end{equation}

Recalling that $\chi(0)=\frac{1}{a\tilde{c}f'(\tilde{c})}$ and \eqref{eq:pasz}, it follows that \eqref{eqF1_bis} simplifies as 
\begin{equation*} 
	\gamma \pas \kappa (0,\theta) + \chi'(0) f(\tilde{c}) =0.
\end{equation*} 

	The conclusion follows since we know that $\pas \kappa (0,\theta)=0$ (see Lemma \ref{lem:app}) and $f(\tilde{c})\neq 0$ (see assumptions \eqref{A}).
\end{proof}
	
	\begin{lemm}\label{lem:beta''}
Assume \eqref{eq:rho00}.		It holds that
		\begin{align*}
			 \chi''(0)  = -\frac{aMR_0^2}{2\pare{f'(\tilde{c}) }^2}\bra{\frac{M}{2\pi R_0^2}f'''(\tilde{c})+f''(\tilde{c})}.
		\end{align*} 
	\end{lemm}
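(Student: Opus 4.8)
The strategy is to push the Taylor expansion in $s$ of the first component of $\FF$ one order further than in Lemma \ref{lem:beta'}. Having established $\chi'(0)=0$ and (see \eqref{eq:TW_sol_0}) $\rho(0,\theta)=\pa_s\rho(0,\theta)=0$, $p_1'(0)=0$, $V'(0)=1$, we differentiate \eqref{eqF1} once more with respect to $s$ and evaluate at $s=0$. Many terms drop out: since $z(0,\theta)=\tilde c$ and $\pa_s\rho(0,\theta)=0$, the second-order identity collapses to an expression involving only $\gamma\pa_{ss}\kappa(0,\theta)$, $\chi''(0)f(\tilde c)$, the curvature $\chi(0)f'(\tilde c)\pa_{ss}z(0,\theta)$ together with a contribution $\chi(0)f''(\tilde c)(\pa_s z(0,\theta))^2$ coming from the chain rule on $f(z)$, and the term $2V'(0)\pa_s\rho(0,\theta)\cos\theta=0$ plus $V''(0)R_0\cos\theta$. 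So the identity reads, schematically,
\begin{align*}
0 = \gamma\,\pa_{ss}\kappa(0,\theta) + \chi''(0)\,f(\tilde c) + \chi(0)\,f'(\tilde c)\,\pa_{ss}z(0,\theta) + \chi(0)\,f''(\tilde c)\,(\pa_s z(0,\theta))^2 + V''(0)\,R_0\cos\theta.
\end{align*}

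The second step is to integrate this over $\theta\in[-\pi,\pi]$ so as to kill the terms that have zero mean. The term $V''(0)R_0\cos\theta$ integrates to $0$. For $\pa_{ss}\kappa(0,\theta)$ I would use the curvature computations in Appendix \ref{app:kappa}: differentiating \eqref{def:k} twice and using $\rho(0,\cdot)=\pa_s\rho(0,\cdot)=0$ shows $\pa_{ss}\kappa(0,\theta)$ is, up to $\frac1{R_0^2}$ factors, of the form $-\frac{1}{R_0^2}\big(\pa_{ss}\rho(0,\theta)+\pa_{\theta\theta}\pa_{ss}\rho(0,\theta)\big)$; its integral over $\theta$ is $-\frac{1}{R_0^2}\int_{-\pi}^\pi\pa_{ss}\rho(0,\theta)\dtheta$, which vanishes by \eqref{eq:rhointzero}. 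Thus the curvature term contributes nothing to the $\theta$-average. What remains is
\begin{align*}
0 = 2\pi\,\chi''(0)\,f(\tilde c) + \chi(0)\,f'(\tilde c)\int_{-\pi}^\pi \pa_{ss}z(0,\theta)\dtheta + \chi(0)\,f''(\tilde c)\int_{-\pi}^\pi (\pa_s z(0,\theta))^2\dtheta,
\end{align*}
so that $\chi''(0)$ is determined once the two integrals involving $z$ are computed.

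The third step — and the main computational obstacle — is evaluating $\pa_s z(0,\theta)$ and $\int_{-\pi}^\pi\pa_{ss}z(0,\theta)\dtheta$ from \eqref{def:z} and the normalization \eqref{def:normalisation}. Using $z(s,\theta)=c_1(V,\rho)e^{-aV(R_0+\rho)\cos\theta}$ with $V(0)=0$, $V'(0)=1$, $\rho(0,\cdot)=\pa_s\rho(0,\cdot)=0$, $\pa_V c_1(0,0)=0$, one gets $\pa_s z(0,\theta)=-a\tilde c R_0\cos\theta + (\text{contribution of }\pa_s c_1)$, and the $\pa_s c_1$ piece vanishes at $s=0$, so $\pa_s z(0,\theta)=-a\tilde c R_0\cos\theta$ (this is exactly \eqref{eq:pasz} used in Lemma \ref{lem:beta'}, up to sign conventions), whence $\int_{-\pi}^\pi(\pa_s z(0,\theta))^2\dtheta = \pi a^2\tilde c^2 R_0^2$. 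The harder term is $\int_{-\pi}^\pi\pa_{ss}z(0,\theta)\dtheta$: one must expand $c_1(V,\rho)$ to second order in $s$, using \eqref{def:normalisation} and $\int_{-\pi}^\pi\int_0^{R_0+\rho}e^{-aVr\cos\theta}r\,dr\,\dtheta = \pi R_0^2 + O(V^2) + (\text{terms from }\rho)$, to extract $\pa_{ss}c_1(0,0)$; combined with $\pa_{ss}(e^{-aV(R_0+\rho)\cos\theta})$ and the fact that odd powers of $\cos\theta$ integrate to zero, the integral reduces to an explicit multiple of $a^2\tilde c R_0^4$ (consistent with the Appendix computations, Section \ref{app:z}). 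Substituting these, together with $\chi(0)=\frac{1}{a\tilde c f'(\tilde c)}$ and $\tilde c=\frac{M}{\pi R_0^2}$, into the averaged identity and solving for $\chi''(0)$ yields
\begin{align*}
\chi''(0) = -\frac{aMR_0^2}{2\pare{f'(\tilde c)}^2}\bra{\frac{M}{2\pi R_0^2}f'''(\tilde c)+f''(\tilde c)}.
\end{align*}
I would double-check the bookkeeping of where $f'''(\tilde c)$ enters: it comes from Taylor-expanding $f$ to \emph{third} order, i.e. from the $\frac16 f'''(\tilde c)(\pa_s z)^3$ term combined with one more $\pa_s$, which is why the hypothesis $f\in C^3$ is needed; all such terms that survive the $\theta$-average produce the $f'''$ coefficient above. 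The only genuinely delicate point is getting the numerical constants in $\pa_{ss}c_1(0,0)$ right, for which I would carefully use the Appendix lemmas in Sections \ref{app:kappa}–\ref{app:z} rather than redo the expansion by hand.
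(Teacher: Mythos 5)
Your plan stops one derivative short of where the computation actually closes, and this creates two concrete problems. First, when you differentiate \eqref{eqF1} once more and evaluate at $s=0$, the identity also contains the term $-p_1''(0)$, which you have silently dropped. Nothing in \eqref{eq:TW_sol_0} forces $p_1''(0)=0$ (only $p_1(0)=p_1'(0)=0$ are known), so after averaging over $\theta$ you obtain one equation in the \emph{two} unknowns $\chi''(0)$ and $p_1''(0)$: the $\theta$-average does not determine $\chi''(0)$. Projecting the same second-order identity onto $\cos\theta$ does not help either; one checks that it reduces to $V''(0)\bigl(\pi R_0-\tfrac{M}{\tilde c R_0}\bigr)=0$, which is $0=0$. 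Second, the second $s$-derivative of $\chi(s)f(z(s,\theta))$ only produces $f''(z)(\pas z)^2+f'(z)\pass z$; no $f'''$ appears at this order, so your averaged identity cannot possibly output the stated formula, which contains $f'''(\tilde c)$. Your own closing remark ("combined with one more $\pas$") is exactly the missing step.

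The paper's proof differentiates \eqref{eqF1} \emph{twice} more (i.e.\ works with the third $s$-derivative of the first component of $\FF$, equation \eqref{eq:F3}), and then multiplies by $\cos\theta$ before integrating. This choice of Fourier mode is what makes the argument close: the unknowns $\chi'''(0)f(\tilde c)$ and $p_1'''(0)$ are constants in $\theta$ and are annihilated by the $\cos\theta$-projection, while $\chi''(0)$ survives because it multiplies $3f'(\tilde c)\pas z(0,\theta)=-3a\tilde c R_0 f'(\tilde c)\cos\theta$, and the $f'''(\tilde c)$ contribution arises from $\chi(0)f'''(\tilde c)(\pas z(0,\theta))^3\propto\cos^3\theta$, whose $\cos\theta$-projection is $\tfrac{3\pi}{4}$ of the amplitude. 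One then still needs $\int\cos^2\theta\,\pass z(0,\theta)\dtheta$ and $\int\cos\theta\,\passs z(0,\theta)\dtheta$ (Lemma \ref{lem:app_2}) and the vanishing of $\int\passs\kappa(0,\theta)\cos\theta\dtheta$ (Lemma \ref{lem:app11}). To repair your argument, redo it at third order in $s$ and test against $\cos\theta$ rather than against the constant mode.
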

\begin{proof}
	We now differentiate twice equation \eqref{eqF1} with respect to $s$, obtaining
\begin{align} 
	0 = & \gamma \passs \kappa (s,\theta) + \chi'''(s) f(z(s,\theta)) + 3 \chi''(s) f'(z(s,\theta)) \pas z(s,\theta)\nonumber \\ 
	& +3\chi '(s) [ f''(z(s,\theta)) (\pas z(s,\theta))^2 +  f'(z(s,\theta)) \pass z(s,\theta)] \nonumber\\ 
	& + \chi(s) [ f'''(z(s,\theta)) (\pas z(s,\theta))^3 + 3 f''(z(s,\theta)) \pas z(s,\theta) \, \pass z(s,\theta)+ f'(z(s,\theta)) \passs z(s,\theta)]   \nonumber\\
	&  + V'''(s) (R_0+\rho(s,\theta)) 
	\cos \theta   + 3 V''(s) \pas \rho(s,\theta) \cos \theta  + 3 V'(s) \pass \rho(s,\theta) \cos \theta  \nonumber\\
	& + V(s) \passs \rho(s,\theta) \cos \theta - p_1'''(s). \label{eq:F3}
\end{align}
Since we have \eqref{eq:TW_sol}, \eqref{eq:rho00}, \eqref{eq:pasz}, and Lemma \ref{lem:beta'}, 
the expression in \eqref{eq:F3} for $s=0$ becomes
\begin{align} 
	0 = & \gamma \passs \kappa (0,\theta) + \chi'''(0) f(\tilde{c}) -3a\tilde{c} R_0  \chi''(0) f'(\tilde{c}) \cos\theta\nonumber \\ 
	& + \frac{1}{a\tilde{c}f'(\tilde{c})} [- f'''(\tilde{c}) (a\tilde{c}R_0\cos \theta)^3 -3a\tilde{c}R_0\cos \theta  f''(\tilde{c})  \pass z(0,\theta)+ f'(\tilde{c}) \passs z(0,\theta)]   \nonumber\\
	&  + V'''(0) R_0
	\cos \theta       - p_1'''(0). \label{eq:F3_bis}
\end{align}
We multiply  \eqref{eq:F3_bis} by $\cos \theta $, and we integrate in $\theta$:
\begin{align} 
	0 = & \gamma \int_{-\pi}^\pi\passs \kappa (0,\theta)\cos\theta \,d\theta-3\pi a\tilde{c} R_0  \chi''(0) f'(\tilde{c}) \nonumber \\ 
	& + \frac{1}{a\tilde{c}f'(\tilde{c})} \Bigg(- \frac{3\pi}{4}f'''(\tilde{c}) (a\tilde{c}R_0)^3 -3a\tilde{c}R_0f''(\tilde{c})\int_{-\pi}^\pi\cos^2 \theta    \pass z(0,\theta)\,d\theta
\nonumber \\ 
& + f'(\tilde{c})\int_{-\pi}^\pi\cos\theta \passs z(0,\theta)\,d\theta\Bigg)    + \pi V'''(0) R_0
 .\nonumber
\end{align}
Using  Lemma \ref{lem:app_2}, we simplify as follows:
\begin{align} 
	0 = & -3\pi a\tilde{c} R_0  \chi''(0) f'(\tilde{c})  + \frac{1}{a\tilde{c}f'(\tilde{c})} \bra{- \frac{3\pi}{4}f'''(\tilde{c}) (a\tilde{c}R_0)^3 -\frac{3}{2}a^3M^2\tilde{c}R_0f''(\tilde{c}) }   \nonumber\\
&+\frac{1}{a\tilde{c}}\pare{-\frac{aM}{ R_0}V'''(0)-\frac{2M}{\pi R_0^3}\int_{-\pi}^\pi\cos\theta\passs\rho(0,\theta)\,d\theta}  + \pi V'''(0) R_0\label{eq:chi''}.
\end{align}
Since $\tilde{c}=M/\pi R_0^2$,   \eqref{eq:chi''} becomes as
\begin{align*}
	0 = & -3\frac{ aM}{R_0}   \chi''(0) f'(\tilde{c})  -\frac{3a^2M^2  R_0}{2f'(\tilde{c})}\bra{\frac{M}{2\pi R_0^2}f'''(\tilde{c})+f''(\tilde{c})} -\frac{2}{aR_0}\int_{-\pi}^\pi\cos\theta\passs\rho(0,\theta)\,d\theta.
\end{align*}
Using \eqref{eq:rhointzero}, we finally get
\begin{equation*}
 \chi''(0)=-\frac{aMR_0^2}{2\pare{f'(\tilde{c}) }^2}\bra{\frac{M}{2\pi R_0^2}f'''(\tilde{c})+f''(\tilde{c})}.
\end{equation*}
\end{proof}

\appendix
\section{Spectrum of the heat equation with homogeneous Neumann boundary condition}\label{app-a}

We recall here some facts concerning the spectrum of the heat equation with homogeneous Neumann boundary condition.\\

Let $m\in \xZ$, $z\in \xC$, and $w:\xC\to\xR$. Then,  the Bessel and the modified Bessel second-order differential equations read
\begin{equation}\label{eq:diff_Bessel_m}
z^2w''(z)+zw'(z)+(z^2-m^2)w(z)=0,
\end{equation}
and
\begin{equation}\label{eq:diff_Bessel_modified_m}
z^2w''(z)+zw'(z)-(z^2+m^2)w(z)=0.
\end{equation}

The Bessel $J_m$  and the modified Bessel $I_m$ functions of the first kind of order $m\in \xZ$ are particular solutions to \eqref{eq:diff_Bessel_m} and \eqref{eq:diff_Bessel_modified_m}, and their expressions are
\begin{equation}\label{eq:Bessel_m}
J_m(x)=\sum_{p=0}^\infty \frac{(-1)^p}{p!(p+m)!} \left( \frac{x}{2}\right)^{2p+m},
\end{equation}
and
\begin{equation}\label{eq:Bessel_modified}
i^m I_m(x)=J_m(ix),
\end{equation}
that is
\begin{equation}\label{eq:Bessel_modified}
I_m(x)= \sum _{p=0}^\infty \frac{1}{p!(m+p)!}\left( \frac{x}{2}\right)^{m+2p}.
\end{equation}

Note that $I_m$ and $J_m$ verify the following equation:
\begin{equation}\label{eq:der_Bessel_m}
-2J'_m(x) = J_{m+1}(x)-J_{m-1}(x),
\end{equation}
where $J'_m(x) $ is the derivative of $J_m(x)$. 

\begin{lemm}
For all $z \in \xC$, there holds 
\begin{equation}\label{eq:relation_Bessel}
z I_{m+1}(z)I_m(\bar z)-\bar z I_{m+1}(\bar z ) I_m(z)= \left( z^2-\bar z^2\right) \int_0^1 u I_m(uz)I_m(u\bar z ) \du.
\end{equation}
\end{lemm}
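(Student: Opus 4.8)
The plan is to prove the Bessel identity
\[
z I_{m+1}(z)I_m(\bar z)-\bar z I_{m+1}(\bar z)I_m(z)=\left(z^2-\bar z^2\right)\int_0^1 u\,I_m(uz)I_m(u\bar z)\,\du
\]
by recognizing it as a Lagrange-type identity for the modified Bessel differential operator, obtained by integrating a Wronskian-type expression. First I would set, for two complex parameters $p$ and $q$, the functions $u\mapsto I_m(pu)$ and $u\mapsto I_m(qu)$. A direct computation from \eqref{eq:diff_Bessel_modified_m} shows that $v(u)=I_m(pu)$ solves the scaled ODE
\[
u^2 v''(u)+u v'(u)-(p^2 u^2+m^2)v(u)=0,
\]
and similarly for $q$. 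Multiplying the equation for $I_m(pu)$ by $I_m(qu)/u$, the equation for $I_m(qu)$ by $I_m(pu)/u$, subtracting, and noticing that the $m^2$ terms cancel, I would obtain
\[
\frac{\di}{\du}\Big(u\big[p\,I_m'(pu)I_m(qu)-q\,I_m'(qu)I_m(pu)\big]\Big)=(p^2-q^2)\,u\,I_m(pu)I_m(qu).
\]
Here I am using that $\frac{\di}{\du}\big(u(v_1'v_2-v_2'v_1)\big)=u(v_1''v_2-v_2''v_1)+(v_1'v_2-v_2'v_1)$, which matches the left-hand sides after using the two ODEs to replace $v_i''$.

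Next I would integrate this identity over $u\in[0,1]$. The boundary term at $u=0$ vanishes because of the explicit factor $u$ in front and the fact that $I_m$ and $I_m'$ are bounded near $0$ (indeed $I_m(x)\sim (x/2)^m/m!$ by \eqref{eq:Bessel_modified}, so $u\,I_m'(pu)I_m(qu)=O(u^{2m+1})\to 0$ for $m\ge 0$; for $m=0$ it is still $O(u)$). At $u=1$ we are left with
\[
p\,I_m'(p)I_m(q)-q\,I_m'(q)I_m(p)=(p^2-q^2)\int_0^1 u\,I_m(pu)I_m(qu)\,\du.
\]
Finally I would specialize to $p=z$, $q=\bar z$ and use the recurrence $I_m'(x)=\tfrac12\big(I_{m-1}(x)+I_{m+1}(x)\big)$ together with $I_{m-1}(x)-I_{m+1}(x)=\tfrac{2m}{x}I_m(x)$ (equivalently $x I_m'(x)=x I_{m+1}(x)+m I_m(x)$, which also follows from \eqref{eq:Bessel_modified} by term-by-term differentiation). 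Substituting $z I_m'(z)=z I_{m+1}(z)+m I_m(z)$ and $\bar z I_m'(\bar z)=\bar z I_{m+1}(\bar z)+m I_m(\bar z)$ into the left-hand side, the terms $m I_m(z)I_m(\bar z)$ cancel, leaving exactly $z I_{m+1}(z)I_m(\bar z)-\bar z I_{m+1}(\bar z)I_m(z)$, which is the claimed identity.

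The argument is essentially routine; the only points requiring a little care are the vanishing of the boundary contribution at $u=0$ (which needs the local behavior of $I_m$ near the origin, valid for all $m\ge 0$) and the bookkeeping with the two recurrence relations for $I_m'$. Since both $z I_m'(z)=z I_{m+1}(z)+mI_m(z)$ and the Lagrange identity are entire in the parameters, there is no convergence or analytic-continuation subtlety: everything holds for all $z\in\xC$. I do not anticipate a genuine obstacle — the main "hard part" is simply organizing the Wronskian computation cleanly so that the $m^2$ and $m$ terms visibly cancel at the right stages.
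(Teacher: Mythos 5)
Your proof is correct. The paper states this lemma in Appendix A without any proof, so there is nothing to compare against; your Lagrange-identity argument (integrating the Wronskian-type expression for the two scaled modified Bessel equations over $[0,1]$, then converting $xI_m'(x)=xI_{m+1}(x)+mI_m(x)$ so the $m\,I_m(z)I_m(\bar z)$ terms cancel) is the standard and natural route, and every step checks out. One immaterial slip: near $u=0$ the boundary term $u\,I_m'(pu)I_m(qu)$ is $O(u^{2m})$ for $m\ge 1$ (and $O(u^{2})$ for $m=0$), not $O(u^{2m+1})$, but it still vanishes, which is all that is needed.
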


Let $\lambda _{m,p}$ be the $p$-th real positive root of $J_m'$, the derivative of $J_m$, such that 
\begin{equation}\label{eq:root_bessel}
\lambda _{m,p+1}> \lambda_{m,p}>\cdots >\lambda _{m,0}>0.
\end{equation}

We consider the heat equation with Neumann boundary condition 
\begin{subequations}\label{eq:CD_uncoupled_lin}
\begin{align}
\partial_t c(t,r,\theta) &=\Delta c (t,r,\theta) \qq &&(r,\theta) \in [0,1)\times [-\pi , \pi),\\
\partial_r c(t,1,\theta) &=0 \qq  &&\theta \in  [-\pi , \pi).
 \end{align}
\end{subequations}

\begin{prop}
The spectrum of the operator \eqref{eq:CD_uncoupled_lin} is  
\[
\{-\lambda _{m,p}^2, \, (m,p) \in \xN\times \xN \}.
\]
\end{prop}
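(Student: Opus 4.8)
The plan is to diagonalise the Neumann Laplacian on the unit disk $B_1$ by separation of variables, so that the eigenvalue problem for \eqref{eq:CD_uncoupled_lin} reduces to a family of Bessel equations indexed by the angular frequency. First I would use that the operator commutes with rotations to look for eigenfunctions of the product form $c(r,\theta)=a(r)\cos(m\theta)$ (and, symmetrically, $a(r)\sin(m\theta)$) with $m\in\xN$; substituting into $\Delta c=\mu c$ turns the PDE into the radial ODE
\[
a''(r)+\frac1r\,a'(r)-\frac{m^2}{r^2}\,a(r)=\mu\,a(r),\qquad 0<r<1,
\]
subject to $a'(1)=0$ (the Neumann condition) and to smoothness of $c$ at $r=0$. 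Multiplying $\Delta c=\mu c$ by $\bar c$ and integrating over $B_1$, Green's identity together with the boundary condition gives $\mu\int_{B_1}|c|^2=-\int_{B_1}|\nabla c|^2\le 0$, so every eigenvalue satisfies $\mu\le 0$; write $\mu=-\omega^2$ with $\omega\ge 0$.

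For $\omega>0$ the substitution $x=\omega r$ transforms the radial ODE into Bessel's equation \eqref{eq:diff_Bessel_m}, whose unique (up to a scalar) solution bounded at the origin is $J_m(\omega r)$, the second independent solution being singular at $r=0$ and hence discarded. The boundary condition $a'(1)=0$ then reads $\omega J_m'(\omega)=0$, i.e.\ $J_m'(\omega)=0$; by the definition \eqref{eq:root_bessel} this forces $\omega=\lambda_{m,p}$ for some $p\in\xN$, whence $\mu=-\lambda_{m,p}^2$. Conversely each such value is attained, with eigenfunctions $J_m(\lambda_{m,p}r)\cos(m\theta)$ and $J_m(\lambda_{m,p}r)\sin(m\theta)$. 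The remaining case $\omega=0$ (i.e.\ $\mu=0$) is realised only by the constant function in the $m=0$ mode, which one records either as the value $0$ of the spectrum or, with the convention $\lambda_{0,0}=0$, inside the stated list.

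Finally, since $B_1$ is bounded with smooth boundary, the Neumann Laplacian has compact resolvent and therefore purely discrete spectrum, so the spectrum coincides with its set of eigenvalues; and the separated eigenfunctions produced above, obtained by coupling the Fourier basis in $\theta$ with the Sturm--Liouville eigenbasis in $r$ on $(0,1)$, form a complete orthogonal system of $L^2(B_1)$. Hence no eigenvalue has been omitted and the spectrum is precisely $\{-\lambda_{m,p}^2:(m,p)\in\xN\times\xN\}$. The only genuinely delicate point is this last completeness step: one must know that separation of variables really exhausts $L^2(B_1)$ — a standard fact from Sturm--Liouville theory for the disk — so that the spectrum is not strictly larger than the list generated by the ansatz; everything else is routine.
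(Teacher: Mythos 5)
Your argument is correct, and it is the standard one. The paper states this proposition in Appendix A without any proof, so there is nothing to compare against: separation of variables, the sign argument via Green's identity, reduction to Bessel's equation \eqref{eq:diff_Bessel_m} under $x=\omega r$, rejection of the solution singular at the origin, and the Neumann condition $J_m'(\omega)=0$ forcing $\omega=\lambda_{m,p}$ as in \eqref{eq:root_bessel} is exactly what the authors are implicitly relying on. Your closing remark about completeness is the right thing to worry about; an equivalent and slightly more economical way to dispatch it is to note that any eigenfunction decomposes into Fourier modes in $\theta$, each nonzero mode being itself an eigenfunction of the radial problem, so no eigenvalue can escape the separated ansatz, while compactness of the resolvent guarantees the spectrum consists only of eigenvalues. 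You are also right to flag that $\mu=0$ (the constant function) is a genuine eigenvalue not literally covered by the list $\{-\lambda_{m,p}^2\}$ under the paper's convention $\lambda_{m,p}>0$ in \eqref{eq:root_bessel}; this is a small imprecision in the statement itself, not in your proof.
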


\section{Bifurcation through the Leray-Schauder degree theory}\label{app:LS}

We here give a very brief background on the (huge) theory regarding the Leray-Schauder degree and the local Leray-Schauder  index
, LS degree and LLS index from now on. A synthesis of the incoming presentation is given in \cite{Mawhin}.\\
We also recall the link among the above notions and bifurcation points. For this part, we mainly refer to \cite{K}.\\

The LS degree is a powerful tool that, under suitable hypothesis, algebraically counts the number of zeros of functions. In our case, we are interested in functions of the type $I-F$, so that their zeros are, actually, fixed points.\\
The LS degree is defined in normed linear space which may be infinite-dimensional, and its  expression can be derived from  the Brouwer degree theory (see, for instance, \cite[Chapters 8 and 9]{Brown}, \cite[Chapter 2]{Droniou}, and \cite[Theorem 10.6]{Brown} too). However, we do not deepen more in this direction because it would require technical tools from homology theory. 
The interested reader can find the construction of the LS degree from the Brouwer one, for instance, in  \cite[Section 3.3]{Droniou}, \cite[Chapter 10]{Brown}. \\

The well-posedness setting for the LS degree is the following. \\
Let $X$ be a real Banach space, $U$ a bounded open subset of $X$, and $\Psi$ a mapping such that
\[
\Psi=I-F:\overline{U}\to X,
\]
where $F:\overline{U}\to X$ is a continuous and compact map. Then, the LS degree
\[
\t{deg}_{LS}\bra{\Psi,U,z},
\]
with
\[
z\in X\q\t{and}\q z\notin \Psi(\partial U),
\]
is a function which associates to the triple $(\Psi,U,z)$ a certain integer.\\
For the sake of completeness, we here list some of the most important properties that $\t{deg}_{LS}$ verifies. We also refer to
\cite[Section 3.2.2]{Droniou}, \cite[Chapter 11]{Brown}, and  \cite[Appendix]{CR2}.
\begin{theo}[{\cite[Theorem 3.2.2]{Droniou}}]
Let 
\[
Y=\set{(\Psi,U,z) \t{ defined as above}}.
\]
Then, there exists a function 
\[
\t{deg}_{LS}:Y\to \mathbb{Z}
\]
such that the following properties are satisfied:
\begin{itemize}
\item normalization: $\t{deg}_{LS}\bra{I,U,z}=1$ for all $z\in U$;
\item additivity: if $U_1,\,U_2$ are open subsets of $U$ such that $U_1\cap U_2=\emptyset$, and $z\notin \Psi(\overline{U}\setminus(U_1\cup U_2))$, then
\[
\t{deg}_{LS}\bra{\Psi,U,z}=\t{deg}_{LS}\bra{\Psi,U_1,z}+\t{deg}_{LS}\bra{\Psi,U_2,z};
\]
\item homotopy invariance: if $h:[0,1]\times \overline{U}\to X$ is compact,  $y:[0,1]\to X$ is continuous, and $y(t)\notin \Psi(\partial U)$ for all $t\in[0,1] $, then
\[
\t{deg}_{LS}\bra{I-h(0,\cdot),U,y(0)}=\t{deg}_{LS}\bra{I-h(1,\cdot),U,y(1)}.
\]
\end{itemize}
\end{theo}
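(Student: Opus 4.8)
The plan is to construct $\t{deg}_{LS}$ by the classical Leray--Schauder reduction to the finite-dimensional Brouwer degree $\deg_B$, which we take as known together with its normalization, additivity, homotopy invariance and, crucially, its \emph{reduction} property: if $V\subseteq W$ are finite-dimensional subspaces, $g\colon\overline{\mathcal O\cap W}\to W$ is continuous with $(\mathrm{id}-g)(\overline{\mathcal O\cap W})\subseteq V$, and $z\in V\setminus g(\partial(\mathcal O\cap W))$, then $\deg_B(g,\mathcal O\cap W,z)=\deg_B(g|_{\overline{\mathcal O\cap V}},\mathcal O\cap V,z)$. For the definition, fix $(\Psi,U,z)\in Y$, write $\Psi=I-F$ with $F$ compact, and put $r:=\dist\big(z,\Psi(\partial U)\big)>0$. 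Since $\overline{F(\overline U)}$ is compact, for every $\varepsilon\in(0,r)$ there is a continuous finite-rank map $F_\varepsilon\colon\overline U\to X$ (a Schauder projection attached to a finite $\varepsilon$-net of $\overline{F(\overline U)}$) with $\sup_{\overline U}|F_\varepsilon-F|<\varepsilon$; choose a finite-dimensional subspace $V$ containing $\mathrm{Range}(F_\varepsilon)$ and $z$. For $x\in\overline{U\cap V}$ one has $(I-F_\varepsilon)(x)\in V$, and on $\partial(U\cap V)\subseteq\partial U$
\[
|(I-F_\varepsilon)(x)-z|\ \ge\ |\Psi(x)-z|-|F_\varepsilon(x)-F(x)|\ >\ r-\varepsilon\ >\ 0 ,
\]
so $\deg_B\big((I-F_\varepsilon)|_{\overline{U\cap V}},\,U\cap V,\,z\big)\in\xZ$ is defined; set $\t{deg}_{LS}(\Psi,U,z)$ equal to this integer.

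\textbf{Step 1: well-definedness.} One must check independence of $\varepsilon$, of $F_\varepsilon$, and of $V$. If $V\subseteq W$ are both admissible (each containing $\mathrm{Range}(F_\varepsilon)$ and $z$), the reduction property of $\deg_B$ gives the same value on $V$ and on $W$. For two admissible approximations $F_\varepsilon,F_{\varepsilon'}$ choose a common admissible $V$; the affine homotopy $H_t:=I-\big(tF_\varepsilon+(1-t)F_{\varepsilon'}\big)$ satisfies $\sup_{\overline U}|tF_\varepsilon+(1-t)F_{\varepsilon'}-F|<\max(\varepsilon,\varepsilon')<r$ for all $t\in[0,1]$, hence $z\notin H_t(\partial(U\cap V))$, and homotopy invariance of $\deg_B$ forces the two candidate values to agree. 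Thus $\t{deg}_{LS}$ is a well-defined $\xZ$-valued function on $Y$.

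\textbf{Step 2: the three properties.} For \emph{normalization} take $F_\varepsilon\equiv0$, $V=\mathrm{span}\{z\}$, so that $\t{deg}_{LS}(I,U,z)=\deg_B(\mathrm{id},U\cap V,z)=1$ when $z\in U$. For \emph{additivity}, with $z\notin\Psi(\overline U\setminus(U_1\cup U_2))$ let $r':=\dist\big(z,\Psi(\overline U\setminus(U_1\cup U_2))\big)>0$, pick a single $F_\varepsilon$ with $\varepsilon<\min(r,r')$ and a single admissible $V$; then $z\notin(I-F_\varepsilon)\big((\overline U\cap V)\setminus((U_1\cap V)\cup(U_2\cap V))\big)$, and additivity for $\deg_B$ on the finite-dimensional slices $U_1\cap V$, $U_2\cap V$ transfers. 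For \emph{homotopy invariance}, given compact $h\colon[0,1]\times\overline U\to X$ and continuous $y$ with $y(t)\notin(I-h(t,\cdot))(\partial U)$, compactness yields $\delta:=\min_{t}\dist\big(y(t),(I-h(t,\cdot))(\partial U)\big)>0$; approximate $h$ uniformly within $\delta/2$ by a continuous finite-rank map $h_\varepsilon$, approximate the path $y$ within $\delta/4$ by a path $y_\varepsilon$ valued in a finite-dimensional space (Schauder projection of the compact set $y([0,1])$), enlarge $V$ to contain $\mathrm{Range}\,h_\varepsilon(t,\cdot)$ for all $t$ and $\mathrm{Range}\,y_\varepsilon$; then $t\mapsto\big(I-h_\varepsilon(t,\cdot)\big)|_{\overline{U\cap V}}$ with the value-path $y_\varepsilon$ is an admissible Brouwer homotopy on $U\cap V$, and homotopy invariance of $\deg_B$ gives $\t{deg}_{LS}\big(I-h(0,\cdot),U,y(0)\big)=\t{deg}_{LS}\big(I-h(1,\cdot),U,y(1)\big)$.

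\textbf{Main obstacle.} The genuinely delicate point is Step 1: that the Brouwer degree of the restricted finite-rank map is insensitive to the choice of approximation and, above all, to the ambient finite-dimensional subspace. The first insensitivity is the affine-homotopy argument above; the second is precisely the reduction (dimension-raising) property of $\deg_B$, whose proof (via the product formula / excision for the Brouwer degree) is the real technical core inherited from the finite-dimensional theory. Everything else is bookkeeping: choosing $\varepsilon$ small enough that the approximation stays admissible on the relevant boundary, and transporting each axiom slice-by-slice from $\deg_B$.
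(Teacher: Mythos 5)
The paper does not prove this statement at all: it is quoted verbatim as background from the cited reference (Droniou, Theorem 3.2.2), and the authors explicitly defer the construction of the Leray--Schauder degree from the Brouwer degree to that reference and to Brown's book. Your proposal reconstructs exactly that standard construction (Schauder finite-rank approximation, restriction to a finite-dimensional slice, Brouwer degree, well-definedness via the reduction property and affine homotopies, then transfer of the three axioms slice by slice), and it is correct in outline; you also correctly identify the dimension-invariance (reduction) property of the Brouwer degree as the genuine technical core. The only points left implicit are routine: that $\Psi(\partial U)$ is closed (so that $r>0$), which uses that $I-F$ is a closed map on bounded closed sets when $F$ is compact, and the degenerate case $z=0$ in your normalization step where $V=\mathrm{span}\{z\}$ is zero-dimensional.
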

We also recall the following Theorem (see also \cite[Proposition 3.2.6]{Droniou}).
\begin{theo}[{\cite[Theorem 11.1]{Brown}}]\label{thm:Brown1}
If
\[
\t{deg}_{LS}\bra{\Psi,U,z}\ne 0,
\]
then $\Psi(x)=0$, i.e.,  $F(x)=x$ for some $x\in U$.
\end{theo}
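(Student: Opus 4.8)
The plan is to deduce this solution property directly from the additivity axiom of the Leray--Schauder degree recalled just above, arguing by contraposition. Suppose $F$ has no fixed point in $U$ at the level $z$, that is $\Psi(x)\neq z$ for every $x\in U$. Since the triple $(\Psi,U,z)$ is admissible we also have $z\notin\Psi(\partial U)$, and since $\overline U=U\cup\partial U$ this gives $z\notin\Psi(\overline U)$. The goal is then to show $\t{deg}_{LS}\bra{\Psi,U,z}=0$, which contradicts the hypothesis.

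The first step is to record that the degree over the empty set vanishes: applying additivity with $\emptyset$ playing the role of $U$ and with $U_1=U_2=\emptyset$ (admissible, since $\emptyset$ is open, $\emptyset\cap\emptyset=\emptyset$, and $z\notin\Psi(\emptyset)$ trivially) yields $\t{deg}_{LS}\bra{\Psi,\emptyset,z}=2\,\t{deg}_{LS}\bra{\Psi,\emptyset,z}$, hence $\t{deg}_{LS}\bra{\Psi,\emptyset,z}=0$. The second step is to apply additivity once more, this time to the open subsets $U_1=U_2=\emptyset$ of $U$: since $z\notin\Psi\bigl(\overline U\setminus(U_1\cup U_2)\bigr)=\Psi(\overline U)$ by the previous step, we obtain $\t{deg}_{LS}\bra{\Psi,U,z}=\t{deg}_{LS}\bra{\Psi,\emptyset,z}+\t{deg}_{LS}\bra{\Psi,\emptyset,z}=0$. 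This is the desired contradiction, so there exists $x\in U$ with $\Psi(x)=z$; for $z=0$ this reads $F(x)=x$.

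There is essentially no obstacle in the argument itself, which is pure bookkeeping with the additivity property; the substantive input is the mere existence of a well-behaved degree, which I would take as given. The only point worth a sentence is why such a degree exists on the admissible class $\Psi=I-F$ with $F$ continuous and compact: from relative compactness of $F(\overline U)$ together with $z\notin\Psi(\partial U)$ one shows $\inf_{x\in\partial U}\norm{\Psi(x)-z}>0$ (a minimizing sequence would have a subsequence along which $F$ converges, forcing $\Psi$ to attain $z$ on the closed set $\partial U$), which permits a finite-dimensional Galerkin reduction to the Brouwer degree, after which the present statement is just the classical fact that a value with empty preimage has Brouwer degree zero. Accordingly I would keep the two-line additivity proof above and cite \cite{Brown,Droniou} for the underlying construction.
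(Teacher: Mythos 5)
The paper does not prove this statement at all: it is recalled verbatim as a classical fact and attributed to \cite[Theorem 11.1]{Brown}, so there is no in-paper argument to compare against. Your derivation from the additivity axiom is correct and is the standard one. Both applications of additivity are legitimate: taking $U_1=U_2=\emptyset$ inside the empty domain forces $\mathrm{deg}_{LS}(\Psi,\emptyset,z)=2\,\mathrm{deg}_{LS}(\Psi,\emptyset,z)=0$, and then, under the contrapositive hypothesis $z\notin\Psi(U)$ combined with the admissibility condition $z\notin\Psi(\partial U)$, the excision step with $U_1=U_2=\emptyset$ gives $\mathrm{deg}_{LS}(\Psi,U,z)=0$, contradicting the hypothesis. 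Your closing remark also silently corrects a typo in the statement as printed (the conclusion should read $\Psi(x)=z$, which reduces to $F(x)=x$ only when $z=0$), and your compactness argument showing $\inf_{x\in\partial U}\|\Psi(x)-z\|>0$ is sound: a minimizing sequence $x_n\in\partial U$ has $F(x_{n_k})$ convergent, hence $x_{n_k}$ convergent to some $x^*$ in the closed set $\partial U$ with $\Psi(x^*)=z$, a contradiction. So the proposal is a complete, self-contained proof of a result the paper only cites; the only caveat is that it relies on the empty set being an admissible domain for the degree, which holds for the axiomatization quoted in the appendix.
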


However, we are not interested in applying the LS degree theory to find fixed points, but bifurcation ones. To this aim, let us introduce the bifurcation parameter $s\in\xR$. We represent the dependence on $s$ of the function $\Psi$ with the subscript $s$:
\[
s\to \Psi(x;s)=\Psi_s(x).
\]
Roughly speaking, once we know that
\[
\Psi_s(x_0)=0
\] 
for a certain $x_0\in U$ and for all $s\in\xR$, then we can exploit the link among LS degree  and bifurcation points to show that there exists a value $s_0\in\xR$  such that $(x_0;s_0)$ is a bifurcation point. \\
With this aim, we restrict our attention to the particular case of isolated solutions, whose definition is given below.
\begin{defi}[Isolated solution {\cite[Section 8]{LS}}]
Let
\[
B_\eps(x_0)=\set{x:\,\norm{x-x_0}_X< \eps}.
\]
Then, we say that $x_0$ is an isolated solution to
\[
\Psi(x_0)=x_0-F(x_0)=z
\]
if the only solution contained in $B_\eps(x_0)$ is $x=x_0$.
\end{defi}
Let us show the connection among LS degree, LLS index, and solutions to $\Psi(x)=0$. We refer to \cite[Section 8]{LS} for the incoming results.\\
Assume that there exists a finite number of  $x_i\notin\partial U$ such that they are the only points satisfying 
\[
\Psi(x_i)=x_i-F(x_i)=z\q\t{for }1\le i\le r<\infty.
\]
Then, the LS degree can be expressed through the following formula:
\begin{equation}\label{eq:sum}
\t{deg}_{LS}\bra{\Psi,U,z}=\sum_{i=1}^{r}\t{ind}(\Psi,x_i).
\end{equation}
The formula in \eqref{eq:sum} implies that, if $x_0$ is an isolated solution, then
\[
\t{deg}_{LS}\bra{\Psi,B_\eps(x_0),y}=\t{ind}(\Psi,x_0).
\]
Hence, in order to compute the LS degree of isolated solutions, it suffices to have a representation formula for the LLS index in this particular case. It is with this reason that we recall the following result.
\begin{theo}[{\cite[Lemma A.4]{CR2}}]\label{teoA4}
Let $0\in U$ and $F(0)=0$. Assume that $F$ is Fréchet differentiable at $0\in X$, and $1$ belongs to the resolvent set of $F'(0)$. \\
Then, $0$ is an isolated solution to $\Psi(x)=0$ and 
\[
\t{ind}(\Psi,0)=(-1)^m,
\]
where $m$ is the sum of the multiplicities of the characteristic values of $F'(0)$ in the interval $(0,1)$.
\end{theo}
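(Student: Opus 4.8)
The plan is to reduce the computation of $\t{ind}(\Psi,0)$ to the Leray--Schauder degree of the linear isomorphism $I-L$, with $L\defn F'(0)$, and then to evaluate the latter by contracting $L$ to a finite-rank operator. First I would record that $L$ is a compact linear operator (the Fréchet differential of a compact map at a point is compact) and that, $1$ being in the resolvent set of $L$, the map $I-L$ is a bounded linear bijection, so by the open mapping theorem $\|(I-L)x\|\ge c\|x\|$ for some $c>0$ and all $x$. Writing $F(x)=Lx+R(x)$ with $\|R(x)\|=o(\|x\|)$ as $x\to0$, we get $\|\Psi(x)\|\ge c\|x\|-\|R(x)\|\ge\tfrac{c}{2}\|x\|$ for $0<\|x\|\le\delta$ with $\delta$ small and $\overline{B_\delta(0)}\subset U$. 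Hence $0$ is an isolated solution of $\Psi(x)=0$, and the index formula \eqref{eq:sum} gives $\t{ind}(\Psi,0)=\t{deg}_{LS}(\Psi,B_\delta(0),0)$.

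Next I would join $\Psi$ to $I-L$ by a compact homotopy on $\overline{B_\delta(0)}$, namely $h(t,x)=t^{-1}F(tx)$ for $t\in(0,1]$ and $h(0,x)=Lx$. Differentiability at $0$ makes $h$ jointly continuous --- since $t^{-1}F(tx)-Lx=t^{-1}R(tx)$ and $\|t^{-1}R(tx)\|\le\|x\|\,\|R(tx)\|/\|tx\|\to0$ uniformly on $\overline{B_\delta(0)}$ --- and each $h(t,\cdot)$ is compact. Admissibility $0\notin(I-h(t,\cdot))(\partial B_\delta(0))$ holds: for $t\in(0,1]$, $x=t^{-1}F(tx)$ is equivalent to $\Psi(tx)=0$ with $tx\in\overline{B_\delta(0)}$, hence $tx=0$, impossible on $\partial B_\delta(0)$; for $t=0$ it reads $(I-L)x=0$, hence $x=0$. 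Homotopy invariance then yields $\t{deg}_{LS}(\Psi,B_\delta(0),0)=\t{deg}_{LS}(I-L,B_\delta(0),0)$, reducing the problem to the linear case.

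For the linear case I would prove $\t{deg}_{LS}(I-L,B_\delta(0),0)=(-1)^m$, with $m$ the total algebraic multiplicity of the eigenvalues of $L$ in $(1,+\infty)$, i.e.\ of its characteristic values in $(0,1)$. By Riesz--Schauder theory the eigenvalues of $L$ accumulate only at $0$, so finitely many, $\lambda_1,\dots,\lambda_k$, lie in $(1,+\infty)$; let $N$ be the sum of their generalized eigenspaces, $\dim N=\sum_j p_j=m$ with $p_j$ the algebraic multiplicity of $\lambda_j$, let $Z$ be the complementary $L$-invariant closed subspace, and $P_N,P_Z$ the associated projections. Set $L_s\defn L(P_N+sP_Z)$ for $s\in[0,1]$, so that $L_1=L$ and $L_0=LP_N$ has finite-dimensional range $\subseteq N$; each $L_s$ is compact and $s\mapsto L_s$ is norm-continuous. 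On $N\oplus Z$ one has $(I-L_s)(n+z)=(I-L)|_N\,n+(I-sL|_Z)\,z$, and both summands are invertible: $(I-L)|_N$ because $1$ is in the resolvent set of $L$, and $(I-sL|_Z)$ because $(I-sL|_Z)z=0$ with $z\ne0$ would force $1/s\in\sigma_p(L|_Z)\cap[1,+\infty)$, which is empty since $1$ is in the resolvent set and the generalized eigenspaces of $\lambda_1,\dots,\lambda_k$ are contained in $N$. Thus $s\mapsto L_s$ is an admissible compact homotopy and $\t{deg}_{LS}(I-L,B_\delta(0),0)=\t{deg}_{LS}(I-L_0,B_\delta(0),0)$; by the finite-dimensional reduction of $\t{deg}_{LS}$ (the range of $L_0$ sits in $N$), this equals $\sgn\det\big((I-L)|_N\big)$. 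Block by block $(I-L)|_N$ is $(1-\lambda_j)$ times a unipotent matrix, so $\det\big((I-L)|_N\big)=\prod_j(1-\lambda_j)^{p_j}$, whose sign is $(-1)^m$ because each $1-\lambda_j<0$. This gives the claimed value.

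The step I expect to be the main obstacle is the linear computation just sketched --- specifically the need to contract $L$ not all the way to $0$ but only to its finite-rank part $LP_N$. The naive homotopy $s\mapsto I-sL$ is not admissible: it becomes non-invertible at $s=1/\lambda_j$ for any eigenvalue $\lambda_j>1$, so one must isolate the finitely many eigenvalues $\ge1$ via the Riesz decomposition $X=N\oplus Z$ and argue, as above, that $I-sL|_Z$ stays invertible along the homotopy while the sign change is entirely produced by the factors $1-\lambda_j<0$ on $N$. Since this is precisely \cite[Lemma~A.4]{CR2}, a form of the classical Leray--Schauder index theorem, in the paper I would simply cite it, the three steps above being the argument behind it.
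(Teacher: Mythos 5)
Your argument is correct, but note that the paper itself gives no proof of this statement: it is imported verbatim as \cite[Lemma A.4]{CR2} (the classical Leray--Schauder index formula), and the surrounding appendix only records it for later use. What you have written is a correct and complete rendition of the standard proof behind that citation: (1) isolation of $0$ from $\|(I-L)x\|\ge c\|x\|$ plus the $o(\|x\|)$ remainder; (2) the dilation homotopy $h(t,x)=t^{-1}F(tx)$ reducing to the linear case, with the admissibility check on $\partial B_\delta$ done correctly; (3) the Riesz decomposition $X=N\oplus Z$ isolating the finitely many eigenvalues in $(1,+\infty)$, the contraction $L_s=L(P_N+sP_Z)$ to the finite-rank part, and the sign computation $\sgn\det\big((I-L)|_N\big)=\prod_j\sgn(1-\lambda_j)^{p_j}=(-1)^m$. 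You also correctly translate between characteristic values in $(0,1)$ and eigenvalues in $(1,+\infty)$, which is the only place where a sign or counting error typically creeps in. Your closing remark matches the paper's actual practice: in the text one would simply cite the lemma, and your three steps are precisely the argument it encapsulates.
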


We recall that 
\begin{itemize}
\item a characteristic values of $F'(0)$  is a real number $\mu$ such that the equation
\[
\mu F'(0)x=x
\]
has a non-trivial solution;
\item the algebraic multiplicity of an eigenvalue $\mu$ is, in this case, the dimension of 
\[
\cup_{k\in\mathbb{N}}\pare{I-\mu F'(0)}^k;
\]
\item the resolvent set is given by  $\xR\setminus\set{\frac{1}{\mu}:\q \mu F'(0)x=x}$.
\end{itemize}
Then, the $m$-power is the sum of the algebraic multiplicities of the real eigenvalues to $F'(0)$ that are greater than one. 
\begin{rema}[{\cite[Chapter IV §2]{K}}]\label{rmk:important}
Let $F$ as in Theorem \ref{teoA4}. Then, the operator $F$ can be expressed in the form
\[
F=F'(0)+R,
\]
being $F'(0)$ its Fréchet derivative evaluated in $0$, and where the term $R$ verifies
\[
\lim_{\norm{x}_X}\frac{\norm{R(x)}_X}{\norm{x}_X}=0.
\]
This is an important remark because, instead of dealing with a possibly nonlinear $F$, we can consider its linearization $F'(0)$. 
\end{rema}
Resuming, the above Theorem \ref{teoA4} and \eqref{eq:sum} imply that
\begin{equation}\label{eq:resume}
\t{deg}_{LS}\bra{\Psi,B_\eps(0),0}=\t{ind}(\Psi,0)=(-1)^m,
\end{equation}
for $m$ defined as in Theorem \ref{teoA4}.\\

We now complete the puzzle showing the link among LLS index and bifurcation points.\\ 
Roughly speaking, as explained in  \cite[Section 6]{Berkovits}, we need to have a change of degree at a certain point  in order to have a bifurcation at this point.
 
\begin{defi}[Bifurcation point - {\cite[Section 5]{LalouxMawhin}}]\label{def:bp-1}
Assume that $F$ is a linear compact Fredholm mapping of order zero, i.e. $Im F$ is closed in $X$ and $dim \,ker F=codim\, Im F<\infty$.  
Assume also that $\Psi_s(0)=0$. Then, the couple 
\[
(0;s_0)\in D= \set{(0;s)\in X\times \xR}
\]
is a bifurcation point for the solutions of $\Psi_s(x)=0$ w.r.t. $D$ if every neighborhood of $(0;s_0)\in \overline{U}\times \xR$ contains at least one solution $(x;s)\ne  (0;s)$
to $\Psi_s(x)=0$.
\end{defi}

\begin{defi}[Bifurcation point - {\cite[Section 8]{Mawhin}}]\label{def:bp-2}
Let $F:U\to X$ completely continuous, and $\Psi_s(0)=0$ for every $s\in \xR$. Then, the couple $(0;s_0)$ is a bifurcation point to $\Psi_s(x)=0$ if  there exists a sequence $\set{(x_k;s_k)}$ of solutions to $\Psi_{s_k}(x_k)=0$ with $(x_k;s_k)\in \pare{U\setminus\set{0}}\times \xR$ such that
\[
s_k\to s_0\q\t{and}\q \norm{x_k}_X\to 0.
\]
\end{defi}
We refer also to \cite{Berkovits}, \cite[Section 1]{Toland} and \cite[Chapter IV p. 181]{K} for the notion of bifurcation point in Definition \ref{def:bp-2}.

\begin{theo}[{\cite[Proposition 2]{LalouxMawhin}, \cite[Chapter IV §5]{K}}]\label{prop2}
Let $F$ as in Theorem \ref{teoA4}. Assume that, for some $s_1<s_2$, we know that $ \t{ind}(\Psi_{s_i},0)$ is well-defined, and
\[
 \t{ind}(\Psi_{s_1},0)\ne \t{ind}(\Psi_{s_2},0).
\]
Then, there exists an $s_0\in [s_1,s_2]$ such that $(0;s_0)$ is a bifurcation point for $\Psi$.
\end{theo}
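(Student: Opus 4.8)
The plan is to argue by contradiction, relying on the homotopy invariance of the Leray--Schauder degree and on the identification of the index of an isolated solution with a local degree, both recorded in this appendix. Parametrize the segment linearly by $s(t)=(1-t)s_1+ts_2$, $t\in[0,1]$, so that $\Psi_{s(0)}=\Psi_{s_1}$ and $\Psi_{s(1)}=\Psi_{s_2}$, and suppose, towards a contradiction, that \emph{no} couple $(0;s_0)$ with $s_0\in[s_1,s_2]$ is a bifurcation point of $\Psi$. Negating Definition~\ref{def:bp-2}, for every $s_0\in[s_1,s_2]$ there exist $\eps_{s_0}>0$ and $\delta_{s_0}>0$ such that
\[
\Psi_s(x)\ne 0\qquad\text{whenever}\quad 0<\norm{x}_X<\eps_{s_0}\quad\text{and}\quad|s-s_0|<\delta_{s_0}.
\]

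First I would amalgamate these local neighborhoods into a single ball valid along the whole segment: by compactness of $[s_1,s_2]$ finitely many of the intervals $(s_j-\delta_{s_j},s_j+\delta_{s_j})$ cover it, and I set $\eps:=\tfrac12\min_j\eps_{s_j}>0$, shrinking $\eps$ further if needed so that $\overline{B_\eps(0)}\subset U$ and so that $0$ is the \emph{only} solution of $\Psi_{s_1}(x)=0$ and of $\Psi_{s_2}(x)=0$ lying in $B_\eps(0)$ --- possible precisely because the hypothesis that $\t{ind}(\Psi_{s_i},0)$ be well defined means $0$ is an isolated solution at $s=s_1,\,s_2$. With this choice $\Psi_s(x)\ne 0$ for every $s\in[s_1,s_2]$ and every $x\in\partial B_\eps(0)$. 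Next, put $h(t,x):=F_{s(t)}(x)$ on $[0,1]\times\overline{B_\eps(0)}$; since the family $F_s$ is completely continuous and jointly continuous in $(s,x)$ (as in Section~\ref{sec:thm:TW:bif}), $h$ is an admissible compact homotopy and $0\notin(I-h(t,\cdot))(\partial B_\eps(0))$ for all $t\in[0,1]$, so homotopy invariance of $\t{deg}_{LS}$ gives
\[
\t{deg}_{LS}\bra{\Psi_{s_1},B_\eps(0),0}=\t{deg}_{LS}\bra{\Psi_{s_2},B_\eps(0),0}.
\]
Finally, since $0$ is the unique solution of $\Psi_{s_i}(x)=0$ in $B_\eps(0)$, formula \eqref{eq:sum} identifies each side with $\t{ind}(\Psi_{s_i},0)$, whence $\t{ind}(\Psi_{s_1},0)=\t{ind}(\Psi_{s_2},0)$, contradicting the hypothesis. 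Therefore some $s_0\in[s_1,s_2]$ must be a bifurcation point.

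The routine ingredients are the homotopy invariance of the degree and the index/degree identification for isolated solutions, both quoted above; the step that needs care is the amalgamation in the second paragraph. The radii $\eps_{s_0}$ and widths $\delta_{s_0}$ produced by negating the bifurcation property a priori depend on the base point, and turning them into a single sphere $\partial B_\eps(0)$ on which $\Psi_s$ never vanishes for \emph{all} $s\in[s_1,s_2]$ simultaneously is exactly where compactness of the parameter interval and joint continuity of $(s,x)\mapsto F_s(x)$ are genuinely used. One must also check that the extra shrinking of $\eps$ needed to isolate the trivial solution at the two endpoints does not spoil the lateral non-vanishing, which it does not, since non-vanishing on $\{0<\norm{x}_X<\eps_{s_j}\}$ passes to every smaller punctured ball.
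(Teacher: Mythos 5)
Your argument is correct, and it is the standard proof of this classical result. Note that the paper itself does not prove Theorem \ref{prop2}: it is quoted from \cite{LalouxMawhin} and \cite{K} without proof, so there is no in-paper argument to compare against. Your contradiction scheme --- negate Definition \ref{def:bp-2} pointwise, amalgamate the resulting punctured neighborhoods over the compact interval $[s_1,s_2]$ into a single sphere $\partial B_\eps(0)$ on which $\Psi_s$ never vanishes, invoke homotopy invariance of $\t{deg}_{LS}$ along $t\mapsto F_{s(t)}$, and identify the degree with the index via \eqref{eq:sum} at the two endpoints --- is exactly the argument in the cited references. The one point worth making explicit is that the admissibility of the homotopy $h(t,x)=F_{s(t)}(x)$ requires the family $(s,x)\mapsto F_s(x)$ to be jointly continuous and compact on $[s_1,s_2]\times\overline{B_\eps(0)}$; this is not stated in the theorem as quoted here, but it is a standing hypothesis in \cite{LalouxMawhin} and \cite{K}, and you correctly flag where it enters. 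You are also right that the extra shrinking of $\eps$ to isolate the trivial solution at $s_1,s_2$ is harmless (and in fact already implied by the negated bifurcation property at those two parameter values). No gaps.
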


\section{Bifurcation through Crandall-Rabinowitz theory}\label{app:CR}
\subsection{A theorem of Crandall-Rabinovitz} \label{C3_Bifurcation}

We recall here the classical  Bifurcation Theorem of Crandall-Rabinovitz \cite{CR} that we used to prove Theorem \ref{thm:TW:bif_2-CR}.\\

 Given $U, V$ two real Banach spaces and a continuous map $\mathcal F :\, \xR \times U \to V$, the goal 
is to analyze the structure of the solution set 
\[
\mathcal F [\lambda ,u]=0, \quad \left( \lambda ,u\right) \in \xR \times U.
\]

\begin{theo}[Local bifurcation \cite{CR}] \label{thm:CR}
	Let $U,V$ be Banach spaces, $W$ a neighborhood of $(\lambda_0,0)$ in $\xR\times U$, and 
	$\mathcal F: W \longrightarrow V$. 
	Suppose that the following properties are satisfied
	\begin{enumerate}[parsep=0cm,itemsep=0cm,topsep=0cm]
		\item $\mathcal F (\lambda,0)=0$ for all $\lambda $ in a neighborhood of $\lambda_0$;
		\item The Fr\'echet partial derivatives $\partial _u \, \mathcal F,\partial _\lambda  \, \mathcal F,\partial _{\lambda u} \, \mathcal F$ exist and 
		are continuous;
		\item ${\rm Ker}\, \partial _u \, \mathcal F(\lambda_0,0)$ is a one dimensional subspace of $U$ spanned by a nonzero vector $u_0 \in U$; 
		\item ${\rm Range}\, \partial_u \, \mathcal F(\lambda_0,0)$ is a closed subspace of V of codimension 1;
		\item $\partial _{\lambda u} \, \mathcal F(\lambda_0,0)[u_0] \notin {\rm Range}\, \partial _{ u} \, \mathcal F(\lambda_0,0)$.
	\end{enumerate}
Then, for any complement $Z$  of ${\rm Ker} \, \partial _u \, \mathcal F(\lambda_0,0)$ in $U$,  there exist a neighborhood $N$ of  $(\lambda_0,0)$ in 
	$\xR \times U$, an interval $I=(-\eps,\eps)$ for some $\eps>0$ and two continuous functions 
	\[
	\vp: (-\eps,\eps) \longrightarrow \xR, \quad \psi: (-\eps,\eps) \longrightarrow Z
	\] 
	such that $\vp(0)=\lambda_0$, $\psi(0)=0$ and 
	\[
	\mathcal F^{-1}[0] \cap U = \lbrace (\vp(s), s u_0 + s \psi(s)): \, 
	|s| < \eps \rbrace \cup \lbrace (\lambda,0) : (\lambda,0) \in N\rbrace.
	\]
Furthermore, if $\partial _{ uu} \, \mathcal F$ is continuous then the functions $\rho$ and $\psi$ are once continuously differentiable.
\end{theo}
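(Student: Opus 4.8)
The plan is to prove Theorem~\ref{thm:CR} by a Lyapunov--Schmidt reduction together with a rescaling of the $u$ variable that desingularises the trivial branch $\{(\lambda,0)\}$. I fix the given direct sum $U=\mathrm{span}\{u_0\}\oplus Z$ and choose a bounded projection $Q:V\to R$ onto $R:=\mathrm{Range}\,\partial_u\mathcal F(\lambda_0,0)$, which is closed by hypothesis~4, with a one--dimensional complement $W$, so $V=R\oplus W$. Because $\mathcal F(\lambda,0)=0$ for $\lambda$ near $\lambda_0$, the fundamental theorem of calculus gives $\mathcal F(\lambda,u)=\int_0^1\partial_u\mathcal F(\lambda,tu)[u]\,\dd t$. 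Writing $u=s(u_0+z)$ with $s\in\xR$ and $z\in Z$, I define
\[
\Phi(\lambda,s,z):=\int_0^1\partial_u\mathcal F\big(\lambda,ts(u_0+z)\big)[u_0+z]\,\dd t ,
\]
so that for $s\neq 0$ one has $\Phi(\lambda,s,z)=0\iff\mathcal F(\lambda,s(u_0+z))=0$, while $\Phi(\lambda_0,0,0)=\partial_u\mathcal F(\lambda_0,0)[u_0]=0$ since $u_0$ spans the kernel. Hypothesis~2 makes $\Phi$ continuous with a continuous partial derivative in $(\lambda,z)$ near $(\lambda_0,0,0)$.

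The heart of the argument is to show that $D_{(\lambda,z)}\Phi(\lambda_0,0,0):\xR\times Z\to V$, which sends $(\mu,\zeta)\mapsto\mu\,\partial_{\lambda u}\mathcal F(\lambda_0,0)[u_0]+\partial_u\mathcal F(\lambda_0,0)[\zeta]$, is a topological isomorphism. Here the restriction $\partial_u\mathcal F(\lambda_0,0)\big|_Z:Z\to R$ is a bijection --- injective since $Z\cap\mathrm{Ker}\,\partial_u\mathcal F(\lambda_0,0)=\{0\}$, surjective by the definition of $R$, and with bounded inverse by the open mapping theorem --- while the transversality hypothesis~5 says the $W$--component of $\partial_{\lambda u}\mathcal F(\lambda_0,0)[u_0]$ is nonzero. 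Decomposing the target along $V=R\oplus W$ then yields both injectivity and surjectivity of $D_{(\lambda,z)}\Phi(\lambda_0,0,0)$. The implicit function theorem therefore produces $\eps>0$ and functions $\varphi:(-\eps,\eps)\to\xR$, $\psi:(-\eps,\eps)\to Z$ with $\varphi(0)=\lambda_0$, $\psi(0)=0$ and $\Phi(\varphi(s),s,\psi(s))\equiv 0$; these are continuous under hypotheses~1--5 and of class $C^1$ once $\partial_{uu}\mathcal F$ is continuous, since then $\Phi$ is jointly $C^1$. This produces the bifurcating branch $\{(\varphi(s),\,s(u_0+\psi(s))):|s|<\eps\}$.

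It remains to check that, after shrinking the neighbourhood $N$ of $(\lambda_0,0)$, the union of this branch with the trivial branch exhausts $\mathcal F^{-1}[0]\cap N$. Given a small solution $(\lambda,u)$, write $u=\alpha u_0+w$ with $\alpha\in\xR$, $w\in Z$; if $\alpha=0$ then $0=\mathcal F(\lambda,w)=\int_0^1\partial_u\mathcal F(\lambda,tw)[w]\,\dd t$, and applying $Q$ yields an operator acting on $w$ that is a small perturbation of the invertible $\partial_u\mathcal F(\lambda_0,0)\big|_Z$, forcing $w=0$. Hence every small nontrivial solution has $\alpha\neq 0$, and with $s=\alpha$, $z=w/\alpha$ it satisfies $\Phi(\lambda,s,z)=0$ with $(\lambda,s,z)$ near $(\lambda_0,0,0)$, so the uniqueness clause of the implicit function theorem forces $\lambda=\varphi(s)$, $z=\psi(s)$. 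The main obstacle is precisely this last step --- choosing $N$ so that the uniqueness of the implicit function theorem applies in the rescaled coordinates --- together with the regularity bookkeeping: under hypotheses~1--5 alone one must invoke the implicit function theorem in the merely continuous category and verify by hand the continuity of $\Phi$ and $D_{(\lambda,z)}\Phi$ up to $s=0$, the $C^1$ conclusion becoming available exactly when $\partial_{uu}\mathcal F$ is continuous.
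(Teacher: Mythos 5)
The paper itself contains no proof of this statement: Theorem \ref{thm:CR} is recalled verbatim from Crandall--Rabinowitz \cite{CR} and used as a black box, so there is no in-paper argument to compare against. Your proposal is, in outline, the original Crandall--Rabinowitz proof: the rescaling $u=s(u_0+z)$, the reduced map $\Phi(\lambda,s,z)=\int_0^1\partial_u\mathcal F(\lambda,ts(u_0+z))[u_0+z]\,\dd t$, and the verification that $(\mu,\zeta)\mapsto\mu\,\partial_{\lambda u}\mathcal F(\lambda_0,0)[u_0]+\partial_u\mathcal F(\lambda_0,0)[\zeta]$ is an isomorphism of $\xR\times Z$ onto $V$ are all correct, and this is exactly how hypotheses 3--5 enter. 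One caution on regularity: to obtain continuity of $\partial_z\Phi$ from hypothesis 2 alone you must compute it from the undivided form $\Phi=s^{-1}\mathcal F(\lambda,s(u_0+z))$ for $s\neq0$, where the chain rule gives $\partial_z\Phi[\zeta]=\partial_u\mathcal F(\lambda,s(u_0+z))[\zeta]$ with the factors of $s$ cancelling; differentiating under your integral sign instead produces a $\partial_{uu}\mathcal F$ term that is not available under hypotheses 1--5.

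The one genuine gap is in the exhaustion step. For a small nontrivial solution $(\lambda,u)$ with $u=\alpha u_0+w$ and $\alpha\neq0$ you set $z=w/\alpha$ and assert that $(\lambda,\alpha,z)$ lies in the uniqueness neighbourhood of the implicit function theorem; but smallness of $u$ does not by itself make $w/\alpha$ small, and you explicitly flag this as ``the main obstacle'' without closing it. The missing estimate is the following: from $0=\mathcal F(\lambda,u)=\alpha\int_0^1\partial_u\mathcal F(\lambda,tu)[u_0]\,\dd t+\int_0^1\partial_u\mathcal F(\lambda,tu)[w]\,\dd t$, the second integral equals $\partial_u\mathcal F(\lambda_0,0)[w]+o(\|w\|)$ and hence has norm at least $(c-o(1))\|w\|$ by the bounded invertibility of $\partial_u\mathcal F(\lambda_0,0)\big|_Z$ onto its range, while the first equals $\alpha\bigl(\partial_u\mathcal F(\lambda_0,0)[u_0]+o(1)\bigr)=o(1)\,|\alpha|$ because $u_0$ spans the kernel. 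Hence $\|w\|\le\eps|\alpha|$ for any prescribed $\eps>0$ once the neighbourhood $N$ is shrunk, so $\|z\|\le\eps$ and the uniqueness clause of the implicit function theorem does apply. With this estimate inserted (and the same continuity bound used to dispose of the $\alpha=0$ case, as you already do), the argument is complete.
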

\begin{rema}[Bifurcation point through Jordan curves]
	In Theorem  \ref{thm:CR},  $(\lambda_0,0)$ is a bifurcation point of the equation $\mathcal F(\lambda,u) = 0$ in the following sense: in a neighborhood of $(\lambda_0,0)$, the set of solutions of $\mathcal F(\lambda,u) = 0$ consists of two curves $\Gamma_1$ and $\Gamma_2$ which intersect 
	only at the point $(\lambda_0,0)$; $\Gamma_1$ is the curve $(\lambda_0,0)$ and $\Gamma_2$ can be parameterized as follows:
	\[
	\Gamma_2 : ( \lambda(s),u(s)),\,  |s| \textrm{ small }; \, ( \lambda(0),u(0)) = (\lambda_0,0);\,  u'(0) = u_0, \, \lambda'(0)\neq 0.
	\]
\end{rema}

\subsection{Computation of $\pas \kappa$, $\pass \kappa$ and $\passs \kappa$}\label{app:kappa}

Recall that $\rho(s,\theta)$ and $\kappa(s,\theta)$ are defined by \eqref{eq:TW_sol}, \eqref{eq:TW_sol_0} and \eqref{def:k} for $s \in I$ and $\theta \in [-\pi,\pi]$.

\begin{lemm}\label{lem:app}
		Assume 
\begin{equation}\label{eq:rho0}
\pat^n\rho(0,\theta) =\pat^n \pas \rho(0,\theta)=0
\qq\forall\theta \in [-\pi,\pi],\,\q \forall n\in \mathbb{N}.
\end{equation}
	Then
	$$\pas \kappa (0,\theta) = 0 $$
	and
	\begin{equation}
		R_0^2\pass \kappa (0,\theta)=- \pass \rho(0,\theta)  -\passtt \rho(0,\theta). \label{eq:der_seconde_k}
	\end{equation}
\end{lemm}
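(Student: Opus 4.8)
The plan is to compute the $s$-derivatives of the curvature $\kappa(s,\theta)$ given by \eqref{def:k} directly, exploiting the hypothesis \eqref{eq:rho0} that $\rho(0,\theta)$, $\pas\rho(0,\theta)$ and all their $\theta$-derivatives vanish. First I would introduce shorthand for the numerator and denominator: write $\kappa = N/D^{3/2}$ with
\[
N(s,\theta) = (R_0+\rho)^2 + 2\pat\rho^2 - (R_0+\rho)\patt\rho, \qquad D(s,\theta) = (R_0+\rho)^2 + \pat\rho^2,
\]
so that at $s=0$ one has $N(0,\theta)=R_0^2$, $D(0,\theta)=R_0^2$, and $\kappa(0,\theta)=1/R_0$. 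The key observation is that \eqref{eq:rho0} forces $\pas N(0,\theta)$, $\pas D(0,\theta)$ to involve only products in which at least one factor is $\pat^n\rho(0,\theta)$ or $\pat^n\pas\rho(0,\theta)$ — except for the terms coming from differentiating the bare $(R_0+\rho)$ or $(R_0+\rho)^2$, which produce $\pas\rho(0,\theta)=0$ as well. Hence $\pas N(0,\theta)=0$ and $\pas D(0,\theta)=0$, and therefore $\pas\kappa(0,\theta)=0$ by the quotient rule.

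For the second derivative I would differentiate $\kappa D^{3/2}=N$ twice in $s$ and evaluate at $s=0$. Using $\pas\kappa(0,\cdot)=0$ and $\pas D(0,\cdot)=0$ the cross terms drop, leaving
\[
\pass\kappa(0,\theta)\, R_0^{3} + \tfrac{1}{R_0}\cdot\tfrac{3}{2}R_0\,\pass D(0,\theta) = \pass N(0,\theta),
\]
where I used $D(0,\theta)^{3/2}=R_0^3$ and $\tfrac{d}{ds}(D^{3/2}) = \tfrac32 D^{1/2}\pas D$, whose second $s$-derivative at $0$ reduces to $\tfrac32 R_0\,\pass D(0,\theta)$ since $\pas D(0,\cdot)=0$. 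It remains to compute $\pass N(0,\theta)$ and $\pass D(0,\theta)$. Here, terms quadratic in first derivatives like $\pas\rho\cdot\pas\rho$ or $\pat\pas\rho\cdot\pat\pas\rho$ vanish at $s=0$ by \eqref{eq:rho0}, so $\pass D(0,\theta) = 2R_0\,\pass\rho(0,\theta)$ (only the $\pass$ of $(R_0+\rho)^2$ contributes a surviving bare factor $R_0$), and similarly
\[
\pass N(0,\theta) = 2R_0\,\pass\rho(0,\theta) - R_0\,\passtt\rho(0,\theta),
\]
the $2\pat\rho^2$ term and the mixed product $\pat\rho\cdot\patt\rho$ contributing nothing at $s=0$. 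Substituting these into the relation above gives
\[
\pass\kappa(0,\theta)\,R_0^3 + 3R_0^2\,\pass\rho(0,\theta) = 2R_0\,\pass\rho(0,\theta) - R_0\,\passtt\rho(0,\theta),
\]
i.e. $R_0^3\pass\kappa(0,\theta) = -R_0\,\pass\rho(0,\theta) - R_0\,\passtt\rho(0,\theta)$, which is exactly \eqref{eq:der_seconde_k} after dividing by $R_0$.

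The only real subtlety — and the step I would be most careful with — is the bookkeeping of which monomials in the Leibniz expansion of $\pass N$ and $\pass D$ actually survive at $s=0$: one must check that every term containing a factor $\pat^n\rho(0,\theta)$ with $n\ge 1$, or a factor $\pas\rho$ times another first-order-in-$s$ quantity, genuinely drops, and that the surviving contributions are precisely those where $\pass$ falls entirely on one $(R_0+\rho)$ factor (or, for $N$, on the $(R_0+\rho)\patt\rho$ block producing $-R_0\passtt\rho$). A clean way to organize this is to expand $\rho(s,\theta) = s\,\pas\rho(0,\theta) + \tfrac{s^2}{2}\pass\rho(0,\theta)+O(s^3)$ and note $\pas\rho(0,\theta)\equiv 0$, so $\rho(s,\theta) = \tfrac{s^2}{2}\pass\rho(0,\theta)+O(s^3)$ and likewise $\pat^n\rho(s,\theta)=\tfrac{s^2}{2}\pat^n\pass\rho(0,\theta)+O(s^3)$; then $N$ and $D$ are polynomial in these quantities, and reading off the coefficient of $s^2$ is immediate and leaves no room for sign errors. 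This also makes the first claim $\pas\kappa(0,\theta)=0$ transparent, since $N$ and $D$ have no $s^1$ term. I would present the argument in this expansion form.
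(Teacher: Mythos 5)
Your proof is correct and follows essentially the same route as the paper's: split $\kappa$ into numerator and denominator, check that all first $s$-derivatives vanish at $s=0$ under \eqref{eq:rho0}, and then compute $\pass N(0,\theta)=2R_0\pass\rho-R_0\passtt\rho$ and the denominator's second derivative (your product-rule form $\kappa D^{3/2}=N$ versus the paper's iterated quotient rule is only a cosmetic difference, and your Taylor-expansion bookkeeping is a clean way to see why only the $s^2$ coefficients survive). One transcription slip: in your substituted display the term should be $3R_0\,\pass\rho(0,\theta)$, not $3R_0^2\,\pass\rho(0,\theta)$, consistent with the line before it and with the (correct) final identity you draw from it.
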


\begin{proof}
	We define the functions
	\begin{align*}
		& N(s,\theta)=\bra{(R_0+\rho)^2 + 2 \pat \rho^2 - (R_0+\rho) \patt \rho}(s,\theta) ,\\
		& D(s,\theta)=\bra{((R_0+\rho)^2 + \pat \rho^2)^{3/2}}(s,\theta).
	\end{align*}
	Then, we can rewrite $\kappa(s,\theta)$ and $\pas\kappa(s,\theta)$ as
	\begin{equation*} 
		\kappa (s,\theta)=\frac{N(s,\theta)}{D(s,\theta)} \quad \textrm{ and } \quad 
		\pas \kappa (s,\theta) = \bra{ \frac{(\pas N) D -  N (\pas D)}{D^2}}
		(s,\theta)
	\end{equation*}
	with 
	\begin{align*}
		(\pas N)(s,\theta) &= \bra{2(R_0+\rho) \pas \rho + 4 \pat \rho \, \past \rho - R_0 \pastt \rho - \pas \rho \, \patt \rho - \rho \, \pastt \rho}(s,\theta), \\
		 (\pas D)(s,\theta) &= \bra{3 (\pat \rho \, \past \rho + (R_0+\rho) \pas \rho) \sqrt{(R_0 + \rho)^2 + (\pat \rho)^2}}(s,\theta).
	\end{align*}
	Since \eqref{eq:rho0} holds for all $\theta$, we see that 
\[
(\pas N)(0,\theta)= (\pas D)(0,\theta)=0,
\]
so
\[
 \pas \kappa (0,\theta)= 0.
\]

Next, we define
	\begin{equation*}
		n(s,\theta)=((\pas N) D -  N (\pas D))(s,\theta) \quad \mbox{and} \quad d(s,\theta)= D^2(s,\theta),
	\end{equation*}
	so that  
	\begin{align*} 
\pas \kappa (s,\theta) =\frac{n(s,\theta)}{d(s,\theta)}\q\t{and}\q
		\pass \kappa (s,\theta) =\bra{\frac{(\pas n) d -  n (\pas d)}{d^2}} (s,\theta).
	\end{align*}
	We compute
	\begin{equation*}
		(\pas n)(s,\theta)=\bra{(\pass N) D -  N (\pass D)}(s,\theta) \quad \mbox{and} \quad (\pas d)(s,\theta)= \bra{2 D \pas D}(s,\theta),
	\end{equation*}
	with
	\begin{align*}
		(\pass N)(s,\theta) = & \left[ 2\pas \rho^2 + (2R_0 + 2\rho - \patt \rho) \pass \rho +4 (\past \rho)^2\right. \\ \nonumber
		& \left.\q+ 4 \pat \rho \, \passt \rho - R_0 \passtt \rho - 2 \pas \rho \pastt \rho - \rho \passtt \rho\right] (s,\theta),\\
(\pass D)(s,\theta) & = \left[ 3 ((R_0 + \rho)^2 + (\pat\rho)^2)^{-1/2}((R_0+\rho)\pas \rho + \pat\rho \past \rho)^2 \right.\\ \nonumber
		& \left.\q+ 3 \sqrt{(R_0 + \rho)^2 + (\pat\rho)^2} ((\pas \rho)^2 + (R_0 + \rho)\pass \rho \right.\\ \nonumber
		&  \left.\q+ (\past \rho)^2 + \pat \rho \passt\rho) \right] (s,\theta).
	\end{align*}
	Using \eqref{eq:rho0}, we see that
\begin{align*}
(\pass N)(0,\theta) &=  2R_0 \pass \rho(0,\theta) -R_0 \passtt \rho(0,\theta)\\
(\pass D)(0,\theta) & = 3R_0^2 \pass \rho(0,\theta) .
\end{align*}
Then, since  
\[
N(0,\theta)=R_0^2,\q D(0,\theta) =R_0^3,\q (\pas D)(0,\theta)=0,
\]
we deduce
\begin{align*}
	\pas n(0,\theta)&=-R_0^4 \pass \rho(0,\theta)  -R_0^4\passtt \rho(0,\theta),\\
\pas d(0,\theta )&=0.
\end{align*}

We gather the above computations and we finally have that
\begin{eqnarray*}
\pass \kappa (0,\theta) &=&\bra{\frac{(\pas n) d -  n (\pas d)}{d^2}} (s,\theta)=  \frac{\pas n(0,\theta)}{D^2(0,\theta)}\\
&=&\frac{-R_0^4 \pass \rho(0,\theta)  -R_0^4\passtt \rho(0,\theta)}{R_0^6},
\end{eqnarray*}
 so  \eqref{eq:der_seconde_k} follows.
\end{proof}

\begin{rema}
Note that, if we also suppose that
\[
\pat^n\pass \rho(0,\theta)=0
\]
 for all $\theta \in [-\pi,\pi]$ and $n\ge 0$, then
\[
\pass \kappa (0,\theta)=0.
\]
\end{rema}

\begin{lemm}\label{lem:app11}
	Assume 
\begin{equation}\label{eq:rho00}
\pat^n\rho(0,\theta) =\pat^n \pas \rho(0,\theta)=\pat^n\pass \rho(0,\theta)=0 \qq\forall\theta \in [-\pi,\pi],\,\q \forall n\in \mathbb{N}.
\end{equation}
	Then
	$$ 		R_0^{2} \passs \kappa (0,\theta)=  - \passs \rho(0,\theta)  -\passstt \rho(0,\theta),
	$$
	hence
\[
\int_{-\pi}^\pi \passs \kappa (0,\theta) \cos \theta \dtheta=0.
\]
\end{lemm}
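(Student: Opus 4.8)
The plan is to mirror the proof of Lemma \ref{lem:app} (the second-derivative case), differentiating the quotient $\kappa = N/D$ one more time at $s=0$ under the stronger vanishing hypothesis \eqref{eq:rho00}, and then to conclude the integral identity by an integration-by-parts argument in $\theta$.

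First I would recall from the proof of Lemma \ref{lem:app} the notation $N(s,\theta)$, $D(s,\theta)$, and the auxiliary functions $n(s,\theta) = (\pas N)D - N(\pas D)$, $d(s,\theta) = D^2$, so that $\pas\kappa = n/d$ and $\pass\kappa = \big((\pas n)d - n(\pas d)\big)/d^2$. I would then write $\passs\kappa$ as the $s$-derivative of $\big((\pas n)d - n(\pas d)\big)/d^2$ and evaluate everything at $s=0$. Under \eqref{eq:rho00} the bookkeeping collapses dramatically: at $s=0$ one has $N=R_0^2$, $D=R_0^3$, $d=R_0^6$, and, crucially, $\pas N = \pas D = \pas d = 0$ (since $\pat^n\pas\rho(0,\theta)=0$), while $\pass N$, $\pass D$ involve only $\pass\rho$, which also vanishes together with all its $\theta$-derivatives, so in fact $\pass N(0,\theta) = \pass D(0,\theta) = 0$ and hence $\pas n(0,\theta) = 0$ as well. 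Therefore $\pass\kappa(0,\theta)=0$, and the only term surviving in $\passs\kappa(0,\theta)$ is the one carrying the third $s$-derivative through $\passs N$ and $\passs D$. Expanding $\passs N$ and $\passs D$ and discarding every monomial containing a factor $\rho$, $\pat\rho$, $\pas\rho$, $\past\rho$, $\pass\rho$, $\passt\rho$ evaluated at $s=0$, one is left with $\passs N(0,\theta) = 2R_0\passs\rho(0,\theta) - R_0\passstt\rho(0,\theta)$ and $\passs D(0,\theta) = 3R_0^2\passs\rho(0,\theta)$, exactly as in the second-order computation with $\pass$ replaced by $\passs$. Plugging into $\passs\kappa(0,\theta) = \big(\passs N\cdot D - N\cdot\passs D\big)/D^2$ at $s=0$ gives
\[
R_0^2\passs\kappa(0,\theta) = -\passs\rho(0,\theta) - \passstt\rho(0,\theta),
\]
which is the claimed identity.

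For the integral statement, I would multiply this identity by $\cos\theta$ and integrate over $(-\pi,\pi)$. The term $\int_{-\pi}^\pi \passstt\rho(0,\theta)\cos\theta\,\dtheta$ is integrated by parts twice; since $\theta\mapsto\passs\rho(0,\theta)$ is $2\pi$-periodic (it is a $\theta$-derivative-closed combination of $\rho$ by \eqref{eq:rhointzero}-type reasoning, being an $s$-derivative of the periodic $\rho(s,\cdot)$), the boundary terms vanish and one gets $\int \passstt\rho(0,\theta)\cos\theta\,\dtheta = -\int\passs\rho(0,\theta)\cos\theta\,\dtheta$. Hence $R_0^2\int\passs\kappa(0,\theta)\cos\theta\,\dtheta = -\int\passs\rho\cos\theta\,\dtheta + \int\passs\rho\cos\theta\,\dtheta = 0$. (Alternatively, and more cheaply, I would invoke \eqref{eq:rhointzero}, which already records $\int_{-\pi}^\pi\pa_s^n\rho(0,\theta)\cos\theta\,\dtheta = 0$ for all $n$, so that both $\int\passs\rho\cos\theta$ and $\int\passstt\rho\cos\theta$ vanish term by term after one integration by parts, giving the result immediately.)

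The main obstacle is purely organizational rather than conceptual: correctly carrying out the triple product/quotient rule for $\kappa = N/D$ and identifying which of the many terms in $\passs N$ and $\passs D$ survive the vanishing hypothesis \eqref{eq:rho00}. The key simplification to emphasize is that \eqref{eq:rho00} kills every $s$-derivative of $\rho$ of order $\le 2$ (and all their $\theta$-derivatives), so all lower mixed-derivative terms drop out and the structure of $\passs N(0,\theta)$, $\passs D(0,\theta)$ is literally the $\pass\to\passs$ analogue of what was computed in Lemma \ref{lem:app}; once that is seen, the final formula and the integral identity follow with no further work.
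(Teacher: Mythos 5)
Your proposal is correct and follows essentially the same route as the paper: identify $\passs N(0,\theta)=2R_0\passs\rho-R_0\passstt\rho$ and $\passs D(0,\theta)=3R_0^2\passs\rho$ after the hypothesis \eqref{eq:rho00} kills all lower-order mixed terms, combine them in the quotient $\kappa=N/D$, and conclude the integral identity by two integrations by parts (or directly from \eqref{eq:rhointzero}). The only difference is organizational: the paper carries out the full third quotient rule via the auxiliary functions $a=(\pas n)d-n(\pas d)$ and $b=d^2$, whereas you shortcut to $\passs\kappa(0)=(\passs N\cdot D-N\cdot\passs D)/D^2$ using that all first and second $s$-derivatives of $N$ and $D$ vanish at $s=0$ — which is valid and yields the same formula.
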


\begin{proof}
	Since we now assume that $\pass \rho(0,\theta)$ (and all its derivatives w.r.t. $\theta$) is zero, the computations in Lemma \ref{lem:app} give in particular
	$$\pass N (0,\theta)=0, \quad \pass D (0,\theta)=0.$$
	Next, we define 
	\begin{equation*}
		a(s,\theta)=((\pas n) d -  n (\pas d))(s,\theta) \quad \mbox{and} \quad b(s,\theta)=d^2(s,\theta).
	\end{equation*}
We thus write 	$\passs \kappa (s,\theta) $ as
	\begin{align} \label{eq:k3}
		\passs \kappa (s,\theta) =\bra{ \frac{(\pas a) b -  a (\pas b)}{b^2}} (s,\theta).
	\end{align}
	We compute 
	\begin{equation*}
		(\pas a)(s,\theta)=\bra{(\pass n) d -  n (\pass d)}(s,\theta) \quad \mbox{and} \quad (\pas b)(s,\theta)= 2[d \, \pas d](s,\theta),
	\end{equation*}
	where 
	\begin{align*}
		& (\pass n)(s,\theta) = ((\passs N)D - N(\passs D) + (\pass N)\pas D - 
		(\pas N)(\pass D))(s,\theta), \\
		& (\pass d)(s,\theta) = 2((\pas D)^2 + D \, \pass D)(s,\theta) ,
	\end{align*}
with
	\begin{align*}
		(\passs N)(s,\theta) & = (6 \pas \rho \, \pass \rho + (2R_0+2\rho-\patt \rho)\passs \rho + 12 \past \rho \, \passt \rho \\ \nonumber
		& \quad + 4 \pat \rho \passst \rho - (R_0 + \rho) \passstt \rho - 3 \pass \rho 
		\pastt \rho - 3 \pas \rho \passtt \rho)(s,\theta)
\\
		(\passs D)(s,\theta) = & -3 ((R_0+\rho)^2 + (\pat \rho)^2)^{-3/2}((R_0+\rho)\pas \rho + \pat \rho \past \rho)^3 \\ \nonumber
		& + 9 ((R_0+\rho)^2 + (\pat \rho)^2)^{-1/2} ((R_0+\rho) \pas \rho + \pat 
		\rho \past \rho)(\pat \rho \passt \rho + (\pas \rho)^2) \\ \nonumber
		& + 9 ((R_0+\rho)^2 + (\pat \rho)^2)^{-1/2} ((R_0+\rho) \pas \rho + \pat 
		\rho \past \rho)\left((R_0+\rho) \pass \rho + (\past \rho)^2 \right)\\ \nonumber
		& + 3 \sqrt{(R_0+\rho)^2 +(\pat \rho)^2)}(3 \pas \rho \pass \rho + (R_0 + \rho) \passs \rho) \\ \nonumber
		& + 3 \sqrt{(R_0+\rho)^2 +(\pat \rho)^2)}(3\past \rho \passt \rho + \pat 
		\rho \passst \rho)(s,\theta).
	\end{align*}
	Using \eqref{eq:rho00}, we get:
	\begin{align*}
		\passs N(0,\theta) &= 2R_0 \passs  \rho(0,\theta) -R_0 \passstt  \rho(0,\theta),\\
	\passs D(0,\theta)& = 3 R_0^2\passs \rho(0,\theta) .
	\end{align*}
Furthermore, since 
\[
N(0,\theta)=R_0^2,\q D(0,\theta) =R_0^3,\q\pas D(0,\theta)=\pass D(0,\theta )=0, 
\]
we deduce that
\begin{align*}
 (\pass n)(0,\theta) &= \bra{(\passs N)D - N(\passs D)}(0,\theta)= -R_0^4 \passs  \rho(0,\theta) -R_0^4 \passstt  \rho(0,\theta), \\
 (\pass d)(0,\theta) &= 0.
\end{align*}
Hence, recalling also that $\pas d(0,\theta)=0$, 
\begin{align*}
(\pas a)(0,\theta)&=\bra{(\pass n) d -  n (\pass d)}(0,\theta)=-R_0^{10} \passs  \rho(0,\theta) -R_0^{10} \passstt  \rho(0,\theta),
\\
 (\pas b)(0,\theta)&= 2(d \, \pas d)(0,\theta) =0,
\end{align*}
and \eqref{eq:k3}   becomes
\begin{align*}
\passs \kappa (0,\theta) =\frac{-R_0^{10} \passs  \rho(0,\theta) -R_0^{10} \passstt  \rho(0,\theta)}{R_0^{12}},
\end{align*}
	so the result follows.\\

In particular
\begin{align*}
R_0^2\int_{-\pi}^\pi \passs \kappa (0,\theta) \cos \theta \dtheta&=-R_0^2\int_{-\pi}^\pi\pare{ \passs  \rho(0,\theta) + \passstt  \rho(0,\theta)} \cos \theta \dtheta\\
&=-R_0^2\int_{-\pi}^\pi\passs  \rho(0,\theta) \cos \theta \dtheta+R_0^2\int_{-\pi}^\pi\passs  \rho(0,\theta)\cos \theta \dtheta\\
&=0.
\end{align*}
\end{proof}

\subsection{Computation of $\pas z$, $\pass z$ and $\passs z$}\label{app:z}

Recall that $\rho(s,\theta)$ and $\kappa(s,\theta)$ are defined by \eqref{eq:TW_sol}, \eqref{eq:TW_sol_0} and \eqref{def:k} for $s \in I$ and $\theta \in [-\pi,\pi]$.

\begin{lemm}\label{lem:app_2}
	Assume \eqref{eq:rho0}. 
	Then for all $\theta \in [-\pi, \pi]$ we have 
	$$
\int_{-\pi}^\pi \pas z(0,\theta)\dtheta=0. 
$$
	Furthermore, if we also have  \eqref{eq:rho00}, 
then
\begin{equation*} 
\int_{-\pi}^\pi  \pass z(0,\theta)\dtheta=\int_{-\pi}^\pi \cos^2\theta \pass z(0,\theta)\dtheta=\frac{a^2M^2}{2},
\end{equation*}
and
\[
\int_{-\pi}^\pi \cos\theta\passs z(0,\theta)\dtheta=-\frac{aM}{ R_0}V'''(0)-\frac{2M}{\pi R_0^3}\int_{-\pi}^\pi\cos\theta\passs\rho(0,\theta)\,d\theta. 
\]
\end{lemm}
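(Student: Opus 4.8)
The plan is to compute the $s$-derivatives of $z(s,\theta)=c_1(V,\rho)e^{-aV(s)(R_0+\rho(s,\theta))\cos\theta}$ explicitly at $s=0$, using the chain rule together with the known values $V(0)=0$, $V'(0)=1$, $\rho(0,\theta)=\partial_s\rho(0,\theta)=0$, $c_1(0,0)=\tilde c$, $\partial_V c_1(0,0)=0$ and the vanishing of all $\theta$-derivatives of $\rho(0,\cdot)$ and $\partial_s\rho(0,\cdot)$. Write $z=c_1\,e^{w}$ with $w(s,\theta)=-aV(s)(R_0+\rho(s,\theta))\cos\theta$. At $s=0$ we have $w=0$, so $z(0,\theta)=\tilde c$, and the successive derivatives of $z$ reduce to polynomial combinations of the derivatives of $w$ and of $c_1$. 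First I would record that $\partial_s w(0,\theta)=-a R_0\cos\theta$ (since $V'(0)=1$, $\rho(0,\cdot)=0$), whence $\partial_s z(0,\theta)=\tilde c\,\partial_s w(0,\theta)+\partial_s c_1(0,0)=-a\tilde c R_0\cos\theta$; integrating over $(-\pi,\pi)$ gives $\int \partial_s z(0,\theta)\,d\theta=0$ because $\int\cos\theta\,d\theta=0$. This is exactly the identity \eqref{eq:pasz} used earlier.

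Next I would compute $\partial_{ss} z(0,\theta)$. By Leibniz, $\partial_{ss}z=\partial_{ss}c_1\,e^w+2\partial_s c_1\,\partial_s w\,e^w + c_1\big((\partial_s w)^2+\partial_{ss}w\big)e^w$; at $s=0$ this is $\partial_{ss}c_1(0,0)+c_1\big((\partial_s w(0,\theta))^2+\partial_{ss}w(0,\theta)\big)$. Here one needs $\partial_{ss}w(0,\theta)$: differentiating $w=-aV(R_0+\rho)\cos\theta$ twice and using $V(0)=0$, $\rho(0,\cdot)=\partial_s\rho(0,\cdot)=0$, $V'(0)=1$, only the term $-aV''(0)R_0\cos\theta$ survives, so $\partial_{ss}w(0,\theta)=-aV''(0)R_0\cos\theta$, and $(\partial_s w(0,\theta))^2=a^2R_0^2\cos^2\theta$. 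Moreover $\partial_{ss}c_1(0,0)$ must be computed from \eqref{def:normalisation} by expanding the integral $\int_{-\pi}^\pi\int_0^{R_0+\rho} e^{-aVr\cos\theta}r\,dr\,d\theta$ to second order in $s$; since the linear term vanishes this gives a constant (independent of $\theta$) equal to $-\tilde c$ times the appropriate second-order coefficient, and using $\tilde c=M/(\pi R_0^2)$ one identifies $\partial_{ss}c_1(0,0)$ explicitly. Combining, $\int\partial_{ss}z(0,\theta)\,d\theta = 2\pi\,\partial_{ss}c_1(0,0) + \tilde c\,a^2R_0^2\int\cos^2\theta\,d\theta + \tilde c(-aV''(0)R_0)\int\cos\theta\,d\theta$; the last integral vanishes, $\int\cos^2\theta\,d\theta=\pi$, and a short computation (matching against the normalization constant) collapses this to $a^2M^2/2$. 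The same arithmetic, now multiplying first by $\cos^2\theta$, gives $\int\cos^2\theta\,\partial_{ss}z(0,\theta)\,d\theta=a^2M^2/2$ as well: the constant term $\partial_{ss}c_1(0,0)\int\cos^2\theta$ and the $\cos^2\theta\cdot\cos^2\theta$ and $\cos^2\theta\cdot\cos\theta$ pieces recombine to the same value — this coincidence is the one point I would double-check carefully, since it relies on the exact value of $\partial_{ss}c_1(0,0)$.

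Finally, for $\int\cos\theta\,\partial_{sss}z(0,\theta)\,d\theta$ I would again use Leibniz to expand $\partial_{sss}(c_1 e^w)$ and evaluate at $s=0$, keeping in mind that $w(0,\cdot)=0$ and $\partial_s w(0,\theta)=-aR_0\cos\theta$. The terms involving $\partial_{sss}c_1$ and $\partial_{ss}c_1$ are $\theta$-independent (or multiply $\partial_s w$, hence $\cos\theta$), and against $\cos\theta$ the pure constants integrate to zero; what remains after integration against $\cos\theta$ is a linear combination of $V'''(0)$ (coming from $\partial_{sss}w(0,\theta)=-aV'''(0)R_0\cos\theta$, the $\rho$-free part) and of $\int\cos\theta\,\partial_{sss}\rho(0,\theta)\,d\theta$ (coming from the $-aV'(0)\partial_{sss}\rho(0,\theta)\cos\theta=-a\partial_{sss}\rho(0,\theta)\cos\theta$ part of $\partial_{sss}w$, times $c_1=\tilde c$), plus cross terms in $\partial_{ss}c_1$ which, after using $\tilde c=M/(\pi R_0^2)$, assemble into the stated coefficients $-aM/R_0$ and $-2M/(\pi R_0^3)$. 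The main obstacle is purely bookkeeping: one must expand $c_1(V,\rho)$ to third order in $s$ from \eqref{def:normalisation}, being careful that $\rho$ contributes only through its $\theta$-integral (the area constraint kills the leading $\rho$-term) and that $\partial_s\rho(0,\cdot)$ and all its $\theta$-derivatives vanish, so that the only surviving $\rho$-dependence at third order is through $\int\cos\theta\,\partial_{sss}\rho(0,\theta)\,d\theta$. Once the coefficients $\partial_{ss}c_1(0,0)$ and $\partial_{sss}c_1(0,0)$ are in hand, all three claimed identities follow by the elementary integrals $\int_{-\pi}^\pi\cos\theta\,d\theta=0$, $\int_{-\pi}^\pi\cos^2\theta\,d\theta=\pi$, $\int_{-\pi}^\pi\cos^3\theta\,d\theta=0$, $\int_{-\pi}^\pi\cos^4\theta\,d\theta=3\pi/4$.
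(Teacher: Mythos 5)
Your approach is essentially the paper's: both write $z=c_1e^{w}$ with $w=-aV(R_0+\rho)\cos\theta$, differentiate in $s$ by Leibniz, evaluate at $s=0$ using $V(0)=0$, $V'(0)=1$, $\rho(0,\cdot)=\pas\rho(0,\cdot)=0$, $\pa_Vc_1(0,0)=0$, $c_1(0,0)=\tilde c$ and $\pa_{VV}c_1(0,0)=-a^2M/(4\pi)$ from the normalization \eqref{def:normalisation}, and then integrate against $1$, $\cos^2\theta$ and $\cos\theta$. The one point you flagged for double-checking is indeed where the trouble lies: substituting $\pa_{ss}c_1(0,0)=-a^2M/(4\pi)$ and $\tilde c=M/(\pi R_0^2)$ into your own formula $\int_{-\pi}^\pi\pass z(0,\theta)\,\dtheta=2\pi\,\pa_{ss}c_1(0,0)+\tilde c\,a^2R_0^2\pi$ yields $a^2M/2$, not the $a^2M^2/2$ you assert (and the same for the $\cos^2\theta$-weighted integral, since $\int(1-4\cos^2\theta)\,\dtheta=\int(1-4\cos^2\theta)\cos^2\theta\,\dtheta=-2\pi$); this is exactly the value the paper's own proof obtains, so the exponent in the lemma's statement appears to be a typo that your completed bookkeeping would have exposed rather than confirmed.
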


\begin{proof} 

Recalling the definition of $z$ \eqref{def:z}, we see that
	\begin{align*} 
		\pas z(s,\theta) &= A(s,\theta) e^{-aV(s)(R_0+\rho(s,\theta))\cos\theta} , 	
	\end{align*}
	with 
	\begin{align*} 
		A(s,\theta)&=\pas c_1(V,\rho)-a\cos\theta c_1(V,\rho)\pas\pare{V(s)(R_0+\rho(s,\theta))}\\
&=V'(s)  \pa _V c_1(V,\rho) +\pas \rho(s,\theta)  \pa _\rho c_1(V,\rho)\\
&\q-c_1(V,\rho) a\left(V'(s) (R_0+\rho (s,\theta))+V(s) \pas \rho (s,\theta) \right) \cos \theta ,
	\end{align*}
	and $c_1$  given by \eqref{def:normalisation}. 
Since
\begin{equation}\label{eq:TW_sol_000}
 \pa _V c_1(0,0) =0,\q c_1(0,0)=\tilde{c},
\end{equation}
we deduce that
\[
A(0,\theta)=-a\tilde{c}R_0\cos\theta,
\]
hence
\begin{equation}\label{eq:pasz}
\pas z(0,\theta)= 
-a\tilde{c}R_0\cos \theta ,
\end{equation}
	and the first result follows.\\

We now assume \eqref{eq:rho00}.\\
	Differentiating again, we obtain
	\begin{equation*} 
		\pass z(s,\theta)=B(V,\rho) e^{-aV(s)(R_0+\rho(s,\theta))\cos\theta},  
	\end{equation*}
	with 
	\[
	B(s,\theta)= \pas A(s,\theta) - A(s,\theta) a\left(V'(s) (R_0+\rho (s,\theta))+V(s) \pas \rho (s,\theta) \right) \cos \theta ,
	\]
	and
	\begin{align*} 
		\pas A(s,\theta) &=  V''(s)  \pa _V c_1(V,\rho) + \left(V'(s)\right)^2  \pa _{VV} c_1(V,\rho) +\pass \rho(s,\theta)  \pa _\rho c_1(V,\rho)\\
		&+\left(\pas \rho(s,\theta)\right)^2  \pa _{\rho\rho} c_1(V,\rho)+2 V'(s) \pas \rho(s,\theta)   \pa _{V\rho} c_1(V,\rho) \nonumber\\
		& -\left(V'(s)  \pa _V c_1(V,\rho) +\pas \rho(s,\theta)  \pa _\rho c_1(V,\rho)\right)  a\left( V'(s) (R_0+\rho (s,\theta)) +V(s) \pas \rho (s,\theta)  \right) \cos \theta \nonumber\\
		&-c_1(V,\rho) a\left( V''(s) (R_0+\rho (s,\theta))+ 2V'(s) \pas \rho (s,\theta) +V(s) \pass \rho (s,\theta)\right) \cos \theta. \nonumber
	\end{align*}
Since we have that \eqref{eq:rho00} and \eqref{eq:TW_sol_000} hold, the above expression reduces to
	\begin{align}\label{der:z_2_bis}
		\pas A(0,\theta) &=  \pa _{VV} c_1(0,0) -\tilde{c}a R_0V''(0)  \cos \theta. \nonumber
	\end{align}
	Using also that 
\begin{equation}\label{eq:TW_sol_0000}
 \pa _{VV} c_1(0,0) = -\frac{a^2M}{4\pi} 
\end{equation}
	we obtain
\begin{align*} 
		\pas A(0,\theta) &=  - \frac{a^2M}{4\pi}   - a\tilde{c}V''(0) R_0 \cos \theta, 
		\end{align*}
	and then
\begin{align*}
B(0,\theta)
& =  - \frac{a^2M}{4\pi}   - a\tilde{c}V''(0) R_0 \cos \theta +a^2\tilde{c}R_0^2\cos^2\theta\nonumber\\
&=  - \frac{a^2M}{4\pi}   - \frac{aM}{\pi R_0}V''(0)  \cos \theta +a^2\frac{M}{\pi}\cos^2\theta\nonumber\\
&=- \frac{a^2M}{4\pi}\pare{1-4\cos^2\theta}   - \frac{aM}{\pi R_0}V''(0)  \cos \theta 
\end{align*}
thanks to the definition of $\tilde{c}$. \\
	Hence,
\begin{equation*} 
\pass z(0,\theta) =- \frac{a^2M}{4\pi}\pare{1-4\cos^2\theta}   - \frac{aM}{\pi R_0}V''(0)  \cos \theta.
\end{equation*}
Then
\begin{align*}
\int_{-\pi}^\pi \pass z(0,\theta)\,d\theta&=-\frac{a^2M}{4\pi}\int_{-\pi}^\pi \pare{1-4\cos^2\theta}  \,d\theta=\frac{a^2M}{2},\\
\int_{-\pi}^\pi \pass z(0,\theta)\cos^2 \theta\,d\theta&=-\frac{a^2M}{4\pi}\int_{-\pi}^\pi \pare{1-4\cos^2\theta}\cos^2\theta  \,d\theta=\frac{a^2M}{2}.
  \end{align*}

Finally, differentiating a third time, we see that 
	\begin{equation*} 
	\passs z(s,\theta)=C(s,\theta) e^{-aV(s)(R_0+\rho(s,\theta))\cos\theta},  
	\end{equation*}
with 
\[
C(s,\theta)= \pas B(s,\theta) - B(s,\theta) a\left(V'(s) (R_0+\rho (s,\theta))+V(s) \pas \rho (s,\theta) \right) \cos \theta.
\]
We compute
\begin{align*}
	\pas B(s,\theta) &= \pass A(s,\theta) -a\pas A(s,\theta) \left(V'(s) (R_0+\rho (s,\theta))+V(s) \pas \rho (s,\theta) \right) \cos \theta \\
	&\q-aA(s,\theta) \left( V''(s) (R_0+\rho (s,\theta)) + 2V'(s) \pas \rho(s,\theta) +V(s) \pass \rho (s,\theta)  \right) \cos \theta ,
\end{align*}
and
\begin{align*}
\pass A(s,\theta)&=\passs c_1(V,\rho)-\passs\pare{aV(s)(R_0+\rho(s,\theta))\cos\theta}c_1(V,\rho)\\
&\q-2\pass\pare{aV(s)(R_0+\rho(s,\theta))\cos\theta}\pas c_1(V,\rho)\\
&\q-\pas\pare{aV(s)(R_0+\rho(s,\theta))\cos\theta}\pass c_1(V,\rho).
\end{align*}
Equations   \eqref{eq:TW_sol_000}, \eqref{eq:TW_sol_0000}, and
\[
\pa_{VVV}c_1(0,0)=0 
\]
imply that
\begin{align*}
\pas c_1(0,0)&=0,
\\
\pass c_1(0,0)&=-\frac{a^2M}{4\pi},
\\
\passs c_1(0,0)&=-3\frac{a^2M}{4\pi}V''(0)-\frac{2M}{R_0^3\pi}\passs\rho(0,\theta).
\end{align*}
Then
\begin{align*}
\pass A(0,\theta)&=\passs c_1(0,0)-\tilde{c}aR_0\cos\theta V'''(0)-aR_0\cos\theta \pass c_1(0,0)\\
&=-3\frac{a^2M}{4\pi}V''(0)-\frac{2M}{R_0^3\pi}\passs\rho(0,\theta)-\frac{aM}{\pi R_0}\cos\theta V'''(0) +\frac{a^3R_0M}{4\pi}\cos\theta
\end{align*}
and hence 
\begin{align*}
\pas B(0,\theta) &= 
-3\frac{a^2M}{4\pi}V''(0)-\frac{2M}{R_0^3\pi}\passs\rho(0,\theta)-\frac{aM}{\pi R_0}\cos\theta V'''(0) +\frac{a^3R_0M}{4\pi}\cos\theta\\
&\q +aR_0\cos\theta\pare{\frac{a^2M}{4\pi}   + a\tilde{c}V''(0) R_0 \cos \theta}\\
&\q +a^2\tilde{c}V''(0)R_0^2\cos^2\theta\\
&=-3\frac{a^2M}{4\pi}V''(0) -\frac{aM}{\pi R_0}\cos\theta V'''(0)+\frac{a^3R_0M}{2\pi}\cos\theta\\
& \q +2a^2\tilde{c}V''(0)R_0^2\cos^2\theta-\frac{2M}{\pi R_0^3}\passs\rho(0,\theta)
.
\end{align*}
Finally
\begin{align*}
C(0,\theta)&= \pas B(0,\theta) - B(0,\theta) aR_0\cos \theta\\
&=-3\frac{a^2M}{4\pi}V''(0) -\frac{aM}{\pi R_0}\cos\theta V'''(0)+\frac{a^3R_0M}{2\pi}\cos\theta\\
&\q +2a^2\tilde{c}V''(0)R_0^2\cos^2\theta-\frac{2M}{\pi R_0^3}\passs\rho(0,\theta)\\
&\q+aR_0\cos\theta\pare{\frac{a^2M}{4\pi}\pare{1-4\cos^2\theta}   + a\tilde{c}V''(0) R_0 \cos \theta}\\
&=3\frac{a^2M}{4\pi}V''(0)\pare{4\cos^2\theta-1}  -\frac{aM}{\pi R_0}\cos\theta V'''(0)+R_0\cos\theta\frac{a^3M}{4\pi}\pare{3-4\cos^2\theta}\\
&\q-\frac{2M}{\pi R_0^3}\passs\rho(0,\theta),
  \end{align*}
and
\begin{align*}
\passs z(0,\theta)&=3\frac{a^2M}{4\pi}V''(0)\pare{4\cos^2\theta-1}  -\frac{aM}{\pi R_0}\cos\theta V'''(0)+R_0\cos\theta\frac{a^3M}{4\pi}\pare{3-4\cos^2\theta}\nonumber\\
&\q-\frac{2M}{\pi R_0^3}\passs\rho(0,\theta), 
\end{align*}
Then
\begin{align*}
\int_{-\pi}^\pi \passs z(0,\theta)\cos \theta\,d\theta&=\int_{-\pi}^\pi C(0,\theta)\cos \theta\,d\theta =-\frac{aM}{ R_0}V'''(0) -\frac{2M}{\pi R_0^3}\int_{-\pi}^\pi\cos\theta\passs\rho(0,\theta)\,d\theta.
  \end{align*}

\end{proof}

\end{document}